\DeclareMathOperator*{\simop}{\sim}
\renewcommand{\phi}{\oldphi}
\title{Patterson-Sullivan theory for groups with a strongly contracting element}
\author{Rémi Coulon}
\date{\today}
\begin{document}

\maketitle

\begin{abstract}
	Using Patterson-Sullivan measures we investigate growth problems for groups acting on a metric space with a strongly contracting element.
	
\end{abstract}

\noindent
{\footnotesize 
\textbf{Keywords.} 
Patterson-Sullivan measures, critical exponent, growth rate, contracting element, amenability. \\
\textbf{Primary MSC Classification.}   
20F65, 
57S30, 
37A35, 
37A15, 
37D40. 
}
\tableofcontents

\section{Introduction}

Let $G$ be a group acting properly by isometries on a proper geodesic space $(X, \distV)$.
In particular $G$ is countable. 
It will always be endowed with the discrete topology.
Its \emph{exponential growth rate} measures the size of its orbits and is defined as
\begin{equation*}
	\omega(G,X) = \limsup_{\ell \to \infty} \frac 1\ell \ln  \card{\set{g \in G}{\dist o{go} \leq \ell}}.
\end{equation*}
This number does depend on the space $X$.
Nevertheless, if the context is clear, we simply write $\omega_G$ instead of $\omega(G,X)$.
It is also the critical exponent of the \emph{Poincaré series} of $G$ defined by 
\begin{equation*}
	\mathcal P_G(s) = \sum_{g \in G} e^{-s\dist o{go}},
\end{equation*}
that is $\mathcal P_G(s)$ diverges (\resp converges) whenever $s < \omega_G$ (\resp $s > \omega_G$).
If $G$ is the fundamental group of a hyperbolic manifold $M$ acting on the universal cover $X = \tilde M$, then $\omega_G$ has numerous interpretations: it is the entropy of the geodesic flow, the Hausdorff dimension of the radial limit set of $G$, etc.
In this context, the exponential growth rate is a central object connecting geometry, group theory, dynamical systems, etc.

\paragraph{Growth spectrum.}
In this article we are interested in the (\emph{normal}) \emph{subgroup growth spectrum} of $G$, i.e. the set
\begin{equation*}
	\spec{G, X} =  \set{\omega(N,X)}{N \vartriangleleft G}.
\end{equation*}
Note that $\spec{G, X}$ is contained in $\intval 0{\omega_G}$.
In particular, $\omega_N = 0$ (\resp $\omega_N = \omega_G$) if $N$ is finite (\resp has finite index in $G$).
A natural question, which has received much attention, is to understand more precisely the extremal values of this set.
This problem is rather well understood if $G$ is a group acting properly, co-compactly by isometries on a Gromov hyperbolic metric space $X$.
For instance, any infinite normal subgroup  $N \vartriangleleft G$ satisfies
\begin{equation*}
	\frac 12 \omega(G,X) < \omega(N,X) \leq \omega(G,X).
\end{equation*}
see Matsuzaki-Yabuki-Jaerisch \cite{Matsuzaki:2020ue}.
Moreover, the second inequality is an equality if and only if $G/N$ is amenable.
Similarly, if $G$ has Kazhdan property (T), then $\omega_N$ cannot be arbitrarily close to $\omega_G$, unless $N$ has finite index in $G$.
See Coulon-Dougall-Schapira-Tapie \cite{Coulon:2018aa} and the references therein.

\paragraph{Contracting elements.}
In the past decades, there have been many efforts to investigate the marks of negative curvature in groups, beyond the context of hyperbolic spaces.
One notion which has emerged is the one of contracting element.
Roughly speaking a subset $Y \subset X$ is contracting, if any ball disjoint from $Y$ has a projection onto $Y$, whose diameter is uniformly bounded \cite{Bestvina:2009hh}.

\begin{rema*}
	In the literature, this property is sometimes called \emph{strong} contraction to distinguish it from a weaker version involving a certain system of non-geodesic paths, see for instance \cite{Arzhantseva:2015cl}.
	Since we will work with this single notion, we simply call it contraction.
\end{rema*}

An element $g \in G$ is \emph{contracting} if the orbit map  $\Z \to G$ sending $n$ to $g^nx$ is a quasi-isometric embedding with contracting image.
Contracting elements can be thought of as hyperbolic directions in the space $X$.
Here are a few examples of group actions with contracting elements.
\begin{itemize}
	\item If $X$ is a hyperbolic space (in the sense of Gromov) endowed with a proper action of $G$, then every loxodromic element in $G$ is contracting \cite{Gromov:1987tk}.
	This is typically the case if $G$ is the fundamental group of a manifold $M$ whose sectional curvature is negative and bounded away from zero, and $X$ the universal cover of $M$.
	Any metric quasi-isometric to this one will also work.
	\item Assume that $G$ is hyperbolic relative to $\{P_1, \dots, P_m\}$.
	Suppose that $G$ acts properly, co-compactly on $X$ (e.g. $X$ is the Cayley graph of $G$ with respect to a finite generating set of $G$).
	Any infinite order element of $G$ which is not conjugated in some $P_i$ is contracting  \cite{Sisto:2012um,Gerasimov:2016uc}.
	\item If $X$ is a CAT(0) space endowed with a proper, co-compact action of $G$, then any rank one element of $G$ is contracting \cite{Bestvina:2009hh}.
	Recall that the universal cover of any closed, compact manifold with non-positive sectional curvature is CAT(0) \cite{Bridson:1999ky}.
	\item Let $\Sigma$ be a closed, compact surface.
	Assume that $G$ is the mapping class group of $\Sigma$ and $X$ the Teichmüller space of $\Sigma$ endowed with the Teichmüller metric.
	Then any pseudo-Anosov element is contracting \cite{Minsky:1996wx}.
\end{itemize}

Groups with a contracting element are known to be acylindrically hyperbolic, see Sisto \cite{Sisto:2018uc}.
Acylindrical hyperbolicity is a powerful tool for studying the structure of a given group.
Nevertheless it is rather useless here as the exponential growth rate $\omega(G,X)$ heavily depends on the metric space $X$.
The typical spaces $X$ we are interested in are indeed not hyperbolic.

\paragraph{Main results.}
The goal of this article is to investigate the extremal values of the subgroup growth spectrum in the context of group actions admitting a contracting element.
Some of our results refine existing statements in the literature.
In particular, we answer most of the questions raised by Arzhantseva and Cashen in \cite{Arzhantseva:2020aa}.
Our main contribution though is the method that we use: we extend to this context the construction of Patterson-Sullivan measures (see below).

When it comes to counting problems, the behavior of the Poincaré series of $G$ at the critical exponent plays a major role.
This motivates the following definition.
The action of $G$ on $X$ is \emph{divergent} (\resp \emph{convergent}) if the Poincaré series $\mathcal P_G(s)$ of $G$ diverges (\resp converges) at $s = \omega_G$.
Our first statement deals with the bottom of the subgroup growth spectrum.

\begin{theo}[see \autoref{res: improved lower bound growth normal sbgp} and \autoref{res: strict half inequality}]
\label{intro: improved lower bound growth normal sbgp}
	Let $X$ be a proper geodesic metric space.
	Let $G$ be a group acting properly, by isometries on $X$ with a contracting element.
	Let $N$ be an infinite normal subgroup of $G$.
	Then
	\begin{equation*}
		\omega(N,X) + \frac 12 \omega(G/N, X/N) \geq \omega(G, X).
	\end{equation*}
	Assume in addition that $G$ is not virtually cyclic and the action of $G$ is divergent.
	Then 
	\begin{equation*}
		\omega(N,X) > \frac 12 \omega(G,X).
	\end{equation*}
\end{theo}

\begin{rema*}
	The first inequality was proved by Matsuzaki and Jaerisch when $G$ is a finitely generated free group acting on its Cayley graph with respect to a free basis \cite{Jaerisch:2017kg}.
	Their method involves fine estimates of the Cheeger constant and the spectral radius of the random walk in $G/N$.
	To the best of our knowledge this result is new, even if $G$ is a hyperbolic group.
	
	The second inequality is well known in the context of hyperbolic spaces, see Roblin \cite{Roblin:2005fn} and Matsuzaki-Yabuki-Jaerisch \cite{Matsuzaki:2020ue}.
	For groups acting with a contracting element, it was proved by Arzhantseva and Cashen under the stronger assumption that $G$ has pure exponential growth, that is when the map	
	\begin{equation*}
		\ell \mapsto \card{\set{g \in G}{\dist o{go} \leq \ell}} e^{-\omega_G \ell}
	\end{equation*}
	is bounded from above and away from zero \cite{Arzhantseva:2020aa}.
	Note that even if $X$ is Gromov hyperbolic, there are groups $G$ acting on $X$, which are divergent but do not have pure exponential growth.
\end{rema*}

The next two results focus on the top of the subgroup growth spectrum.
Let $Q$ be a discrete group.
The left action of $Q$ on itself induces an action of $Q$ on $\ell^\infty(Q)$.
The group $Q$ is \emph{amenable} if there exists a $Q$-invariant mean $\ell^\infty(Q) \to \R$.

\begin{theo}[see \autoref{res: amenability roblin}]
\label{intro: amenability roblin}
	Let $X$ be a proper, geodesic, metric space.
	Let $G$ be a group acting properly, by isometries on $X$ with a contracting element.
	Let $N$ be a normal subgroup of $G$.
	If $G/N$ is amenable, then $\omega(N,X) = \omega(G,X)$.
\end{theo}

\begin{rema*}
	This type of results has a long history.
	Assume that $G$ is the fundamental group of a compact hyperbolic manifold and $X$ the universal cover of $M$.
	Let $N$ be a normal subgroup of $G$.
	Brooks proved that $\omega_N = \omega_G$ if and only if $G/N$ is amenable \cite{Brooks:1981jp} -- Brooks' result is actually stated in terms of the bottom spectra of certain Laplace's operators, but they can be related to the growth rates of the groups via Sullivan's formula \cite{Sullivan:1987bt}.
	A similar statement was obtained independently by Grigorchuk and Cohen when $G$ is a free group acting on its Cayley graph $X$ with respect to a free basis \cite{Grigorchuk:1980wx,Cohen:1982gt}.
	The ``easy direction'' stated above was generalized by Roblin to the settings of CAT($-1$) spaces \cite{Roblin:2005fn}.
\end{rema*}

Recall that two subgroups $H_1$ and $H_2$ of $G$ are \emph{commensurable} if $H_1 \cap H_2$ has finite index in both $H_1$ and $H_2$.
A subgroup $H \subset G$ is \emph{commensurated}, if $H$ and $gHg^{-1}$ are commensurable, for every $g \in G$.
The class of commensurated subgroups contains all normal subgroups and finite index subgroups of $G$.
More generally any subgroup of $G$ that is commensurable with a normal subgroup of $G$ is commensurated.
But they are numerous other examples, see \autoref{rem: commensurated subroups}.

\begin{theo}[see \autoref{res: div normal sbgp}]
\label{intro: div normal sbgp part 1}
	Let $X$ be a proper, geodesic, metric space.
	Let $G$ be a group acting properly, by isometries on $X$ with a contracting element.
	Let $H$ be a commensurated subgroup of $G$.
	If the action of $H$ on $X$ is divergent, then $\omega(H,X) = \omega(G,X)$ and the action of $G$ on $X$ is divergent.
\end{theo}

\begin{rema*}
	To the best of our knowledge the statements in the literature only cover the case where $H$ is normal.
	With this stronger assumption, it was proved by Matsuzaki and Yabuki, if $G$ is a kleinian group, and generalized  by  Matsuzaki, Yabuki and Jaerisch when $X$ is Gromov hyperbolic \cite{Matsuzaki:2009vs, Matsuzaki:2020ue}.
\end{rema*}

\paragraph{Patterson-Sullivan theory.}
Assume that $G$ is the fundamental group of a closed riemannian manifold $M$ with negative sectional curvature.
In this context dynamical systems -- 
first and foremost the study of the geodesic flow on the unit tangent bundle of $M$ -- provide efficient tools to tackle counting problems.
For instance, using the dynamics of the geodesic flow, Margulis proved that the number $c(\ell)$ of simple closed geodesics on $M$ of length at most $\ell$ behaves like
\begin{equation*}
	c(\ell) \simop_{\ell \to \infty} \frac{e^{\omega_G \ell}}{\omega_G \ell}.
\end{equation*}
See \cite{Margulis:1969ve}.
Fix a base point $o \in X$.
Denote by $X = \tilde M$ the universal cover of $M$ and $\partial X$ its visual boundary.
In this topic, measures on the boundary play a prominent role.
Recall that a $G$-invariant, $\omega_G$-conformal density is a collection $\nu = (\nu_x)_{x \in X}$ of non-zero finite measures on $X \cup \partial X$, all in the same measure class, satisfying the following properties: for all $g \in G$, for all $x,y \in X$, we have
\begin{itemize}
	\item $g_\ast \nu_x = \nu_{gx}$ (invariance),
	\item $\displaystyle \frac{d\nu_x}{d\nu_y}(\xi) = e^{-\omega_G b_\xi(x,y)}$  $\nu$-almost everywhere (conformality),
\end{itemize}
where $b_\xi$ stands for the Buseman cocycle at $\xi \in \partial X$.
In particular $\omega_G$ can be interpreted as the dimension of the measure $\nu_o$.
Patterson's construction provides examples of such densities which are supported on $\partial X$.
These measures are designed so that the action of $G$ on $(\partial X, \nu_o)$ captures many properties of the geodesic flow on $M$.
The theory can be generalized for groups acting on a Gromov hyperbolic space, see for instance Coornaert \cite{Coornaert:1993uv} and Bader-Furman \cite{Bader:2017te}.
For such groups, Theorem~\ref{intro: improved lower bound growth normal sbgp}, \ref{intro: amenability roblin} and \ref{intro: div normal sbgp part 1} can be proved using invariant conformal densities.

In the past years, growth problems in groups with a contracting element have been investigated by various people, see for instance \cite{Yang:2014aa, Arzhantseva:2015cl,Dahmani:2019aa,Yang:2019wa,Yang:2020ub,Arzhantseva:2020aa,Li:2020aa}.
Since no Patterson-Sullivan theory existed in this context, each time the authors developed ad hoc methods.
Actually they often make a point of avoiding ``fairly sophisticated'' results about Patterson-Sullivan ``machinery''.
We adopt here an opposite point of view.
For us, these results witnessed the fact that a Patterson-Sullivan theory should exist.
Building this ``missing'' theory is the purpose of this work.
If the ambient space $X$ is CAT(0) this task has been achieved by Link \cite{Link:2018ue} extending the work of Knieper \cite{Knieper:1997us, Knieper:1998vj}.
Our approach does not require any CAT(0) assumption though.
Our goal is to stress that this construction is particularly robust and requires very little hypotheses, beside the existence of a contracting element.
Once the basic properties of invariant conformal densities have been established, they provide a unified framework for solving various growth problems.
We believed that these tools can be used for many other applications inspired by non-positive curvature.

\paragraph{Strategy.}
We would like to understand the behavior of certain densities supported on the ``boundary at infinity'' of $X$.
Thus, the first task is to build an appropriate compactification of $X$ to carry these measures.
There have been many attempts to build an analogue of the Gromov boundary for groups with a contracting element: the contracting and Morse boundaries \cite{Charney:2015dn,Murray:2019vd,Cordes:2015tr}, the sublinearly Morse boundary \cite{Qing:2022wg,Qing:2020wx}, etc.
However these boundaries are sometimes ``too small'' (for instance the Morse boundary cannot be used as a topological model of the Poisson boundary) and often not compact.
This can be a difficulty to build Patterson-Sullivan measures.
Instead, we choose to work with the horocompactification.
In short, it is the ``smallest'' compactification $\bar X$ of $X$ such that the map
\begin{equation*}
	\begin{array}{ccc}
		X \times X \times X & \to & \R \\
		(x,y,z) & \mapsto & \dist xz - \dist yz
	\end{array}
\end{equation*}
extends continuously to a map $X \times X \times \bar X \to \R$.
The horoboundary of $X$ is $\partial X = \bar X \setminus X$.
A point in the horoboundary is a cocycle $c \colon X \times X \to \R$, playing the role of a Buseman cocycle.
Hence this choice is natural to give a rigorous sense to conformal densities.
If $X$ is CAT(0), then the horoboundary coincides with the visual boundary.
In general, this boundary is slightly too large though for invariant conformal densities to behave as expected.
Let us illustrate this fact with an example.

\begin{exam*}
	Consider a group $G$ acting properly, co-compactly, by isometries on a CAT(0) space $X_0$.
	Build a new space $X = X_0 \times \intval 01$ endowed with the $L^1$-metric.
	Let $G$ act trivially on $\intval 01$ and consider the diagonal action of $G$ on $X$.
	This action is still proper and co-compact and $\omega(G, X) = \omega(G, X_0)$.
	The horoboundary of $X$ is homeomorphic to $\partial X = \partial X_0 \times \intval 01$.
	To carry the analogy with negatively curved manifold, we would like that if $\mu = (\mu_x)$ is a $G$-invariant, $\omega_G$-conformal density supported on $\partial X$ then the action of $G$ on $(\partial X, \mu_o)$ is ergodic.
	However, in this example we can choose a $G$-invariant, $\omega_G$-conformal density $\nu = (\nu_x)$ on $\partial X_0$ and form the average $\mu = (\nu^0 + \nu^1)/2$, where $\nu^i$ is a copy of $\nu$ supported on $\partial X_0 \times \{i\}$.
	Then the action of $G$ on $(\partial X, \mu_o)$ is not ergodic.
\end{exam*}

This issue already arises if $X$ is Gromov hyperbolic.
In this context, it can be fixed by passing to the reduced horoboundary.
Endow the horoboundary $\partial X$ with the equivalence relation $\sim$ defined as follows: two cocycles $c, c' \in \partial X$ are equivalent, if $\norm[\infty]{c-c'} < \infty$.
The reduced horoboundary is the quotient $\partial X / \sim$.
If $X$ is hyperbolic, then it coincides with the Gromov boundary.
Moreover the projection $\pi \colon \partial X \onto \partial X/\!\! \sim$ is very well understood, see Coornaert-Papadopoulos \cite{Coornaert:2001ff}.
Pushing forward in $\partial X/\!\! \sim$, the densities built in $\partial X$ provides well behaved measures.

However, in general the reduced horoboundary $\partial X /\!\!  \sim$ is a rather nasty topological space.
For instance, if $X = \R^2$ is endowed with the taxicab metric, then $\partial X /\!\!  \sim$ is not even Hausdorff.
To bypass this difficulty we adopt a measure theoretic point of view.
Denote by $\mathfrak R$ the $\sigma$-algebra that consists of all Borel sets which are saturated for the equivalence relation $\sim$.
We make an abuse of vocabulary and call the measurable space $(\partial X, \mathfrak R)$ the \emph{reduced horoboundary}.
When restricted to the reduced horoboundary, the invariant, conformal densities are well behaved.
For instance, we prove the following partial form of the Hopf-Tsuji-Sullivan dichotomy (we refer the reader to \autoref{sec: first application} for the definition of the radial limit set).

\begin{theo}[see \autoref{res: lower bound dimension density} and \autoref{res: ctg limit set with full measure}]
\label{intro: dichotomy}
	Let $X$ be a proper geodesic metric space and $o \in X$.
	Let $G$ be a group acting properly, by isometries on $X$ with a contracting element.
	Suppose that $G$ is not virtually cyclic.
	Let $\omega \in \R_+$.
	Let $\mu = (\mu_x)$ be the restriction to the reduced horoboundary $(\partial X, \mathfrak R)$ of  a $G$-invariant, $\omega$-conformal density.
	The following are equivalent.
	\begin{enumerate}
		\item The action of $G$ on $X$ is divergent (and thus $\omega = \omega_G$).
		\item $\mu_o$ gives positive measure to the radial limit set.
		\item $\mu_o$ gives full measure to the radial limit set.
	\end{enumerate}
\end{theo}

\begin{rema*}
	In a forthcoming work, see \cite{Coulon:2023aa}, we plan to complete the Hopf-Tsuji-Sullivan dichotomy by investigating the ergodicity of the geodesic flow in this context and its consequences for growth problems.
\end{rema*}

If the action of $G$ on $X$ is divergent, we prove that invariant, conformal densities are ergodic and essentially unique, when restricted to the reduced horoboundary.

\begin{theo}[see \autoref{res: quasi-conf + ergo}]
\label{intro: quasi-conf + ergo}
	Let $X$ be a proper, geodesic, metric space and $o \in X$.
	Let $G$ be a non virtually cyclic group acting properly, by isometries on $X$ with a contracting element.
	Assume that the action of $G$ on $X$ is divergent.
	Let $\mu = (\mu_x)$ be the restriction to the reduced horoboundary $(\partial X, \mathfrak R)$ of  a $G$-invariant, $\omega_G$-conformal density.
	Then
	\begin{enumerate}
		\item $\mu_o$ is ergodic;
		\item $\mu_o$ is non-atomic;
		\item $\mu$ is almost unique in the following sense:
		there is $C \in \R_+^*$, such that if $\mu' = (\mu'_x)$ is the restriction to the reduced horoboundary of another $G$-invariant, $\omega_G$-conformal density, then for every $x \in X$, we have $\mu'_x \leq C \mu_x$.
	\end{enumerate}
\end{theo}

Finally, we complete \autoref{intro: div normal sbgp part 1} as follows.

\begin{theo}[see \autoref{res: div normal sbgp}]
\label{intro: div normal sbgp part 2}
	Let $X$ be a proper, geodesic, metric space.
	Let $G$ be a group acting properly, by isometries on $X$ with a contracting element.
	Suppose that $G$ is not virtually cyclic.
	Let $H$ be a commensurated subgroup of $G$.
	If the action of $H$ on $X$ is divergent, then any $H$-invariant, $\omega_H$-conformal density is $G$-almost invariant when restricted to the reduced horoboundary $(\partial X, \mathfrak R)$.
\end{theo}

\begin{rema*}
	Other applications can be found in Sections~\ref{sec: first application} and \ref{sec: more applications}.
	In this article we focused on growth problems.
	Nevertheless, we believe that the tools we introduced can be used for other purposes, e.g. to generalize the ``no proper conjugation'' property of divergent subgroups exhibited by Matsuzaki, Yabuki and Jaerisch \cite{Matsuzaki:2020ue}.
\end{rema*}

\paragraph{Strongly positively recurrent actions.}
As we mentioned before, divergent actions play an important role in counting problems.
Any proper and co-compact action is divergent.
In particular, if $G$ acts properly on $X$ with a quasi-convex orbit, then its action is divergent.
This framework has been generalized independently by Schapira-Tapie \cite{Schapira:2021ti} and Yang \cite{Yang:2019wa} under the names \emph{strongly positively recurrent} action (SPR) and \emph{statistically convex co-compact} action (SCC) respectively -- the idea also implicitly appears in the work of Arzhantseva-Cashen-Tao \cite{Arzhantseva:2015cl}.
The notion has an independent dynamical origin as well, see for instance \cite{Gurevich:1998it,Sarig:2001cd}.
Roughly speaking the idea is to ask that the elements of $g \in G$ which ``violate'' the quasi-convexity of $G$ are statistically very rare.
It was proved by Yang that such actions are divergent.
In \autoref{sec: spr} we provide an alternative proof of this fact in the spirit of Schapira-Tapie \cite{Schapira:2021ti}.

\begin{rema*}
	Since obtaining the results in this article, we have learned that Wenyuan Yang independently investigated conformal measures in the same context \cite{Yang:2022wm}.
	The techniques used by Wenyuan Yang are slightly different.
	For instance, his proof of \autoref{intro: quasi-conf + ergo} (partial form of the Hopf-Tsuji-Sullivan dichotomy) relies on projection complexes introduced by Bestvina, Bromberg, and Fujiwara \cite{Bestvina:2015tv}.
	In contrast, we tried to use whenever possible low-tech arguments (both of geometric and measure theoretic nature).
\end{rema*}

\paragraph{Acknowledgment.}
The author is grateful to the \emph{Centre Henri Lebesgue} ANR-11-LABX-0020-01 for creating an attractive mathematical environment.
He acknowledges support from the Agence Nationale de la Recherche under Grant \emph{Dagger} ANR-16-CE40-0006-01 and \emph{GoFR} ANR-22-CE40-0004.
He thanks the Institut Henri Poincaré (UAR 839 CNRS-Sorbonne Université) for its hospitality and support (through LabEx CARMIN, ANR-10-LABX-59-01) during the trimester program \emph{Groups Acting on Fractals, Hyperbolicity and Self-Similarity} in Spring 2022.
The author warmly thanks Françoise Dal'bo, Ilya Gekhtman, Andrea Sambusetti, Barbara Schapira, Samuel Tapie, and Wenyuan Yang for related discussions.
He also thanks the referee for the helpful comments.

\section{Groups with a contracting element}
\label{sec: contracting}

\paragraph{Notations and vocabulary.}
In this article $(X,d)$ is a proper, metric space.
A \emph{geodesic} is a path $\gamma \colon I \to X$ (where $I \subset \R$ is an interval) such that 
\begin{equation*}
	\dist{\gamma(t)}{\gamma(t')} = \abs{t' - t},\quad \forall t,t' \in I.
\end{equation*}
From now on, we assume that $X$ is \emph{geodesic}, that is any two points are joined by a (non necessarily unique) geodesic. 
Note that we do require $X$ to be geodesically complete.

For every $x \in X$ and $r \in \R_+$, we denote by $B(x,r)$ the open ball of radius $r$ centered at $x$.
Let $Y$ be a closed subset of $X$.
Given $x \in X$, a point $y \in Y$ is a (\emph{nearest point}) \emph{projection} of $x$ on $Y$ if $\dist xy = d(x,Y)$.
The \emph{projection} of a subset $Z\subset X$ onto $Y$ is
\begin{equation*}
	\pi_Y(Z) = \set{y \in Y}{ y \ \text{is the projection of a point}\ z \in Z}.
\end{equation*}
Let $I \subset \R$ be a closed interval and  $\gamma \colon I \to X$ a continuous path intersecting $Y$.
The \emph{entry point} and \emph{exit point} of $\gamma$ in $Y$ are the points $\gamma(t)$ and $\gamma(t')$ where
\begin{equation*}
	t = \inf \set{s \in I}{\gamma(s) \in Y} 
	\quad \text{and} \quad
	t' = \sup \set{s \in I}{\gamma(s) \in Y} .
\end{equation*}
If $I$ is bounded, such points always exist (the subset $Y$ is closed).
Given $d\in \R_+$, we denote by $\mathcal N_d(Y)$ the \emph{$d$-neighborhood} of $Y$, that is the set of points $x \in X$ such that $d(x,Y) \leq d$.
The distance between two subsets $Y, Y'$ of $X$ is 
\begin{equation*}
	d(Y,Y') = \inf_{(y,y') \in Y \times Y'} \dist y{y'}.
\end{equation*}

\paragraph{Contracting set.}

\begin{defi}[Contracting set]
	Let $\alpha \in \R^*_+$.
	A closed subset $Y \subset X$ is \emph{$\alpha$-contracting} if for any geodesic $\gamma$ with $d(\gamma, Y)\geq \alpha$, we have $\diam \pi_Y(\gamma) \leq \alpha$.
	The set $Y$ is \emph{contracting} if $Y$ is $\alpha$-contracting for some $\alpha \in \R^*_+$.	
\end{defi}

The next statements are direct consequences of the definition.
Their proofs are left to the reader.

\begin{lemm}[Quasi-convexity]
\label{res: qc contracting set}
	Let $Y$ be an $\alpha$-contracting subset.
	If $\gamma$ is a geodesic joining two points of $\mathcal N_\alpha(Y)$, then $\gamma$ lies in the $5\alpha/2$-neighborhood of $Y$.
\end{lemm}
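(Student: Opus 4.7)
The plan is to isolate each ``excursion'' of $\gamma$ outside the open $\alpha$-neighborhood of $Y$ and bound its length by applying the contracting hypothesis once per excursion. Parameterize $\gamma \colon [0,L] \to X$ by arc length, with $\gamma(0), \gamma(L) \in \mathcal N_\alpha(Y)$, and suppose some $u_0 \in [0,L]$ satisfies $d(\gamma(u_0), Y) > \alpha$ (otherwise there is nothing to prove). Take the maximal subinterval $[s,t] \ni u_0$ on which $d(\gamma, Y) \geq \alpha$. My first observation will be that both $\gamma(s)$ and $\gamma(t)$ in fact lie in $\mathcal N_\alpha(Y)$: in the interior case this follows from maximality of $[s,t]$ together with continuity of $u \mapsto d(\gamma(u), Y)$, while at a boundary endpoint (that is, $s = 0$ or $t = L$) it is given directly by the hypothesis on $\gamma$.

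Next I would apply the $\alpha$-contracting property to the subgeodesic $\gamma|_{[s,t]}$, which by construction has $d(\gamma|_{[s,t]}, Y) \geq \alpha$. This bounds the diameter of $\pi_Y(\gamma|_{[s,t]})$ by $\alpha$. Choosing nearest-point projections $y_s, y_t \in Y$ of $\gamma(s), \gamma(t)$, I then plan to chain the triangle inequality through $\gamma(s) \to y_s \to y_t \to \gamma(t)$, using $d(\gamma(s), y_s), d(\gamma(t), y_t) \leq \alpha$ from the first step and $d(y_s, y_t) \leq \alpha$ from contraction, to obtain
\begin{equation*}
    t - s \;=\; d(\gamma(s), \gamma(t)) \;\leq\; 3\alpha.
\end{equation*}

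Once the excursion has been shortened, the pointwise bound becomes automatic: for any $u \in [s,t]$, triangulating through $y_s$ gives $d(\gamma(u), Y) \leq d(\gamma(u), y_s) \leq (u-s) + \alpha$, and symmetrically through $y_t$ gives $(t-u) + \alpha$; taking the minimum and using $t - s \leq 3\alpha$ yields $d(\gamma(u), Y) \leq (t-s)/2 + \alpha \leq 5\alpha/2$. Outside all such excursions the bound $d(\gamma(u), Y) \leq \alpha$ holds trivially, so $\gamma \subset \mathcal N_{5\alpha/2}(Y)$ as required. I do not expect a genuine obstacle; the only point meriting minor care is verifying the endpoint behavior of each excursion, which is the only reason the hypothesis on $\gamma(0), \gamma(L)$ is used and which is handled uniformly by the continuity/maximality argument above.
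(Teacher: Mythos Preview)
Your proof is correct. The paper does not actually prove this lemma; it states that it is a direct consequence of the definition and leaves the proof to the reader, so there is no proof in the paper to compare against. Your excursion-by-excursion argument is the natural way to carry this out and recovers exactly the constant $5\alpha/2$.
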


\begin{lemm}[Projections]
\label{res: proj contracting set}
	Let $Y$ be an $\alpha$-contracting subset.
	Let $x,y\in X$ and $\gamma$ be a geodesic from $x$ to $y$.
	Let $p$ and $q$ be respective projections of $x$ and $y$ onto $Y$.
	If $d(x,Y) < \alpha$ or $\dist pq > \alpha$, then the following holds:
	\begin{enumerate}
		\item $d(\gamma, Y) < \alpha$;
		\item the entry point (\resp exit point) of $\gamma$ in $\mathcal N_\alpha(Y)$ is $2\alpha$-closed to $p$ (\resp $q$);
		\item $\dist xy \geq \dist xp + \dist pq +  \dist qy - 8\alpha$.
	\end{enumerate}
\end{lemm}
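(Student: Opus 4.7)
The plan is to deduce all three conclusions from the $\alpha$-contracting property applied to suitable sub-geodesics of $\gamma$; no other ingredient is needed.

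I would start with (1). If $d(x,Y) < \alpha$, then $x$ itself, which lies on $\gamma$, already witnesses $d(\gamma,Y) < \alpha$. Otherwise $\dist pq > \alpha$, and I argue by contradiction: should $d(\gamma,Y) \geq \alpha$, the contracting hypothesis would force $\diam \pi_Y(\gamma) \leq \alpha$, but $p$ and $q$ both belong to $\pi_Y(\gamma)$ since they are projections of the endpoints $x,y \in \gamma$, contradicting $\dist pq > \alpha$.

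For (2), let $x'$ be the entry point of $\gamma$ into $\mathcal N_\alpha(Y)$, which exists thanks to (1). If $x \in \mathcal N_\alpha(Y)$, then $x'=x$ and $\dist{x'}p = d(x,Y) < \alpha$, so the bound is trivial. Otherwise, continuity of $z \mapsto d(z,Y)$ forces $d(x',Y) = \alpha$, and the sub-geodesic $\gamma'$ of $\gamma$ running from $x$ to $x'$ satisfies $d(\gamma',Y) \geq \alpha$. The $\alpha$-contracting property then gives $\diam \pi_Y(\gamma') \leq \alpha$. Picking any projection $p'$ of $x'$ onto $Y$, we have $p, p' \in \pi_Y(\gamma')$, hence $\dist p{p'} \leq \alpha$; combined with $\dist{x'}{p'} = d(x',Y) = \alpha$, the triangle inequality yields $\dist{x'}p \leq 2\alpha$. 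The analogous bound $\dist{y'}q \leq 2\alpha$ for the exit point $y'$ is obtained by reversing the orientation of $\gamma$ and running exactly the same argument.

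Conclusion (3) is then a bookkeeping consequence of (2). Since $\gamma$ is a geodesic and the points $x, x', y', y$ appear in this order along it, we have $\dist xy = \dist x{x'} + \dist{x'}{y'} + \dist{y'}y$. The triangle inequality together with the $2\alpha$-bounds of (2) gives $\dist x{x'} \geq \dist xp - 2\alpha$, $\dist{x'}{y'} \geq \dist pq - 4\alpha$ and $\dist{y'}y \geq \dist qy - 2\alpha$, whose sum is the claimed inequality. The only genuinely delicate point in the whole argument is step (2), namely verifying that the sub-geodesic ending at $x'$ (respectively starting at $y'$) stays at distance exactly $\geq \alpha$ from $Y$ so that the contracting definition applies; once this is nailed down, everything else is a direct application of the triangle inequality.
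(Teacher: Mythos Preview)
Your argument is correct and is precisely the kind of direct verification the paper has in mind; the lemma is stated there with the comment that it is a ``direct consequence of the definition'' and the proof is explicitly left to the reader. Each of your three steps is sound: the contradiction in (1), the entry/exit analysis in (2) via the sub-geodesic outside $\mathcal N_\alpha(Y)$ (whose distance to $Y$ is indeed $\geq\alpha$ by continuity at the entry point), and the bookkeeping in (3) using that $x,x',y',y$ lie in order along $\gamma$.
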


\begin{rema}
\label{rem: Lipschitz proj}
	It follows from the above statement that the nearest point projection onto $Y$ is large-scale $1$-Lipschitz.
	Actually, refining the above argument one can prove that for every subset $Z \subset X$, we have
	\begin{equation*}
		\diam (\pi_Y(Z)) \leq \diam(Z) + 4\alpha.
	\end{equation*}
\end{rema}

\begin{lemm}
\label{res: contraction vs hausdorff dist}
	For every $\alpha, d \in \R_+$, there exists $\beta \in \R_+$ with the following property.
	Let $Y$ and $Z$ be two closed subsets of $X$.
	Assume that the Hausdorff distance between them is at most $d$.
	If $Y$ is $\alpha$-contracting, then $Z$ is $\beta$-contracting.
\end{lemm}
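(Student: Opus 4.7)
The plan is to transfer the $\alpha$-contracting property from $Y$ to $Z$ using their Hausdorff proximity. As a first step, I choose $\beta$ large enough so that any geodesic $\gamma$ with $d(\gamma, Z) \geq \beta$ automatically satisfies $d(\gamma, Y) \geq \alpha$. Since every $y \in Y$ lies within distance $d$ of some point of $Z$, one has $d(\gamma, Y) \geq d(\gamma, Z) - d$, so $\beta \geq \alpha + d$ suffices. The contracting property of $Y$ then gives $\diam \pi_Y(\gamma) \leq \alpha$.

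To bound $\diam \pi_Z(\gamma)$, I pick two points $x_1, x_2 \in \gamma$, let $z_i = \pi_Z(x_i)$, and choose $y_i \in Y$ with $\dist{y_i}{z_i} \leq d$. Let also $y'_i = \pi_Y(x_i)$. The key observation is that $y_i$ is an ``approximate'' projection of $x_i$ onto $Y$: combining $\dist{x_i}{y_i} \leq \dist{x_i}{z_i} + d$ with the Hausdorff estimate $d(x_i, Z) \leq d(x_i, Y) + d$, one obtains $\dist{x_i}{y_i} \leq \dist{x_i}{y'_i} + 2d$.

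The main step, and the one I expect to be the most delicate, is to upgrade this approximate equality of distances into an actual bound on $\dist{y'_i}{y_i}$. For this I apply \autoref{res: proj contracting set} to the geodesic $[x_i, y_i]$, whose endpoints project onto $Y$ at $y'_i$ and $y_i$ respectively (the latter being trivially its own projection). Either $\dist{y'_i}{y_i} \leq \alpha$, in which case nothing is to be done, or the lemma gives $\dist{x_i}{y_i} \geq \dist{x_i}{y'_i} + \dist{y'_i}{y_i} - 8\alpha$; comparing with the previous inequality forces $\dist{y'_i}{y_i} \leq 2d + 8\alpha$ in both cases. From $\diam \pi_Y(\gamma) \leq \alpha$ and the triangle inequality one then deduces $\dist{y_1}{y_2} \leq 4d + 17\alpha$, whence $\dist{z_1}{z_2} \leq 6d + 17\alpha$. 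Choosing $\beta = 17\alpha + 6d$ (which dominates $\alpha + d$) exhibits $Z$ as a $\beta$-contracting set. The only nontrivial ingredient is the reverse triangle inequality provided by \autoref{res: proj contracting set}: without it, an approximate projection of $x_i$ onto $Y$ would not be guaranteed to lie near a genuine one, and the comparison between the two projections would break down.
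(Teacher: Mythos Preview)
Your argument is correct. The paper does not supply its own proof of this lemma: it is grouped with \autoref{res: qc contracting set} and \autoref{res: proj contracting set} under the sentence ``Their proofs are left to the reader,'' so there is nothing to compare against. Your route via \autoref{res: proj contracting set}, turning the near-projection $y_i$ into a point uniformly close to the true projection $y'_i$, is a clean way to fill in the details and yields the explicit constant $\beta = 17\alpha + 6d$.
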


\paragraph{Contracting element.}
Consider now a group $G$ acting properly, by isometries on $X$.

\begin{defi}[Contracting element]
	Let $y \in X$.
	An element $g \in G$ is \emph{contracting}, for its action on $X$, if the orbit map  $\Z \to G$ sending $n$ to $g^ny$ is a quasi-isometric embedding with contracting image.
\end{defi}

Note that the definition does not depend on the point $y$ (see \autoref{res: contraction vs hausdorff dist}).
The next statement is a reformulation of Yang's Lemma~3.3 (and its proof) in \cite{Yang:2020ub}.

\begin{lemm}
\label{res: segment closed to a contracting element}
	Let $g \in G$ be a contracting element.
	For every $d \in \R_+$ and $z \in X$, there is $\alpha \in \R^*_+$ with the following property.
	Let $p,q \in \Z$ with $p \leq q$.
	Let $x,y \in X$ such that $\dist x{g^pz} \leq d$ and $\dist y{g^qz} \leq d$.
	Let $\gamma$ be a geodesic joining $x$ to $y$.
	Then any subpath of $\gamma$ is $\alpha$-contracting.
	Moreover for every integer $n \in \intvald pq$, the point $g^nz$ is $\alpha$-close to $\gamma$.
\end{lemm}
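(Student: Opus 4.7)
Let $A = \set{g^n z}{n \in \Z}$ denote the orbit of $z$ under the cyclic group generated by $g$. By hypothesis, $A$ is $\alpha_0$-contracting for some $\alpha_0 \in \R^*_+$ and the orbit map $n \mapsto g^n z$ is a $(K, C)$-quasi-isometric embedding. The plan is to prove in three steps: (a) $\gamma$ lies in a uniform neighborhood of $A$; (b) every axis point $g^n z$ with $p \leq n \leq q$ is close to $\gamma$; and (c) $\gamma$ and each of its subpaths are contracting, which we deduce from (a), (b) and the Hausdorff-distance stability of the contracting property (\autoref{res: contraction vs hausdorff dist}).

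For step (a), let $p_x, p_y$ be nearest-point projections of $x, y$ onto $A$. The triangle inequality yields $\dist{p_x}{g^p z} \leq 2d$ and $\dist{p_y}{g^q z} \leq 2d$. If $\dist{p_x}{p_y} > \alpha_0$, then \autoref{res: proj contracting set} locates the entry and exit points of $\gamma$ in $\mathcal N_{\alpha_0}(A)$ within $2\alpha_0$ of $p_x$ and $p_y$, forcing the pre-entry and post-exit pieces of $\gamma$ to have length at most $d+2\alpha_0$. The middle piece, whose endpoints lie in $\mathcal N_{\alpha_0}(A)$, is contained in $\mathcal N_{5\alpha_0/2}(A)$ by \autoref{res: qc contracting set}. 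In the degenerate case $\dist{p_x}{p_y} \leq \alpha_0$, the quasi-isometric embedding forces $q-p$, and hence $\dist x y$, to be uniformly bounded, so $\gamma$ is contained in a ball and the conclusion is trivial. Either way $\gamma \subset \mathcal N_R(A)$ with $R$ depending only on $d, \alpha_0$ and the quasi-isometric constants of the axis.

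For step (b), the remark following \autoref{res: proj contracting set} ensures that $\pi_A$ is large-scale $1$-Lipschitz. Composing with the quasi-isometric inverse $\tau \colon A \to \Z$ of the orbit map, the function $t \mapsto \tau(\pi_A(\gamma(t)))$ is coarsely Lipschitz along the arc-length parametrization of $\gamma$, taking values within a uniform constant of $p$ at the initial endpoint and of $q$ at the terminal one. A discrete intermediate-value argument then yields, for every $n \in \intvald p q$, a parameter $t_n$ with $\tau(\pi_A(\gamma(t_n)))$ within a uniform constant of $n$; combined with step (a), this places $g^n z$ at uniformly bounded distance from $\gamma(t_n)$.

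For step (c), let $\gamma'$ be any subpath of $\gamma$. Its endpoints lie in $\mathcal N_R(A)$ and are therefore within $R + \dist z{gz}$ of some axis points $g^{p'}z, g^{q'}z$; applying steps (a) and (b) to $\gamma'$ with the enlarged constant $d' = R + \dist z{gz}$ shows that the Hausdorff distance between $\gamma'$ and the finite orbit segment $A'' = \set{g^n z}{p' \leq n \leq q'}$ is bounded by a constant depending only on $d, \alpha_0, K, C$. The main obstacle lies in turning this proximity into a contracting statement: by \autoref{res: contraction vs hausdorff dist} it suffices to show that finite orbit segments are uniformly contracting, which is not immediate from the contraction of the full axis $A$ because truncating a contracting set need not preserve the property. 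We resolve this by a direct case analysis. Given a geodesic $\beta$ at distance at least $\alpha''$ from $A''$ with $\alpha''$ large enough, either $\beta$ is also $\alpha_0$-far from the entire axis $A$, so $\diam \pi_A(\beta) \leq \alpha_0$ and $\pi_{A''}(\beta)$ differs from $\pi_A(\beta)$ by at most a bounded amount, or $\beta$ comes close to $A$ outside $A''$, in which case the quasi-isometric structure of $A$ forces $\pi_{A''}(\beta)$ to concentrate near one of the endpoints $g^{p'}z$ or $g^{q'}z$. This yields a uniform contracting constant for $A''$ and, via \autoref{res: contraction vs hausdorff dist}, a uniform contracting constant $\alpha$ valid for every subpath of $\gamma$, completing the proof.
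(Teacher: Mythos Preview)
The paper does not give its own proof of this lemma; it records that the statement is ``essentially a reformulation of Yang's Lemma~3.3 (and its proof)'' in \cite{Yang:2020ub} and leaves it at that. Your three-step strategy is the natural one, and steps~(a) and~(b) are correct as written.

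Step~(c) is where the content lies, and your case analysis is both too compressed and, as stated, not quite right. In your first case you claim that $\pi_{A''}(\beta)$ differs from $\pi_A(\beta)$ by at most a bounded amount; this is false when $\pi_A(\beta)$ lands entirely outside $A''$, since then $\pi_{A''}(v)$ collapses near an endpoint of $A''$ while $\pi_A(v)$ may be arbitrarily far from it. What you actually need is that $\diam \pi_{A''}(\beta)$ is bounded, and the missing ingredient is a reverse triangle inequality along the axis: for $m < p' \leq n$ one has $\dist{g^m z}{g^n z} \geq \dist{g^m z}{g^{p'} z} + \dist{g^{p'} z}{g^n z} - O(1)$, which follows by applying your own step~(b) to a geodesic from $g^m z$ to $g^n z$. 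This is what forces $\pi_{A''}(v)$ to sit near $g^{p'} z$ whenever $\pi_A(v)$ lies to the left of $A''$. In your second case you have not excluded the possibility that $\beta$ comes close to $A$ on \emph{both} sides of $A''$ (that would contradict $d(\beta, A'') \geq \alpha''$, again via step~(b)), nor have you said anything about points of $\beta$ that are far from $A$; for those one still has to argue, e.g.\ via \autoref{res: proj contracting set} applied to a subsegment of $\beta$ running to a near-axis point. None of these gaps is hard to fill with the tools already in hand, but since they constitute exactly the nontrivial part of the lemma you should spell them out rather than gesture at them.
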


Let $g \in G$ be a contracting element and $A$ be the $\group g$-orbit of a point $y \in X$.
Define $E(g)$ as the set of elements $u \in G$ such that the Hausdorff distance between $A$ and $uA$ is finite.
It follows from the definition that $E(g)$ is a subgroup of $G$ that does not depend on $y$.
It is the maximal virtually cyclic subgroup of $G$ containing $\group g$.
Moreover $E(g)$ is almost-malnormal, that is $uE(g)u^{-1} \cap E(g)$ is finite, for every $u \in G \setminus E(g)$, see Yang \cite[Lemma~2.11]{Yang:2019wa}.

\begin{lemm}
\label{res: contracting element in commensurated subgroup}
	Assume that $G$ is not virtually cyclic and contains a contracting element.
	Let $H \subset G$ be a commensurated subgroup.
	If $H$ is infinite, then $H$ is not virtually cyclic and contains a contracting element.
\end{lemm}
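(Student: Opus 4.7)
The plan is to establish the two conclusions by exploiting the almost malnormality of the maximal elementary subgroup $E(g)$ attached to a contracting element $g \in G$.

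For the first conclusion, I argue by contradiction. Assuming $H$ is virtually cyclic and passing to a finite-index infinite cyclic subgroup (still commensurated), we may write $H = \langle h \rangle$ with $h$ of infinite order. Commensurability of $\langle h \rangle$ with $g \langle h \rangle g^{-1}$ yields $g h^m g^{-1} = h^{\pm k}$ for some positive $m, k$. A translation-length comparison forces $m = k$ (with a separate argument when the translation length vanishes), and after replacing $g$ by $g^2$ if needed, $h^m$ centralizes $g$. Since any element commuting with a contracting element $g$ normalizes $\langle g \rangle$ and hence lies in its commensurator, which equals $E(g)$ by almost malnormality, we conclude $h^m \in E(g)$, so $\langle h \rangle$ is commensurable with $\langle g \rangle$ and $h \in E(g)$. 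Hence $H \subseteq E(g)$. Picking $u \in G \setminus E(g)$ (possible since $G$ is not virtually cyclic), almost malnormality gives $u H u^{-1} \cap H \subseteq u E(g) u^{-1} \cap E(g)$ finite, contradicting the commensurated property, which forces this intersection to have finite index in the infinite group $H$.

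For the second conclusion, note that $H \not\subseteq E(g)$, otherwise $H$ would be virtually cyclic. If $H \cap E(g)$ is infinite, it is an infinite subgroup of the virtually cyclic group $E(g)$, so it contains an infinite-order element commensurable with $g$ -- a contracting element in $H$. Otherwise $H \cap E(g)$ is finite, and I pick $h_0 \in H \setminus E(g)$ of infinite order. The commensurated property applied to $g^n$ gives, for each $n \geq 1$, a positive integer $k_n$ with $g^n h_0^{k_n} g^{-n} \in H$, whence
\begin{equation*}
\alpha_n \;=\; [g^n,\, h_0^{k_n}] \;=\; g^n h_0^{k_n} g^{-n} h_0^{-k_n} \;\in\; H.
\end{equation*}
I then claim that $\alpha_n$ is contracting for $n$ large. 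The orbit path $o \to g^n o \to h_0^{-k_n} g^n o \to g^{-n} h_0^{-k_n} g^n o \to \alpha_n o$ consists of two long segments hugging translates of the contracting $\langle g \rangle$-orbit, joined by shorter corners determined by $h_0^{\pm k_n}$. Because $h_0^{k_n} \notin E(g)$, the projections between consecutive long segments are uniformly bounded, and iterating under powers of $\alpha_n$, \autoref{res: segment closed to a contracting element} identifies the concatenation as a contracting quasi-geodesic.

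The main obstacle is this final geometric verification: it is a Schottky-type combination argument whose execution requires careful use of the entry/exit and projection estimates of \autoref{res: qc contracting set} and \autoref{res: proj contracting set} together with \autoref{res: segment closed to a contracting element}, in order to conclude that the axis of $\alpha_n$ is a quasi-geodesic lying in a uniform neighborhood of translates of the $\langle g \rangle$-orbit. Two secondary subtleties must also be handled: the degenerate configuration in which $H \setminus E(g)$ consists only of torsion elements and $H \cap E(g)$ is finite (which must be ruled out by a refined argument, or else the commutator construction must be run with a torsion $h_0$ chosen so that $\alpha_n$ is still infinite order), and the vanishing-translation-length case mentioned above in the first conclusion.
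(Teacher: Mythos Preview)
Your Case~2 construction has a genuine gap. The integer $k_n$ is only constrained by $[H : H \cap g^n H g^{-n}]$, which you have no control over as $n$ grows; the ``corners'' $h_0^{\pm k_n}$ can therefore be arbitrarily long, not shorter as you assert. Worse, the $h_0$-orbit is not assumed contracting, so the broken path from $o$ to $\alpha_n o$ alternates between contracting segments (along translates of the $g$-axis) and segments of uncontrolled length along a set with no contraction. Even if a ping-pong argument makes this path quasi-geodesic, there is no reason its image should be contracting: a ball sitting near one of the long $h_0$-segments can have large projection onto it. The appeal to \autoref{res: segment closed to a contracting element} does not help, since that lemma concerns geodesics whose endpoints are both close to a single $\langle g\rangle$-orbit. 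Your first part also leaves the case $\tau(h)=0$ genuinely open, and this case does occur for infinite-order elements in proper geodesic spaces.

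The paper avoids both problems with a single cleaner construction. One first observes (exactly as you do at the end of your first part) that $H \not\subset E(g')$ for any contracting $g'$, by almost malnormality. Then, fixing $n$ large, one takes $h$ in the finite-index subgroup $H_0 = (g^{-n}Hg^n)\cap(g^nHg^{-n})\cap H$ with $h \notin E(g)$, and sets
\[
f \;=\; (g^n h g^{-n})\,(g^{-n} h g^n)^{-1} \;=\; g^n\,(h g^{-2n} h^{-1})\,g^n \;\in\; H.
\]
The point is that $f$ decomposes as three long pieces each translating along a copy of the contracting axis $A$ (namely $A$, $g^n hA$, and $fA$), with bounded projections between consecutive copies since $h\notin E(g)$. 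There are no non-contracting corridors at all, so the axis of $f$ is contracting. Finally, ``$H$ not virtually cyclic'' drops out as a corollary: if it were, then $H \subset E(f)$, contradicting the first observation applied to $g'=f$. This reverses your order of the two conclusions and sidesteps the translation-length case analysis entirely.
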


\begin{proof}
	We only sketch the proof. 
	For more details we refer the reader to Arzhantseva-Cashen-Tao \cite[Section~3]{Arzhantseva:2015cl} where similar arguments are given.
	We first claim that for every contracting element $g \in G$, the group $H$ is not contained in $E(g)$.
	Assume on the contrary that $H \subset E(g)$.
	Since $G$ is not virtually cyclic, there is $u \in G \setminus E(g)$.
	By malnormality, $uHu^{-1} \cap H$ is finite and cannot have finite index in $H$, which contradicts the fact that $H$ is commensurated.
	
	We now fix once and for all a contracting element $g \in G$.	
	We denote by $A$ an orbit of $\group g$.
	It is $\alpha$-contracting for some $\alpha \in \R^*_+$.
	Moreover there is $C \in \R_+$ such that for every $u \in G \setminus E(g)$, we have $\diam(\pi_A(uA)) \leq C$, see Yang \cite[Lemma~2.11]{Yang:2019wa}.
	We choose $n \in \N$ such that $\dist o{g^no}$ is very large compare to both $\alpha$ and $C$.
	
	We claim that there is an element $h \in H\setminus E(g)$ such that both $g^nhg^{-n}$ and $g^{-n}hg^n$ belong to $H$.
	Since $H$ is commensurated, both $g^{-n}Hg^n \cap H$ and $g^nHg^{-n} \cap H$ have finite index in $H$.
	Thus
	\begin{equation*}
		H_0 = \left(g^{-n}Hg^n\right) \cap \left(g^nHg^{-n} \right) \cap H
	\end{equation*}
	has finite index in $H$.
	It follows that $H_0$ is not contained in $E(g)$.
	Indeed otherwise $E(g)$ should also contain $H$, contradicting our previous claim.
	Any element $h \in H_0 \setminus E(g)$ satisfies the conclusions of our second claim.
	
	Consider now the element 
	\begin{equation*}
		f = \left(g^nhg^{-n}\right)\left(g^{-n}h g^n\right)^{-1}
		= g^n \left(hg^{-2n} h^{-1}\right) g^n.
	\end{equation*}
	Note that $f$ belongs to $H$.
	We claim that $f$ is contracting.
	For simplicity we let $g_0 = g^n$ and $g_1= hg^{-2n}h^{-1}$ so that $f = g_0g_1g_0$.
	They respectively act ``by translation'' on the $\alpha$-contracting sets $A_0 = A$ and $A_1 = hA$.
	Fix a point $x \in \pi_A(hA)$.
	Since $\pi_A(hA)$ has diameter at most $C$, any geodesic $\gamma \colon \intval 0T \to X$ from $x$ to $fx$ fellow-travels for a long times with $A_0=A$, $g_0A_1=g_0hA$ and $g_0g_1A_0 = fA$ (see \autoref{fig: contracting1}).
	\begin{figure}[htbp]
		\vskip5mm
		\labellist
		\small\hair 2pt
		 \pinlabel {$x$} [ ] at 40 25
		 \pinlabel {$g_0x$} [ ] at 88 70
		 \pinlabel {} [ ] at 159 116
		 \pinlabel {} [ ] at 237 116
		 \pinlabel {$g_0g_1x$} [ ] at 310 80
		 \pinlabel {$g_0g_1g_0x = fx$} [ ] at 320 28
		 \pinlabel {$A_0 = A$} [ ] at 20 82
		 \pinlabel {$g_0A_1 = g_0hA$} [ ] at 205 164
		 \pinlabel {$g_0g_1A_0 = fA$} [ ] at 410 70
		\endlabellist
		\centering
		\includegraphics[width=0.8\linewidth]{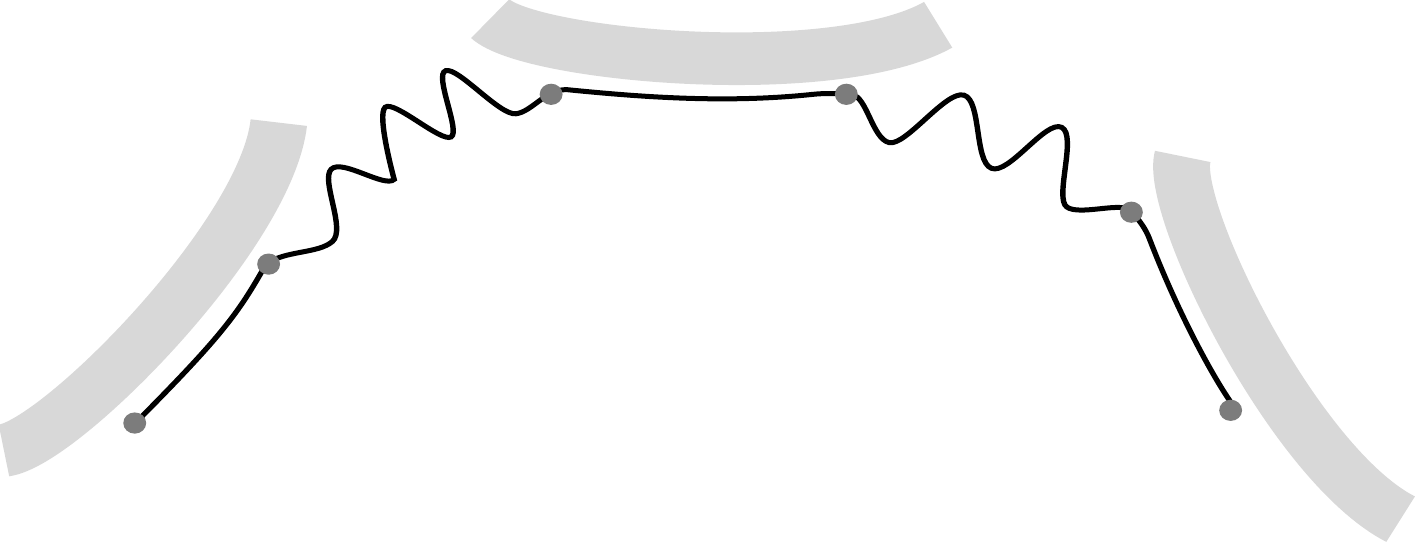}
		\caption{The geodesic from $x$ to $fx$. The gray shapes are ``axis'' of conjugates of $g$.}
		\label{fig: contracting1}
	\end{figure}
	Note that the construction has been designed so that the end of $\gamma$ and the beginning of $f \gamma$ both fellow-travel with $fA$ in the ``same direction''.
	Consequently, the concatenation of $\gamma$ and $f\gamma$ cannot backtrack much.
	We extend $\gamma$ to a bi-infinite $\group f$-invariant path, still denoted by $\gamma \colon \R \to X$, which is characterized as follows: $\gamma(t + kT) = f^k \gamma(t)$ for every $t \in \R$ and $k \in \Z$.
	One proves that $\gamma$ is a quasi-geodesic which  fellow-travels for a long time with $f^kA$, for every $k \in \Z$ (see \autoref{fig: contracting2}).
	This suffices to show that $f$ is contracting.
	\begin{figure}[htb]
		\labellist
		\small\hair 2pt
		 \pinlabel {$x$} [ ] at 42 23
		 \pinlabel {$g_0x$} [ ] at 78 16
		 \pinlabel {$fg_0^{-1}x$} [ ] at 140 15
		 \pinlabel {$fx$} [ ] at 177 23
		 \pinlabel {$fg_0x$} [ ] at 214 18
		 \pinlabel {$f^2g_0^{-1}x$} [ ] at 270 16
		 \pinlabel {$f^2x$} [ ] at 316 25
		 \pinlabel {$A$} [ ] at 43 -5
		 \pinlabel {$fA$} [ ] at 178 -5
		 \pinlabel {$f^2A$} [ ] at 316 -5
		\endlabellist
		\centering
		\includegraphics[width=\linewidth]{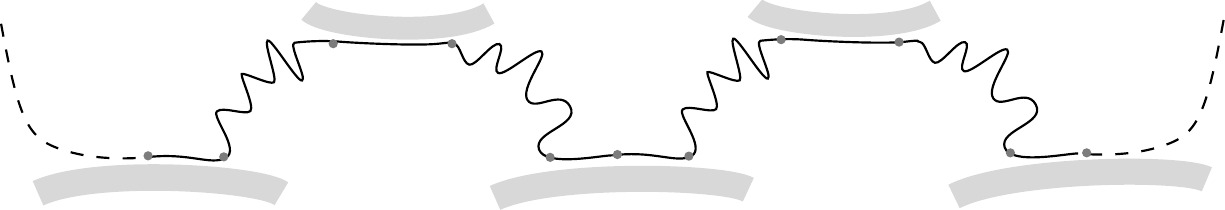}
		\vskip5mm
		\caption{The ``axis'' of $f$.}
		\label{fig: contracting2}
	\end{figure}

	We are left to prove that $H$ is not virtually cyclic.
	If $H$ was virtually cyclic, it would be contained in $E(f)$.
	This contradicts our first claim.
\end{proof}

\section{Compactification of $X$}

\subsection{Horocompactification}
Let $C(X)$ be the set of all real valued, continuous functions on $X$ endowed with the topology of uniform convergence on every compact subset.
We denote by $C^*(X)$ the quotient of $C(X)$ by the subspace consisting of all constant functions, and endowed with the quotient topology.
Given a base point $o \in X$, one can think of $C^*(X)$ as the set of all continuous functions that vanish at $o$.
Alternatively $C^*(X)$ is the set of continuous cocycles $c \colon X \times X \to \R$.
By \emph{cocycle} we mean that 
\begin{equation*}
	c(x,z) = c(x,y) + c(y,z), \quad \forall x,y,z \in X.
\end{equation*}
For example, given $z \in X$, we define the cocycle $b_z \colon X \times X \to \R$ by 
\begin{equation*}
	b_z(x,y) = \dist xz - \dist yz, \quad \forall x,y \in X.
\end{equation*}
Since $X$ is geodesic, the map
\begin{equation*}
	\begin{array}{rccc}
		\iota \colon & X & \to & C^*(X) \\
			& z & \mapsto & b_z
	\end{array}
\end{equation*}
is a homeomorphism from $X$ onto its image.

\begin{defi}[Horoboundary]
	The \emph{horocompactification} $\bar X$ of $X$ it the closure of $\iota(X)$ in $C^*(X)$.
	The \emph{horoboundary} of $X$ is the set $\partial X = \bar X \setminus \iota(X)$.
\end{defi}

From now on, we identify $X$ with its image under the map $\iota \colon X \to C^*(X)$.
By construction, every cocycle $c \in \bar X$ is $1$-Lipschitz, or equivalently $\abs{c(x,x')} \leq \dist x{x'}$, for every $x,x' \in X$.
It is a consequence of the Azerla-Ascoli theorem, that the horocompactification $\bar X$ is indeed a compact set.
We denote by $\mathfrak B$ the Borel $\sigma$-algebra on $\bar  X$.
In the remainder of the article we make an abuse of notations, and write $(\partial X, \mathfrak B)$ to denote the horoboundary endowed with the $\sigma$-algebra $\mathfrak B$ restricted to $\partial X$.

\begin{defi}
	Let $c \in \bar X$ and $\epsilon \in \R_+$.
	An \emph{$\epsilon$-quasi-gradient arc} for $c$ is a path $\gamma \colon I \to X$ parametrized by arc length such that 
	\begin{equation*}
		(t - s) - \epsilon \leq c(\gamma(s),\gamma(t)) \leq t - s, \quad \forall s,t \in I.
	\end{equation*}
	A \emph{gradient arc for $c$} is a $0$-quasi-gradient arc for $c$.	
	If $I = \R_+$, we call $\gamma$ a \emph{(quasi-)gradient ray}.
\end{defi}

\begin{rema}
\label{rem: gradient arc}
	The following observations follow from the definition and/or the triangle inequality.
	\begin{itemize}
		\item Since cocycles in $\bar X$ are $1$-Lipschitz, a gradient arc is always geodesic.		
		\item Conversely, let $x,y \in X$ and $\epsilon \in \R_+$ such that $c(x,y) \geq \dist xy - \epsilon$.
		Any geodesic from $x$ to $y$ is an $\epsilon$-quasi-gradient arc for $c$.
		\item Let $\gamma_1 \colon \intval {a_1}{b_1} \to X$ and $\gamma_2 \colon \intval {a_2}{b_2} \to X$ be two paths such that $\gamma_i$ is an $\epsilon_i$-quasi-gradient arc for $c$.
		If $\gamma_1(b_1) = \gamma_2(a_2)$ then the concatenation of $\gamma_1$ and $\gamma_2$ is an $(\epsilon_1 + \epsilon_2)$-quasi-gradient arc for $c$.
		
	\end{itemize}
\end{rema}

The existence of gradient rays is given by the next statement.

\begin{lemm}
\label{res: existence gradient ray}
	Let $c \in \partial X$.
	For every $x \in X$, there exists a gradient ray $\gamma \colon \R_+ \to X$ for $c$ such that $\gamma(0) = x$.
\end{lemm}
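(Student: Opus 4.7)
The plan is to realize $c$ as a limit of cocycles of the form $b_{z_n}$, approximate a gradient ray by geodesics from $x$ to $z_n$, and extract a convergent subsequence.

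First, since $\bar X$ is the closure of $\iota(X)$, we can pick a sequence $(z_n)$ in $X$ such that $b_{z_n} \to c$ in $C^*(X)$. Because $c \in \partial X = \bar X \setminus \iota(X)$ and $X$ is proper, the sequence $(z_n)$ cannot remain bounded: otherwise a subsequence would converge to some $z \in X$, forcing $b_{z_n} \to b_z$ and hence $c = b_z \in \iota(X)$, a contradiction. Extracting a subsequence if necessary, assume $\dist x{z_n} \to \infty$.

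Next, for each $n$ (large enough so that $\dist x{z_n} \geq 1$), choose a geodesic $\gamma_n \colon \intval 0{\dist x{z_n}} \to X$ from $x$ to $z_n$. Each $\gamma_n$ is $1$-Lipschitz and all start at the fixed point $x$, so by the Arzelà-Ascoli theorem (and properness of $X$) a diagonal subsequence converges, uniformly on compact sets, to a $1$-Lipschitz path $\gamma \colon \R_+ \to X$ with $\gamma(0) = x$. Since each $\gamma_n$ is a geodesic on its domain, the limit $\gamma$ is also a geodesic ray; in particular $\dist{\gamma(s)}{\gamma(t)} = t - s$ for all $0 \leq s \leq t$.

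Finally, I verify that $\gamma$ is a gradient ray for $c$. Fix $0 \leq s \leq t$. For all $n$ sufficiently large we have $t \leq \dist x{z_n}$, so $\gamma_n(s)$ and $\gamma_n(t)$ lie on a geodesic ending at $z_n$, which gives
\begin{equation*}
	b_{z_n}(\gamma_n(s), \gamma_n(t)) = \dist{\gamma_n(s)}{z_n} - \dist{\gamma_n(t)}{z_n} = (\dist x{z_n} - s) - (\dist x{z_n} - t) = t - s.
\end{equation*}
The points $\gamma_n(s), \gamma_n(t)$ converge to $\gamma(s), \gamma(t)$ and remain in a bounded (hence relatively compact) subset of $X$; combined with the convergence $b_{z_n} \to c$ in $C^*(X)$, this yields $c(\gamma(s), \gamma(t)) = t - s$, so $\gamma$ is a gradient ray for $c$.

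The only subtle point is the last step: one must be careful that convergence in $C^*(X)$ is convergence of cocycles $X \times X \to \R$ uniformly on compact subsets, so the limit $b_{z_n}(\gamma_n(s), \gamma_n(t)) \to c(\gamma(s), \gamma(t))$ needs both uniform convergence of the cocycles and convergence of the arguments, which is exactly the situation provided by Arzelà-Ascoli. Everything else is routine.
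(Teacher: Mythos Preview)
Your proof is correct and follows essentially the same approach as the paper: pick $z_n \to c$, take geodesics $\gamma_n$ from $x$ to $z_n$, extract a limiting ray by Arzel\`a--Ascoli, and verify the gradient condition by passing to the limit in $b_{z_n}(\gamma_n(s),\gamma_n(t)) = t-s$. You have simply filled in the details (unboundedness of $(z_n)$, the explicit cocycle computation, the justification of the limit) that the paper leaves to the reader.
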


\begin{proof}
	Let $(z_n)$ be a sequence of points of $X$ converging to $c$.
	For every $n \in \N$, we let $b_n = \iota(z_n)$ and denote by $\gamma_n \colon \intval 0{\ell_n} \to X$ a geodesic from $x$ to $z_n$.
	Since $X$ is proper, $(\gamma_n)$ converges, up to passing to a subsequence, to a geodesic ray $\gamma \colon \R_+ \to X$ starting at $x$.
	As $(b_n)$ converges uniformly on every compact subset to $c$, we check that $\gamma$ is a gradient ray for $c$.
\end{proof}

Given $c \in \partial X$ and a gradient ray $\gamma \colon \R_+ \to X$ for $c$, we think of $\gamma$ as a geodesic from $\gamma(0)$ to $c$.
The next definition is designed to handle simultaneously the cocycles corresponding to points in $X$ and the ones in $\partial X$.

\begin{defi}
\label{res: complete gradient arc}
	Let $x \in X$ and $c \in \bar X$.
	A \emph{gradient arc from $x$ to $c$} is 
	\begin{itemize}
		\item any geodesic from $x$ to $z$, if $c = \iota(z)$ for some $z \in X$,
		\item any gradient ray for $c$ starting at $x$, if $c$ belongs to $\partial X$.
	\end{itemize}
\end{defi}

\subsection{Reduced horoboundary}
\label{sec: reduced horoboundary}

Given a cocycle $c \in C^*(X)$ we write $\norm[\infty]c$ for its uniform norm, i.e.
\begin{equation*}
	\norm[\infty]c = \sup_{x,x' \in X} \abs{c(x,x')}.
\end{equation*}
Note that $\norm[\infty]c$ can be infinite.
If $K \subset X$ is compact, then $\norm[K]c$ is the uniform norm of $c$ restricted to $K \times K$.
We endow $\bar X$ with a binary relation: two cocycles $c,c' \in \bar X$ are equivalent, and we write $c \sim c'$, if one of the following holds:
\begin{itemize}
	\item either $c$ and $c'$ lie in the image of $\iota \colon X \to C^*(X)$ and $c = c'$,
	\item or $c,c' \in \partial X$ and $\norm[\infty] {c-c'} < \infty$.
\end{itemize}
Given a subset $B \subset \bar X$, the \emph{saturation} of $B$, denoted by $B^+$, is the union of all equivalence classes intersecting $B$.
We say that $B$ is \emph{saturated} if it is a union of equivalence classes, or equivalently if $B^+ = B$.
Note that the collection of saturated subsets is closed under complement as well as (uncountable) union and intersection.
The \emph{reduced algebra}, denoted by $\mathfrak R$, is the sub-$\sigma$-algebra of $\mathfrak B$ which consists of all saturated Borel subsets.

\begin{defi}
\label{def: reduced horoboundary}
	The \emph{reduced horocompactification} and \emph{reduced horoboundary} of $X$ are respectively the measurable spaces $(\bar X, \mathfrak R)$ and $(\partial X, \mathfrak R)$.
\end{defi}

\begin{lemm}
\label{res: saturation of closed subset}
	If $F \subset \bar X$ is closed, then $F^+$ belongs to $\mathfrak R$.
\end{lemm}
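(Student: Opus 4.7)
The plan is to verify directly that $F^+$ is Borel; saturation is automatic from the construction. The main idea is to split $F^+$ along the two kinds of equivalence classes and exploit compactness of $\bar X$. Since $X$ is locally compact Hausdorff and $\bar X$ is a Hausdorff compactification, $\iota(X)$ is open in $\bar X$, so $\partial X$ is closed. Equivalence classes of points of $X$ are singletons disjoint from the classes in $\partial X$, so
\begin{equation*}
	F^+ = (F \cap X) \cup \bigcup_{c \in F \cap \partial X} [c] = (F \cap X) \cup (F^+ \cap \partial X).
\end{equation*}
The set $F \cap X$ is closed in $\bar X$, hence Borel, and $F \cap \partial X$ is closed in $\bar X$ as an intersection of two closed subsets. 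It remains to show that $F^+ \cap \partial X$ is Borel.

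Next I would write
\begin{equation*}
	F^+ \cap \partial X = \bigcup_{n \in \N} A_n, \qquad A_n = \set{c' \in \partial X}{\exists c \in F \cap \partial X, \ \norm[\infty]{c-c'} \leq n},
\end{equation*}
which is legitimate because two cocycles $c, c' \in \partial X$ are equivalent exactly when $\norm[\infty]{c-c'} < \infty$. The main step is to prove that each $A_n$ is closed in $\partial X$. Let $(c'_k)$ be a sequence in $A_n$ converging to some $c' \in \partial X$ (note $\bar X$ is metrizable since $X$ is proper, so sequential arguments suffice). Pick $c_k \in F \cap \partial X$ with $\norm[\infty]{c_k - c'_k} \leq n$. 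By compactness of $\bar X$ and closedness of $F \cap \partial X$, some subsequence satisfies $c_k \to c \in F \cap \partial X$. For any fixed $x, y \in X$, convergence in $C^*(X)$ is in particular pointwise, so
\begin{equation*}
	\abs{c(x,y) - c'(x,y)} = \lim_k \abs{c_k(x,y) - c'_k(x,y)} \leq n,
\end{equation*}
and taking the supremum over $x, y \in X$ gives $\norm[\infty]{c - c'} \leq n$, i.e.\ $c' \in A_n$.

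Each $A_n$ is therefore closed in $\partial X$, hence Borel in $\bar X$, and $F^+ \cap \partial X = \bigcup_n A_n$ is Borel. Combined with $F \cap X$ being Borel, this shows $F^+ \in \mathfrak B$, and since $F^+$ is saturated by definition, $F^+ \in \mathfrak R$. The only real obstacle is the passage from a uniform bound on $c_k - c'_k$ to a uniform bound on the limit $c - c'$; this is handled cleanly by the fact that convergence in $C^*(X)$ implies pointwise convergence, so the sup on $X \times X$ is lower semicontinuous and in particular preserves the bound $\leq n$ under limits.
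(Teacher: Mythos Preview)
Your argument is correct and follows essentially the same route as the paper: write the saturation as a countable union indexed by the bound on $\norm[\infty]{\cdot}$, and use compactness of $F$ (together with the fact that convergence in $\bar X$ implies pointwise convergence of cocycle values) to show each piece is closed. The paper packages the same compactness step slightly differently, writing $F^+ = \bigcup_D \bigcap_K F_{K,D}$ with $F_{K,D} = \{c : \exists b \in F,\ \norm[K]{c-b}\leq D\}$ closed, but the content is the same.

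One small slip: $F \cap X$ need not be closed in $\bar X$ (take $F = \bar X$); it is however the intersection of the closed set $F$ with the open set $X$, hence Borel, which is all you need. The paper sidesteps this by assuming $F \subset \partial X$ from the start.
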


\begin{proof}
	It suffices to prove that $F^+$ is a Borel subset.
	Without loss of generality we can assume that $F$ is contained in $\partial X$.
	Given $D \in \R_+$ and a compact subset $K \subset X$, we write
	\begin{equation*}
		F_{K,D} = \set{c \in \partial X}{ \exists b \in F, \ \norm[K]{c-b} \leq D}.
	\end{equation*}
	Since $F$ is compact, $F_{K, D}$ is closed.
	Using again the fact that $F$ is compact we observe that 
	\begin{equation*}
		F^+ = \bigcup_{D \in \R_+} \bigcap_{K \subset X} F_{K,D}
	\end{equation*}
	where $K$ runs over all compact subsets of $X$.
	Hence the result.
\end{proof}

\subsection{Boundary at infinity of a contracting set}

The goal of this section is to understand how cocycles at infinity interact with contracting subsets of $X$.

\begin{defi}
	Let $Y$ be a closed subset of $X$.
	Let $c \in \bar X$.
	A \emph{projection} of $c$ on $Y$ is a point $q \in Y$ such that for every $y \in Y$, we have $c(q,y) \leq 0$.
\end{defi}

Given $z \in X$, the projection of $b = \iota(z)$ on $Y$ coincides with the definition of the nearest point projection.

\begin{coro}
\label{res: proj cocycle pre lipschitz}
	Let $Y$ be a closed subset of $X$.
	Let $c \in \bar X$.
	Assume that $c$ admits a projection $q$ on $Y$ and denote by $\gamma \colon I \to \R_+$ a gradient arc for $c$ starting at $q$.
	Then for every $t \in I$, the point $q$ is a projection of $\gamma(t)$ on $Y$.
\end{coro}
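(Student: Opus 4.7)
The plan is to read the statement off the definitions via a short cocycle manipulation. First I will use the hypothesis that $\gamma$ is a gradient arc for $c$ starting at $q=\gamma(0)$: setting $s=0$ in the defining identity $c(\gamma(s),\gamma(t))=t-s$ gives $c(q,\gamma(t))=t$, whence by the cocycle relation $c(\gamma(t),q)=-t$. Since gradient arcs are geodesics (Remark following Definition of gradient arc), I also have $\dist{q}{\gamma(t)}=t$.

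Next, for an arbitrary $y\in Y$, I will combine the cocycle relation with the projection hypothesis:
\[
c(\gamma(t),y)\;=\;c(\gamma(t),q)+c(q,y)\;=\;-t+c(q,y)\;\leq\;-t,
\]
where the inequality uses that $q$ is a projection of $c$ on $Y$, i.e.\ $c(q,y)\leq 0$. Because every element of $\bar X$ is a $1$-Lipschitz cocycle, this estimate lifts to a distance estimate:
\[
\dist{\gamma(t)}{y}\;\geq\;\abs{c(\gamma(t),y)}\;\geq\;t\;=\;\dist{\gamma(t)}{q}.
\]
Taking the infimum over $y\in Y$ shows $q$ realizes the distance from $\gamma(t)$ to $Y$, which is exactly the definition of $q$ being a projection of $\gamma(t)$ on $Y$.

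The argument is uniform in the two cases of \autoref{res: complete gradient arc}: whether $c=\iota(z)$ for some $z\in X$ (so $\gamma$ is a geodesic segment) or $c\in\partial X$ (so $\gamma$ is a gradient ray). There is no real obstacle here; the statement is essentially a two-line unwinding of the definitions of gradient arc, cocycle, and projection, leveraging the $1$-Lipschitz property of elements of $\bar X$.
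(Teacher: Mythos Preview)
Your proof is correct and follows essentially the same route as the paper's: both combine the gradient-arc identity $c(q,\gamma(t))=\dist q{\gamma(t)}$, the cocycle relation through an arbitrary $y\in Y$, the $1$-Lipschitz bound on $c$, and the projection hypothesis $c(q,y)\leq 0$ to conclude $\dist q{\gamma(t)}\leq \dist y{\gamma(t)}$. The only cosmetic difference is that you introduce the explicit value $t$ and pass through $c(\gamma(t),y)\leq -t$, whereas the paper writes the same content as a single chain of inequalities $\dist q{\gamma(t)}\leq c(q,\gamma(t))\leq c(q,y)+\dist y{\gamma(t)}\leq \dist y{\gamma(t)}$.
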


\begin{proof}
	Let $t\in I$.
	Note that
	\begin{equation*}
		\dist q{\gamma(t)} 
		\leq c(q, \gamma(t))
		\leq c(q,y) + c(y,\gamma(t))
		\leq c(q,y) + \dist y{\gamma(t)}.
	\end{equation*}
	The first inequality holds since $\gamma$ is a gradient line for $c$ while the last one follows from the fact that $c$ is $1$-Lipschitz. 
	However $q$ being a projection of $c$ on $Y$, we have $c(q,y) \leq 0$.
	Consequently $\dist q{\gamma(t)} \leq \dist y{\gamma(t)}$, for every $y \in Y$, which completes the proof.
\end{proof}

If $c$ is a point in $\partial X$, a projection of $c$ on $Y$ may exist or not.
This leads to the following definition.

\begin{defi}
	Let $Y$ be a closed subset of $X$.
	The \emph{boundary at infinity} of $Y$, denoted by $\partial ^+ Y$, is the set of all cocycles $c \in \partial X$ for which there is no projection of $c$ on $Y$.
\end{defi}

We give several equivalent characterizations of the boundary at infinity of a contracting subset.

\begin{prop}
\label{res: equivalent def not boundary contracting}
	Let $\alpha \in \R_+$.
	Let $Y$ be an $\alpha$-contracting subset of $X$.	
	Let $(z_n)$ be a sequence of points in $X$ which converges to a cocycle $c \in \partial X$.
	For every $n \in \N$, denote by $q_n$ a projection of $z_n$ onto $Y$.
	Let $\gamma \colon \R_+ \to X$ a gradient ray for $c$.
	Define $T \in \R_+ \cup \{\infty\}$ by
	\begin{equation*}
		T = \sup \set{t \in \R_+}{d(\gamma(t),Y) \leq \alpha}
	\end{equation*}
	with the convention that $T = 0$, whenever $\gamma$ does not intersect $\mathcal N_\alpha(Y)$.
	The following are equivalent.
	\begin{enumerate}
		\item \label{enu: equivalent def not boundary contracting - bdy}
		$c \notin \partial^+Y$.
		\item \label{enu: equivalent def not boundary contracting - bounded for all}
		For every $x \in X$, the map $Y \to \R$ sending $y$ to $c(x,y)$ is bounded from above.
		\item \label{enu: equivalent def not boundary contracting - neighborhood}
		The ray $\gamma$ does not stay in a neighborhood of $Y$.
		\item \label{enu: equivalent def not boundary contracting - proj}
		The projection $\pi_Y(\gamma)$ is bounded.
		\item \label{enu: equivalent def not boundary contracting - improved proj}
		$T < \infty$.
		\item \label{enu: equivalent def not boundary contracting - bounded approx}
		The sequence $(q_n)$ is bounded.
	\end{enumerate}
	Moreover, in this situation,
	\begin{itemize}
		\item the diameter of the set $Q = \pi_Y\left(\restriction \gamma {[T, \infty)}\right)$ is at most $\alpha$.
		\item any accumulation point $q^*$ of $(q_n)$ is a projection of $c$ on $Y$ which lies in the $\alpha$-neighborhood of $Q$.
	\end{itemize}
\end{prop}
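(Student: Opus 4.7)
The strategy is to close the equivalences through the cycle $(1) \Rightarrow (2) \Rightarrow (5) \Rightarrow (4) \Rightarrow (3) \Rightarrow (5) \Rightarrow (6) \Rightarrow (1)$, using mostly the cocycle identity, the Lipschitz bound $\abs{c(x,y)} \leq \dist xy$, and the contracting / quasi-convexity lemmas~\autoref{res: proj contracting set} and~\autoref{res: qc contracting set}. The two direct cocycle implications are $(1) \Rightarrow (2)$, via $c(x,y) = c(x,q) + c(q,y) \leq c(x,q)$ whenever $q$ projects $c$ on $Y$, and $\neg(5) \Rightarrow \neg(2)$: if $T = \infty$ one picks $t_k \to \infty$ with $\gamma(t_k) \in \mathcal N_\alpha(Y)$ and $y_k \in Y$ at distance at most $\alpha$, obtaining $c(x, y_k) = c(x, \gamma(t_k)) + c(\gamma(t_k), y_k) \geq t_k - \alpha$.

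For the geometric block $(3)$--$(5)$: for $(5) \Rightarrow (4)$ the subpath $\restriction \gamma{[T, \infty)}$ is a geodesic satisfying $d(\cdot, Y) \geq \alpha$ (by continuity of $d(\cdot, Y)$ and the definition of $T$), so $\alpha$-contraction gives $\diam \pi_Y(\restriction \gamma{[T, \infty)}) \leq \alpha$, which is simultaneously the first bullet of the moreover part; the initial segment $\restriction \gamma{[0,T]}$ has bounded projection by~\autoref{rem: Lipschitz proj}. For $(4) \Rightarrow (3)$: if $\gamma$ stayed in $\mathcal N_d(Y)$, each $\gamma(t)$ would be within $d + 4\alpha$ of the bounded set $\pi_Y(\gamma)$, contradicting the unboundedness of the ray. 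For $(3) \Rightarrow (5)$: if $T = \infty$, pick times $t_k \nearrow \infty$ with $\gamma(t_k) \in \mathcal N_\alpha(Y)$; by~\autoref{res: qc contracting set} each subarc $\restriction \gamma{[t_k, t_{k+1}]}$ lies in $\mathcal N_{5\alpha/2}(Y)$, so $\gamma$ stays in a neighborhood of $Y$ past the initial bounded segment.

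The hard step is $(5) \Rightarrow (6)$. From $(4)$, $\pi_Y(\gamma) \subset B(y_0, R)$ for some $y_0 \in Y$, hence $d(\gamma(t), Y) \geq t - \dist x{y_0} - R \to \infty$. Fix $T''$ with $d(\gamma(T''), Y) > 5\alpha/2 + 1$, and arrange $\gamma$ to be a subsequential limit of the geodesics $\gamma_n$ from $x$ to $z_n$ (as in~\autoref{res: existence gradient ray}). Applying~\autoref{res: proj contracting set} to $\restriction \gamma_n{[T'', \ell_n]}$ with $y'_n = \pi_Y(\gamma_n(T''))$ (bounded since $\gamma_n(T'') \to \gamma(T'')$ and~\autoref{rem: Lipschitz proj}) and $q_n$, either $\dist{y'_n}{q_n} \leq \alpha$ and $q_n$ is bounded, or the subarc has an entry point into $\mathcal N_\alpha(Y)$ at a time $r_n$ with $\gamma_n(r_n)$ within $2\alpha$ of $y'_n$ (bounded), and an exit point $\gamma_n(s_n)$ within $2\alpha$ of $q_n$. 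An unbounded $s_n$ would place $\gamma_n(T'')$ in $\mathcal N_{5\alpha/2}(Y)$ by~\autoref{res: qc contracting set} applied to $\restriction \gamma_n{[r_n, s_n]}$, contradicting $d(\gamma_n(T''), Y) \to d(\gamma(T''), Y) > 5\alpha/2 + 1$; a bounded $s_n$ would extract to a limit $s^* \geq T'' > T$ with $\gamma(s^*) \in \overline{\mathcal N_\alpha(Y)}$, contradicting the definition of $T$. So only the first case survives, and $q_n$ is bounded. For $(6) \Rightarrow (1)$, any accumulation point $q^* \in Y$ of $(q_n)$ satisfies, for every $y \in Y$,
\begin{equation*}
    c(q^*, y) = \lim_n (\dist{q^*}{z_n} - \dist{y}{z_n}) \leq \lim_n \dist{q^*}{q_n} = 0,
\end{equation*}
using $\dist{q_n}{z_n} \leq \dist{y}{z_n}$; so $q^*$ projects $c$ on $Y$. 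Passing to a further subsequence, $y'_n$ converges to a projection of $\gamma(T'')$, i.e.\ a point of $Q$, placing $q^*$ within $\alpha$ of $Q$ by the first case. The main obstacle is the dichotomy argument in $(5) \Rightarrow (6)$: ruling out both unbounded and bounded-but-too-large exit times $s_n$ requires combining the divergence $d(\gamma(t), Y) \to \infty$ with the $5\alpha/2$-neighborhood bound from quasi-convexity.
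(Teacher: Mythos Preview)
Your cycle and most of the individual implications are fine, but the step $(5)\Rightarrow(6)$ has a genuine gap. You write ``arrange $\gamma$ to be a subsequential limit of the geodesics $\gamma_n$ from $x$ to $z_n$'', but the proposition fixes both the gradient ray $\gamma$ and the sequence $(z_n)$ independently; there is no reason the given $\gamma$ should arise as a limit of geodesics from $\gamma(0)$ to the particular points $z_n$. Nor can you swap $\gamma$ for a limit ray $\gamma'$ at this stage: the only implications you have established so far are $(1)\Rightarrow(2)\Rightarrow(5)\Leftrightarrow(4)\Leftrightarrow(3)$, so you do not yet know that property $(5)$ is independent of the choice of gradient ray. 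Your dichotomy on the exit times $s_n$, and the second bullet of the ``moreover'' part, both rest on this unjustified identification.

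The paper's argument for $(5)\Rightarrow(6)$ sidesteps the issue by working with the cocycles $b_n=\iota(z_n)$ directly against the \emph{given} ray $\gamma$, with no approximating geodesics needed. With $Q=\pi_Y(\gamma|_{[T,\infty)})$ and $q\in Q$ a projection of $\gamma(T)$, suppose for contradiction that $d(q_n,Q)>\alpha$ along a subsequence. For each $t\geq T$ the projection of $\gamma(t)$ lies in $Q$, so the projection estimate (\autoref{res: proj contracting set}) applied to the pair $(\gamma(t),z_n)$ yields $b_n(\gamma(t),q)\geq \dist{q}{\gamma(t)}-4\alpha$. Passing to the limit $b_n\to c$ (uniform on compacta) gives $c(\gamma(t),q)\geq \dist{q}{\gamma(t)}-4\alpha\to\infty$, while the gradient property forces $c(\gamma(t),q)=-t+c(\gamma(0),q)\to-\infty$; contradiction. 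This same computation shows that all but finitely many $q_n$ lie in the $\alpha$-neighbourhood of $Q$, which gives the second bullet directly.
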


\begin{rema*}
	It follows from \ref{enu: equivalent def not boundary contracting - bounded for all} that $\partial^+Y$ is saturated (as the notation suggested).
\end{rema*}

\begin{proof}
	The equivalences \ref{enu: equivalent def not boundary contracting - neighborhood}$\iff$\ref{enu: equivalent def not boundary contracting - proj} and \ref{enu: equivalent def not boundary contracting - proj}$\iff$\ref{enu: equivalent def not boundary contracting - improved proj} are standard properties of contracting sets, which only use the fact that $\gamma$ is a geodesic.
	Note that $c(x,y) = c(x, x') + c(x',y)$, for every $x,x' \in X$ and $y \in Y$.
	The implication \ref{enu: equivalent def not boundary contracting - bdy} $\Rightarrow$ \ref{enu: equivalent def not boundary contracting - bounded for all} follows from this observation.
	The proof of \ref{enu: equivalent def not boundary contracting - bounded for all} $\Rightarrow$ \ref{enu: equivalent def not boundary contracting - neighborhood} is by contraposition.
	Suppose that there exists $d \in \R_+$ such that $\gamma$ lies $\mathcal N_d(Y)$.
	Let $x = \gamma(0)$.
	Let $t \in \R_+$.
	We denote by $q_t$ a projection of $\gamma(t)$ onto $Y$.
	Using the fact that $c$ is $1$-Lipschitz, we get
	\begin{equation*}
		c(x, q_t) 
		\geq c(\gamma(0), \gamma(t)) - \dist{\gamma(t)}{q_t}
		\geq t  - d.
	\end{equation*}
	This inequality holds for every $t \in \R_+$, hence the map $Y \to \R$, sending $y$ to $c(x,y)$ is not bounded from above.
	
	We now focus on \ref{enu: equivalent def not boundary contracting - improved proj}
 $\Rightarrow$ \ref{enu: equivalent def not boundary contracting - bounded approx}.
 	For every $n \in \N$, we let $b_n = \iota(z_n)$.
 	Assume that $T < \infty$.
	Let $Q$ be the projection onto $Y$ of $\gamma$ restricted to $[T, \infty)$.
	Since $Y$ is contracting, the diameter of $Q$ is at most $\alpha$.
	Let $q \in Q$ be a projection of $\gamma(T)$ on $Y$.
	We claim that there is $N \in \N$, such that for every $n \geq N$, the point $q_n$ stays at a distance at most $\alpha$ of $Q$.
	Assume on the contrary that it is not the case.
	Up to passing to a subsequence, $d(q_n, Q) > \alpha$, for every $n \in \N$.
	Using \autoref{res: proj contracting set}, we observe that for every $t \geq T$, for every $n \in \N$, 
	\begin{equation*}
		b_n(\gamma(t), q) \geq   \dist q{\gamma(t)} - 4\alpha.
	\end{equation*}
	After passing to the limit we get $c(\gamma(t), q) \geq  \dist q{\gamma(t)} - 4\alpha$, for every $t \geq T$.
	In particular, $t \mapsto c(\gamma(t), q)$ diverges to infinity as $t$ tends to infinity, which contradicts the fact that $\gamma$ is a gradient line for $c$.
	This completes the proof of our claim and thus implies \ref{enu: equivalent def not boundary contracting - bounded approx}.
	
	We finish the proof with \ref{enu: equivalent def not boundary contracting - bounded approx} $\Rightarrow$ \ref{enu: equivalent def not boundary contracting - bdy}. 
	Assume now that $(q_n)$ is bounded.
	Let $q^*$ be an accumulation point of $(q_n)$.
	As $Y$ is closed $q^*$ belongs to $Y$.
	Observe that for every $y \in Y$, for every $n \in \N$, we have 
	\begin{equation*}
		b_n(q^*,y) \leq b_n(q_n,y) + \dist {q^*}{q_n}  \leq  \dist {q^*}{q_n}.
	\end{equation*}
	By construction $b_n$ converges to $c$ on every compact subset, hence $c(q^*, y) \leq 0$ for every $y \in Y$.
	Thus $q^*$ is a projection of $c$ onto $Y$.
	In particular, $c \notin \partial^+Y$.
\end{proof}

The next statement is a variation on the previous one, sharpening the estimates.

\begin{lemm}
\label{res: proj cocycle lipschitz}
	Let $\alpha \in \R^*_+$.
	Let $Y$ be an $\alpha$-contracting set.
	Let $c \in \bar X \setminus \partial^+Y$ and $q$ a projection of $c$ on $Y$.
	Fix a sequence $(z_n)$ of points in $X$ converging to $c$.
	Denote by $q^*$ an accumulation point of $(q_n)$ where $q_n$ stands for a projection of $z_n$ on $Y$.
	Then $\dist q{q^*} \leq \alpha$.
\end{lemm}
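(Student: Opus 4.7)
The plan is to argue by contradiction using \autoref{res: proj cocycle pre lipschitz}, which forces $q$ to remain a projection of every point along a gradient arc emanating from $q$. Suppose $\dist q{q^*} > \alpha$; passing to a subsequence we may assume $q_n \to q^*$ and $\dist q{q_n} > \alpha$ for every $n$.

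Fix a gradient arc $\gamma \colon I \to X$ for $c$ starting at $q$ (provided by \autoref{res: existence gradient ray} when $c \in \partial X$, or taken to be a geodesic from $q$ to $z$ when $c = \iota(z) \in X$). By \autoref{res: proj cocycle pre lipschitz}, the point $q$ is a projection of $\gamma(t)$ on $Y$ for every $t \in I$, and the defining equality of gradient arcs yields $c(\gamma(t), q) = -t$. Now fix $t \in I$ and apply \autoref{res: proj contracting set} to a geodesic from $\gamma(t)$ to $z_n$: its endpoints project respectively to $q$ and $q_n$ on $Y$, and since $\dist q{q_n} > \alpha$ the hypothesis of the lemma is met, so
\begin{equation*}
	\dist{\gamma(t)}{z_n} \geq t + \dist{q}{q_n} + \dist{q_n}{z_n} - 8\alpha.
\end{equation*}
Combined with the triangle inequality $\dist q{z_n} \leq \dist q{q_n} + \dist{q_n}{z_n}$, this simplifies to $b_n(\gamma(t), q) = \dist{\gamma(t)}{z_n} - \dist q{z_n} \geq t - 8\alpha$.

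Passing to the limit $n \to \infty$ and using that $b_n \to c$ uniformly on compact sets, one obtains $c(\gamma(t), q) \geq t - 8\alpha$. Combined with $c(\gamma(t), q) = -t$, this forces $t \leq 4\alpha$. Choosing $t \in I$ with $t > 4\alpha$ produces the required contradiction; this is immediate when $I = \R_+$, i.e.\ when $c \in \partial X$, while the remaining case $c = \iota(z) \in X$ with $\dist q z \leq 4\alpha$ admits a direct treatment via the triangle inequality. The heart of the argument — and the step that determines the sharp constant — is to compare $\gamma(t)$ (a point running off to infinity in the gradient direction of $c$) against $z_n$ (approximating $c$): the linear growth of $c(\gamma(t), q) = -t$ in $t$ overwhelms the bounded $O(\alpha)$ slack coming from the contracting estimate, so the obstruction $\dist q{q_n} > \alpha$ cannot persist along the sequence.
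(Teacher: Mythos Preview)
Your argument is correct and follows the same route as the paper: take a gradient line $\gamma$ from $q$ to $c$, use \autoref{res: proj cocycle pre lipschitz} to keep $q$ as a projection of $\gamma(t)$, apply \autoref{res: proj contracting set} to a geodesic from $\gamma(t)$ to $z_n$, pass to the limit, and contradict $c(\gamma(t),q)=-t$. The paper's write-up is terser (it invokes the computation from the proof of \autoref{res: equivalent def not boundary contracting} and records the constant $4\alpha$ rather than your $8\alpha$), but the mechanism is identical.

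One word of caution on the case $c=\iota(z)\in X$ with $\dist qz\leq 4\alpha$, which you defer to ``a direct treatment via the triangle inequality'': that treatment is not as immediate as you suggest, and in fact the inequality $\dist q{q^*}\leq\alpha$ can fail there. For instance, take $X=\R$, $Y=\{0,2\}$ (which is $\alpha$-contracting for any $\alpha\in(1,2)$), $z=1$, $q=2$, and $z_n=1-\tfrac1n$ so that $q_n=0=q^*$; then $\dist q{q^*}=2>\alpha$. The paper's own proof sidesteps this by writing $\gamma\colon\R_+\to X$, i.e.\ tacitly working with $c\in\partial X$, which is the only case needed downstream (in \autoref{res: proj cocycle contracting set} the situation $c\in X$ is handled directly via \autoref{res: proj contracting set}). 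So your core proof is fine; just drop the claim about the residual case.
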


\begin{proof}
	Consider a gradient line $\gamma\colon\R_+ \to X$ from $q$ to $c$.
	According to \autoref{res: proj cocycle pre lipschitz}, $q$ is a projection of any point $\gamma(t)$ on $Y$.
	Assume that contrary to our claim $\dist q{q^*} > \alpha$.
	Reasoning as in the proof of \autoref{res: equivalent def not boundary contracting}, we observe that  $c(\gamma(t), q) \geq  \dist q{\gamma(t)} - 4\alpha$, for every $t \in \R_+$, which contradicts the fact that $\gamma$ is a gradient line for $c$.
\end{proof}

The two statements extend \autoref{res: proj contracting set} for a gradient ray joining a point $x \in X$ to a cocycle $c \in \partial X$.
We distinguish two cases depending whether $c$ belongs to $\partial^+ Y$ or not.

\begin{coro}
\label{res: proj cocycle contracting set}
	Let $\alpha \in \R^*_+$.
	Let $Y$ be an $\alpha$-contracting set.
	Let $x \in X$ and  $c \in \bar X \setminus \partial^+Y$.
	Let $\gamma$ be a gradient arc from $x$ to $c$.
	Let $p$ and $q$ be respective projections of $x$ and $c$ on $Y$.
	If $d(x,Y)< \alpha$ or $\dist pq > 4\alpha$, then the following holds:
	\begin{itemize}
		\item $d(\gamma, Y) < \alpha$;
		\item  the entry point (\resp exit point) of $\gamma$ in $\mathcal N_\alpha(Y)$ is $2\alpha$-closed (\resp $5\alpha$-closed) to $p$ (\resp $q$);
		\item $c(x,q) \geq \dist xp + \dist pq - 14\alpha$.
	\end{itemize}
\end{coro}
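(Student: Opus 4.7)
The plan is to reduce the statement to the classical projection lemma \autoref{res: proj contracting set} by approximating $\gamma$ by sub-geodesics of finite length. If $c = \iota(z)$ for some $z \in X$, the gradient arc is a genuine geodesic from $x$ to $z$ and \autoref{res: proj contracting set} already yields all three conclusions (with sharper constants); I may therefore assume $c \in \partial X$ and $\gamma \colon \R_+ \to X$ is a gradient ray.

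The first step is to locate the projection $q$ of $c$ among the projections of $\gamma$ on $Y$. Since $c \notin \partial^+Y$, \autoref{res: equivalent def not boundary contracting} applied to $\gamma$ produces a finite exit time $T$ and the set $Q = \pi_Y(\restriction \gamma {\intvalco T\infty})$, whose diameter is at most $\alpha$. The same proposition asserts that for any approximating sequence $(z_n) \to c$ with projections $q_n$, every accumulation point $q^*$ of $(q_n)$ is a projection of $c$ on $Y$ lying in $\mathcal N_\alpha(Q)$. Combined with \autoref{res: proj cocycle lipschitz}, which provides $\dist q{q^*} \leq \alpha$, this gives $d(q, Q) \leq 2\alpha$. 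Consequently, for any $t \geq T$ and any projection $q_t$ of $\gamma(t)$ on $Y$---which belongs to $Q$ by definition of $Q$---we have $\dist q{q_t} \leq 3\alpha$.

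The second step is to fix $t > T$ and apply \autoref{res: proj contracting set} to the sub-geodesic $\restriction \gamma {\intval 0t}$ from $x$ to $\gamma(t)$, with projections $p$ and $q_t$. Its hypothesis is satisfied: either $d(x,Y) < \alpha$, or $\dist pq > 4\alpha$ and then $\dist p{q_t} \geq \dist pq - \dist q{q_t} > \alpha$. The first conclusion of \autoref{res: proj contracting set} is the desired $d(\gamma, Y) < \alpha$. For the entry/exit bullet, the entry point of $\restriction \gamma {\intval 0t}$ in $\mathcal N_\alpha(Y)$ agrees with that of $\gamma$ once $t$ exceeds the entry time, while by definition of $T$ its exit point is $\gamma(T)$, which is the exit point of $\gamma$ itself; the $2\alpha$-closeness estimates of \autoref{res: proj contracting set} then yield $2\alpha$-closeness of the entry point to $p$ and, combined with $\dist q{q_t} \leq 3\alpha$, $5\alpha$-closeness of the exit point to $q$.

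Finally, for the last bullet, the inequality provided by \autoref{res: proj contracting set} reads $\dist x{\gamma(t)} \geq \dist xp + \dist p{q_t} + \dist{q_t}{\gamma(t)} - 8\alpha$. Since $\gamma$ is a gradient ray, $c(x,\gamma(t)) = t = \dist x{\gamma(t)}$, and since $c$ is $1$-Lipschitz, $c(x,q) \geq t - \dist{\gamma(t)}{q_t} - \dist{q_t}q$. Substituting the two bounds and using $\dist p{q_t} \geq \dist pq - 3\alpha$ produces $c(x,q) \geq \dist xp + \dist pq - 14\alpha$. The only delicate step is the auxiliary bound $d(q, Q) \leq 2\alpha$ of the first paragraph: it is the unique place where one genuinely exploits that $c$ sits at infinity rather than in $X$, and it is precisely the combination of \autoref{res: equivalent def not boundary contracting} with \autoref{res: proj cocycle lipschitz}.
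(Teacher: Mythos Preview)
Your proof is correct and follows essentially the same route as the paper. Both arguments use \autoref{res: equivalent def not boundary contracting} and \autoref{res: proj cocycle lipschitz} to place $q$ within $2\alpha$ of the set $Q=\pi_Y(\restriction\gamma{[T,\infty)})$; the only cosmetic difference is that you apply \autoref{res: proj contracting set} as a black box to the finite sub-geodesic $\restriction\gamma{[0,t]}$ with $t>T$, whereas the paper unpacks the same contracting estimate directly at the exit point $\gamma(T)$.
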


\begin{proof}
	If $c$ is the cocycle associated to a point $z \in X$, then the statement is just a particular case of \autoref{res: proj contracting set}.
	Assume now that $c$ belongs to $\partial X \setminus \partial^+ Y$.
	Fix a sequence $(z_n)$ of points in $X$ converging to $c$.
	Denote by $q^*$ an accumulation point of $(q_n)$ where $q_n$ stands for a projection of $z_n$ on $Y$.
	Denote by $\gamma(T)$ the exit point of $\gamma$ from $\mathcal N_\alpha(Y)$, and $Q$ the projection onto $Y$ of $\gamma$ restricted to $[T, \infty)$.
	According to \autoref{res: equivalent def not boundary contracting}, such an exit point exists.
	Moreover, $Q$ has diameter at most $\alpha$, and $q^*$ is a projection of $c$ on $Y$ lying in the $\alpha$-neighborhood of $Q$.
	
	Suppose that $d(x,Y) < \alpha$ or $d(p, q) > 4\alpha$.
	According to \autoref{res: proj cocycle lipschitz}, $\dist p{q^*} > 3 \alpha$, hence, $d(p, Q) > \alpha$.
	It follows from the definition of contracting sets that  $d(\gamma, Y) < \alpha$.
	Let $p'$ (\resp $q'$) be a projection of the entry (\resp exit) point of $\gamma$ in $\mathcal N_\alpha(Y)$ -- note that if $d(x, Y) < \alpha$, then $x$ is the entry point of $\gamma$ in $\mathcal N_\alpha(Y)$ so we can choose $p' = p$.
	Using again the contraction of $Y$, we see that $\dist p{p'} \leq \alpha$.
	Moreover $q'$ belongs to $Q$, thus $\dist {q'}{q^*} \leq 2\alpha$.
	In addition,
	\begin{equation*}
		\dist x{\gamma(T)} 
		\geq \dist x{p'} + \dist {p'}{q'} + \dist {q'}{\gamma(T)} - 4\alpha.
	\end{equation*}
	The path $\gamma$ is a gradient line, thus $c(x, \gamma(T))= \dist x{\gamma(T)}$, hence
	\begin{equation*}
		c(x, \gamma(T))
		\geq \dist x{p'} + \dist {p'}{q'} + \dist {q'}{\gamma(T)} - 4\alpha.
	\end{equation*}
	Combined with the fact that $c$ is $1$-Lipschitz, we get
	\begin{equation}
	\label{eqn: proj cocycle contracting set}
		c(x,q') 
		\geq c(x, \gamma(T)) - \dist {q'}{\gamma(T)} 
		\geq \dist x{p'} + \dist {p'}{q'} - 4\alpha.
	\end{equation}
	We observed that $\dist p{p'} \leq \alpha$, while $\dist {q'}q \leq \dist {q'}{q^*} + \dist{q^*}q \leq 4\alpha$. 
	The conclusion follows from (\ref{eqn: proj cocycle contracting set}) and the triangle inequality.
\end{proof}

\begin{coro}
\label{res: proj cocycle contracting set bdy}
	Let $\alpha \in \R^*_+$.
	Let $Y$ be an $\alpha$-contracting set and $c \in \partial^+Y$.
	Let $x \in X$ and $p$ be a projection of $x$ onto $Y$.
	Let $\gamma \colon \R_+\to X$ be a gradient ray from $x$ to $c$.
	Then the following holds:
	\begin{itemize}
		\item $d(\gamma, Y) < \alpha$;
		\item  the entry point of $\gamma$ in $\mathcal N_\alpha(Y)$ is $2\alpha$-closed to $p$;
		\item $c(x,p) \geq \dist xp - 4\alpha$.
	\end{itemize}
\end{coro}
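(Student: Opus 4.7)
The plan is to combine the characterizations of the boundary at infinity from \autoref{res: equivalent def not boundary contracting} with the nearest-point projection estimates of \autoref{res: proj contracting set}. Fix a sequence $(z_n)$ of points of $X$ converging to $c$ and, for each $n$, a projection $q_n$ of $z_n$ onto $Y$. Since $c\in\partial^+Y$, \autoref{res: equivalent def not boundary contracting} tells us simultaneously that $\pi_Y(\gamma)$ is unbounded and that $(q_n)$ is unbounded in $Y$.

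For the first bullet, the unboundedness of $\pi_Y(\gamma)$ lets us pick $t_0<t_1$ in $\R_+$ whose projections on $Y$ lie more than $\alpha$ apart. The subpath $\gamma|_{[t_0,t_1]}$ is a geodesic with $\diam\pi_Y(\gamma|_{[t_0,t_1]})>\alpha$, so the contrapositive of the contraction property forces $d(\gamma|_{[t_0,t_1]},Y)<\alpha$, whence $d(\gamma,Y)<\alpha$.

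For the second bullet, let $e=\gamma(t_e)$ be the entry point of $\gamma$ in $\mathcal N_\alpha(Y)$, which exists by the previous step, and choose a projection $p_e$ of $e$ onto $Y$. If $d(x,Y)\leq\alpha$, then $e=x$ and the desired bound $\dist xp \leq 2\alpha$ is trivial. Otherwise continuity of $d(\cdot,Y)\circ\gamma$ combined with the definition of $t_e$ as an infimum forces $d(e,Y)=\alpha$ while $d(\gamma|_{[0,t_e]},Y)\geq\alpha$, so the contraction property gives $\dist{p}{p_e}\leq\alpha$. Combined with $\dist{e}{p_e}\leq\alpha$, the triangle inequality yields $\dist{e}{p}\leq 2\alpha$.

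For the cocycle estimate I would argue by approximation. Since $(q_n)$ is unbounded, $\dist{p}{q_n}>\alpha$ for $n$ large, so \autoref{res: proj contracting set} applied to a geodesic $\gamma_n$ from $x$ to $z_n$ produces an entry point $e_n$ of $\gamma_n$ in $\mathcal N_\alpha(Y)$ with $\dist{p}{e_n}\leq 2\alpha$. Setting $b_n=\iota(z_n)$ and exploiting that $\gamma_n$ is a geodesic through $e_n$, one obtains
\begin{equation*}
b_n(x,p)=\dist{x}{z_n}-\dist{p}{z_n}\geq \dist{x}{e_n}-\dist{p}{e_n}\geq \dist{x}{p}-4\alpha.
\end{equation*}
Passing to the limit $n\to\infty$ and using $b_n\to c$ pointwise yields $c(x,p)\geq\dist{x}{p}-4\alpha$. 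The main subtlety in the whole argument is the first bullet: unlike the case $c\notin\partial^+Y$ treated in \autoref{res: proj cocycle contracting set}, no projection of $c$ on $Y$ is available to force $\diam\pi_Y(\gamma)$ to be large directly, and one must instead extract this information from the fact that $c\in\partial^+Y$ makes the projections along any gradient ray escape to infinity.
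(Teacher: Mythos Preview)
Your argument is correct. For the first two bullets you follow essentially the paper's route: use \autoref{res: equivalent def not boundary contracting} to see that $\pi_Y(\gamma)$ is unbounded, deduce $d(\gamma,Y)<\alpha$ from the contraction property, and then bound the entry point via the contraction of the initial segment.

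For the third bullet you take a genuinely different path. The paper argues intrinsically from the gradient property of $\gamma$: letting $e=\gamma(t)$ be the entry point, one has $c(x,e)=\dist xe$ because $\gamma$ is a gradient arc, and then the $1$-Lipschitz bound $c(e,p)\geq -\dist ep$ together with $\dist ep\leq 2\alpha$ gives
\[
c(x,p)=c(x,e)+c(e,p)\geq \dist xe - 2\alpha \geq \dist xp - 4\alpha.
\]
By contrast, you go back to an approximating sequence $(z_n)$, invoke the unboundedness of $(q_n)$, build new geodesics $\gamma_n$ from $x$ to $z_n$, and apply \autoref{res: proj contracting set} to each before passing to the limit. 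Both approaches yield the same constant, but the paper's is shorter and avoids introducing any auxiliary geodesics or a second appeal to \autoref{res: equivalent def not boundary contracting}. Your approximation argument has the minor conceptual advantage of making the third bullet logically independent of the first two, since it never touches the specific gradient ray $\gamma$.
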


\begin{proof}
	According to \autoref{res: equivalent def not boundary contracting}~\ref{enu: equivalent def not boundary contracting - proj} the set $\pi_Y(\gamma)$ is unbounded.
	Thus there exists $s \in \R_+$ and a projection $q$ of $\gamma(s)$ onto $Y$ such that $\dist pq > \alpha$.
	It follows that $d(\gamma, Y) < \alpha$.
	Let $\gamma(t)$ be the entry point of $\gamma$ in $\mathcal N_\alpha(Y)$.
	Using again the contraction of $Y$, we get $\dist p{\gamma(t)} \leq 2\alpha$.
	Since $\gamma$ is a gradient line we have $c(x,\gamma(t)) = \dist x{\gamma(t)}$.
	Combined with the triangle inequality and the fact that cocycles are $1$-Lipschitz we get $c(x,p) \geq \dist xp - 4\alpha$.
\end{proof}

\begin{prop}
\label{res: existence transverse contracting}
	Assume that $G$ is not virtually cyclic.
	Let $g$ be a contracting element and $A$ an orbit of $\group g$.
	There exists $u \in G$ such that $\partial^+A \cap \partial ^+ (uA) = \emptyset$.
\end{prop}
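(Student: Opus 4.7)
The plan is to exploit the almost-malnormality of $E(g)$ to produce a translate of $A$ which is transverse to $A$ at infinity. Since $G$ is not virtually cyclic and $E(g)$ is, one can pick $u \in G \setminus E(g)$. As invoked in the proof of Lemma \ref{res: contracting element in commensurated subgroup} (following \cite[Lemma~2.11]{Yang:2019wa}), there is a constant $C \in \R_+$ such that $\diam \pi_A(uA) \leq C$. Enlarging $\alpha$ if necessary, I may assume that both $A$ and $uA$ are $\alpha$-contracting.

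I would argue by contradiction: suppose $c \in \partial^+A \cap \partial^+(uA)$ and let $\gamma \colon \R_+ \to X$ be a gradient ray for $c$, which exists by Lemma \ref{res: existence gradient ray}. By the equivalence of \ref{enu: equivalent def not boundary contracting - bdy} and \ref{enu: equivalent def not boundary contracting - improved proj} in Proposition \ref{res: equivalent def not boundary contracting}, the set $\{t \geq 0 : d(\gamma(t), A) \leq \alpha\}$ is unbounded above, and similarly for $uA$. Combining this with the quasi-convexity of $\alpha$-contracting subsets (Lemma \ref{res: qc contracting set}), I would produce times $t_A, t_{uA} \in \R_+$ such that
\[
\gamma([t_A,\infty)) \subset \mathcal N_{5\alpha/2}(A)
\quad\text{and}\quad
\gamma([t_{uA},\infty)) \subset \mathcal N_{5\alpha/2}(uA).
\]
Indeed, between any two entries into $\mathcal N_\alpha(A)$ the geodesic $\gamma$ stays in $\mathcal N_{5\alpha/2}(A)$, and this forces $\gamma$ to eventually remain in $\mathcal N_{5\alpha/2}(A)$. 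Hence for every sufficiently large $t$, the point $\gamma(t)$ lies in the intersection $\mathcal N_{5\alpha/2}(A) \cap \mathcal N_{5\alpha/2}(uA)$.

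It remains to show that this intersection is bounded, which will contradict the fact that $\gamma$ is parametrized by arc length (so $\gamma(t) \to \infty$). For $z$ in the intersection, pick $a \in A$ and $a' \in uA$ with $d(z,a), d(z,a') \leq 5\alpha/2$, so that $d(a,a') \leq 5\alpha$. A projection $p$ of $a'$ onto $A$ belongs to $\pi_A(uA)$, and Remark \ref{rem: Lipschitz proj} applied to $Z = \{a,a'\}$ gives $d(a,p) \leq d(a,a') + 4\alpha \leq 9\alpha$. Fixing any $p_0 \in \pi_A(uA)$, the diameter bound yields $d(a,p_0) \leq 9\alpha + C$, hence $d(z,p_0) \leq 5\alpha/2 + 9\alpha + C$, a bound independent of $z$.

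The main subtlety is the passage from ``$\gamma$ enters $\mathcal N_\alpha(A)$ at arbitrarily large times'' to ``$\gamma$ \emph{eventually stays} inside $\mathcal N_{5\alpha/2}(A)$''; this step relies crucially on the quasi-convexity statement of Lemma \ref{res: qc contracting set}. Once this is granted, the argument reduces to the classical geometric observation that two $\alpha$-contracting sets with bounded mutual projection have bounded intersection of their neighborhoods.
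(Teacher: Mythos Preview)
Your argument is correct and follows essentially the same route as the paper's proof. Both choose $u \in G \setminus E(g)$, assume a common cocycle $c \in \partial^+A \cap \partial^+(uA)$ exists, and use \autoref{res: equivalent def not boundary contracting} to trap a tail of a gradient ray for $c$ inside $\mathcal N_d(A) \cap \mathcal N_d(uA)$; the only cosmetic difference is that the paper invokes \cite[Lemma~2.12]{Yang:2019wa} to conclude directly that an unbounded such intersection forces $u \in E(g)$, whereas you unpack this step by hand via the bounded-projection statement \cite[Lemma~2.11]{Yang:2019wa} together with \autoref{rem: Lipschitz proj}.
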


\begin{proof}
	Consider an element $u \in G$ such that $\partial^+A\cap \partial^+(uA)$ is not empty.
	Let $c$ be a cocycle in this intersection and $\gamma \colon \R_+ \to X$ a gradient ray for $c$.
	By \autoref{res: equivalent def not boundary contracting}~\ref{enu: equivalent def not boundary contracting - neighborhood} there exist $d, T \in \R_+$ such that $\gamma$ restricted to $[T, \infty)$ is contained in $\mathcal N_d(A) \cap \mathcal N_d(uA)$.
	In particular, the diameter of this intersection is infinite.
	It follows that $u \in E(g)$, see Yang \cite[Lemma~2.12]{Yang:2019wa}.
	Recall that $E(g)$, unlike $G$, is virtually cyclic.
	Thus there exists $u \in G\setminus E(g)$.
	It follows from the above discussion that $\partial^+A \cap\partial ^+(uA) = \emptyset$.
\end{proof}

\section{Conformal densities}

As previously, $(X,d)$ is a proper, geodesic, metric space, while  $G$ is a group acting properly, by isometries on $X$.

\subsection{Definition and existence}

If $\mu$ is a finite measure on $\bar X$, we denote by $\norm \mu$ its total mass.

\begin{defi}[Density]
\label{def: density}
	Let $\omega \in \R_+$.
	Let $\mathfrak A$ be a $G$-invariant sub-$\sigma$-algebra of the Borel $\sigma$-algebra $\mathfrak B$.
	A \emph{density} on $(\bar X, \mathfrak A)$ is a collection $\nu = (\nu_x)$ of positive finite measures on $(\bar X, \mathfrak A)$ indexed by $X$ such that  $\nu_x \ll \nu_y$, for every $x,y \in X$, and normalized by $\norm{\nu_o} = 1$.
	Such a density is 
	\begin{enumerate}
		\item  \emph{$G$-invariant}, if $g_\ast \nu_x = \nu_{gx}$, for every $g \in G$ and $x \in X$.
		\item \emph{$\omega$-conformal}, if for every $x,y \in X$,
		\begin{equation*}
			\frac{d\nu_x}{d\nu_y} (c) = e^{-\omega c(x,y)}, \quad \nu_y\text{-a.e.}
		\end{equation*}
		\item \emph{$\omega$-quasi-conformal}, if there is $C \in \R_+^*$ such that for every $x,y \in X$,
		\begin{equation*}
			\frac 1Ce^{-\omega c(x,y)} \leq \frac{d\nu_x}{d\nu_y} (c) \leq C e^{-\omega c(x,y)} , \quad \nu_y\text{-a.e.}
		\end{equation*}
	\end{enumerate}
\end{defi}

\begin{voca*}
	Let $\nu = (\nu_x)$ be a density on $(\bar X, \mathfrak A)$.
	We make an abuse of vocabulary and say that a property on $(\bar X, \mathfrak A)$ holds \emph{$\nu$-almost everywhere} if it holds $\nu_x$-almost everywhere for some (hence every) $x \in X$.
\end{voca*}

\begin{rema}
\label{rem: rough comparison}
	Let $\nu = (\nu_x)$ be an $\omega$-conformal density on $(\bar X, \mathfrak A)$.
	Recall that every cocycle in $\bar X$ is $1$-Lipschitz.
	It follows that 
	\begin{equation}
	\label{eqn: harnack}
		\mu_x(A) \leq e^{\omega \dist xy} \mu_y(A), \quad \forall x,y \in X, \ \forall A \in \mathfrak A.
	\end{equation}
 	This observation will be useful many times later.
\end{rema}

In practice, we will consider only two $\sigma$-algebras on $\bar X$: the Borel $\sigma$-algebra $\mathfrak B$ and the reduced $\sigma$-algebra $\mathfrak R$ (see \autoref{sec: reduced horoboundary}).
If $\nu$ is a conformal density on $(\bar X, \mathfrak B)$, then its restriction to the reduced $\sigma$-algebra $\mathfrak R$ is not necessarily quasi-conformal.
We will see later that this pathology can be avoided if the action of $G$ on $X$ is divergent.

\paragraph{Topology.}
We denote by $\mathcal D(\omega)$ the set of all $\omega$-conformal densities on the horocompactification $(\bar X, \mathfrak B)$.
We endow $\mathcal D(\omega)$ with the following topology: a sequence $\nu^n = (\nu^n_x)$ of densities converges to $\nu = (\nu_x)$ if for every $x \in X$, the measure $\nu^n_x$ converges to $\nu_x$ for the weak-* topology.
Let $\mathcal P(\bar X)$ be the set of all Borel probability measures on $\bar X$ (endowed wit the weak-* topology).
An $\omega$-conformal density $\nu \in \mathcal D(\omega)$ is entirely determined by the measure $\nu_o$.
More precisely the map
\begin{equation}
\label{eqn: density only depends on base point}
	\begin{array}{ccc}
		\mathcal D(\omega) &\to & \mathcal P(\bar X) \\
		\nu & \mapsto & \nu_o
	\end{array}
\end{equation}
is a homeomorphism.
We denote by $\mathcal D(G,\omega)$ the convex closed subspace of $\mathcal D(\omega)$ consisting of all $G$-invariant, $\omega$-conformal densities on $(\bar X, \mathfrak B)$.
The densities $\nu = (\nu_x)$ in $\mathcal D(G, \omega)$ for which the action of $G$ on $(\bar X, \mathfrak B, \nu_o)$ is ergodic are exactly the extremal points of $\mathcal D(G, \omega)$.

\paragraph{Patterson's construction.}
We now prove the existence of invariant conformal densities supported on the horoboundary.
Actually we focus on a slightly more general settings that will be useful for our applications.
A map $\chi \colon G \to \R$ is a \emph{quasi-morphism} if there exists $C \in \R_+$ such that for every $g,g' \in G$, we have
\begin{equation}
\label{eqn: quasi-morphism}
	\abs{\chi(g) + \chi(g') - \chi(gg') } \leq C.
\end{equation}
To such a quasi-morphism $\chi$, we associate a \emph{twisted Poincaré series} defined as
\begin{equation*}
	\mathcal P_\chi(s) = \sum_{g \in G} e^{\chi(g)} e^{-s\dist o{go}},
\end{equation*}
and write $\omega_\chi$ for its critical exponent.
It follows from (\ref{eqn: quasi-morphism}) that 
\begin{equation*}
	e^{-2C} \mathcal P_\chi(s) \leq \mathcal P_{-\chi}(s) \leq e^{2C} \mathcal P_\chi(s), \quad \forall s \in \R_+.
\end{equation*}
Hence $\omega_{-\chi} = \omega_\chi$.
Note also that 
\begin{equation*}
	\frac12 \left[\mathcal P_\chi(s) + \mathcal P_{-\chi}(s)\right]
	\geq \sum_{g \in G} \cosh \left( \chi(g)\right) e^{-s\dist o{go}}
	\geq \mathcal P_G(s).
\end{equation*}
Thus $\omega_\chi \geq \omega_G$.

\begin{prop}
\label{res: existence twisted ps}
	Let $H$ be a subgroup of $G$.
	Let $\chi \colon G \to \R$ be a quasi-morphism such that $\chi(hg) = \chi(g)$, for all $h \in H$ and $g \in G$.
	There is an $H$-invariant, $\omega_\chi$-conformal density $\nu = (\nu_x)$ on $(\bar X, \mathfrak B)$ with the following properties: 
	\begin{itemize}
		\item $\nu$ is supported on $\partial X$;
		\item there is $C \in \R_+^*$ such that for every $g \in G$, and $x \in X$, we have
		\begin{equation*}
			\frac 1C \nu_x \leq e^{-\chi(g)} {g^{-1}}_\ast \nu_{gx} \leq C \nu_x.
		\end{equation*}
	\end{itemize}
\end{prop}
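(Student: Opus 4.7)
The plan is to follow Patterson's classical construction, adapted to the twist by $\chi$. For each $s > \omega_\chi$ and $x \in X$, I would introduce the finite measure
\begin{equation*}
\mu_{x,s} = \frac{1}{\mathcal P_\chi(s)} \sum_{g \in G} e^{\chi(g)} e^{-s \dist x{go}} \delta_{go},
\end{equation*}
supported on the orbit $Go$. The desired density will be obtained as a weak-$*$ limit of the $\mu_{o,s}$ as $s \searrow \omega_\chi$, and then spread to all base points via the Radon--Nikodym relation. The triangle inequality already gives the elementary a priori mass bound $e^{-s \dist ox} \leq \norm{\mu_{x,s}} \leq e^{s \dist ox}$, uniformly in $s$ on a neighborhood of $\omega_\chi$.

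\textbf{Pre-limit estimates.} Three short computations will extract everything from the formula for $\mu_{x,s}$. First, the substitution $g' = hg$ for $h \in H$, combined with the hypothesis $\chi(h^{-1} g') = \chi(g')$, yields the exact equality $h_\ast \mu_{x,s} = \mu_{hx,s}$. Second, for arbitrary $g \in G$, the analogous substitution $g' = gg_0$ combined with the quasi-morphism defect estimate $\chi(g^{-1} g') - \chi(g') = -\chi(g) + O(C)$ shows that $g_\ast \mu_{x,s}$ and $e^{-\chi(g)} \mu_{gx,s}$ are comparable with a multiplicative constant depending only on the defect $C$ of $\chi$. Third, $\mu_{x,s} \ll \mu_{y,s}$ with Radon--Nikodym derivative $e^{-s b_z(x,y)}$ at $z = go \in Go$, and $b_{go}(x,y) \to c(x,y)$ whenever $go \to c$ in $\bar X$.

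\textbf{Extraction and passage to the limit.} Since $\bar X$ is compact and $\mu_{o,s}$ is a probability measure, the family $(\mu_{o,s})$ is relatively compact in the weak-$*$ topology. I would extract a sequence $s_n \searrow \omega_\chi$ along which $\mu_{o,s_n}$ converges to some probability measure $\nu_o$. Using the Radon--Nikodym relation of the third bullet above, one then deduces weak-$*$ convergence of $\mu_{x,s_n}$ to a finite measure $\nu_x$ for every $x \in X$, and the family $\nu = (\nu_x)$ is automatically $\omega_\chi$-conformal. The pointwise equivariance estimates of the first two bullets pass to the limit, giving exact $H$-invariance and the $G$-quasi-invariance required in the statement.

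\textbf{Main obstacle.} The delicate point is to force the limit $\nu$ to be supported on $\partial X$. When $\mathcal P_\chi(\omega_\chi) = +\infty$ this is automatic: for any fixed $R \in \R_+$, the mass that $\mu_{o,s}$ places on $B(o,R)$ comes from the finitely many group elements with $\dist o{go} \leq R$, and is therefore $O(1/\mathcal P_\chi(s)) \to 0$. When $\mathcal P_\chi$ converges at the critical exponent, I would apply Patterson's trick: choose an increasing slowly varying function $\theta \colon \R_+ \to \R_+^*$ satisfying $\theta(r+a)/\theta(r) \to 1$ as $r \to \infty$ for every $a \in \R_+$, such that the reweighted series $\sum_g \theta(\dist o{go}) e^{\chi(g)} e^{-s \dist o{go}}$ still has critical exponent $\omega_\chi$ but now diverges there. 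Repeating the construction with the weights $\theta(\dist o{go}) e^{\chi(g)}$ preserves equivariance and conformality in the limit, the cost being only a multiplicative factor $\theta(\dist x{go})/\theta(\dist o{go}) \to 1$ that is absorbed in the limit. Verifying that this trick is compatible with the twist by the quasi-morphism and does not damage the uniform estimate in assertion (b) is the technical heart of the argument.
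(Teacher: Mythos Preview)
Your proposal is correct and follows essentially the same route as the paper: Patterson's construction on the horocompactification, with the slowly varying weight $\theta$ to force divergence at $s=\omega_\chi$, and direct verification of $H$-invariance, $\omega_\chi$-conformality, and the $G$-quasi-invariance estimate at the level of the approximating measures before passing to the weak-$*$ limit. The only cosmetic difference is that the paper builds $\theta$ into the approximating measures from the outset (writing $\nu_x^s = \mathcal Q(s)^{-1}\sum_g \theta(\dist x{go})\,e^{\chi(g)}e^{-s\dist x{go}}\mathrm{Dirac}(go)$ with the weight depending on $x$, which makes $H$-invariance exact before the limit), whereas you introduce $\theta$ only as a correction in the convergent case; both lead to the same limiting density.
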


\begin{rema*}
	If $H=G$ and $\chi$ is the trivial morphism, then the proposition says that there exists a $G$-invariant, $\omega_G$-conformal density supported on $\partial X$.
\end{rema*}

\begin{proof}
	The proof follows Patterson's strategy \cite{Patterson:1976hp}.
	As Burger and Mozes already observed, this construction can be carried without difficulty in the horocompactification of $X$ \cite{Burger:1996kc}.
	We only review here its main steps.
	Note that the twisted Poincaré series $\mathcal P_\chi(s)$ may converge at the critical exponent $s = \omega_\chi$.
	Using Patterson's idea, one produces a ``slowly growing'' function $\theta \colon \R_+ \to \R_+$ with the following properties -- see Roblin \cite[Lemme~2.1.1]{Roblin:2005fn}.
	\begin{labelledenu}[P]
		\item \label{enu: patterson - slowing growing weight}
		For every $\varepsilon > 0$, there exists $t_0 \geq 0$, 
	such that for every $t \geq t_0$ and $u \geq 0$, we have $ \theta(t + u ) \leq e^{\varepsilon u} \theta(t)$.
		\item  \label{enu: patterson - weighted series}
		The weighted twisted Poincaré series, defined by 
		\begin{equation}
		\label{eqn: weighted patterson series}
			 \mathcal Q(s)
			=
			\sum_{g \in G} \theta(\dist o{g o})e^{\chi(g)}e^{-s\dist{g o}o}
		 \end{equation}
		 is divergent whenever $s \leq \omega_\chi$, and convergent otherwise. In particular, $\mathcal Q(s)$ diverges to infinity as $s$ approaches $\omega_\chi$ (from above).
	\end{labelledenu}
	For every $x \in X$ and $s > \omega_\chi$, we define a measure on $\bar X$ by
	\begin{equation}
	\label{res: patterson approx}
		\nu_x^s = \frac 1{\mathcal Q(s)}\sum_{g \in G} \theta(\dist x{go}) e^{\chi(g)}e^{-s\dist x{go}} {\rm Dirac}(go).
	\end{equation}
	Since $\bar X$ is compact, the space of probability measures on $\bar X$ is compact for the weak-* topology.
	Consequently there exists a sequence of real numbers $(s_n)$ converging to $\omega_\chi$ from above, and such that for every $x \in X$, the measure $\nu_x^{s_n}$ converges to a measure on $\bar X$, that we denote by $\nu_x$. 
	Note that $\nu ^s= (\nu^s_x)$ is $H$-invariant, for every $s > \omega_\chi$.
	Moreover, since $\chi$ is a quasi-morphism, there exists $C \in \R_+^*$ such that for every $s > \omega_\chi$, $g \in G$ and $x \in X$, we have
	\begin{equation*}
		\frac 1C \nu^s_x \leq e^{-\chi(g)} {g^{-1}}_\ast \nu^s_{gx} \leq C \nu^s_x.
	\end{equation*}
	Hence the same properties hold for $\nu$.
	The horocompactification is precisely designed so that  the map
	\begin{equation*}
		\begin{array}{ccc}
			X \times X \times X & \to & \R \\
			(x,y,z) & \mapsto & \dist xz - \dist yz
		\end{array}
	\end{equation*}
	extends continuously to a map $X \times X \times \bar X \to \R$.
	Taking advantage of this fact, one checks that $\nu$ is $\omega_\chi$-conformal.
	Since $\mathcal Q(s)$ diverges when $s$ approaches $\omega_\chi$, the density $\nu$ is supported on $\partial X$.
\end{proof}

\subsection{Group action on the space of density}

\paragraph{Group action.}
The action of $G$ on $X$ induces a right action of $G$ on the set of densities.
Let $\mathfrak A$ be a $G$-invariant sub-$\sigma$-algebra of $\mathfrak B$. 
Given a density $\nu = (\nu_x)$ on $(\bar X, \mathfrak A)$ and $g \in G$, we define a new density $\nu^g$ as follows
\begin{equation*}
	\nu^g_x = \frac 1{\norm{\nu_{go}}} {g^{-1}}_\ast \nu_{gx}, \quad \forall x \in X.
\end{equation*}
We make the following observations.
\begin{enumerate}
	\item If $\nu$ is $\omega$-conformal, then the same holds for $\nu^g$.
	\item If $\nu$ is $H$-invariant, for some subgroup $H \subset G$, then $\nu^g$ is $H^g$-invariant, where $H^g = g^{-1}Hg$.
\end{enumerate}
In particular, if $N$ is a normal subgroup of $G$, then the map 
\begin{equation*}
	\begin{array}{ccc}
		\mathcal D(N,\omega) \times G & \to & \mathcal D(N,\omega) \\
		(\nu, g) & \mapsto & \nu^g
	\end{array}
\end{equation*}
defines a right action of $G$ on $\mathcal D(N,\omega)$ which is trivial when restricted to $N$.

\paragraph{Fixed point properties.}
For our study, we need to distinguish several fixed point properties for the action of $G$ on the space of densities.
Recall that a density $\nu = (\nu_x)$ is
\begin{enumerate}
	\item \label{enu: fix point - inv}
	\emph{$G$-invariant}, if $g_\ast \nu_x = \nu_{gx}$ for every $g \in G$ and $x \in X$.
\end{enumerate}
We say that $\nu$ is
\begin{enumerate}
	\setcounter{enumi}{1}
	\item \label{enu: fix point - almost inv}
	\emph{$G$-almost invariant}, if there exists $C \in \R_+^*$ such that for every $g \in G$ and $x \in X$,
	\begin{equation*}
		\frac 1C \nu_{gx} \leq g_\ast \nu_x \leq C \nu_{gx},
	\end{equation*}
	\item  \label{enu: fix point - fix}
	\emph{fixed by $G$}, if $\nu^g = \nu$, for every $g \in G$,
	\item \label{enu: fix point - afix}
	\emph{almost-fixed by $G$}, if there exists $C \in \R_+^*$ such that for every $g \in G$ and $x \in X$,
	\begin{equation*}
		\frac 1C \nu_x\leq \nu_x^g \leq C \nu_x,
	\end{equation*}
	\item \label{enu: fix point - qinv}
	\emph{$G$-quasi-invariant}, if $g_\ast \nu_o \ll \nu_o$, for every $g \in G$.
\end{enumerate}
If we want to emphasize the constant $C$ in \ref{enu: fix point - afix}, we will say that $\nu$ is \emph{$C$-almost-fixed by $G$}.
These properties are related as follows.
\begin{center}
	\begin{tikzpicture}
		\matrix (m) [matrix of math nodes, row sep=0.5em, column sep=2.5em, text height=1.5ex, text depth=0.25ex] 
		{ 
				&  \rm \ref{enu: fix point - almost inv}& &  	\\
				\rm \ref{enu: fix point - inv} &  & \rm \ref{enu: fix point - afix}& \rm \ref{enu: fix point - qinv}. 	\\
				& \rm \ref{enu: fix point - fix} & & \\
		}; 
		\draw[-{Straight Barb[length=4pt,width=5pt]}, double] (m-2-1) -- (m-1-2) ;
		\draw[-{Straight Barb[length=4pt,width=5pt]}, double] (m-2-1) -- (m-3-2) ;
		\draw[-{Straight Barb[length=4pt,width=5pt]}, double] (m-1-2) -- (m-2-3) ;
		\draw[-{Straight Barb[length=4pt,width=5pt]}, double] (m-3-2) -- (m-2-3) ;
		\draw[-{Straight Barb[length=4pt,width=5pt]}, double] (m-2-3) -- (m-2-4) ;
	\end{tikzpicture}
\end{center}
The reverse implications do not hold in general.
The next statement highlights the role of quasi-morphisms in our study.

\begin{lemm}
\label{res: almost-fixed gives quasi-morphism}
	Let  $\nu = (\nu_x)$ be a density.
	Assume that $\nu$ is fixed (\resp almost-fixed) by $G$.
	Then the map $\chi \colon G \to \R$ sending $g$ to $\ln \norm{\nu_{go}}$ is a morphism (\resp quasi-morphism).
\end{lemm}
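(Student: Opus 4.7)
The plan is a direct computation using the definition of the right action of $G$ on densities. The key observation is that pushforward preserves total mass, so for every $g \in G$ and $x \in X$ we have
\begin{equation*}
	\norm{\nu^g_x} = \frac{1}{\norm{\nu_{go}}} \norm{{g^{-1}}_\ast \nu_{gx}} = \frac{\norm{\nu_{gx}}}{\norm{\nu_{go}}}.
\end{equation*}
Once this identity is in hand, the fixed (resp.\ almost-fixed) condition can be read off as a multiplicative (resp.\ quasi-multiplicative) relation among the masses $\norm{\nu_{gx}}$, which becomes additive (resp.\ quasi-additive) after applying $\ln$.

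For the first statement, suppose $\nu$ is fixed by $G$. Then $\nu^h = \nu$ for every $h \in G$, so in particular $\norm{\nu^h_x} = \norm{\nu_x}$ for every $x \in X$. Combining this with the identity above yields $\norm{\nu_{hx}} = \norm{\nu_{ho}} \cdot \norm{\nu_x}$. Specializing to $x = go$ gives $\norm{\nu_{hgo}} = \norm{\nu_{ho}} \cdot \norm{\nu_{go}}$, and taking logarithms produces $\chi(hg) = \chi(h) + \chi(g)$, which shows that $\chi$ is a morphism.

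For the second statement, suppose $\nu$ is $C$-almost-fixed by $G$. Comparing the total masses in the inequality $C^{-1} \nu_x \leq \nu^g_x \leq C \nu_x$ and using the identity above gives
\begin{equation*}
	\frac{1}{C} \norm{\nu_{go}} \cdot \norm{\nu_x} \leq \norm{\nu_{gx}} \leq C \norm{\nu_{go}} \cdot \norm{\nu_x}.
\end{equation*}
Specializing to $x = ho$ and taking logarithms, one obtains
\begin{equation*}
	\abs{\chi(gh) - \chi(g) - \chi(h)} \leq \ln C,
\end{equation*}
which is exactly the quasi-morphism condition with defect $\ln C$. There is no real obstacle here — the proof is essentially bookkeeping, the only conceptual point being the observation that the normalization factor $\norm{\nu_{go}}$ in the definition of $\nu^g$ is precisely what converts the multiplicative cocycle relation for total masses into an additive (quasi-)cocycle relation for $\chi$.
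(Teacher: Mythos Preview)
Your proof is correct and follows essentially the same approach as the paper: compare total masses in the defining inequality $C^{-1}\nu_x \leq \nu^g_x \leq C\nu_x$ at $x = g'o$, using that pushforward preserves mass to obtain $\norm{\nu^g_x} = \norm{\nu_{gx}}/\norm{\nu_{go}}$. The paper treats only the almost-fixed case explicitly and remarks that the fixed case works the same way, whereas you spell out both; otherwise the arguments are identical.
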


\begin{proof}
	Assume that $\nu$ is almost-fixed by $G$ (if $\nu$ is fixed by $G$ the proof works in the exact same way).
	There exists $C \in \R_+^*$ such that for every $g \in G$ and $x \in X$, we have 
	\begin{equation*}
		\frac 1C \nu_x\leq \nu_x^g \leq C \nu_x.
	\end{equation*}
	Let $g,g' \in G$.
	Comparing the total masses of the above measures for $x = g'o$, we get
	\begin{equation*}
		\frac 1C\norm{\nu_{g'o}}
		\leq \frac{\norm{\nu_{gg'o}}}{\norm{\nu_{go}}}
		\leq C \norm{\nu_{g'o}}. \qedhere
	\end{equation*}
\end{proof}

\begin{lemm}
\label{res: ergodic quasi-inv induces almost-fixed}
	Let $H$ be a subgroup of $G$.
	Let $\mu = (\mu_x)$ be an $H$-invariant, $\omega$-quasi-conformal density on the reduced horocompactification $(\bar X, \mathfrak R)$.
	Assume that for every $g \in G$, the action of $H^g \cap H$ on $(\bar X, \mathfrak R, \mu_o)$ is ergodic.
	If $\mu$ is $G$-quasi-invariant, then $\mu$ is almost-fixed by $G$.
\end{lemm}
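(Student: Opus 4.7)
The plan is to control the Radon--Nikodym derivative $\lambda_g := d\mu^g_o/d\mu_o$ uniformly in $g$ and then propagate the estimate to every basepoint using quasi-conformality. First I would verify that $\mu^g$ is itself $\omega$-quasi-conformal with the same constant $C_0$ as $\mu$, and that $\mu^g$ is $H^g$-invariant (both recorded above). Combining $G$-quasi-invariance with quasi-conformality (which yields $\mu_{go} \sim \mu_o$) shows that $\mu^g_o$ and $\mu_o$ share the same measure class on $(\bar X,\mathfrak R)$, so $\lambda_g$ is a well-defined $\mathfrak R$-measurable positive function with $\int \lambda_g\,d\mu_o = \|\mu^g_o\| = 1$.

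The core step is to show that $\lambda_g$ is $(H \cap H^g)$-invariant up to a multiplicative constant. Fix $h \in H \cap H^g$. By $H$-invariance of $\mu$ we have $h_*\mu_o = \mu_{ho}$, and by $H^g$-invariance of $\mu^g$ we have $h_*\mu^g_o = \mu^g_{ho}$. From $\mu^g_o = \lambda_g \cdot \mu_o$ one reads directly
\begin{equation*}
	\frac{d(h_*\mu^g_o)}{d(h_*\mu_o)}(c) = \lambda_g(h^{-1}c),
\end{equation*}
while the chain rule together with $\omega$-quasi-conformality of both $\mu$ and $\mu^g$ gives
\begin{equation*}
	\frac{d\mu^g_{ho}}{d\mu_{ho}}(c)
	= \frac{d\mu^g_{ho}}{d\mu^g_o}(c)\,\lambda_g(c)\,\frac{d\mu_o}{d\mu_{ho}}(c)
	\in \bigl[K^{-1},K\bigr]\,\lambda_g(c),
\end{equation*}
with $K := C_0^2$, since the Busemann-type factors $e^{\pm\omega c(ho,o)}$ cancel up to the quasi-conformality constants. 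Hence $\lambda_g \circ h \in [K^{-1},K]\,\lambda_g$ holds $\mu_o$-a.e.\ for each $h$, and because $G$, hence $H \cap H^g$, is countable, this near-invariance holds simultaneously for all $h \in H \cap H^g$ outside a single $\mu_o$-null set.

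Now define the saturated envelope $\tilde\lambda_g(c) := \sup_{h \in H \cap H^g} \lambda_g(hc)$. Since $\mathfrak R$ is $G$-invariant and the supremum is countable, $\tilde\lambda_g$ is $\mathfrak R$-measurable; it is genuinely $(H \cap H^g)$-invariant modulo null sets and satisfies $\lambda_g \leq \tilde\lambda_g \leq K\,\lambda_g$. By the ergodicity hypothesis, $\tilde\lambda_g$ is $\mu_o$-a.e.\ equal to some constant $C_g$, which yields $C_g/K \leq \lambda_g \leq C_g$ $\mu_o$-a.e. Integrating against $\mu_o$ and using $\int \lambda_g\,d\mu_o = 1 = \|\mu_o\|$ forces $1 \leq C_g \leq K$, so that
\begin{equation*}
	K^{-1} \leq \lambda_g(c) \leq K \quad \text{for } \mu_o\text{-a.e.\ } c,
\end{equation*}
with a constant independent of $g$.

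Finally, to upgrade the bound from $x=o$ to arbitrary $x \in X$, I would apply the chain rule
\begin{equation*}
	\frac{d\mu^g_x}{d\mu_x}(c) = \frac{d\mu^g_x}{d\mu^g_o}(c)\,\lambda_g(c)\,\frac{d\mu_o}{d\mu_x}(c),
\end{equation*}
noting that the outer two factors are each comparable to $e^{\mp\omega c(x,o)}$ up to $C_0$, so they cancel up to $K$ and produce the uniform estimate $K^{-2}\mu_x \leq \mu^g_x \leq K^2\mu_x$ for all $g \in G$ and $x \in X$. The delicate point throughout is purely measure-theoretic: to invoke ergodicity on $(\bar X,\mathfrak R,\mu_o)$ one must keep everything $\mathfrak R$-measurable, and this is precisely where the countability of $H \cap H^g$ and the $G$-invariance of $\mathfrak R$ are essential; the rest is careful bookkeeping of Radon--Nikodym derivatives.
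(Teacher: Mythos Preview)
Your proof is correct and follows essentially the same approach as the paper: both show the Radon--Nikodym derivative at $o$ is $(H\cap H^g)$-almost-invariant up to the quasi-conformality constant, invoke ergodicity to make it essentially constant, integrate to pin down that constant, and then propagate to all basepoints via quasi-conformality. The only cosmetic differences are that the paper works with the unnormalized derivative $F(g,c)=\frac{d(g^{-1}_\ast\mu_{go})}{d\mu_o}(c)$ and establishes the near-invariance through a general quasi-cocycle identity for $F$, whereas you normalize to $\lambda_g$ and appeal directly to the $H^g$-invariance of $\mu^g$; the paper also takes an infimum where you take a supremum.
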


\begin{proof}
	Since $\mu$ is $G$-quasi-invariant, we can define the following map
	\begin{equation*}
		\begin{array}{rccc}
			F \colon & G \times \bar X & \to & \R_+ \\
			& (g,c) & \mapsto & \displaystyle \frac{d \left({g^{-1}}_\ast \mu_{go}\right)}{d\mu_o}(c).
		\end{array}
	\end{equation*}
	
	\begin{clai}
		There exists $C \in \R_+^*$ such that for every $g_1, g_2 \in G$, we have
		\begin{equation*}
			\frac 1C\frac{F(g_1g_2,c)}{F(g_2,c) } \leq F(g_1,g_2c) \leq C \frac{F(g_1g_2,c)}{F(g_2,c) }, \quad \mu\text{-a.e.}
		\end{equation*}
	\end{clai}
	
	\noindent
	Let $g_1, g_2 \in G$.
	The computation gives
	\begin{equation}
	\label{eqn: ergodic quasi-inv induces almost-fixed}			
			F(g_1g_2,c) 
			=  \frac{d \left({g_1^{-1}}_\ast \mu_{g_1g_2o}\right)}{d\mu_{g_2o}}(g_2c) \ \frac{d \left({g_2^{-1}}_\ast \mu_{g_2o}\right)}{d\mu_o}(c), \quad \mu\text{-a.e.}	\end{equation}
	Since $\mu$ is $\omega$-quasi-conformal, there exists $C \in \R_+$ such that for every $x ,y\in X$, for every $g \in G$, we have
	\begin{equation*}
		\frac 1C \frac {d\mu_x}{d\mu_y}
		\leq \frac{d\mu_{gx}}{d\mu_{gy}} \circ g
		\leq C  \frac {d\mu_x}{d\mu_y}.
	\end{equation*}
	Hence
	\begin{equation*}
		\frac 1C \frac{d \left({g^{-1}}_\ast \mu_{gy}\right)}{d\mu_y}
		\leq \frac{d \left({g^{-1}}_\ast \mu_{gx}\right)}{d\mu_x}
		\leq C \frac{d \left({g^{-1}}_\ast \mu_{gy}\right)}{d\mu_y}.
	\end{equation*}
	It follows that the first factor in the right-hand side of (\ref{eqn: ergodic quasi-inv induces almost-fixed}) is $F(g_1,g_2c)$  -- up to a multiplicative error that does not depend on $g_1$ or $g_2$ -- while the second factor is exactly $F(g_2,c)$.
	This completes the proof of our claim.

	Let $g \in G$ and set $H_0 = H^g \cap H$.
	Let $h \in H_0$.
	According to our claim we have
	\begin{equation*}
		\frac 1C\frac{F(gh,c)}{F(h,c) } \leq F(g,hc) \leq C \frac{F(gh,c)}{F(h,c) }, \quad \mu\text{-a.e.}
	\end{equation*}
	Recall that $\mu$ is $H$-invariant, thus
	\begin{equation*}
		F(h,c) = 1, 
		\quad \text{and} \quad
		F(gh,c) = F(ghg^{-1}g,c) = F(g,c), \quad \mu\text{-a.e.}
	\end{equation*}
	Our previous inequalities becomes
	\begin{equation*}
		\frac 1C F(g,c) \leq F(g,hc) \leq C F(g,c), \quad \mu\text{-a.e.}
	\end{equation*}
	We now define an auxiliary function $F_g \colon \bar X \to \R_+$ by 
	\begin{equation*}
		F_g(c) = \inf_{h \in H_0} F(g,hc).
	\end{equation*}
	By construction $F_g$ is $H_0$-invariant.
	Since the action of $H_0$ on $(\bar X, \mathfrak R, \mu_o)$ is ergodic, $F_g$ is constant.
	From now on, we denote by $F_g$ its essential value.
	It follows from our previous observation that 
	\begin{equation*}
		F_g \leq F(g,c) \leq C F_g, 
		\quad \mu\text{-a.e.}	
	\end{equation*}
	Coming back to the definition of $F$ this means that	\begin{equation*}
		F_g \leq  \frac{d \left({g^{-1}}_\ast \mu_{go}\right)}{d\mu_o} \leq C F_g, \quad \mu\text{-a.e.}
	\end{equation*}
	Integrating these inequalities, we see that $F_g \leq \norm{\mu_{go}} \leq C F_g$ .
	Hence 
	\begin{equation*}
		\frac 1C \norm{\mu_{go}} \leq  \frac{d \left({g^{-1}}_\ast \mu_{go}\right)}{d\mu_o} \leq C \norm{\mu_{go}}.
	\end{equation*}
	Recall that $C$ does not depend on $g$.
	We have proved that there exists $C \in \R_+^*$ such that for every $g \in G$, 
	\begin{equation*}
		\frac 1C \mu_o \leq \mu^g_o \leq C \mu_o.
	\end{equation*}
	Using the quasi-conformality of $\mu$, we conclude that $\mu$ is almost-fixed by $G$.
\end{proof}

\begin{rema*}
	The same argument shows that if $\mu$ is $\omega$-conformal (instead of $\omega$-quasi-conformal) then $\mu$ is fixed by $G$.
\end{rema*}

\subsection{The Shadow Principle}

Given $x,y \in X$ and $c \in \bar X$, we define the following Gromov product 
\begin{equation*}
\label{eqn: gromov product}
	\gro xcy = \frac 12 \left[\dist  xy + c(y,x)\right].
\end{equation*}

\begin{rema*}
	If $c = \iota(z)$ for some $z \in X$, then the above formula coincides with the usual definition of the Gromov product.
\end{rema*}

Since cocycles in $\bar X$ are $1$-Lipschitz, we always have $0 \leq \gro xcy \leq \dist xy$.
We also observe that 
\begin{equation}
\label{eqn: gromov product - lip}
	\abs{ \gro xcy - \gro{x'}c{y'}} \leq \dist x{x'} + \dist y{y'}, \quad \forall x,x',y,y' \in X.
\end{equation}

\begin{defi}
	Let $x,y \in X$.
	Let $r \in \R_+$.
	The \emph{$r$-shadow} of $y$ seen from $x$, is the set 
	\begin{equation*}
		\mathcal O_x(y,r) = \set{ c \in \bar X}{ \gro xcy \leq r}.
	\end{equation*}
\end{defi}

By construction, $\mathcal O_x(y,r)$ is a closed subset of $\bar X$.
It follows from (\ref{eqn: gromov product - lip}) that for every $x,x',y,y' \in X$ and $r \in \R_+$,
\begin{equation}
\label{eqn: inclusion shadows triangle inequality}
	\mathcal O_x(y,r) \subset \mathcal O_{x'}(y',r'), \quad \text{where} \quad r' = r + \dist x{x'} + \dist y{y'}.
\end{equation}

\begin{rema*}
	A more intuitive definition of shadows could have been the following: a cocycle $c \in \bar X$ belongs to $\mathcal O_x(y,r)$ if some gradient arc from $x$ to $c$ passes at a distance at most $r$ from $y$.
	Nevertheless, unlike our approach, this definition is very sensitive to the change of point $x$.
\end{rema*}

Following Roblin with small variations we define the Shadow Principle \cite{Roblin:2005fn}.

\begin{defi}
\label{def: shadow principle}
	Let $\omega \in \R_+$ and $(\epsilon, r_0) \in \R_+^* \times \R_+$.
	Let $\nu = (\nu_x)$ be an $\omega$-conformal density on $(\bar X, \mathfrak B)$.
	We say that $(G, \nu)$ satisfies the \emph{Shadow Principle with parameters $(\epsilon, r_0)$} if for every $g \in G$ and $r \geq r_0$, we have
	\begin{equation}
	\label{eqn: shadow principle}
		\nu_o\left( \mathcal O_o(go,r)\right) 
		\geq \epsilon\norm{\nu_{go}}e^{-\omega \dist o{go}}.
	\end{equation}
	We say that $(G, \nu)$ satisfies the \emph{Shadow Principle}, if there are $(\epsilon, r_0) \in \R_+^* \times \R_+$ such that $(G, \nu)$ satisfies the Shadow Principle with parameters $(\epsilon, r_0)$.
\end{defi}

\begin{rema*}
	If the inequality (\ref{eqn: shadow principle}) holds for $r = r_0$, then it automatically holds for every $r \geq r_0$.
	We will see later that a similar upper bound is always satisfied without any additional assumption.
\end{rema*}

Our next task is to adapt Sullivan's celebrated Shadow Lemma (\autoref{res: shadow lemma}).
It states that $(G,\nu)$ satisfies the Shadow Principle whenever $\nu$ is an $\omega$-conformal density which is $N$-invariant for some infinite normal subgroup $N \vartriangleleft G$.

\begin{prop}
\label{res: pre-shadow lemma}
	Assume that $G$ is not virtually cyclic and contains a contracting element.
	Let $\mathcal D_0$ be a closed subset of $G$-quasi-invariant densities on $\bar X$.
	There exists $(\epsilon, r_0)  \in \R_+^*\times \R_+$ with the following property.
	For every $r \geq r_0$, for every density $\nu = (\nu_x)$ in $\mathcal D_0$, for every $z \in X$, we have
	\begin{equation*}
		\nu_o\left(\mathcal O_z(o,r)\right) \geq \epsilon.
	\end{equation*}
\end{prop}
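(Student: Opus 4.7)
The plan is to argue by contradiction. Suppose the conclusion fails; then one can extract sequences $r_n \to \infty$, $\nu^n \in \mathcal D_0$ and $z_n \in X$ with $\nu^n_o\bigl(\mathcal O_{z_n}(o,r_n)\bigr) < 1/n$. Since $\mathcal O_z(o,r) = \bar X$ as soon as $r \geq \dist oz$, one necessarily has $r_n < \dist o{z_n}$, hence $\dist o{z_n} \to \infty$. Using compactness of $\bar X$ together with the closedness of $\mathcal D_0$ inside the compact space of densities (via a diagonal extraction on a countable dense subset of $X$), I pass to subsequences so that $\iota(z_n) \to c$ in $\bar X$ with $c \in \partial X$, and $\nu^n \to \nu$ in $\mathcal D_0$. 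In particular $\nu^n_o \to \nu_o$ in the weak-$\ast$ topology.

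The heart of the strategy is to prove $\nu_o = \delta_c$ and then contradict $G$-quasi-invariance. For the first step I would establish the following geometric claim: for every open neighborhood $U$ of $c$ in $\bar X$, one has $\bar X \setminus \mathcal O_{z_n}(o,r_n) \subseteq U$ for all sufficiently large $n$. Rewriting $2\gro{z_n}{c'}{o} = \dist o{z_n} + c'(o,z_n)$, this amounts to a uniform upper bound on $\gro{z_n}{c'}{o}$ as $c'$ ranges over the closed set $\bar X \setminus U$; since $r_n \to \infty$, any such bound is sufficient. Granted the claim, $\nu^n_o(U) \geq \nu^n_o\bigl(\bar X \setminus \mathcal O_{z_n}(o,r_n)\bigr) > 1 - 1/n$ for every open $U \ni c$, and the elementary portmanteau fact that a sequence of probability measures assigning asymptotic mass $1$ to every open neighborhood of $c$ converges weak-$\ast$ to $\delta_c$ then gives $\nu_o = \delta_c$.

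The hard part is precisely this geometric claim, and this is where the contracting hypothesis enters. Applying \autoref{res: existence transverse contracting} to a contracting element $g$ I would obtain $u \in G$ with $\partial^+A \cap \partial^+(uA) = \emptyset$; setting $g_1 = g^N$ and $g_2 = (ugu^{-1})^N$ for $N$ large enough that $\dist o{g_i o}$ exceeds the contracting constant $\alpha$, the orbits $A_i = \{g_i^k o : k \in \Z\}$ both pass through $o$, are $\alpha$-contracting, and still satisfy $\partial^+A_1 \cap \partial^+A_2 = \emptyset$ (being at finite Hausdorff distance from $A$ and $uA$, respectively). The limit $c$ lies outside at least one of $\partial^+A_1, \partial^+A_2$; say $c \notin \partial^+A_1$. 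By \autoref{res: equivalent def not boundary contracting} every gradient ray toward $c$ leaves every tubular neighborhood of $A_1$ and the projection of $c$ onto $A_1$ is bounded. A case analysis using \autoref{res: proj cocycle contracting set} and \autoref{res: proj cocycle contracting set bdy}, applied to the gradient arcs from $z_n$ to $c'$ together with the projections of $z_n$ and $c'$ onto $A_1$ (and onto $A_2$ in the symmetric case $c \in \partial^+A_1$), then pins $\gro{z_n}{c'}{o}$ to a uniform bound whenever $c'$ lies in $\bar X \setminus U$.

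With $\nu_o = \delta_c$ in hand, the contradiction is immediate. Since $\nu \in \mathcal D_0$ is $G$-quasi-invariant, $\delta_{gc} = g_\ast \delta_c \ll \delta_c$ for every $g \in G$, which forces $gc = c$. If $c \notin \partial^+A_i$ then $c$ admits a projection $p = g_i^k o$ on $A_i$, and $g_i p = g_i^{k+1} o$ is another projection of $c$ by $g_i$-equivariance; but projections of $c$ on $A_i$ have diameter at most $\alpha$ by \autoref{res: equivalent def not boundary contracting}, whereas $\dist p{g_i p} = \dist o{g_i o} > \alpha$ by the choice of $N$. Hence $c \in \partial^+A_i$ for both $i$, contradicting $\partial^+A_1 \cap \partial^+A_2 = \emptyset$.
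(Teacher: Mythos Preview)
Your overall contradiction setup matches the paper's, but the ``geometric claim'' at the heart of your argument is false: it is \emph{not} true that $\bar X \setminus \mathcal O_{z_n}(o,r_n)$ is eventually contained in every open neighborhood of $c$. A single contracting axis $A_1$ only controls cocycles $c'$ whose projection on $A_1$ is far from the projection $p$ of $c$; when $c'$ projects near $p$ (your subcase where $\dist pq' \leq 4\alpha$, so \autoref{res: proj cocycle contracting set} does not apply), nothing forces $c'$ to lie near $c$ in the horocompactification, nor $\gro{z_n}{c'}{o}$ to stay bounded. Concretely, take $G = \Z^2 * \Z$ acting on its Cayley graph with $t$ the generator of the $\Z$-factor (contracting), $o=e$, $z_n = a^n$, and let $c'$ be the limit of $a^m b^m$. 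One checks $c \neq c'$ while $\gro{z_n}{c'}{o} = n$; both $z_n$ and $c'$ project to $o$ on the $t$-axis, so neither $A_1$ nor any case analysis based on it can produce the uniform bound you need. Hence the conclusion $\nu_o = \delta_c$ is unreachable by this route.

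The paper avoids this by aiming for a strictly weaker conclusion. Having chosen the axis $A$ with $c \notin \partial^+A$ and $p$ a projection of $c$ on $A$, it defines the closed set $F = \{c' \in \bar X : c'(p,y) \leq 4\alpha,\ \forall y \in A\}$ and shows that $\bar X \setminus F \subset \mathcal O_{z_n}(o,r_n)$ for large $n$ --- this is exactly the computation you do outside the problematic subcase, and $F$ is precisely what absorbs that subcase. One deduces $\nu_o(F)=1$. The contradiction then comes not from a fixed-point argument but from a disjointness trick: choosing $g \in \group h$ with $\dist{gp}{p}$ large, one shows $F \cap gF = \emptyset$, while $G$-quasi-invariance gives $\nu_o(gF)>0$, so $\nu_o(F \cup gF) > 1$. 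Your final paragraph (forcing $Gc=c$ and contradicting bounded projections) would work if $\nu_o=\delta_c$ held, but that hypothesis is simply too strong for spaces that are not hyperbolic.
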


\begin{proof}
	Assume that our claim fails.
	We can find a sequence $(r_n)$ diverging to infinity, a sequence $(z_n)$ of points in $X$, and a sequence $\nu^n = (\nu^n_x)$ of densities in $\mathcal D_0$ such that 
	\begin{equation*}
		\nu^n_o\left(\mathcal O_{z_n}(o,r_n)\right)
	\end{equation*}
	converges to zero.
	Since $\mathcal D_0$ is closed, up to passing to a subsequence, we may assume that $\nu^n$ converges to a density $\nu$ in $\mathcal D_0$.
	Note that for every $x,y \in X$ and $r \in \R_+$, we have $\mathcal O_x(y,r) = \bar X$, whenever $\dist xy \leq r$.
	Hence $\dist o{z_n}$ necessarily diverges to infinity.
	Up to passing again to a subsequence, we can assume that $z_n$ converges to $b \in \partial X$.
	
	By assumption $G$ is not virtually cyclic and contains a contracting element 
	According to \autoref{res: existence transverse contracting}, there exists a contracting element $h \in G$, such that $b \notin \partial ^+A$, where $A = \group h o$ is $\alpha$-contracting for some $\alpha \in \R_+^*$.
	For every $n \in \N$, we write $p_n$ for a projection of $z_n$ onto $A$.
	Up to passing to a subsequence $(p_n)$ converges to a point $p \in A$ which is a projection of $b$ onto $A$ (\autoref{res: equivalent def not boundary contracting}).
	We introduce the following closed subset of $\bar X$
	\begin{equation*}
		F = \set{c \in \bar X}{ c(p,y) \leq 4\alpha, \ \forall y \in Y}.
	\end{equation*}
	We are going to prove that $\nu_o(F) = 1$.
		
	Let $c \in \bar X \setminus F$.
	Suppose first that $c \notin \partial^+A$.
	Let $q$ be a projection of $c$ onto $A$.
	Since $c$ is $1$-Lipschitz, we have
	\begin{equation*}
		c(p,y) \leq \dist pq + c(q,y) \leq \dist pq, \quad \forall y \in A.
	\end{equation*}
	As $c$ does not belong to $F$, necessarily $\dist pq > 4\alpha$.
	In particular, there exists $N_0 \in \N$, such that for every $n \geq N_0$, we have $\dist {p_n}q > 4\alpha$.
	According to \autoref{res: proj cocycle contracting set}, $\gro {z_n}c{p_n} \leq 2\alpha$.
	Note that the conclusion still holds if $c \in \partial^+A$.
	This is indeed a consequence of \autoref{res: proj cocycle contracting set bdy}.
	Hence in all cases, we have
	\begin{equation*}
		\gro{z_n}co \leq \gro{z_n}c{p_n} + \dist o{p_n} \leq 2\alpha + \dist o{p_n}.
	\end{equation*}
	Recall that $(p_n)$ is bounded.
	Consequently, there is $N_1 \geq N_0$ such that for every $n \geq N_1$, the set $\bar X \setminus F$ is contained in $\mathcal O_{z_n}(o,r_n)$.
	In particular, $\nu^n_o (\bar X \setminus F)$ converges to zero.
	Since $\bar X \setminus F$ is an open subset of $\bar X$, we deduce that $\nu_o(\bar X \setminus F) = 0$, i.e. $\nu_o(F) = 1$.
	
	The group $\group h$ has unbounded orbits.
	Thus there is $g \in \group h$ such that $\dist{gp}p > 36\alpha$.
	We claim that $F \cap gF$ is empty.
	Let $c \in F$.
	Remember that $c$ does not belong to $\partial^+A$ by \autoref{res: equivalent def not boundary contracting}~\ref{enu: equivalent def not boundary contracting - bounded for all}.
	Choose a projection $q$ of $c$ onto $A$.
	It follows from \autoref{res: proj cocycle contracting set} that 
	\begin{equation*}
		4\alpha \geq c(p,q) \geq \dist pq - 14\alpha.
	\end{equation*}
	Hence $\dist pq \leq 18\alpha$.
	Suppose now that contrary to our claim $c$ also belongs to $gF$. 
	In particular, $g^{-1}q$ is a projection of $g^{-1}c$ onto $A$.
	Following the same argument as above, we get $\dist p{g^{-1}q} \leq 18\alpha$.
	Consequently $\dist p{gp} \leq 36\alpha$, a contradiction.
	Observe that
	\begin{equation*}
		\nu_o(gF) = {g^{-1}}_*\nu_o(F) = \norm{\nu_{go}} \nu^g_{g^{-1}o}(F).
	\end{equation*}
	As an element of $\mathcal D_0$, the density $\nu$ is $G$-quasi-invariant, hence $\nu_o \ll \nu^g_{g^{-1}o}$.
	We prove that $\nu_o(F) > 0$, thus $\nu_o(gF) > 0$.
	Consequently,
	\begin{equation*}
		\nu_o( F \cup gF)  = \nu_o( F) + \nu_o(g F) > 1.
	\end{equation*}
	This contradicts the fact that $\nu_o$ is a probability measure.
\end{proof}

\begin{coro}
\label{res: shadow lemma}
	Assume that $G$ is not virtually cyclic and has a contracting element.
	Let $N$ be an infinite normal subgroup of $G$.
	Let $\omega \in \R_+$.
	There exists $(\epsilon, r_0)  \in \R_+^* \times \R_+$ such that for every $r \geq r_0$, for every $N$-invariant, $\omega$-conformal density $\nu = (\nu_x)$, for every $g \in G$, we have
	\begin{equation}
	\label{eqn: shadow lemma}
		\epsilon \norm{\nu_{go}}e^{-\omega \dist o{go}}
		\leq \nu_o\left( \mathcal O_o(go,r)\right) 
		\leq e^{2\omega r}  \norm{\nu_{go}}e^{-\omega \dist o{go}}.
	\end{equation}
	In particular, $(G,\nu)$ satisfies the Shadow Principle with parameters $(\epsilon, r_0)$.
\end{coro}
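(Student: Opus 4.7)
My plan is to prove the two inequalities separately: the upper bound is a direct consequence of conformality, while the lower bound reduces to \autoref{res: pre-shadow lemma} after a change of base point. For the upper bound, I would start from the defining condition of the shadow: any $c \in \mathcal O_o(go,r)$ satisfies $\dist o{go} + c(go,o) \leq 2r$, so $c(o,go) \geq \dist o{go} - 2r$. Conformality then yields
\begin{equation*}
	\frac{d\nu_o}{d\nu_{go}}(c) \;=\; e^{-\omega c(o,go)} \;\leq\; e^{2\omega r}\, e^{-\omega \dist o{go}}
\end{equation*}
on $\mathcal O_o(go,r)$. Integrating and using $\nu_{go}(\mathcal O_o(go,r)) \leq \norm{\nu_{go}}$ gives the upper estimate, and this part uses only that $\nu$ is $\omega$-conformal.

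For the lower bound, I would first record the algebraic identity $\mathcal O_o(go,r) = g \cdot \mathcal O_{g^{-1}o}(o,r)$, which follows by unpacking the $G$-action on cocycles and checking that $\gro{o}{gc}{go} = \gro{g^{-1}o}{c}{o}$. Combined with the identity $\nu^g_{g^{-1}o} = \norm{\nu_{go}}^{-1} {g^{-1}}_\ast \nu_o$, this rewrites
\begin{equation*}
	\nu_o(\mathcal O_o(go,r)) \;=\; \norm{\nu_{go}}\, \nu^g_{g^{-1}o}(\mathcal O_{g^{-1}o}(o,r)).
\end{equation*}
Next, applying conformality of $\nu^g$ between $o$ and $g^{-1}o$, together with the $1$-Lipschitz inequality $c(o, g^{-1}o) \geq -\dist o{go}$, I would deduce
\begin{equation*}
	\nu^g_{g^{-1}o}(A) \;\geq\; e^{-\omega \dist o{go}}\, \nu^g_o(A)
\end{equation*}
for any measurable $A$. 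Combining the two displays, the lower bound reduces to producing a uniform $\epsilon > 0$ such that $\nu^g_o(\mathcal O_{g^{-1}o}(o,r)) \geq \epsilon$ for every $g \in G$ and every $\nu$ in the class at hand.

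This uniform lower bound is exactly the conclusion of \autoref{res: pre-shadow lemma}, applied with $z = g^{-1}o$ to the class $\mathcal D_0$ of all $N$-invariant, $\omega$-conformal densities on $(\bar X, \mathfrak B)$. The class is closed, and since $N$ is normal, it is stable under $\nu \mapsto \nu^g$, so each $\nu^g$ lies in $\mathcal D_0$. The main obstacle is that \autoref{res: pre-shadow lemma} was stated under $G$-quasi-invariance, which need not hold for a merely $N$-invariant density; the fix is to run the proof of that proposition with $G$ replaced by $N$ throughout. This is legitimate because, by \autoref{res: contracting element in commensurated subgroup}, the infinite normal subgroup $N$ is itself not virtually cyclic and contains a contracting element, so that \autoref{res: existence transverse contracting} is available inside $N$; moreover any $N$-invariant $\omega$-conformal density is automatically $N$-quasi-invariant by conformality, which is the only place where the quasi-invariance hypothesis enters the argument. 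This produces constants $\epsilon, r_0 > 0$ depending only on the action of $N$, giving the lower bound and showing that $(G, \nu)$ satisfies the Shadow Principle with parameters $(\epsilon, r_0)$.
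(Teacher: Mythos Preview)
Your proof is correct and follows essentially the same route as the paper's: derive the upper bound directly from conformality on the shadow, and reduce the lower bound to \autoref{res: pre-shadow lemma} via the identity $\nu_o(\mathcal O_o(go,r)) = \norm{\nu_{go}}\,\nu^g_{g^{-1}o}(\mathcal O_{g^{-1}o}(o,r))$ together with the normality of $N$. One clarification: what you describe as ``running the proof of that proposition with $G$ replaced by $N$'' is exactly what the paper does by \emph{applying} \autoref{res: pre-shadow lemma} to the group $N$ and $\mathcal D_0 = \mathcal D(N,\omega)$ --- the proposition is stated for an abstract group, so once \autoref{res: contracting element in commensurated subgroup} guarantees $N$ is not virtually cyclic and has a contracting element, the quasi-invariance hypothesis becomes $N$-quasi-invariance, which $N$-invariance already gives.
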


\begin{proof}
	Let $\nu = (\nu_x)$ be an $\omega$-conformal density.
	A classical computation shows that for every $g \in G$ and $r \in \R_+$, we have
	\begin{equation*}
		\nu_o\left(\mathcal O_o(go,r)\right)
		= \norm{\nu_{go}}\int \mathbb 1_{\mathcal O_{g^{-1}o}(o, r)}(c) e^{-\omega c(g^{-1}o,o)} d\nu^g_o(c).
	\end{equation*}
	Shadows have been designed so that for every $c \in \mathcal O_{g^{-1}o}(o, r)$, we have 
	\begin{equation*}
		\dist o{go} -2 r \leq c(g^{-1}o,o) \leq \dist o{go}.
	\end{equation*}
	Consequently, 
	\begin{equation*}
		\nu^g_o\left(\mathcal O_{g^{-1}o}(o,r)\right)
		\leq \frac { e^{\omega\dist {go}o}}{\norm{\nu_{go}}} \nu_o\left(\mathcal O_o(go,r)\right)
		\leq e^{2\omega r}\nu^g_o\left(\mathcal O_{g^{-1}o}(o,r)\right).
	\end{equation*}
	The upper bound in (\ref{eqn: shadow lemma}) follows from the fact that $\nu^g_o$ is a probability measure.
	Let us focus on the lower bound.
	To that end, we assume now that $\nu$ is $N$-invariant.
	As $N$ is a normal subgroup of $G$, the density $\nu^g$ is $N$-invariant as well.
	Since $G$ contains a contracting element, and is not virtually cyclic, the same holds for $N$ (\autoref{res: contracting element in commensurated subgroup}).
	The result now follows from \autoref{res: pre-shadow lemma} applied with the group $N$ and the closed set $\mathcal D_0 = \mathcal D(N,\omega)$.
\end{proof}

\begin{rema}
\label{rem: shadow lemma}
	Note that the upper bound in (\ref{eqn: shadow lemma}) was proved without assuming any invariance for $\nu$.
	Moreover it works for any $r \in \R_+$.
\end{rema}

Here is another variation of the Shadow Lemma.

\begin{coro}
\label{res: shadow lemma almost-fixed}
	Assume that $G$ is not virtually cyclic and contains a contracting element.
	Let $\omega \in \R_+$ and $\nu = (\nu_x)$ be an $\omega$-conformal density.
	If $\nu$ is almost fixed by $G$, then $(G,\nu)$ satisfies the Shadow Principle.
\end{coro}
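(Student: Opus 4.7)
The plan is to reduce to \autoref{res: pre-shadow lemma} applied to a suitable closed family of $G$-quasi-invariant densities associated to $\nu$. Let $C \in \R_+^*$ be the constant witnessing that $\nu$ is $C$-almost-fixed by $G$. I would consider the orbit $\set{\nu^g}{g \in G}$ under the right action of $G$ on $\mathcal D(\omega)$, and let $\mathcal D_0$ denote its weak-$*$ closure in $\mathcal D(\omega)$. Since almost-fixedness gives $C^{-1}\nu_x \leq \nu^g_x \leq C\nu_x$ for all $g \in G$ and $x \in X$, the same two-sided bound passes to weak-$*$ limits, so every $\mu \in \mathcal D_0$ satisfies $C^{-1}\nu_x \leq \mu_x \leq C\nu_x$.

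From this bound I can check that $\mathcal D_0$ is a closed family of $G$-quasi-invariant densities. Indeed, almost-fixedness of $\nu$ combined with the Harnack-type estimate (\ref{eqn: harnack}) shows that $g_\ast \nu_o$ is comparable to $\nu_o$ for every $g \in G$; in particular $\nu$ is $G$-quasi-invariant. For any $\mu \in \mathcal D_0$, since $\mu_o$ is equivalent to $\nu_o$ (by the above two-sided bound), we get $g_\ast \mu_o \ll g_\ast \nu_o \ll \nu_o \ll \mu_o$, so $\mu$ is $G$-quasi-invariant as well.

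Now I would invoke \autoref{res: pre-shadow lemma} on the closed family $\mathcal D_0$, obtaining constants $(\epsilon_0, r_0) \in \R_+^* \times \R_+$ such that
\begin{equation*}
	\mu_o\bigl(\mathcal O_z(o,r)\bigr) \geq \epsilon_0, \quad \forall \mu \in \mathcal D_0,\ \forall z \in X,\ \forall r \geq r_0.
\end{equation*}
Specializing to $\mu = \nu^g$ and $z = g^{-1}o$ yields $\nu^g_o\bigl(\mathcal O_{g^{-1}o}(o,r)\bigr) \geq \epsilon_0$ for all $g \in G$ and $r \geq r_0$.

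Finally, the change-of-variable computation performed at the start of the proof of \autoref{res: shadow lemma}, which uses neither invariance nor contracting elements, gives
\begin{equation*}
	\nu_o\bigl(\mathcal O_o(go, r)\bigr) \geq \norm{\nu_{go}}\, e^{-\omega \dist o{go}}\, \nu^g_o\bigl(\mathcal O_{g^{-1}o}(o, r)\bigr).
\end{equation*}
Combining the two inequalities produces exactly (\ref{eqn: shadow principle}) with parameters $(\epsilon_0, r_0)$, which is the Shadow Principle. The only point demanding care is the verification that the weak-$*$ orbit closure $\mathcal D_0$ consists of $G$-quasi-invariant densities, but this is forced by the uniform two-sided bound coming from almost-fixedness; beyond that, the argument is a direct application of \autoref{res: pre-shadow lemma} together with the routine shadow calculation already used in \autoref{res: shadow lemma}.
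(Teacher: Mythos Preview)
Your argument is correct, but you take a small detour compared to the paper. The paper applies \autoref{res: pre-shadow lemma} to the singleton $\mathcal D_0 = \{\nu\}$ (which is trivially closed and $G$-quasi-invariant, since almost-fixed implies quasi-invariant), obtaining $\nu_o(\mathcal O_{g^{-1}o}(o,r)) \geq \epsilon_0$. Then it uses the almost-fixed inequality $\nu^g_o \geq C^{-1}\nu_o$ \emph{directly} to deduce $\nu^g_o(\mathcal O_{g^{-1}o}(o,r)) \geq C^{-1}\epsilon_0$, and plugs this into the same change-of-variable formula you wrote. In other words, the almost-fixed bound already gives the comparison between $\nu^g_o$ and $\nu_o$ on every Borel set, so there is no need to form the orbit closure, check that weak-$*$ limits inherit the two-sided bound, or verify quasi-invariance for every member of $\mathcal D_0$. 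Your route works, but the paper's is one line shorter and avoids the limit argument entirely.
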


\begin{proof}
	Let $g \in G$ and $r \in \R_+$.
	Reasoning as in the proof of \autoref{res: shadow lemma}, we see that
	\begin{equation*}
		\nu_o\left(\mathcal O_o(go,r)\right) 
		\geq \norm{\nu_{go}} e^{-\omega \dist o{go}} \nu^g_o\left(\mathcal O_{g^{-1}o}(o,r)\right).
	\end{equation*}
	Since $\nu$ is almost-fixed by $G$, there is $\epsilon \in \R_+^*$, which does not depend on $g$ or $r$, such that $\nu^g_o \geq \epsilon \nu_o$.
	Consequently,
	\begin{equation*}
		\nu_o\left(\mathcal O_o(go,r)\right) 
		\geq \epsilon\norm{\nu_{go}} e^{-\omega \dist o{go}} \nu_o\left(\mathcal O_{g^{-1}o}(o,r)\right).
	\end{equation*}
	The result now follows from \autoref{res: pre-shadow lemma} applied with $\mathcal D_0 = \{ \nu\}$.
\end{proof}

\subsection{Contracting tails}

In order to take full advantage of the Shadow Lemma, we consider a particular kind of shadows, namely shadows of the form $\mathcal O_x(y,r)$ where $x$ is joined to $y$ by a ``geodesic'' whose tail is contracting.
The next definition quantifies the ``contraction strength'' of this tail.

\begin{defi}
\label{def: contracting tail}
	Let $\alpha \in \R_+^*$ and  $L \in \R_+$.
	Let $x,y \in X$.
	The pair $(x,y)$ has an \emph{$(\alpha, L)$-contracting tail}, if there exists an $\alpha$-contracting geodesic $\tau$ ending at $y$ and a projection $p$ of $x$ on $\tau$ satisfying $\dist py \geq L$.
	The path $\tau$ is called a \emph{(contracting) tail} of $(x,y)$ --- the contracting strength and the length of the tail should be clear from the context.
\end{defi}

\begin{nota}
\label{not: element with contracting tail}
	Given $\alpha \in \R^*_+$ and $L \in \R_+$, we denote by $\mathcal T(\alpha, L)$ the set of all elements $g \in G$ such that the pair $(o,go)$ has an $(\alpha, L)$-contracting tail.
\end{nota}

The next statement is essentially a reformulation of \autoref{res: proj cocycle contracting set}.
It states that if $(x,y)$ has a sufficiently long contracting tail, then the shadow of $y$ seen from $x$ behaves according to our intuition coming from hyperbolic geometry.

\begin{lemm}
\label{res: wysiwyg shadow}
	Let $\alpha \in \R_+^*$ and $r, L \in \R_+$ with $L > r + 13\alpha$.
	Let $x,y \in X$.
	Assume that $(x,y)$ has an $(\alpha, L)$-contracting tail, say $\tau$.
	Let $p$ be a projection of $x$ on $\tau$.
	Let $c\in \mathcal O_x(y,r)$.
	Let $\gamma$ be a gradient arc from $x$ to $c$.
	Let $q$ be a projection of $c$ onto $\tau$.
	Then the following holds.
	\begin{enumerate}
		\item \label{enu: wysiwyg shadow - dist to y}
		$\dist yq \leq r+7\alpha$.
		\item $d(\gamma, \tau) < \alpha$.
		\item The entry point of $\gamma$ in $\mathcal N_\alpha(\tau)$ is $2\alpha$-close to $p$.
		\item The exit point of $\gamma$ from $\mathcal N_\alpha(\tau)$ is $5\alpha$-close to $q$.
	\end{enumerate}
\end{lemm}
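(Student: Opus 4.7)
The plan is to first establish conclusion~(1), which controls $\dist yq$, and then feed the resulting lower bound $\dist pq > 4\alpha$ into \autoref{res: proj cocycle contracting set} to obtain conclusions~(2)--(4).

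Unpacking the shadow hypothesis $c \in \mathcal O_x(y, r)$ via the identity $c(y,x) = -c(x,y)$ gives $c(x, y) \geq \dist xy - 2r$, while the mere existence of a projection $q$ of $c$ onto $\tau$ forces $c \notin \partial^+\tau$. Since $y$ itself lies on $\tau$ (as its endpoint) and $\dist py \geq L > \alpha$, \autoref{res: proj contracting set} applied to $(x,y)$ yields $\dist xy \geq \dist xp + \dist py - C_1\alpha$; since $d(y,\tau)=0 < \alpha$, \autoref{res: proj cocycle contracting set} applied to $y$ and $c$ yields $c(y,q) \geq \dist yq - C_2\alpha$. Both constants sharpen noticeably because one of the two points involved already sits on $\tau$. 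Combining these with $c(x,q) = c(x,y) + c(y,q)$ and the Lipschitz estimate $c(x,q) \leq \dist xq \leq \dist xp + \dist pq$ gives
\begin{equation*}
	\dist pq \geq \dist py + \dist yq - 2r - (C_1 + C_2)\alpha.
\end{equation*}

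A case analysis on the position of $q$ along $\tau$ concludes the first phase. If $q$ lies past $p$ on the side of $\tau$ opposite to $y$, then $\dist qy = \dist qp + \dist py$, and the displayed inequality forces $\dist py \leq r + (C_1+C_2)\alpha/2$, contradicting $\dist py \geq L > r + 13\alpha$. Hence $q$ lies between $p$ and $y$; then $\dist pq = \dist py - \dist yq$, the inequality becomes $2\dist yq \leq 2r + (C_1+C_2)\alpha$, and we obtain the announced bound $\dist yq \leq r + 7\alpha$. With this in hand, $\dist pq > L - r - 7\alpha > 6\alpha > 4\alpha$, which is precisely the hypothesis required to apply \autoref{res: proj cocycle contracting set} to the pair $(x, c)$; its three conclusions coincide word for word with the remaining points~(2)--(4) of the lemma.

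The main technical difficulty is the accurate bookkeeping of $C_1$ and $C_2$: to land exactly on the numerical bound $r + 7\alpha$ (rather than a larger multiple), and thereby guarantee $\dist pq > 4\alpha$ at the next step, one must systematically use that $y$ lies on $\tau$ to shave $\alpha$-terms off the intermediate constants appearing in both \autoref{res: proj contracting set} and \autoref{res: proj cocycle contracting set}.
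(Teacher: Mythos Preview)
Your argument is correct and follows the same strategy as the paper, which packages the cocycle computation into a single Gromov-product identity
\[
\dist yq + \gro xc{p'} + \gro{p'}cq \;=\; \gro xcy + \gro xy{p'} + \gro ycq + \gro{p'}yq
\]
and then runs exactly the same case analysis with the same constants (your $C_1=4$ and $C_2=10$ correspond to the bounds $\gro xy{p'}\leq 2\alpha$ and $\gro ycq\leq 5\alpha$). One small caveat: the statement lets $p$ be \emph{any} projection of $x$ onto $\tau$, whereas $\dist py\geq L$ is only guaranteed for \emph{some} projection; the paper therefore works with a distinguished projection $p'$ satisfying that bound and, at the very end, uses $\dist p{p'}\leq 2\alpha$ to transfer the conclusion to the given $p$.
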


\begin{proof}
	Since $\tau$ is an contracting tail, there is a projection $p'$ of $x$ on $\tau$ such that $\dist {p'}y \geq L$.
	The computation shows that 
	\begin{equation*}
		\dist yq + \gro xc{p'} + \gro {p'}cq
		= \gro xcy + \gro xy{p'} + \gro ycq + \gro {p'}yq,
	\end{equation*}
	(it suffices to expand the definition of the Gromov products).
	On the one hand, we observed that $\gro xc{p'} \geq 0$ and $\gro {p'}cq \geq 0$.
	On the other hand, since $\tau$ is contracting, we have $\gro xy{p'} \leq 2\alpha$ (\autoref{res: proj contracting set}) and $\gro ycq \leq 5\alpha$ (\autoref{res: proj cocycle contracting set}).
	It follows that 	
	\begin{equation*}
		\dist yq
		\leq r + 7 \alpha + \gro {p'}yq.
	\end{equation*}
	We now discuss the relative positions of $p'$, $q$ and $y$ on $\tau$.
	If $p'$ lies between $q$ and $y$, then $\gro {p'}yq = \dist {p'}q$ while $\dist yq = \dist y{p'} + \dist {p'}q$.
	It forces $L\leq \dist y{p'} \leq r + 7 \alpha$, which contradicts our assumption.
	Thus $q$ lies between $p'$ and $y$ so that $\gro {p'}yq = 0$.
	Consequently $\dist yq \leq r + 7 \alpha$, which completes the proof of \ref{enu: wysiwyg shadow - dist to y}.

	By assumption, $p$ is another projection of $x$ on $\tau$, which is $\alpha$-contracting, thus $\dist p{p'} \leq 2 \alpha$.
	Consequently,
	\begin{equation*}
		\dist pq \geq \dist {p'}y -  \dist p{p'} - \dist yq \geq L - (r+9\alpha)  > 4\alpha.
	\end{equation*}
	According to \autoref{res: proj cocycle contracting set}, $d(\gamma,\tau) < \alpha$.
	Moreover the entry (\resp exit) point of $\gamma$ in $\mathcal N_\alpha(\tau)$ is  $2\alpha$-close to $p$ (\resp $5\alpha$-closed to $q$).
	Hence the result.
\end{proof}

\begin{lemm}
\label{res: intersection contracting shadows}
	Let $\alpha \in \R_+^*$ and $r, L \in \R_+$ with $L > r + 13\alpha$.
	Let $x, y_1, y_2 \in X$ such that $(x,y_i)$ has an $(\alpha, L)$-contracting tail, say $\tau_i$.
	Let $p_i$ be a projection of $x$ on $\tau_i$.
	If $\mathcal O_x(y_1, r) \cap \mathcal O_x(y_2,r)$ is non empty, then
	\begin{enumerate}
		\item $\dist {y_1}{y_2} \leq \abs{\dist x{y_1} - \dist x{y_2}} + 4r + 44\alpha$, and
		\item $\dist {p_1}{p_2} \leq \abs{\dist x{p_1} - \dist x{p_2}} +  8\alpha$.
	\end{enumerate}
\end{lemm}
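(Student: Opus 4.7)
The plan is to extract, from any cocycle in the assumed non-empty intersection $\mathcal O_x(y_1,r) \cap \mathcal O_x(y_2,r)$, a single ``geodesic skeleton'' that simultaneously shadows both tails $\tau_1$ and $\tau_2$, and then translate bounds along this skeleton into bounds between corresponding points on the two tails.

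First I would pick any $c \in \mathcal O_x(y_1,r) \cap \mathcal O_x(y_2,r)$ and fix a gradient arc $\gamma\colon I \to X$ from $x$ to $c$, parametrized by arc length. Applying \autoref{res: wysiwyg shadow} to the triple $(x,y_i,\tau_i)$ for each $i \in \{1,2\}$ --- the hypothesis $L > r+13\alpha$ is precisely what is required --- yields a projection $q_i$ of $c$ onto $\tau_i$, together with an entry point $a_i = \gamma(s_i)$ and exit point $b_i = \gamma(t_i)$ of $\gamma$ in $\mathcal N_\alpha(\tau_i)$, satisfying
\begin{equation*}
	\dist{a_i}{p_i} \leq 2\alpha, \qquad \dist{b_i}{q_i} \leq 5\alpha, \qquad \dist{y_i}{q_i} \leq r+7\alpha.
\end{equation*}
Crucially the same arc $\gamma$ serves both values of $i$, which is the conceptual heart of the argument: all comparisons between a point of $\tau_1$ and a point of $\tau_2$ can be routed through two points on the common geodesic $\gamma$.

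Since $\gamma$ is a geodesic parametrized by arc length, one has $s_i = \dist x{a_i}$ and $t_i = \dist x{b_i}$, so $\dist{a_1}{a_2} = \abs{s_1 - s_2}$ and $\dist{b_1}{b_2} = \abs{t_1 - t_2}$. For the second claim I would then use that $p_i$ is a nearest-point projection of $x$ on $\tau_i$, so $\dist x{p_i} \leq s_i \leq \dist x{p_i} + 2\alpha$, hence $\abs{s_i - \dist x{p_i}} \leq 2\alpha$; the chain
\begin{equation*}
	\dist{p_1}{p_2} \leq \dist{p_1}{a_1} + \abs{s_1 - s_2} + \dist{a_2}{p_2} \leq \abs{\dist x{p_1} - \dist x{p_2}} + 8\alpha
\end{equation*}
gives it immediately. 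For the first claim I would estimate analogously $\abs{t_i - \dist x{y_i}} \leq \dist{b_i}{y_i} \leq \dist{b_i}{q_i} + \dist{q_i}{y_i} \leq r + 12\alpha$, so that $\abs{t_1 - t_2} \leq \abs{\dist x{y_1} - \dist x{y_2}} + 2(r+12\alpha)$, and then chain
\begin{equation*}
	\dist{y_1}{y_2} \leq \dist{y_1}{b_1} + \abs{t_1 - t_2} + \dist{b_2}{y_2} \leq \abs{\dist x{y_1} - \dist x{y_2}} + 4r + O(\alpha).
\end{equation*}

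The geometric content is just that a single gradient arc cannot simultaneously track two substantially different contracting tails, so the tails are forced to be aligned with the arc both near $x$ (at the $p_i$'s) and near $c$ (at the $q_i$'s, hence at the $y_i$'s). The only real work is careful bookkeeping of the additive $\alpha$-constants to recover the precise $4r+44\alpha$ bound stated in the lemma; that pure bookkeeping is the main, and essentially only, obstacle.
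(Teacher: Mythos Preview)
Your approach is exactly the paper's: pick $c$ in the intersection, fix a single gradient arc $\gamma$ from $x$ to $c$, apply \autoref{res: wysiwyg shadow} to both tails to see that each $y_i$ (resp.\ $p_i$) lies in a uniform neighborhood of $\gamma$, and conclude by the elementary fact that two points within distance $D$ of a geodesic through $x$ are at mutual distance at most $\abs{\dist x{y_1}-\dist x{y_2}}+4D$. One cosmetic slip: the inequality $\dist x{p_i}\le s_i$ does not follow from $p_i$ being a nearest-point projection (the entry point $a_i$ lies on $\gamma$, not on $\tau_i$), but the bound $\abs{s_i-\dist x{p_i}}\le 2\alpha$ you actually use is immediate from $\dist{a_i}{p_i}\le 2\alpha$; your route through the exit point gives $D=r+12\alpha$ and hence $4r+48\alpha$, whereas the paper asserts $D=r+11\alpha$ to land on $4r+44\alpha$---this is the ``pure bookkeeping'' you anticipated, and it does not affect the argument.
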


\begin{proof}
	Let $c$ be a cocycle in the intersection $\mathcal O_x(y_1, r) \cap \mathcal O_x(y_2,r)$.
	Let $\gamma$ be a gradient arc from $x$ to $c$.
	According to \autoref{res: wysiwyg shadow} the points $y_1$ and $y_2$ lie in the $(r+11\alpha)$-neighborhood of $\gamma$, while $p_1$ and $p_2$ are $2\alpha$-closed to $\gamma$.
	The result follows.
\end{proof}

In order to state the next lemma, we need a notion of spheres in $G$ (for the metric induced by $X$).
This is the purpose of the following notation.
Given $\ell, a \in \R_+$, we let
\begin{align*}
	S(\ell, a) & = \set{g \in G}{\ell - a \leq \dist o{go} < \ell + a}, 
\end{align*}

\begin{lemm}
\label{res: contracting extension}
	Assume that $G$ has a contracting element.
	There is $\alpha \in \R^*_+$ such that for every $L \in \R_+$, for every $g \in G$, there exist $u \in S(L,\alpha)$ and a geodesic $\tau \colon \intval 0T \to X$ from $o$ to $guo$ with the following properties
	\begin{enumerate}
		\item The path $\gamma$ restricted to $\intval{T-L}T$ is $\alpha$-contracting.
		In particular, $gu \in \mathcal T(\alpha, L)$.
		\item The point $go$ is $\alpha$-close to $\tau(T-L)$.
	\end{enumerate}
\end{lemm}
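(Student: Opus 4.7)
The plan is to take $u = h^n$ for a fixed contracting element $h \in G$ and a suitably signed integer $n$ calibrated so that $\dist o{h^n o} \approx L$, and then let $\tau$ be any geodesic from $o$ to $guo$. I will fix once and for all a contracting element $h$, with orbit $A = \group h o$ (which is $\alpha_0$-contracting for some $\alpha_0$), and $\kappa = \dist o{ho}$; the final constant $\alpha$ will depend only on $\alpha_0$ and $\kappa$, hence only on $h$.

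Given $g$ and $L$, first I would choose $u$ as follows. The translated set $gA$ is still $\alpha_0$-contracting. Let $p = gh^{k_0} o$ be a projection of $o$ onto $gA$. Pick $\epsilon \in \{+1, -1\}$ opposite to the sign of $k_0$ (arbitrary if $k_0 = 0$), pick $m \in \N$ with $\abs{\dist o{h^m o} - L} \leq \kappa$ (possible because $k \mapsto h^k o$ is a quasi-isometric embedding), and set $n = \epsilon m$, $u = h^n$. Then $u \in S(L, \kappa)$ and the integer $0$ lies between $k_0$ and $n$. For $L$ below a threshold $L_0$ depending only on $h$, I would simply take $u = e$; the tail then has length $L < L_0$ and is automatically $\alpha$-contracting once $\alpha \geq L_0$, while $\tau(T-L)$ is trivially $\alpha$-close to $go$.

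Next I would take $\tau$ to be any geodesic from $o$ to $gh^n o$, of length $T$. For $L \geq L_0$, $\dist p{gh^n o} = \dist o{h^{n-k_0} o}$ is far larger than $\alpha_0$, and \autoref{res: proj contracting set} applied with $Y = gA$ tells us that $\tau$ enters $\mathcal N_{\alpha_0}(gA)$ within $2\alpha_0$ of $p$, exits within $2\alpha_0$ of $gh^n o$, and $T \geq \dist op + \dist p{gh^n o} - 8\alpha_0$. The key observation is that the orbit of $go$ under the conjugate contracting element $ghg^{-1}$ is exactly $gA$; since conjugation by $g$ is an isometry, the constants of \autoref{res: segment closed to a contracting element} applied to $(ghg^{-1}, go)$ coincide with those for $(h, o)$ and do not depend on $g$. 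Applying that lemma to the portion of $\tau$ between its entry and exit points in $\mathcal N_{\alpha_0}(gA)$, I would conclude that $gh^k o$ is $\alpha$-close to $\tau$ for every integer $k$ between $k_0$ and $n$; in particular $go = gh^0 o$ is $\alpha$-close to $\tau$ at some parameter $t_1$, with $\abs{t_1 - \dist o{go}} \leq \alpha$.

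To match $t_1$ with $T - L$, I would invoke the approximate additivity $\dist{h^{k_0} o}{h^n o} = \dist o{h^{k_0} o} + \dist o{h^n o} \pm O(1)$ (another consequence of \autoref{res: segment closed to a contracting element}, since $o = h^0 o$ lies close to the geodesic between $h^{k_0} o$ and $h^n o$) together with the triangle-inequality identity $\dist op + \dist p{go} = \dist o{gh^{k_0} o} + \dist o{h^{k_0} o} \geq \dist o{go}$. These combine to give $T \geq \dist o{go} + L - O(1)$; the reverse bound $T \leq \dist o{go} + L + \kappa$ is immediate. Hence $T - L = \dist o{go} \pm O(1)$, and $\tau(T-L)$ is $\alpha$-close to $go$, giving property~(2). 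Finally, $\tau$ restricted to $\intval{T-L}T$ is a geodesic from a point $\alpha$-close to $gh^0 o$ to $gh^n o$, so a last application of \autoref{res: segment closed to a contracting element} (again for $(ghg^{-1}, go)$) shows it is $\alpha$-contracting, proving~(1). The main technical difficulty is precisely ensuring that a single $\alpha$ depending only on $h$ suffices uniformly in $g$ and $L$; this is exactly what the isometric conjugation argument secures, and a secondary subtlety is the choice of sign of $n$, since if $k_0$ and $n$ had the same sign, the geodesic $\tau$ would miss $go$ entirely and property~(2) would fail.
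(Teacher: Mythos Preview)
Your argument is essentially the paper's proof, just carried out in the translated frame $gA$ rather than $A$ (the paper takes the geodesic $\gamma$ from $g^{-1}o$ to $h^{\pm N}o$ and then sets $\tau = g\gamma$, which is exactly your $\tau$). The choice of sign, the appeal to \autoref{res: segment closed to a contracting element}, and the estimate matching $T-L$ with the position of $go$ are all the same; your observation that the constants for $(ghg^{-1}, go)$ agree with those for $(h, o)$ by equivariance is precisely what justifies this change of frame.

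One small glitch: your separate small-$L$ case with $u = e$ breaks when $\dist o{go} < L$ (for instance when $g$ fixes $o$), since then $T - L < 0$ and there is no tail of length $L$. The paper avoids any case split by choosing $N$ minimal with $\dist o{h^N o} \geq L + 2\beta$ (rather than $\approx L$); this padding guarantees both $u \in S(L,\alpha)$ and $\dist p{guo} > \alpha_0$ for every $L \geq 0$, so the main argument runs uniformly. You can fix your version the same way and drop the small-$L$ case entirely.
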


\begin{proof}
	Let $h \in G$ be a contracting element.	
	Denote by $A$ the $\group h$-orbit of $o$.
	It is $\beta$-contracting for some $\beta \in \R^*_+$.
	Let $\alpha_0 \in \R^*_+$ be the parameter given by \autoref{res: segment closed to a contracting element} applied with the element $h$, the point $z = o$ and $d = 2\beta$.
	Up to increasing the value of $\alpha_0$ we can assume that $\alpha_0 \geq  2 \beta + \dist o{ho}$.
	
	Let $L \in \R_+$.
	Since $h$ is contracting, the orbit map $\Z \to X$ sending $n$ to $h^no$ is a quasi-isometric embedding.
	Thus there is $N \in \N$, such that for every $n \geq N$, we have $\dist o{h^no} \geq L+ 2\beta$.
	We choose $N$ minimal with the above property.
	In particular,
	\begin{equation*}
		L + 2\beta 
		\leq \dist o{h^No} \leq \dist o{h^{N-1}o} + \dist o{ho}
		< L + 2\beta + \dist o{ho}.
	\end{equation*}
	Hence $h^N$ and $h^{-N}$ belong to $S(L, \alpha_0)$.
	
	Let $g \in G$.
	There is $k \in \Z$, such that $h^ko$ is a projection of $g^{-1}o$ onto $A$.
	If $k \leq 0$ (\resp $k \geq 0$), we choose $u = h^N$ (\resp $u = h^{-N}$).
	We now prove that $u$ satisfies the announced properties.
	We suppose that $k \geq 0$.
	The other case works in the exact same way.
	Let $\gamma \colon \intval 0T \to X$ be a geodesic from $g^{-1}o$ to $h^{-N}o$.
	According to \autoref{res: proj contracting set}, $h^ko$ is $2\beta$-close to the entry point $\gamma(t)$ of $\gamma$ in $\mathcal N_\beta(A)$.
	By our choice of $N$, we have
	\begin{equation*}
		T - t \geq \dist{\gamma(t)}{\gamma(T)} \geq \dist {h^ko}{h^{-N}o} - 2 \beta \geq \dist o{h^{N+k}o} - 2\beta \geq L.
	\end{equation*}
	In particular, $t \leq T-L\leq T$.
	It follows from \autoref{res: segment closed to a contracting element} and our choice of $\alpha_0$ that 
	\begin{itemize}
		\item $\gamma$ restricted to $\intval {T-L}T$ is  $\alpha_0$-contracting;
		\item there is $s \in \intval {T-L}T$ such that $\dist {\gamma(s)}o \leq \alpha_0$.
	\end{itemize}
	Using the triangle inequality we observe that 
	\begin{equation*}
		\abs{\dist{\gamma(s)}{h^{-N}o} - \dist o{h^{-N}o}} \leq \dist{\gamma(s)}o \leq \alpha_0.
	\end{equation*}
	On the one hand $\dist {\gamma(s)}{h^{-N}o} = T - s$.
	On the other hand $h^{-N} \in S(L, \alpha_0)$.
	Thus $\abs{(T-L) - s} \leq 2\alpha_0$.
	Consequently the triangle inequality yields
	\begin{equation*}
		\dist o{\gamma(T-L)}
		\leq \dist o{\gamma(s)} + \dist{\gamma(s)}{\gamma(T-L)} \leq 3\alpha_0.
	\end{equation*}
	Observe now that the path $\tau = g\gamma$ satisfies the conclusion of the lemma with the parameter $\alpha = 3\alpha_0$.
\end{proof}

Given $\alpha, r, L, \ell \in \R_+$, we consider the following set
\begin{equation*}
	A_\ell(\alpha,r, L) = \bigcup_{ g \in S(\ell, r) \cap \mathcal T(\alpha, L) } \mathcal O_o(go, r).
\end{equation*}
Observe that for a fixed $\ell \in \R$, the set $A_\ell(\alpha,r,L)$ is a non-decreasing function of $r$ (\resp $\alpha$) and a non-increasing function of $L$.

\begin{prop}
\label{res: pre upper bounds}
	Assume that $G$ contains a contracting element.
	There is $\alpha \in \R^*_+$, such that for every $\omega, a \in \R_+$ and $(\epsilon, r_0) \in \R_+^* \times \R_+$, there exist $r_1, C \in \R_+^*$, with the following property.
	Let $\nu = (\nu_x)$ be an $\omega$-conformal density.
	If $(G,\nu)$ satisfies the Shadow Principle with parameters $(\epsilon, r_0)$, then for every $r \geq r_1$, $L > r + 13\alpha$, and $\ell \in \R_+$,
	\begin{equation*}
		\sum_{g \in S(\ell, a)}  \norm{\nu_{go}}  e^{-\omega \dist o{go}}
		\leq C e^{2\omega L}\nu_o \left( A_{\ell+L}(\alpha, r,L) \right).
	\end{equation*}
\end{prop}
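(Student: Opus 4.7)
The plan is to associate to each $g \in S(\ell,a)$ a carefully chosen extension $\hat g \in \mathcal T(\alpha,L) \cap S(\ell+L, a+\alpha)$ whose shadow both captures the weight $\norm{\nu_{go}}\,e^{-\omega d(o,go)}$ via the Shadow Principle and lies inside $A_{\ell+L}(\alpha,r,L)$, and then to show this family of shadows covers with multiplicity uniformly bounded in $L$. Let $\alpha$ be the constant provided by \autoref{res: contracting extension} (possibly enlarged). For each $g$, apply \autoref{res: contracting extension} to produce $u(g) \in S(L,\alpha)$ and a geodesic $\tau_g \colon \intval 0{T_g} \to X$ from $o$ to $\hat go := gu(g)o$ whose restriction $\tau'_g := \tau_g|_{\intval{T_g-L}{T_g}}$ is $\alpha$-contracting, with $go$ at distance at most $\alpha$ from $\tau_g(T_g-L)$. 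Since $\tau_g$ is a geodesic issued from $o$, one has $d(o,\tau_g(t))=t$ for all $t \in \intval{T_g-L}{T_g}$; hence the nearest-point projection $p_g$ of $o$ onto $\tau'_g$ is exactly $\tau_g(T_g-L)$, so $d(p_g,\hat go)=L$, $\hat g\in\mathcal T(\alpha,L)$, and $d(p_g,go)\leq \alpha$. Moreover $|d(o,\hat go)-(\ell+L)|\leq a+\alpha$, so for $r_1$ large enough (e.g.\ $r_1 = \max\{r_0, 2(a+\alpha)\}$) and every $r\geq r_1$ we have $\hat g\in S(\ell+L,r)\cap \mathcal T(\alpha,L)$, whence $\mathcal O_o(\hat go,r)\subset A_{\ell+L}(\alpha,r,L)$.

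Apply the Shadow Principle to $\hat g$ and combine with the Harnack-type bound (\ref{eqn: harnack}) applied to $d(go,\hat go)\leq L+\alpha$:
\[
\nu_o\bigl(\mathcal O_o(\hat go,r)\bigr) \geq \epsilon\,\norm{\nu_{\hat go}}\,e^{-\omega d(o,\hat go)} \geq \epsilon\,e^{-2\omega(L+\alpha)}\,\norm{\nu_{go}}\,e^{-\omega d(o,go)}.
\]
Summing over $g\in S(\ell,a)$ and controlling the multiplicity of the family $\{\mathcal O_o(\hat go,r)\}$ will then deliver the statement with $C := Ne^{2\omega\alpha}/\epsilon$, where $N$ is the multiplicity bound established in the next step.

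The technical heart of the proof is producing a multiplicity constant $N$ depending only on $a$ and $\alpha$. Fix $c \in A_{\ell+L}(\alpha,r,L)$, and suppose $c \in \mathcal O_o(\hat g o,r) \cap \mathcal O_o(\hat g' o,r)$ for some $g,g' \in S(\ell,a)$. Since $L > r+13\alpha$, \autoref{res: intersection contracting shadows}, applied with the contracting tails $\tau'_g,\tau'_{g'}$, yields
\[
d(p_g,p_{g'}) \leq |d(o,p_g)-d(o,p_{g'})|+8\alpha.
\]
Because $d(go,p_g)\leq\alpha$, $d(g'o,p_{g'})\leq\alpha$, and $g,g'\in S(\ell,a)$, we get $|d(o,p_g)-d(o,p_{g'})|\leq 2a+2\alpha$, so $d(p_g,p_{g'})\leq 2a+10\alpha$ and consequently $d(go,g'o)\leq 2a+12\alpha$. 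Properness of the $G$-action then bounds the number of such $g'$ by a constant $N = N(a,\alpha)$ that is independent of $c$, $\ell$, $L$, and $r$. Putting the pieces together,
\[
\epsilon\,e^{-2\omega(L+\alpha)}\sum_{g\in S(\ell,a)} \norm{\nu_{go}}\,e^{-\omega d(o,go)} \leq \sum_{g\in S(\ell,a)} \nu_o\bigl(\mathcal O_o(\hat go,r)\bigr) \leq N\,\nu_o\bigl(A_{\ell+L}(\alpha,r,L)\bigr),
\]
which rearranges to the desired inequality.

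The main obstacle is precisely the uniformity of the multiplicity in $L$. A naive appeal to \autoref{res: intersection contracting shadows}\,(1) only bounds $d(\hat go,\hat g'o)$, which translates into a bound on $d(go,g'o)$ of order $L$ (since $go$ and $\hat go$ are $L$ apart), ruining the constant. The crucial gain is that part (2) of that lemma controls directly the projection points $p_g, p_{g'}$, which, by the geometric construction of $\tau_g$, track $go$ and $g'o$ up to an error of order $\alpha$ only.
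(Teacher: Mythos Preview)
Your proof is correct and follows essentially the same approach as the paper: extend each $g$ via \autoref{res: contracting extension}, use the Shadow Principle plus Harnack to bound the summands from below by the shadow measures, and control the multiplicity via part~(2) of \autoref{res: intersection contracting shadows} applied to the projection points $p_g$ (which sit $\alpha$-close to $go$). Your explicit observation that $p_g = \tau_g(T_g-L)$ because $\tau_g$ is a geodesic from $o$, and your final paragraph isolating why the naive use of part~(1) would fail, are exactly the key points the paper relies on.
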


\begin{proof}
	We choose for $\alpha$ the parameter given by \autoref{res: contracting extension}.
	Let $\omega, a \in \R_+$ and $(\epsilon, r_0) \in \R_+^* \times \R_+$.
	The action of $G$ on $X$ is proper, so there is $M \in \N$ such that
	\begin{equation*}
		\card{\set{g \in G}{\dist o{go} \leq 2a + 12\alpha}} \leq M.
	\end{equation*}
	In addition, we set
	\begin{equation*}
		r_1 = \max\{r_0, a + 3\alpha\}.
	\end{equation*}
	Let $\nu = (\nu_x)$ be an $\omega$-conformal density such that $(G, \nu)$ satisfies the Shadow Principle with parameters $(\epsilon, r_0)$.
	Let $r \geq r_1$,  $L > r + 13\alpha$, and $\ell \in \R_+$.
	According to \autoref{res: contracting extension}, for every $g \in S(\ell, a)$, there is $u_g \in S(L,\alpha)$ such that $gu_g$ belongs to $\mathcal T(\alpha, L)$.
	Moreover there is a geodesic $\tau_g \colon \intval 0{T_g} \to X$ joining $o$ to $gu_go$, whose restriction to $\intval {T_g-L}{T_g}$ is $\alpha$-contracting, and such that $go$ is $\alpha$-close to $z_g = \tau_g(T_g-L)$.
	In particular, 
	\begin{equation*}
		\abs{\dist o{go}  + \dist o{u_go} - \dist o{gu_go}} 
		\leq 2\alpha.
	\end{equation*}
	Since $g \in S(\ell, a)$ and $u_g \in S(L,\alpha)$, we get $gu_g \in S(\ell + L, a + 3\alpha)$.
	Hence $\mathcal O_o(gu_go,r)$ is contained in $A_{\ell+L}(\alpha, r, L)$.
	
	Consider now $g,g' \in S(\ell, a)$ such that $\mathcal O_o(gu_go,r) \cap \mathcal O_o(g'u_{g'}o,r)$ is non empty.
	By construction, $go$ and $g'o$ are $\alpha$-close to $z_g$ and $z_{g'}$.
	Note that $z_g = \tau_g(T_g -L)$ is the (unique) projection of $o$ on $\tau_g$ restricted to $\intval{T_g -L}{T_g}$, which is $\alpha$-contracting.	
	A similar statement holds for $\tau_{g'}$.
	Using \autoref{res: intersection contracting shadows} and the triangle inequality, we observe that 
	\begin{align*}
		\dist {go}{g'o}
		\leq \dist {z_g}{z_{g'}} + 2\alpha
		& \leq \abs{ \dist o{z_g} - \dist o{z_{g'}}} + 10\alpha \\
		& \leq \abs{ \dist o{go} - \dist o{g'o}} + 12\alpha \\
		& \leq 2a + 12\alpha.
	\end{align*}
	It follows from our choice of $M$ that any cocycle $c \in \bar X$ belongs to a most $M$ shadows of the form $\mathcal O_o(gu_go,r)$, where $g \in S(\ell,a)$.
	Consequently 
	\begin{equation}
	\label{eqn: pre upper bounds}
		\sum_{g \in S(\ell, a)} \nu_o\left(\mathcal O_o(gu_go,r) \right)
		\leq M\nu_o\left(A_{\ell+L}(\alpha, r, L)\right).
	\end{equation}
	Recall that $(G, \nu)$ satisfies the Shadow Principle with parameters $(\epsilon, r_0)$.
	Hence for every $g \in S(\ell, a)$, we have
	\begin{equation*}
		 \nu_o\left(\mathcal O_o(gu_go,r)\right)
		 \geq \epsilon \norm{\nu_{gu_go}}e^{-\omega\dist o{gu_go}}
		 \geq \epsilon e^{-2\omega (L+\alpha)}\norm{\nu_{go}}e^{-\omega\dist o{go}}.
	\end{equation*}
	The second inequality follows from (\ref{eqn: harnack}).
	Consequently (\ref{eqn: pre upper bounds}) becomes
	\begin{equation*}
		\sum_{g \in S(\ell, a)}  \norm{\nu_{go}}  e^{-\omega \dist o{go}}
		\leq  \left(\frac{M e^{2\omega\alpha}}\epsilon\right) e^{2\omega L}\nu_o\left(A_{\ell+L}(\alpha, r, L)\right). \qedhere
	\end{equation*}
\end{proof}

\begin{coro}
\label{res: pre upper bounds bis}
	Assume that $G$ contains a contracting element.
	For every $\omega, a \in \R_+$ and $(\epsilon, r_0) \in \R_+^* \times \R_+$, there exists $C \in \R_+^*$, with the following property.
	Let $\nu = (\nu_x)$ be an $\omega$-conformal density.
	Assume that $(G, \nu)$ satisfies the Shadow Principle with parameters $(\epsilon, r_0)$.
	For every $\ell \in \R_+$,
	\begin{equation*}
		\sum_{g \in S(\ell, a)}  \norm{\nu_{go}}  e^{-\omega \dist o{go}}
		\leq C \nu_o \left( \bar X \setminus B(o, \ell) \right).
	\end{equation*}
\end{coro}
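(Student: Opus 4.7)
The plan is to feed Proposition \ref{res: pre upper bounds} a well-chosen pair $(r, L)$ and then verify that the set $A_{\ell + L}(\alpha, r, L)$ produced by that proposition is automatically contained in $\bar X \setminus B(o, \ell)$. Once this inclusion is established, the factor $e^{2\omega L}$ becomes a universal constant, since $L$ will be fixed once for all in terms of $\omega$, $a$, $\epsilon$, $r_0$, and can therefore be absorbed into the constant $C$ of the corollary.

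Concretely, I would take $\alpha \in \R_+^*$ from Proposition \ref{res: pre upper bounds}, then let $r_1 \in \R_+^*$ be the threshold given by that proposition applied to the data $(\omega, a, \epsilon, r_0)$. I would fix $r = r_1$ and then pick any $L$ satisfying simultaneously $L > r + 13\alpha$ (required to invoke Proposition \ref{res: pre upper bounds}) and $L \geq 3r$ (needed for the inclusion argument below). Proposition \ref{res: pre upper bounds} then supplies a constant $C' \in \R_+^*$ with
\begin{equation*}
	\sum_{g \in S(\ell, a)} \norm{\nu_{go}} e^{-\omega \dist o{go}}
	\leq C' e^{2\omega L} \nu_o\bigl(A_{\ell + L}(\alpha, r, L)\bigr),
	\qquad \forall \ell \in \R_+.
\end{equation*}

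The remaining content is the inclusion $A_{\ell + L}(\alpha, r, L) \subset \bar X \setminus B(o, \ell)$. Let $c \in A_{\ell + L}(\alpha, r, L)$, so there is $g \in S(\ell + L, r) \cap \mathcal T(\alpha, L)$ with $c \in \mathcal O_o(go, r)$. If $c \in \partial X$, the inclusion holds trivially since $B(o, \ell) \subset X$ is disjoint from $\partial X$. Otherwise $c = \iota(z)$ for some $z \in X$, and unfolding the shadow condition $\gro oc{go} \leq r$ gives
\begin{equation*}
	\dist oz \geq \dist o{go} + \dist{go}z - 2r \geq \dist o{go} - 2r \geq (\ell + L - r) - 2r = \ell + L - 3r \geq \ell,
\end{equation*}
where we used $g \in S(\ell + L, r)$ and $L \geq 3r$. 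Hence $z \notin B(o, \ell)$, so again $c \notin B(o, \ell)$. Setting $C = C' e^{2\omega L}$ completes the proof.

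I do not expect any serious obstacle: the statement is essentially a packaging of Proposition \ref{res: pre upper bounds} in terms of the exterior of a metric ball, and the only thing to watch is that $L$ must be taken large enough relative to $r$ so that a shadow seen from $o$ of a point at distance roughly $\ell + L$ cannot reach back into $B(o, \ell)$ — which is precisely what the condition $L \geq 3r$ enforces.
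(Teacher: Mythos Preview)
Your proof is correct and follows essentially the same approach as the paper's: fix $r$ and $L$ once and for all, apply Proposition~\ref{res: pre upper bounds}, and verify the inclusion $A_{\ell+L}(\alpha,r,L)\subset \bar X\setminus B(o,\ell)$ via the shadow condition. The only cosmetic difference is that the paper uses the slightly sharper estimate $\dist oz \geq \dist o{go} - r$ (from $\dist o{go} - \dist oz \leq \gro oz{go}$), which lets them take $L \geq 2r$ instead of your $L \geq 3r$; this is immaterial.
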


\begin{proof}
	Denote by $\alpha, r_1, C$ the parameters given by \autoref{res: pre upper bounds} applied with $\omega$, $a$, and $(\epsilon,r_0)$.
	We choose $r \geq r_1$ and $L > \max \{ 2r, r + 15\alpha\}$.
	Let $\nu = (\nu_x)$ be an $\omega$-conformal density such that $(G, \nu)$ satisfies the Shadow Principle with parameters $(\epsilon, r_0)$.
	By \autoref{res: pre upper bounds}, we have
	\begin{equation*}
		\sum_{g \in S(\ell, a)}  \norm{\nu_{go}}  e^{-\omega \dist o{go}}
		\leq Ce^{2\omega L}\nu_o \left( A_{\ell+L}( \alpha, r, L) \right), \quad \forall \ell \in \R_+.
	\end{equation*}
	Consider now $x,y,z \in X$.
	By the very definition of shadows, if $z \in \mathcal O_x(y,r)$, then 
	\begin{equation*}
		\dist xy - \dist xz \leq \gro xzy \leq r.
	\end{equation*}
	Hence for every $\ell \in \R_+$, the set $A_{\ell+L}(\alpha, r, L)$ lies in $\bar X \setminus B(o, \ell + L - 2r)$.
	The result follows from the fact that $L \geq 2r$.
\end{proof}

\subsection{First applications}
\label{sec: first application}

For our first applications, we assume that $G$ is a group acting properly, by isometries on $X$ with a contracting element.
In addition, we suppose that $G$ is not virtually cyclic.

\begin{prop}
	There exists $C \in \R_+$ such that for every $\ell \in \R_+$,
	\begin{equation*}
		\card{ \set{g \in G}{ \dist o{go} \leq \ell}} \leq C e^{\omega_G\ell}.
	\end{equation*}
\end{prop}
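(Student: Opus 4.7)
The plan is to take the Patterson--Sullivan density for $G$ itself at the critical exponent $\omega_G$ and feed it into the Shadow Principle machinery already in place.

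First, apply \autoref{res: existence twisted ps} with $H = G$ and the trivial quasi-morphism $\chi \equiv 0$ (so that $\omega_\chi = \omega_G$). This produces a $G$-invariant, $\omega_G$-conformal density $\nu = (\nu_x)$ on $(\bar X, \mathfrak B)$, supported on $\partial X$. The $G$-invariance forces $\norm{\nu_{go}} = \norm{g_\ast \nu_o} = \norm{\nu_o} = 1$ for every $g \in G$.

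Since $G$ is not virtually cyclic and contains a contracting element, $G$ itself is an infinite normal subgroup of $G$. Hence \autoref{res: shadow lemma} applies and provides $(\epsilon, r_0) \in \R_+^* \times \R_+$ such that $(G, \nu)$ satisfies the Shadow Principle with parameters $(\epsilon, r_0)$. I then invoke \autoref{res: pre upper bounds bis} with $\omega = \omega_G$ and $a = 1/2$: there exists $C \in \R_+^*$ such that for every $\ell \in \R_+$,
\begin{equation*}
	\sum_{g \in S(\ell, 1/2)} e^{-\omega_G \dist o{go}} \;\leq\; C\, \nu_o\bigl(\bar X \setminus B(o, \ell)\bigr) \;\leq\; C,
\end{equation*}
using that $\nu_o$ is a probability measure.

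For every integer $n \geq 0$, each $g \in S(n + 1/2,\, 1/2)$ satisfies $\dist o{go} < n+1$, so the above bound yields $\card S(n + 1/2,\, 1/2) \leq C\, e^{\omega_G(n+1)}$. The ball $\set{g \in G}{\dist o{go} \leq \ell}$ is covered by the shells $S(n+1/2,\, 1/2)$ for $n = 0, 1, \dots, \lfloor \ell \rfloor$, so summing these estimates gives a geometric sum dominated by its last term, producing a bound of the form $C' e^{\omega_G \ell}$ (the hypotheses force $\omega_G > 0$, via a Schottky-type construction of a free sub-semigroup from a contracting element in the non virtually cyclic group $G$, so the geometric sum is indeed controlled by its last term; if one did not want to use this, taking $a = \ell$ in \autoref{res: pre upper bounds bis} gives the same conclusion in one step). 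No serious obstacle is expected here: the hard work is hidden in the Shadow Lemma and in \autoref{res: pre upper bounds bis}, and the remaining step is the routine slicing of the ball into unit-width annular shells.
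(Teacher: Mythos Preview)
Your proposal is correct and follows essentially the same route as the paper: take a $G$-invariant $\omega_G$-conformal density via \autoref{res: existence twisted ps}, use \autoref{res: shadow lemma} and \autoref{res: pre upper bounds bis} to bound the cardinality of each annular shell, then sum a geometric series (which indeed requires $\omega_G > 0$, implicitly used in the paper as well). One small caveat: your parenthetical alternative of taking $a = \ell$ in \autoref{res: pre upper bounds bis} does not work as stated, since the constant $C$ there depends on $a$; but your main argument is sound.
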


\begin{rema*}
	An alternative proof of this fact can be found in Yang \cite{Yang:2019wa}.
\end{rema*}

\begin{proof}
	According to \autoref{res: existence twisted ps} there exists a $G$-invariant, $\omega_G$-conformal density $\nu = (\nu_x)$.
	By \autoref{res: shadow lemma}, the pair $(G, \nu)$ satisfies the Shadow Principle for some parameters $(\epsilon, r_0) \in \R_+^* \times \R_+$.
	Fix $a \in \R_+^*$.
	Applying \autoref{res: pre upper bounds bis}, there exists $C \in \R_+$ such that for every $\ell \in \R_+$, 
	\begin{equation*}
		\sum_{g \in S(\ell, a)} \norm{\nu_{go}}  e^{-\omega_G \dist o{go}}
		\leq C \nu_o \left( \bar X \setminus B(o, \ell) \right) \leq C.
	\end{equation*}
	However, since $\nu$ is $G$-invariant, $\norm{\nu_{go}} = \norm{\nu_o} = 1$, for every $g \in G$.
	Hence we get
	\begin{equation*}
		\card{S(\ell, a)} \leq Ce^{\omega_G a}e^{\omega_G \ell}, \quad \forall \ell \in \R_+.
	\end{equation*}
	The result follows by summing this inequality over $\ell \in a \N$.
\end{proof}

Before stating our next application, we define the radial limit set for the action of $G$ on $X$.

\begin{defi}
\label{def: radial limit set}
	Let $r \in \R_+$ and $ \in X$.
	The set $\Lambda_{\rm rad}(G, x,r)$ consists of all cocycles $c \in \partial X$ with the following property:  for every $T \geq 0$, there exists $g \in G$ with $\dist x{go} \geq T$ such that $c \in \mathcal O_x(go,r)$.
	The \emph{radial limit set} of $G$ is the union 
	\begin{equation*}
		\Lambda_{\rm rad}(G) 
		= \bigcup_{ r \geq 0} 
		G \Lambda_{\rm rad}(G, o, r).
	\end{equation*}
\end{defi}

\begin{rema}
\label{rem: radial limit set}
	Note that $\Lambda_{\rm rad}(G, x,r)$ is a non-decreasing function of $r$.
	The radial limit set $\Lambda_{\rm rad}(G)$ is $G$-invariant.
	It follows from (\ref{eqn: inclusion shadows triangle inequality}) that 
	\begin{equation*}
		\Lambda_{\rm rad}(G) = 
		\bigcup_{r \geq 0} 
		\Lambda_{\rm rad}(G, o, r).
	\end{equation*}
\end{rema}

\begin{lemm}
\label{res: radial limit set saturated}
	The radial limit set is saturated.
\end{lemm}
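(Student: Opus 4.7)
My plan is to show that the radial limit set is stable under the equivalence relation $\sim$ on $\partial X$, i.e.\ if $c \in \Lambda_{\rm rad}(G)$ and $c' \in \partial X$ with $c \sim c'$, then $c' \in \Lambda_{\rm rad}(G)$. Since $\Lambda_{\rm rad}(G)$ is entirely contained in $\partial X$ and points of $\iota(X)$ are singleton equivalence classes, it suffices to handle this case.

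The key observation is that the Gromov product $\gro xcy = \tfrac12(\dist xy + c(y,x))$ depends only linearly on $c$. Consequently, for any $x, y \in X$ and any two cocycles $c, c' \in \partial X$, one has
\begin{equation*}
	\abs{\gro xcy - \gro x{c'}y} = \tfrac12 \abs{c(y,x) - c'(y,x)} \leq \tfrac12 \norm[\infty]{c - c'}.
\end{equation*}
Hence if $c \sim c'$, setting $D = \norm[\infty]{c - c'} < \infty$, we obtain the shadow inclusion $\mathcal O_o(y, r) \cap [c] \subset \mathcal O_o(y, r + D/2)$ for each $y \in X$, where $[c]$ denotes the equivalence class of $c$.

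Now suppose $c \in \Lambda_{\rm rad}(G, o, r)$, so there is a sequence $(g_n)$ in $G$ with $\dist o{g_no} \to \infty$ and $c \in \mathcal O_o(g_no, r)$ for every $n$. Then by the inequality above, $c' \in \mathcal O_o(g_no, r + D/2)$ for every $n$, which means $c' \in \Lambda_{\rm rad}(G, o, r + D/2) \subset \Lambda_{\rm rad}(G)$. Using \autoref{rem: radial limit set}, the same argument applied to $gc$ and $gc'$ (noting that $c \sim c'$ implies $gc \sim gc'$ with the same constant $D$, since the $G$-action is by isometries on $X$) shows that saturation is preserved under the $G$-action as well, so the full union $\Lambda_{\rm rad}(G) = \bigcup_{r \geq 0} \Lambda_{\rm rad}(G, o, r)$ is saturated. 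Finally, $\Lambda_{\rm rad}(G)$ is a countable union of Borel sets (it can be written as $\bigcup_{n \in \N} \Lambda_{\rm rad}(G, o, n)$, each of which is Borel as a countable intersection of countable unions of closed shadows), hence belongs to $\mathfrak B$, and combined with the saturation this places it in $\mathfrak R$. There is no real obstacle here; the whole statement is essentially a direct verification that $\sim$-equivalent cocycles produce shadows of comparable size.
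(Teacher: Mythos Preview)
Your proof is correct and follows essentially the same approach as the paper: the key observation in both is that if $c \in \mathcal O_x(y,r)$ and $c' \sim c$, then $c' \in \mathcal O_x(y,r')$ for $r' = r + \norm[\infty]{c-c'}$ (you obtain the slightly sharper $r' = r + \tfrac12\norm[\infty]{c-c'}$, which is fine). Your additional remarks on the $G$-action and on Borel measurability are correct but not needed for the bare statement, since by \autoref{rem: radial limit set} one already has $\Lambda_{\rm rad}(G) = \bigcup_{r\geq 0}\Lambda_{\rm rad}(G,o,r)$ and the lemma only asserts saturation.
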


\begin{proof}
	Let $x,y \in X$ and $r \in \R_+$.
	Let $c, c' \in \partial X$ such that $c \sim c'$.
	Note that if $c$ belongs to $\mathcal O_x(y,r)$ then $c'$ belongs to $\mathcal O_x(y,r')$ where $r' = r + \norm[\infty]{c- c'}$.
	The result follows from this observation.
\end{proof}

\begin{prop}
\label{res: series roblin}
	Let $\omega \in \R_+$ and $\nu = (\nu_x)$ be an $\omega$-conformal density.
	If $(G, \nu)$ satisfies the Shadow Principle then the series 
	\begin{equation}
	\label{eqn: series roblin}
		\sum_{g \in G} \norm{\nu_{go}}e^{-s\dist o{go}} 
	\end{equation}
	converges whenever $s > \omega$.
	If, in addition, $\nu_o$ gives positive measure the radial limit set $\Lambda_{\rm rad}(G)$, then the series diverges at $s = \omega$.
	In particular its critical exponent is exactly $\omega$.
\end{prop}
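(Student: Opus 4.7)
The plan is to prove the two halves of the statement separately: convergence for $s > \omega$ via the upper bound supplied by \autoref{res: pre upper bounds bis}, and divergence at $s = \omega$ by identifying the radial limit set with a $\limsup$ of shadows and applying the Borel--Cantelli lemma.

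For the convergence, I would fix $a \in \R_+^*$ and partition $G$ into the shells $S(\ell,a)$ for $\ell \in a\N$. Since $(G,\nu)$ satisfies the Shadow Principle, \autoref{res: pre upper bounds bis} produces a constant $C \in \R_+$ such that for every $\ell$,
\begin{equation*}
	\sum_{g \in S(\ell,a)} \norm{\nu_{go}} e^{-\omega \dist o{go}} \leq C \nu_o\left(\bar X \setminus B(o,\ell)\right) \leq C.
\end{equation*}
Since $\dist o{go} \geq \ell - a$ on $S(\ell,a)$, multiplying through by $e^{-(s-\omega)\dist o{go}}$ yields
\begin{equation*}
	\sum_{g \in S(\ell,a)} \norm{\nu_{go}} e^{-s \dist o{go}} \leq C e^{a(s-\omega)} e^{-(s-\omega)\ell},
\end{equation*}
and summing over $\ell \in a\N$ gives a convergent geometric series, hence the convergence of (\ref{eqn: series roblin}) for $s > \omega$.

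For the divergence at $s = \omega$ under the additional hypothesis, I would first invoke \autoref{rem: radial limit set} to write $\Lambda_{\rm rad}(G)$ as the non-decreasing union of the sets $\Lambda_{\rm rad}(G,o,r)$; positivity of $\nu_o(\Lambda_{\rm rad}(G))$ then forces $\nu_o(\Lambda_{\rm rad}(G,o,r)) > 0$ for some $r \in \R_+$. Since $X$ is proper and the action is proper, I can enumerate $G = \{g_n\}_{n \in \N}$ so that $\dist o{g_n o}$ is non-decreasing and tends to infinity. A direct inspection of the definition of $\Lambda_{\rm rad}(G,o,r)$ then shows that
\begin{equation*}
	\Lambda_{\rm rad}(G,o,r) = \limsup_{n \to \infty} \mathcal O_o(g_n o, r),
\end{equation*}
so by the easy direction of the Borel--Cantelli lemma, $\sum_n \nu_o(\mathcal O_o(g_n o, r)) = \infty$. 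Finally, I apply the always-valid upper bound $\nu_o(\mathcal O_o(go,r)) \leq e^{2\omega r} \norm{\nu_{go}} e^{-\omega \dist o{go}}$ from \autoref{rem: shadow lemma} (which requires no invariance of $\nu$) to transfer this divergence to the series (\ref{eqn: series roblin}) at $s = \omega$. I do not anticipate any significant obstacle here: the two halves exploit the two sides of the shadow estimates, and the only conceptual step is the identification of the radial limit set as a $\limsup$ of shadows, which falls out of unwinding the definitions once the enumeration by non-decreasing distance is in place.
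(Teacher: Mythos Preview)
Your proposal is correct and follows essentially the same approach as the paper. The convergence half is identical; for the divergence half the paper argues by contraposition (assuming convergence at $s=\omega$ and bounding $\nu_o(\Lambda_{\rm rad}(G,o,r))$ by the tail of the series via \autoref{rem: shadow lemma}), which is exactly the easy direction of Borel--Cantelli that you invoke, just written out by hand rather than named.
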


\begin{proof}
	Fix $a \in \R_+^*$.
	According to \autoref{res: pre upper bounds bis} there is $C \in \R_+^*$ such that for every $\ell \in \R_+$,
	\begin{equation*}
		\sum_{g \in S(\ell, a)}\norm{\nu_{go}}  e^{-\omega \dist o{go}} 
		\leq C \nu_o \left( \bar X \setminus B(o, \ell) \right)
		\leq C.
	\end{equation*}
	Hence the series (\ref{eqn: series roblin}) converges whenever $s > \omega$.
	The second part of the proposition is proved by contraposition.
	Suppose that the series (\ref{eqn: series roblin}) converges at $s = \omega$.
	Let $r \in \R_+$.
	For simplicity we write  $\Lambda = \Lambda_{\rm rad}(G, o, r)$.
	Observe that for every $T \in \R_+$, 
	\begin{equation*}
		\Lambda \subset  \bigcup_{\substack{g \in G \\ \dist o{go} \geq T}} \mathcal O_o(go, r).
	\end{equation*}
	It follows from \autoref{rem: shadow lemma}, that 
	\begin{equation*}
		\nu_o(\Lambda)
		\leq \sum_{\substack{g \in G \\ \dist o{go} \geq T}} \nu_o\left(\mathcal O_o(go, r)\right)
		\leq e^{2\omega r} \sum_{\substack{g \in G \\ \dist o{go} \geq T}} \norm{\nu_{go}}e^{-\omega \dist o{go}}.
	\end{equation*}
	The right-hand side of the inequality is the remainder of the series we are interested in.
	Since this series converges at $s = \omega$, we get $\nu_o(\Lambda) = 0$.
	We observed in \autoref{rem: radial limit set} that
	\begin{equation*}
		\Lambda_{\rm rad}(G) =  \bigcup_{r \geq 0} \Lambda_{\rm rad}(G, o, r).
	\end{equation*}
	Hence $\nu_o(\Lambda_{\rm rad}(G)) = 0$.
\end{proof}

\begin{rema}
\label{rem:  series roblin}
	Note that the proof of the second assertion (the series diverges at $s = \omega$, whenever $\nu_o$ gives positive measure to the radial limit set) only uses the upper estimate from the Shadow lemma.
	Hence it holds even if $G$ is virtually cyclic or does not contain a contracting element.
\end{rema}

\begin{coro}
\label{res: lower bound dimension density}
	Let $\omega \in \R_+$.
	Let $\nu = (\nu_x)$ be a $G$-invariant, $\omega$-conformal density.
	Then $\omega \geq \omega_G$.
	Moreover if $\nu_o$ gives positive measure to the radial limit set $\Lambda_{\rm rad} (G)$, then $\omega = \omega_G$ and the action of $G$ on $X$ is divergent.
\end{coro}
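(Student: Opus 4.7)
The plan is to reduce the statement to \autoref{res: series roblin} by exploiting the $G$-invariance of $\nu$, which forces $\norm{\nu_{go}} = \norm{\nu_o} = 1$ for every $g \in G$. Under this normalization, the series
\begin{equation*}
\sum_{g \in G} \norm{\nu_{go}} e^{-s\dist o{go}}
\end{equation*}
coincides with the Poincaré series $\mathcal P_G(s)$, so the critical exponent of the former is precisely $\omega_G$.

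First I would verify that the Shadow Principle applies to $(G,\nu)$. Since the standing assumption in this subsection is that $G$ is not virtually cyclic and contains a contracting element, and since $G$ itself is an infinite normal subgroup of $G$, \autoref{res: shadow lemma} applied with $N = G$ shows that $(G,\nu)$ satisfies the Shadow Principle with some parameters $(\epsilon,r_0) \in \R_+^* \times \R_+$. The first part of \autoref{res: series roblin} then gives that $\mathcal P_G(s)$ converges for every $s > \omega$, which forces $\omega_G \leq \omega$.

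For the second assertion, assume in addition that $\nu_o(\Lambda_{\rm rad}(G)) > 0$. The second part of \autoref{res: series roblin} (whose proof only uses the upper bound in the Shadow Lemma, as noted in \autoref{rem:  series roblin}) shows that $\mathcal P_G(s)$ diverges at $s = \omega$. This implies $\omega \leq \omega_G$, and combined with the first part gives $\omega = \omega_G$ together with divergence of the action of $G$ on $X$.

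I do not anticipate any substantive obstacle: everything has been packaged in \autoref{res: shadow lemma} and \autoref{res: series roblin}, and the only point to check is that $G$-invariance trivializes the weights $\norm{\nu_{go}}$, so that the abstract series of \autoref{res: series roblin} becomes the ordinary Poincaré series.
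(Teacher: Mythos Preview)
Your proposal is correct and follows essentially the same approach as the paper: apply \autoref{res: shadow lemma} (with $N=G$) to obtain the Shadow Principle, observe that $G$-invariance makes $\norm{\nu_{go}}=1$ so that the series in \autoref{res: series roblin} is precisely $\mathcal P_G(s)$, and read off both conclusions. The paper's proof is the same argument stated more tersely.
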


\begin{rema*}
	If the action of $G$ on $X$ is proper an co-compact, one checks that the radial limit set is actually $\partial X$.
	Hence if $\nu = (\nu_x)$ is a $G$-invariant, $\omega$-conformal density supported on $\partial X$, then $\omega = \omega_G$.
\end{rema*}

\begin{proof}
	By \autoref{res: shadow lemma}, the pair $(G, \nu)$ satisfies the Shadow Principle.
	Since the density $\nu$ is $G$-invariant, the series (\ref{eqn: series roblin}) which appears in \autoref{res: series roblin} is exactly the Poincaré series of $G$.
	The result follows.
\end{proof}

\begin{coro}
\label{res: dimension almost-fixed}
	Let $\omega \in \R_+$.
	Let $\nu = (\nu_x)$ be an $\omega$-conformal density and $\mu = (\mu_x)$ its restriction to the reduced horocompactification $(\bar X, \mathfrak R)$.
	Assume that $(G, \nu)$ satisfies the Shadow Principle.
	If $\mu$ is almost-fixed by $G$, then $\omega \geq \omega_G$.
\end{coro}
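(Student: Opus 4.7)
The plan is to assemble three ingredients already in place: the quasi-morphism produced by the almost-fixed property, the comparison $\omega_\chi \geq \omega_G$ recorded before \autoref{res: existence twisted ps}, and the convergence half of \autoref{res: series roblin}.

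First, I would unpack what the hypothesis gives us. Since the whole horocompactification $\bar X$ is trivially saturated, it belongs to $\mathfrak R$, so $\norm{\mu_x} = \norm{\nu_x}$ for every $x \in X$. In particular, applying \autoref{res: almost-fixed gives quasi-morphism} to $\mu$ yields a quasi-morphism
\begin{equation*}
    \chi \colon G \to \R, \qquad \chi(g) = \ln \norm{\mu_{go}} = \ln \norm{\nu_{go}}.
\end{equation*}
The twisted Poincaré series associated to $\chi$ is then
\begin{equation*}
    \mathcal P_\chi(s) = \sum_{g \in G} e^{\chi(g)} e^{-s\dist o{go}} = \sum_{g \in G} \norm{\nu_{go}} e^{-s\dist o{go}}.
\end{equation*}

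Second, I would invoke the general observation made just before \autoref{res: existence twisted ps}: for any quasi-morphism $\chi$, the inequality $\frac{1}{2}\bigl[\mathcal P_\chi(s) + \mathcal P_{-\chi}(s)\bigr] \geq \mathcal P_G(s)$ and the identity $\omega_{-\chi} = \omega_\chi$ together force $\omega_\chi \geq \omega_G$.

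Third, I would apply \autoref{res: series roblin}. Because $(G, \nu)$ satisfies the Shadow Principle, the series displayed above (which coincides with the series from \autoref{res: series roblin}) converges whenever $s > \omega$; equivalently, $\omega_\chi \leq \omega$. Chaining the two inequalities gives $\omega_G \leq \omega_\chi \leq \omega$, which is the desired conclusion.

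There is no real obstacle here: the statement is essentially a bookkeeping corollary of the Shadow Principle combined with \autoref{res: almost-fixed gives quasi-morphism}. The only subtlety to double check is the identification $\norm{\mu_{go}} = \norm{\nu_{go}}$ so that the quasi-morphism produced for $\mu$ controls the series attached to $\nu$; this is immediate from $\bar X \in \mathfrak R$.
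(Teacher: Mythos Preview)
Your proof is correct and follows the same route as the paper: obtain the quasi-morphism $\chi(g)=\ln\norm{\nu_{go}}$ via \autoref{res: almost-fixed gives quasi-morphism}, use $\omega_\chi \geq \omega_G$, and conclude with the convergence half of \autoref{res: series roblin}. Your explicit remark that $\bar X \in \mathfrak R$ forces $\norm{\mu_{go}} = \norm{\nu_{go}}$ is a useful clarification of a step the paper leaves implicit.
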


\begin{proof}
	According to \autoref{res: almost-fixed gives quasi-morphism} the map $\chi \colon G \to \R_+$ sending $g$ to $\ln\norm{\nu_{go}}$ is a quasi-morphism.
	Note that the exponent of the series
	\begin{equation*}
		\sum_{g \in G} \norm{\nu_{go}} e^{-s\dist o{go}} 
		= \sum_{g \in G} e^{\chi(g)} e^{-s\dist o{go}} 
	\end{equation*}
	is exactly $\omega_\chi$.
	We observed earlier that $\omega_\chi \geq \omega_G$.
	The result follows from \autoref{res: series roblin}.
\end{proof}

\begin{coro}
\label{res: amenability roblin}
	Let $N$ be a normal subgroup of $G$ such that $G/N$ is amenable.
	Then $\omega_N = \omega_G$.
\end{coro}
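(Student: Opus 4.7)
The plan is to establish $\omega_G \leq \omega_N$; the reverse inequality is immediate since $N \subset G$. I would first verify that $N$ is infinite. Groups with a contracting element that are not virtually cyclic are acylindrically hyperbolic (Sisto), hence contain a non-abelian free subgroup and are non-amenable. If $N$ were finite, $G$ would be an extension of the amenable group $G/N$ by a finite group, hence amenable, a contradiction. Applying \autoref{res: existence twisted ps} with $H = N$ and the trivial quasi-morphism then produces an $N$-invariant, $\omega_N$-conformal density $\nu = (\nu_x)$ supported on $\partial X$.

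Next I would use amenability of $Q = G/N$ to average $\nu$. Since $N$ is normal and $\nu$ is $N$-invariant, the right action $g \mapsto \nu^g$ descends to $Q$, yielding a family $(\nu^q)_{q \in Q}$ of $N$-invariant, $\omega_N$-conformal densities. Fixing a bi-invariant mean $m$ on $Q$, define $\mu_o$ by the positive linear functional $f \mapsto m_q[\int f\,d\nu^q_o]$ on $C(\bar X)$ (a probability measure via Riesz) and extend it to a density $\mu = (\mu_x)$ by conformality. Both conformality and $N$-invariance are preserved under averaging, so $\mu$ is $N$-invariant and $\omega_N$-conformal, and by the Shadow Lemma (\autoref{res: shadow lemma}), $(G, \mu)$ satisfies the Shadow Principle.

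The technical heart of the argument is a lower bound for $\norm{\mu_{go}}$. By construction $\norm{\mu_{go}} = m_q[\norm{\nu^q_{go}}]$, and (\ref{eqn: harnack}) ensures that $q \mapsto \ln \norm{\nu^q_{go}}$ lies in $\ell^\infty(Q)$. Set $\chi(g) = m_q[\ln \norm{\nu^q_{go}}]$. Using the identity $\norm{\nu^q_{g_1 g_2 o}} = \norm{\nu^q_{g_1 o}} \cdot \norm{\nu^{qg_1}_{g_2 o}}$ together with right-invariance of $m$, a short computation shows that $\chi$ is a homomorphism $G \to \R$; the $N$-invariance of each $\nu^q$ forces $\norm{\nu^q_{ho}} = 1$ for $h \in N$, so $\chi|_N = 0$ and $\chi$ factors through $Q$. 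In particular $\chi$ is a quasi-morphism, and the discussion preceding \autoref{res: existence twisted ps} yields $\omega_\chi \geq \omega_G$. Jensen's inequality applied to the mean $m$ and the convex function $\exp$ then gives
\[
    \norm{\mu_{go}} = m_q\bigl[\exp(\ln \norm{\nu^q_{go}})\bigr] \geq \exp\bigl(m_q[\ln \norm{\nu^q_{go}}]\bigr) = e^{\chi(g)}.
\]

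To conclude, \autoref{res: series roblin} applied to $\mu$ gives that $\sum_{g \in G} \norm{\mu_{go}} e^{-s\dist o{go}}$ converges for $s > \omega_N$, and the bound $\norm{\mu_{go}} \geq e^{\chi(g)}$ forces the twisted Poincaré series $\mathcal P_\chi(s)$ to converge for $s > \omega_N$, i.e.\ $\omega_\chi \leq \omega_N$. Combined with $\omega_\chi \geq \omega_G$, this gives $\omega_G \leq \omega_N$ and completes the proof. The main difficulty will be the crux step just described: because the right action $\nu \mapsto \nu^g$ is not affine, owing to the normalization factor $1/\norm{\nu_{go}}$, naïve averaging does not produce a $G$-invariant density, and one must convert the multiplicative non-linearity into an additive one via the logarithm. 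Jensen's inequality is the precise gadget that turns this into a clean comparison between $\norm{\mu_{go}}$ and the homomorphism $\chi$, after which the twisted Poincaré series machinery finishes the argument.
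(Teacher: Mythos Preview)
Your proposal is correct and follows essentially the same approach as the paper's proof: average the $N$-invariant $\omega_N$-conformal density over $Q=G/N$ using an invariant mean, apply the Shadow Lemma to the averaged density $\mu$, define $\chi(g)$ as the mean of $\ln\norm{\nu^q_{go}}$ and use Jensen's inequality to get $\norm{\mu_{go}}\geq e^{\chi(g)}$, then invoke \autoref{res: series roblin} together with $\omega_\chi\geq\omega_G$ to conclude. The only cosmetic differences are that you define $\mu_o$ first and extend by conformality (the paper defines all $\mu_x$ directly via the mean, giving the same density) and that you use a bi-invariant mean (which makes the homomorphism verification cleaner, since it is right-invariance that is actually needed).
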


\begin{proof}
	The proof follows Roblin's argument \cite{Roblin:2005fn}.
	Assume first that $N$ is finite.
	This forces $G$ to be itself amenable.
	However since $G$ contains a contracting element, $G$ has to be virtually cyclic (otherwise $G$ would contain a free subgroup). 
	Hence $\omega_G = 0$ and the result holds.
	Suppose now that $N$ is infinite.
	For simplicity we let $Q = G/N$.
	Denote by $\nu = (\nu_x)$ an $N$-invariant, $\omega_N$-conformal density on $\bar X$.
	We are going to ``average'' the $G$-orbit of $\nu$ to produce another $N$-invariant, $\omega_N$-conformal density.
	The construction goes as follows.
	Choose a $Q$-invariant mean $M \colon \ell^\infty(Q) \to \R$.
	Let $x \in X$ and $f \in C(\bar X)$.
	Consider the function
	\begin{equation*}
		\begin{array}{rccc}
			\psi_{x,f} \colon & G & \to & \R \\
			& u & \mapsto & \displaystyle \int f d\nu_x^u
		\end{array}
	\end{equation*}
	According to (\ref{eqn: harnack}), we have $\norm {\nu_x^u} \leq e^{\omega_N \dist ox}$, for every $u \in G$.
	Consequently, $\psi_{x,f}$ is a bounded function.
	Since $\nu$ is $N$-invariant, we also observe that $\psi_{x,f}$ induces a map bounded map $Q \to \R$, that we still denote by $\psi_{x,f}$.
	Using Riesz representation theorem we define a new measure $\mu_x$ by imposing
	\begin{equation*}
		\int f d\mu_x = M \left(\psi_{x,f}\right),\quad \forall f\in C(\bar X).
	\end{equation*}
	One checks that $\mu = (\mu_x)$ is still $\omega_N$-conformal. 
	Since $N$ is a normal subgroup, it is also $N$-invariant.
	In particular, it satisfies the Shadow Principle (\autoref{res: shadow lemma}).
	According to \autoref{res: series roblin} the series
	\begin{equation}
	\label{eqn: amenability roblin}
		\sum_{g \in G} \norm{\mu_{go}}e^{-s\dist o{go}} 
	\end{equation}
	converges whenever $s > \omega_N$. Consider now the map 
	\begin{equation*}
		\begin{array}{rccc}
			\chi \colon & G & \to & \R \\
			& g & \mapsto & M \left(\ln \psi_{go,\mathbb 1}\right).
		\end{array}
	\end{equation*}
	Note that 
	\begin{equation*}
		\psi_{g_1g_2o,\mathbb 1}(u) = \psi_{g_2o,\mathbb 1}(ug_1) \psi_{g_1o,\mathbb 1}(u), \quad \forall u,g_1,g_2 \in G.
	\end{equation*}
	Since $M$ is $Q$-invariant, $\chi$ is a homomorphism (factoring through the projection $G \onto Q$).
	The exponential is a convex function, hence the Jensen inequality yields
	\begin{equation*}
		e^{\chi(g)} \leq \norm {\mu_{go}}, \quad \forall g \in G.
	\end{equation*}
	Consequently 
	\begin{equation*}
		\sum_{g \in G} \norm{\mu_{go}}e^{-s\dist o{go}}
		\geq \sum_{g \in G} e^{\chi(g)}e^{-s\dist o{go}}
	\end{equation*}
	The critical exponent of the sum on the right hand side is $\omega_\chi$.
	Combined with the above discussion it yields $\omega_N \geq \omega_\chi$.
	We observed previously that for a homomorphism $\chi$, we have $\omega_\chi \geq \omega_G$, thus $\omega_N \geq \omega_G$.
	The other inequality follows from the fact that $N \subset G$.
\end{proof}

\begin{rema}
	In the proof, we have used the fact that $N$ is normal in a crucial way to ensure that $\mu$ is $N$-invariant and therefore satisfies the Shadow Principle.
	It would be nice to generalize this strategy to the case of a \emph{co-amenable} subgroup $H$, i.e. when there is a $G$-invariant mean on the coset space $H\backslash G$.
	We know from the Patterson-Sullivan construction that $H$ fixes a point $\nu$ in $\mathcal D(\omega_H)$.
	Since $G$ acts on this compact space, it would be tempting to use the fixed point characterization of co-amenability due to Eymard \cite[Exposé~1, §2]{Eymard:1972ug} and conclude that $G$ should fix a point $\mu$ in $\mathcal D(\omega_H)$.
	Even though $\mu$ may not be $H$-invariant that would be enough to ensure that it satisfies the Shadow Principle.
	One has to be careful though that the action of $G$ on $\mathcal D(\omega_H)$ is not affine.
	Hence the fixed point criterion does not apply here.
	In this context, one can easily prove that the ``average'' density $\mu$ we have built is $G$-quasi-invariant. 
	Unfortunately, this information is not sufficient to prove that it satisfies the Shadow Principle.
\end{rema}

\begin{coro}
\label{res: improved lower bound growth normal sbgp}
	Let $N$ be an infinite, normal subgroup of $G$.
	Then
	\begin{equation*}
		\omega(N,X) + \frac 12 \omega(G/N, X/N) \geq \omega(G, X).
	\end{equation*}
\end{coro}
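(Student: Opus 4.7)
I may assume that $G$ is not virtually cyclic (else $\omega(G,X) = 0$ and the inequality is immediate). By Proposition~\ref{res: existence twisted ps} applied to $H = N$ with $\chi = 0$ (viewing $N$ as the ambient acting group), there exists an $N$-invariant, $\omega_N$-conformal density $\nu = (\nu_x)$ on $\bar X$, supported on $\partial X$. Since $N$ is infinite and normal in $G$, Lemma~\ref{res: contracting element in commensurated subgroup} ensures that $N$ itself is not virtually cyclic and has a contracting element, and Corollary~\ref{res: shadow lemma} guarantees that $(G, \nu)$ satisfies the Shadow Principle.

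\textbf{Coset structure.} By the $N$-invariance of $\nu$ combined with the normality of $N$ (for $m \in N$ and $g \in G$, $mg = g(g^{-1}mg)$ with $g^{-1}mg \in N$, so $\norm{\nu_{mgo}} = \norm{m_*\nu_{go}} = \norm{\nu_{go}}$), the map $g \mapsto \norm{\nu_{go}}$ is constant on left cosets of $N$; denote its common value on $\bar g = gN$ by $M_{\bar g}$. Set $L_{\bar g} := d_{X/N}(\bar o, \bar g \bar o) = \min_{n \in N} \dist{o}{gno}$. The Harnack inequality \eqref{eqn: harnack}, applied to the minimizing coset representative, yields
\[
e^{-\omega_N L_{\bar g}} \leq M_{\bar g} \leq e^{\omega_N L_{\bar g}}.
\]
Proposition~\ref{res: pre upper bounds bis} provides the key shadow counting estimate
\[
\sum_{g \in S(\ell, a)} M_{\bar g} \, e^{-\omega_N \dist{o}{go}} \leq C, \qquad \forall \ell \geq 0,
\]
so that $\sum_g M_{\bar g} e^{-s \dist{o}{go}}$ converges for every $s > \omega_N$.

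\textbf{Cauchy--Schwarz decomposition.} The heart of the proof is to decompose the Poincaré series of $G$ via Cauchy--Schwarz. For $s_1, s_2 > 0$,
\[
\mathcal P_G(s_1 + s_2)^2 = \Biggl( \sum_{g \in G} M_{\bar g}^{1/2} e^{-s_1 \dist{o}{go}} \cdot M_{\bar g}^{-1/2} e^{-s_2 \dist{o}{go}} \Biggr)^{\!\!2} \leq \Biggl( \sum_g M_{\bar g} e^{-2 s_1 \dist{o}{go}} \Biggr) \Biggl( \sum_g M_{\bar g}^{-1} e^{-2 s_2 \dist{o}{go}} \Biggr).
\]
The first factor is controlled by the shadow bound: it converges whenever $2 s_1 > \omega_N$, i.e.\ $s_1 > \omega_N / 2$. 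For the second factor, I decompose over cosets and use $M_{\bar g}^{-1} \leq e^{\omega_N L_{\bar g}}$ together with $\dist{o}{go} \geq L_{\bar g}$; a careful counting of the elements of each coset inside the shell (exploiting the Shadow Lemma for the action of $N$ on $X$, which gives a sharp bound on the $N$-orbit growth starting from the min-distance point) should bound the second factor by a constant multiple of the Poincaré series $\mathcal P_{G/N, X/N}(2 s_2 - \omega_N)$. Balancing the two thresholds $s_1 > \omega_N/2$ and $2 s_2 - \omega_N > \omega(G/N, X/N)$ gives $\omega(G,X) \leq s_1 + s_2$ and, passing to the infimum, the desired inequality.

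\textbf{Main obstacle.} The delicate step is extracting the factor $\tfrac12$ on $\omega(G/N, X/N)$: a naive use of the Harnack bound on $M_{\bar g}^{-1}$ together with the Harnack-type upper bound on $\sum_{n \in N} e^{-2 s_2 \dist{o}{\gamma n o}} \lesssim e^{(2\omega_N - 2 s_2) L_{\bar g}}$ produces only the weaker bound $\omega_G \leq 2\omega_N + \omega(G/N, X/N)/2$. To get the sharp exponent, one must replace the loose Harnack control of $M_{\bar g}^{-1}$ by an estimate that couples the two factors, for instance by noting that the pair $\bigl( M_{\bar g}, \sum_n e^{-2 s_2 \dist{o}{\gamma n o}} \bigr)$ is jointly constrained through the Shadow Principle applied to $(N, \nu)$ at base point $\gamma^{-1} o$. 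This is where the Patterson--Sullivan machinery introduced in the paper (in particular the refined Shadow Principle of Proposition~\ref{res: pre upper bounds} with a contracting tail) should give the missing $\omega_N$ worth of sharpness and complete the proof.
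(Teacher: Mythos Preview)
Your proposal has a genuine gap, which you yourself identify in the ``Main obstacle'' paragraph: you cannot control the second Cauchy--Schwarz factor $\sum_g M_{\bar g}^{-1} e^{-2s_2\dist o{go}}$ well enough to extract the coefficient $\tfrac12$ in front of $\omega(G/N,X/N)$. Your suggestion that ``the refined Shadow Principle \dots\ should give the missing $\omega_N$ worth of sharpness'' is not a proof, and I do not see how to make it one: the quantity $M_{\bar g}^{-1}$ has no obvious interpretation in terms of shadows, and the Harnack bound you invoke is sharp in general, so there is no hidden slack to recover.

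The paper avoids this difficulty by running Cauchy--Schwarz on $\ell^2(Q)$ rather than on $\ell^2(G)$. With $Q=G/N$, $\pi\colon G\to Q$, $\zeta\colon X\to X/N$, set
\[
\phi_s(q)=\sum_{g\in\pi^{-1}(q)}e^{-s\dist o{go}},\qquad \psi_t(q)=e^{-t\,d_{X/N}(\zeta(o),q\zeta(o))}.
\]
Since $\zeta$ is $1$-Lipschitz, $(\phi_s,\psi_t)_{\ell^2(Q)}\geq \mathcal P_G(s+t)$, and Cauchy--Schwarz gives $\mathcal P_G(s+t)\leq \|\phi_s\|\,\|\psi_t\|$. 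The factor $\tfrac12$ now comes \emph{for free}: $\|\psi_t\|^2=\sum_q e^{-2t\,d_{X/N}(\zeta(o),q\zeta(o))}$ is precisely the Poincar\'e series of $Q$ acting on $X/N$ at exponent $2t$, finite once $2t>\omega(G/N,X/N)$. All the work goes into the other factor: one computes
\[
\|\phi_s\|^2=\mathcal P_N(s)\sum_{g\in G}\|\nu^s_{go}\|\,e^{-s\dist o{go}},
\]
where $\nu^s_x=\mathcal P_N(s)^{-1}\sum_{h\in N}e^{-s\dist x{ho}}\mathrm{Dirac}(ho)$ is the explicit $N$-invariant, $s$-conformal density. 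The Shadow Principle (Corollary~\ref{res: shadow lemma} and Proposition~\ref{res: pre upper bounds bis}) then bounds this by $C\sum_{h\in N}[1+\dist o{ho}]e^{-s\dist o{ho}}$, which converges for $s>\omega_N$.

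Two points are worth noting. First, the paper uses the density $\nu^s$ at exponent $s>\omega_N$, not a fixed $\omega_N$-conformal density; this is what makes $\|\nu^s_{go}\|$ literally equal to a normalized coset sum and allows the clean identity for $\|\phi_s\|^2$. Second, the decisive structural choice is to place the Cauchy--Schwarz in $\ell^2(Q)$: this makes one factor depend only on the quotient geometry (yielding $\tfrac12\omega_Q$ automatically) and concentrates all the shadow machinery in the other. Your splitting on $\ell^2(G)$ mixes the two geometries in both factors, and that is why you cannot separate out $\tfrac12\omega_Q$.
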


\begin{proof}
	Let $Q = G/N$.
	Denote by $\pi \colon G \onto Q$ and $\zeta \colon X \to X/N$ the canonical projections.
	For simplicity we write $\omega_N$ and $\omega_G$ for the growth rates of $N$ and $G$ acting on $X$, while $\omega_Q$ stands for the growth rate of $Q$ acting on $X/N$.
	We denote by $\mathcal H$ the Hilbert space $\mathcal H = \ell^2(Q)$.
	Given $s,t \in \R_+$ we consider the following maps
	\begin{equation*}
		\begin{array}{rccccrccc}
			\phi_s \colon & Q & \to & \R 
			& \quad \text{and} \quad \
			& \psi_t \colon & Q & \to & \R \\
			& q & \mapsto &\displaystyle  \sum_{g \in \pi^{-1}(q)} e^{-s\dist o{go}}
			&&
			& q & \mapsto & \displaystyle e^{-t\dist {\zeta(o)}{q\zeta(o)}}
		\end{array}
	\end{equation*}
	One checks that $\psi_t \in \mathcal H$ (\resp $\psi_t \notin \mathcal H$) whenever $2t >  \omega_Q$ (\resp $2t <  \omega_Q$).
	Similarly $\phi_s \notin \mathcal H$, whenever $s < \omega_N$.
	We now prove that the converse essentially holds true.
	
	\begin{clai}
		If $s > \omega_N$, then $\phi_s \in \mathcal H$.
	\end{clai}
	
	Let $s > \omega_N$.
	Consider the $N$-invariant, $s$-conformal density $\nu^s = (\nu^s_x)$ on $\bar X$ defined by
	\begin{equation*}
		\nu^s_x = \frac 1 {\mathcal P_N(s)} \sum_{h \in N} e^{-s \dist x{ho}} {\rm Dirac}(ho).
	\end{equation*}
	The computation shows that 
	\begin{align*}
		\norm{\phi_s}^2 
		= \sum_{\substack{g_1,g_2 \in G\\ \pi(g_1)=\pi(g_2)}} e^{-s \left[\dist o{g_1o} + \dist o{g_2o}\right]}
		& = \sum_{g \in G}e^{-s \dist o{go}} \left(\sum_{h \in N} e^{-s\dist{go}{ho}}\right) \\
		& = \mathcal P_N(s)  \sum_{g \in G} \norm{\nu^s_{go}} e^{-s \dist o{go}} .
	\end{align*}
	Fix $a \in \R_+^*$.
	According to \autoref{res: shadow lemma}, $\nu^s$ satisfies the Shadow Principle.
	By \autoref{res: pre upper bounds bis} there exists $C \in\R_+^*$, such that for every $\ell \in \R_+$,
	\begin{equation*}
		\sum_{g \in S(\ell, a)} \norm{\nu^s_{go}}  e^{-s \dist o{go}} 
		\leq C\nu^s_o\left(\bar X \setminus B(o, \ell)\right).
	\end{equation*}
	Unfolding the definition of $\nu^s$, we get
	\begin{equation*}
		\mathcal P_N(s)\sum_{g \in S(\ell, a)}  \norm{\nu^s_{go}}  e^{-s \dist o{go}}
		\leq C \sum_{\substack{h \in N \\ \dist o{ho} \geq \ell}} e^{-s \dist o{ho}}.
	\end{equation*}
	It follows that 
	\begin{equation*}
		\norm{\phi_s}^2
		\leq C \sum_{k \in \N} \sum_{\substack{h \in N \\ \dist o{ho} \geq ka}} e^{-s \dist o{ho}}
		\leq C \sum_{h \in N} \sum_{\substack{k \in \N \\ ka \leq \dist o{ho}}}  e^{-s \dist o{ho}}.
	\end{equation*}
	Up to replacing $C$ by a larger constant, we have
	\begin{equation}
	\label{eqn: improved lower bound growth normal sbgp}
		\norm{\phi_s}^2
		\leq C \sum_{h \in N} \left[1 + \dist o{ho}\right] e^{-s \dist o{ho}}.
	\end{equation}
	Note that the series 
	\begin{equation*}
		-\sum_{h \in N} \dist o{ho} e^{-s\dist o{ho}}
	\end{equation*}
	is the derivative of the Poincaré series of $N$, hence it converges.
	Consequently the right-hand side in (\ref{eqn: improved lower bound growth normal sbgp}) converges.
	Thus $\norm{\phi_s}$ is finite, which completes the proof of our claim.

	Let $s, t \in \R_+$ with $s > \omega_N$ and $2t > \omega_Q$.
	The scalar product of $\phi_s$ and $\psi_t$ can be computed as follows:
	\begin{align*}
		\left( \phi_s, \psi_t\right)
		& = \sum_{q \in Q} e^{-t \dist {\zeta(o)}{q\zeta(o)}} \left(\sum_{g \in \pi^{-1}(q)} e^{-s\dist o{go}}\right) \\
		& = \sum_{g \in G} e^{-s \dist o{go}}e^{-t \dist{\zeta(o)}{\pi(g)\zeta(o)}}.
	\end{align*}
	The projection $\zeta \colon X \onto X/N$ is $1$-Lipschitz.
	Combined with the Cauchy-Schwartz inequality, it gives
	\begin{equation*}
		\mathcal P_G(s+t)
		\leq \left( \phi_s, \psi_t\right)
		\leq \norm{\phi_s}\norm{\psi_t} < \infty.
	\end{equation*}
	Consequently $s + t \geq \omega_G$.
	This inequality holds for every $s, t \in \R_+$ with $s > \omega_N$ and $2t > \omega_Q$, whence the result.
\end{proof}

\begin{rema*}
 	We keep the notations of the proof.
	\autoref{res: improved lower bound growth normal sbgp} is sharp.
	\begin{itemize}
		\item 
		Indeed, assume that $X$ is a Cayley graph of $G$.
		If $Q$ has subexponential growth, then $\omega_Q = 0$ and $Q$ is amenable, so that $\omega_N = \omega_G$ by \autoref{res: amenability roblin}.
		\item At the other end of the spectrum, assume that $G = \free r$ is the free group of rank $r$ acting on its Cayley graph $X$ with respect to a free basis.
		Let $g \in G \setminus\{1\}$.
		For every $k \in \N$, denote by $N_k$ the normal closure of $g^k$.
		Then
		\begin{equation*}
			\lim_{k \to \infty}\omega_{N_k} = \frac 12 \omega_G
			\quad \text{and} \quad
			\lim_{k \to \infty}{\omega_{G/N_k}} = \omega_G.
		\end{equation*}
		The first limit is due to Grigorchuk \cite{Grigorchuk:1977vw}, see also Champetier \cite{Champetier:1993gs}.
		The second limit was proved by Shukhov \cite{Shukhov:1999ex}, see also Coulon \cite{Coulon:2013ab}.
	\end{itemize}
\end{rema*}

\section{Divergent actions}
\label{sec: div action}

\subsection{Contracting tails, continued}

We continue here our study of shadows of elements $g \in \mathcal T(\alpha, L)$ having a contracting tail.
Before we recall the proof of the following classical result.

\begin{lemm}
\label{res: neighbored geodesics}
	Let $C \in \R_+$.
	Let $x,y \in X$ and  $\gamma \colon \intval ab \to X$ be a continuous path from $x$ to $y$.
	Let $\nu \colon I \to X$ be a geodesic.
	Denote by $p = \nu(c)$ and $q = \nu(d)$ respective projections of $x$ and $y$ on $\nu$ and suppose that $c \leq d$.
	Assume in addition that $\gamma$ lies in the $C$-neighborhood of $\nu$.
	Then $\nu$ restricted to $\intval cd$ lies in the $2C$-neighborhood of $\gamma$.
\end{lemm}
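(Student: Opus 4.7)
The plan is to fix an arbitrary $t\in\intval cd$ and to produce a point on $\gamma$ at distance at most $2C$ from $\nu(t)$, by means of a one-dimensional intermediate value argument. The key idea is to split $\nu$ into two halves at the parameter $t$ and to follow $\gamma$ from $x$ to $y$ until one reaches a point that is equidistant from both halves: such a point must automatically lie close to $\nu(t)$.

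Concretely, I would let $\nu_-$ (respectively $\nu_+$) denote the image under $\nu$ of the sub-interval of $I$ consisting of parameters at most (respectively at least) $t$. Both are non-empty, since $p=\nu(c)\in\nu_-$ (using $c\leq t$) and $q=\nu(d)\in\nu_+$ (using $d\geq t$). Define
\begin{equation*}
	h(s)=\dist{\gamma(s)}{\nu_-}-\dist{\gamma(s)}{\nu_+}.
\end{equation*}
Since the distance to a non-empty subset is $1$-Lipschitz, $h$ is continuous on $\intval ab$. At $s=a$, the fact that $p$ realises $\dist{x}{\nu}$ gives $\dist{x}{\nu_-}\leq\dist xp=\dist x\nu\leq\dist{x}{\nu_+}$, so $h(a)\leq 0$; symmetrically $h(b)\geq 0$. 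The intermediate value theorem then yields $s_0\in\intval ab$ with $h(s_0)=0$. Writing $r$ for the common value $\dist{\gamma(s_0)}{\nu_-}=\dist{\gamma(s_0)}{\nu_+}$, and using that the image of $\nu$ is $\nu_-\cup\nu_+$, one gets $r=\dist{\gamma(s_0)}\nu\leq C$ from the hypothesis on $\gamma$.

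To finish, I would produce points $\nu(t_-)\in\nu_-$ and $\nu(t_+)\in\nu_+$ realising these infima; properness of $X$ confines the relevant parameters to a bounded subset of $I$, which takes care of the case when $I$ is not closed. By construction $t_-\leq t\leq t_+$, and since $\nu$ is a geodesic, the triangle inequality yields
\begin{equation*}
	t_+-t_-=\dist{\nu(t_-)}{\nu(t_+)}\leq\dist{\nu(t_-)}{\gamma(s_0)}+\dist{\gamma(s_0)}{\nu(t_+)}=2r.
\end{equation*}
At least one of $t-t_-$ and $t_+-t$ is therefore at most $r$; assuming without loss of generality $t-t_-\leq r$, the triangle inequality gives $\dist{\nu(t)}{\gamma(s_0)}\leq(t-t_-)+r\leq 2r\leq 2C$, which is the desired estimate. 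The only mildly delicate step is the existence of the nearest-point parameters $t_\pm$; the rest reduces to the continuity of $h$ together with the triangle inequality, and the factor $2$ in $2C$ comes precisely from $t$ possibly sitting at one end of the interval $\intval{t_-}{t_+}$ of length at most $2r$.
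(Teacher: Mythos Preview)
Your proof is correct and follows the same strategy as the paper: fix $\nu(t)$, use an intermediate-value argument along $\gamma$ to find a point $\gamma(s_0)$ whose nearest points on $\nu$ straddle $t$, and then bound $\dist{\nu(t)}{\gamma(s_0)}$ by the triangle inequality. The only difference is in execution: you apply the intermediate value theorem to the continuous function $h(s)=\dist{\gamma(s)}{\nu_-}-\dist{\gamma(s)}{\nu_+}$ and obtain $s_0$ directly, whereas the paper takes $s_0=\sup\{s:\gamma(s)\text{ admits a projection in }\nu_-\}$ and must run an $\epsilon$-argument at the end because this set need not be closed. Your route is marginally cleaner, at the cost of needing the infima $\dist{\gamma(s_0)}{\nu_\pm}$ to be attained --- a hypothesis of exactly the same strength as the paper's implicit use of nearest-point projections of each $\gamma(s)$ onto $\nu$.
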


\begin{proof}
	Fix a point $r = \nu(t_0)$ on $\nu$ where $t_0 \in \intval cd$.
	Consider the set $J$ consisting of all $s \in \intval ab$ such that $\gamma(s)$ admits a projection on $\nu$ of the form $\nu(t)$ with $t \leq t_0$.
 	Note that $J$ is non empty as it contains $a$.
	Now set $s_0 = \sup J$.
	Let $\epsilon > 0$.
	Since $\gamma$ is continuous, there is $\eta > 0$ such that for every $s \in \intval ab$, we have $\dist {\gamma(s)}{\gamma(s_0)} \leq \epsilon$ provided $\abs{s-s_0} \leq \eta$.
	By definition of $J$, there is $s_1 \in (s_0-\eta, s_0]$ such that $\gamma(s_1)$ admits a projection on $\nu$ of the form $\nu(t_1)$ with $t_1 \leq t_0$.
	On the other hand, we claim that there is $s_2 \in [s_0, s_0 + \eta)$ such that $\gamma(s_2)$ admits a projection on $\nu$ of the form $\nu(t_2)$ with $t_2 \geq t_0$.
	Indeed if $s_0 = b$, we can simply take $s_2 = b$.
	Otherwise it follows from the definition of $J$.
	By assumption $\dist{\nu(t_i)}{\gamma(s_i)} \leq C$.
	Hence the triangle inequality yields
	\begin{equation*}
		\dist {\nu(t_1)}{\nu(t_2)} 
		\leq \dist {\gamma(s_1)}{\gamma(s_2)} + 2C
		\leq 2C + 2\epsilon.
	\end{equation*}
	and then 
	\begin{equation*}
		d(r, \gamma) 
		\leq \frac 12 \dist {\nu(t_1)}{\nu(t_2)} + \max \left\{ \dist{\nu(t_1)}{\gamma(s_1)}, \dist{\nu(t_2)}{\gamma(s_2)}\right\}
		\leq 2C + \epsilon.
	\end{equation*}
	This inequality holds for every $\epsilon > 0$, whence the result.
\end{proof}

\begin{lemm}
\label{res: pre-vitali}
	Let $\alpha \in \R_+^*$ and $r, L \in \R_+$ with $L > r + 16\alpha$.
	Let $x,y_1, y_2 \in X$ with $\dist x{y_1} \leq \dist x{y_2}$.
	Assume that the pairs $(x,y_1)$ and $(x,y_2)$ have an $(\alpha, L)$-contracting tail.
	If $\mathcal O_x(y_1,r) \cap \mathcal O_x(y_2,r)$ is not empty, then $\mathcal O_x(y_2,r)$ is contained in $\mathcal O_x(y_1,r + 42\alpha)$.
\end{lemm}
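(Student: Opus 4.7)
The plan is to prove that $\gro{x}{c}{y_1} \leq r + 42\alpha$ for every $c \in \mathcal O_x(y_2, r)$ by applying the projection estimate of \autoref{res: proj cocycle contracting set} to $c$ and $\tau_1$. Denote by $q$ the projection of $c$ onto $\tau_1$. If one can show that (a) $q$ lies between $p_1$ and $y_1$ on $\tau_1$, (b) $\dist{p_1}{q} > 4\alpha$, and (c) $\dist{y_1}{q} \leq r + 35\alpha$, then \autoref{res: proj cocycle contracting set} gives $c(x,q) \geq \dist{x}{p_1} + \dist{p_1}{q} - 14\alpha$. Combined with the $1$-Lipschitz property of $c$, the triangle inequality $\dist{x}{y_1} \leq \dist{x}{p_1} + \dist{p_1}{y_1}$, and the identity $\dist{p_1}{y_1} = \dist{p_1}{q} + \dist{q}{y_1}$ from (a), a short computation yields $c(x, y_1) \geq \dist{x}{y_1} - 2\dist{q}{y_1} - 14\alpha$, and hence $\gro{x}{c}{y_1} \leq \dist{q}{y_1} + 7\alpha \leq r + 42\alpha$.

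The heart of the argument is therefore to establish (c). I would exploit the intersection hypothesis: fix $c_0 \in \mathcal O_x(y_1, r) \cap \mathcal O_x(y_2, r)$ and denote by $q_0$ its projection on $\tau_1$. \autoref{res: wysiwyg shadow} directly gives $\dist{y_1}{q_0} \leq r + 7\alpha$. Gradient arcs $\gamma_0$ (from $x$ to $c_0$) and $\gamma$ (from $x$ to $c$) both fellow-travel $\tau_2$ from near $p_2$ to within $r + 7\alpha$ of $y_2$---by applying \autoref{res: wysiwyg shadow} with $\tau_2$ to $c_0$ and to $c$ respectively---hence they remain $O(\alpha)$-close along this common region. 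Combined with the large-scale $1$-Lipschitz property of the nearest-point projection (\autoref{rem: Lipschitz proj}), this should transfer to the projections onto $\tau_1$ and yield $\dist{q}{q_0} \leq O(\alpha)$, whence (c). Conditions (a) and (b) then follow from the cocycle identity used in the proof of \autoref{res: wysiwyg shadow} together with the length hypothesis $L > r + 16\alpha$.

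The main obstacle I anticipate is the transfer step in (c). When $\dist{x}{y_2} - \dist{x}{y_1}$ is much larger than $L$, the fellow-travel regions of $\gamma_0$ with $\tau_1$ and with $\tau_2$ do not overlap along $\gamma_0$, so $q_0$ may fail to lie in the region where $\gamma_0$ and $\gamma$ are close. In that regime one must argue differently, likely by applying the cocycle identity from the proof of \autoref{res: wysiwyg shadow} directly to $(c, \tau_1)$ and bounding the remaining Gromov products using both parts of \autoref{res: intersection contracting shadows}---the estimate $\dist{p_1}{p_2} \leq \abs{\dist{x}{p_1} - \dist{x}{p_2}} + 8\alpha$, together with the fact that $\gamma$ passes within $2\alpha$ of $p_2$, should constrain the position of $\gamma$ relative to $\tau_1$ enough to close the argument.
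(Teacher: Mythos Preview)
Your proposal has a genuine gap, and you have correctly located it yourself: the ``transfer step'' for (c) does not go through. The fellow-travelling of $\gamma_0$ and $\gamma$ occurs along $\tau_2$, which may be far from $\tau_1$; being $O(\alpha)$-close to each other there says nothing about their nearest-point projections onto $\tau_1$. More fundamentally, since $c$ is only assumed to lie in $\mathcal O_x(y_2,r)$, you have no a~priori control on the projection $q$ of $c$ onto $\tau_1$ at all---\autoref{res: wysiwyg shadow} does not apply to the pair $(c,\tau_1)$, and the Lipschitz property of \autoref{rem: Lipschitz proj} only helps once you already know two source points are close. Your fallback suggestion (invoking \autoref{res: intersection contracting shadows} to constrain $p_1,p_2$) controls where $\gamma$ \emph{enters} $\mathcal N_\alpha(\tau_1)$ if it enters at all, but not where it exits, which is what determines $q$; so that route also stalls.

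The paper's proof avoids projecting $c$ onto $\tau_1$ entirely. It fixes auxiliary points $z_i\in\tau_i$ at distance $r+13\alpha$ from $y_i$ and shows (Claim~1) that $z_i$ lies within $5\alpha$ of \emph{any} gradient arc from $x$ to a cocycle in $\mathcal O_x(y_i,r)$; in particular $\gro xc{z_i}\leq 5\alpha$ for such $c$. Taking a common $c_0$ in both shadows and a single gradient arc $\gamma_0$ to it, both $z_1$ and $z_2$ are $5\alpha$-close to $\gamma_0$; the hypothesis $\dist x{y_1}\leq\dist x{y_2}$ orders the corresponding times along $\gamma_0$ (up to $14\alpha$), giving $\gro x{z_2}{z_1}\leq 24\alpha$ (Claim~2). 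For arbitrary $c\in\mathcal O_x(y_2,r)$ one then uses the purely metric inequality
\[
\gro xc{z_1}\leq\gro xc{z_2}+\gro x{z_2}{z_1}\leq 5\alpha+24\alpha=29\alpha,
\]
and finally $\dist{y_1}{z_1}=r+13\alpha$ yields $\gro xc{y_1}\leq r+42\alpha$. The key idea you are missing is this replacement of the moving projection $q$ by a \emph{fixed} anchor $z_1$, together with the Gromov-product triangle inequality to pass information from $z_2$ to $z_1$ via the single common cocycle $c_0$.
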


\begin{proof}
	By assumption there is an $\alpha$-contracting geodesic $\tau_i$ ending at $y_i$, and a projection $p_i$ of $x$ on $\tau_i$ satisfying $\dist{p_i}{y_i} \geq L$.
	In addition, we write $z_i$ for the point on $\tau_i$ satisfying $\dist {y_i}{z_i} = r + 13\alpha$.
	We split the proof in several claims.
	
	\begin{clai}
		Let $c \in \mathcal O_x(y_i,r)$ and $\gamma \colon I \to X$ a gradient line from $x$ to $c$.
		Then $z_i$ lies in the $5\alpha$-neighborhood of $\gamma$.
		In particular $c \in \mathcal O(z_i, 5\alpha)$.
	\end{clai}
	
	According to \autoref{res: wysiwyg shadow}, $\gamma$ intersects $\mathcal N_\alpha(\tau_i)$.
	Denote by $\gamma(s_i)$ and $\gamma(t_i)$ the entry and exit point of $\gamma$ in $\mathcal N_\alpha(\tau_i)$.
	We know, again from \autoref{res: wysiwyg shadow}, that $\dist{\gamma(s_i)}{p_i} \leq 2 \alpha$ and $\dist{\gamma(t_i)}{y_i} \leq r + 12\alpha$.
	In particular, if $m_i$ and $n_i$ stands for projections of $\gamma(s_i)$ and $\gamma(t_i)$ on $\tau_i$ respectively then $m_i$, $z_i$, $n_i$ and $y_i$ are aligned in this order along $\tau_i$.
	The path $\gamma$ restricted to $\intval{s_i}{t_i}$ lies in the $5\alpha/2$-neighborhood of $\tau$ (\autoref{res: qc contracting set}).
 	It follows that $z_i$ is $5\alpha$-close to $\gamma$ (\autoref{res: neighbored geodesics}).

	\begin{clai}
		We have $\gro x{z_2}{z_1} \leq 24\alpha$.	
	\end{clai}
	
	According to our assumption $\mathcal O_x(y_1,r) \cap \mathcal O_x(y_2,r)$ is not empty.
	Let $\gamma \colon I \to X$ be a gradient arc from $x$ to a cocycle $c$ in this intersection.
	It follows form our previous claim that there is $u_i \in \R_+$ such that $\gamma(u_i)$ is $5\alpha$-close to $z_i$.
	Hence the triangle inequality yields
	\begin{equation}
	\label{eqn: pre-vitali}
		\gro x{z_2}{z_1} \leq \gro x{\gamma(u_2)}{\gamma(u_1)} + 10\alpha.
	\end{equation}
	Observe that
	\begin{equation*}
		\dist  x{z_1} + \dist{z_1}{y_1} - 4 \alpha
		\leq \dist x{y_1}
		\leq \dist x{y_2}
		\leq \dist x{z_2} + \dist {z_2}{y_2}.
	\end{equation*}
	Indeed, the first inequality follows from \autoref{res: proj contracting set} applied to $\tau_1$, the second one is part of our assumptions, while the last one is just the triangle inequality.
	By construction $\dist {y_1}{z_1} = \dist {y_2}{z_2}$.
	Combined with the triangle inequality it yields
	\begin{equation*}
		\dist x{\gamma(u_1)} \leq \dist x{\gamma(u_2)} + 14\alpha.
	\end{equation*}
	Therefore $\gro x{\gamma(u_2)}{\gamma(u_1)} \leq 14\alpha$, which combined with (\ref{eqn: pre-vitali}) completes the proof of our second claim.

	\begin{clai}
		 $\mathcal O_x(y_2,r)$ is contained in $\mathcal O_x(y_1,r + 42\alpha)$.
	\end{clai}
	
	Consider $c \in \mathcal O_x(y_2,r)$.
	According to our first claim $\gro xc{z_2} \leq 5\alpha$.
	Since cocyles are $1$-Lipschitz, we get in combination with the previous claim
	\begin{equation*}
		\gro xc{z_1} \leq \gro xc{z_2} + \gro x{z_2}{z_1} \leq 29 \alpha.
	\end{equation*}
	However $\dist {y_1}{z_1} = r + 13\alpha$.
	It follows from (\ref{eqn: gromov product - lip}) that $\gro xc{y_1} \leq r + 42\alpha$.
\end{proof}

The next statement will be used later to estimate the measures of various saturated sets using a Vitali type argument.

\begin{lemm}
\label{res: vitali}
	Let $\alpha \in \R_+^*$ and $r, L \in \R_+$ with $L > r + 16\alpha$.
	Let $S$ be a subset of $\mathcal T(\alpha, L)$.
	There is a subset $S^* \subset S$ with the following properties
	\begin{enumerate}
		\item \label{enu: vitali - disjoint}
		The collection $\left(\mathcal O_o(go,r)\right)_{g \in S^*}$ is pairwise disjoint.
		\item \label{enu: vitali - cover}
		$\displaystyle \bigcup_{g \in S} \mathcal O_o(go,r) \subset  \bigcup_{g \in S^*} \mathcal O_o(go,r+42\alpha)$.
	\end{enumerate}
\end{lemm}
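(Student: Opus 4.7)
The plan is to apply a classical Vitali-type selection argument, using \autoref{res: pre-vitali} as the key geometric input. That lemma says that whenever $\dist o{y_1} \leq \dist o{y_2}$ and the two shadows $\mathcal O_o(y_1,r)$ and $\mathcal O_o(y_2,r)$ meet, the shadow of the farther point is swallowed by the enlarged shadow of the closer point: $\mathcal O_o(y_2,r) \subset \mathcal O_o(y_1, r+42\alpha)$. So it suffices to construct $S^*$ in such a way that (a) the shadows of its elements are pairwise disjoint, and (b) for every $g \in S$ there is some $h \in S^*$ with $\dist o{ho} \leq \dist o{go}$ whose shadow meets that of $g$.

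First I would use the properness of the action to note that the set $\set{g \in G}{\dist o{go} \leq R}$ is finite for every $R \in \R_+$. In particular $S$ is countable and admits an enumeration $S = (g_n)_n$, indexed by an initial segment of $\N$, such that $n \mapsto \dist o{g_n o}$ is non-decreasing. Next I would build $S^*$ inductively by declaring that $g_n$ belongs to $S^*$ if and only if $\mathcal O_o(g_n o, r) \cap \mathcal O_o(g_m o, r) = \emptyset$ for every $m < n$ with $g_m \in S^*$. Property \ref{enu: vitali - disjoint} is then immediate from the construction.

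For property \ref{enu: vitali - cover}, let $g = g_n \in S$. If $g_n \in S^*$ there is nothing to prove. Otherwise, by the defining rule, there exists $m < n$ with $g_m \in S^*$ such that $\mathcal O_o(g_n o, r) \cap \mathcal O_o(g_m o, r) \neq \emptyset$. Since $m < n$ we have $\dist o{g_m o} \leq \dist o{g_n o}$, and both $g_m, g_n$ lie in $\mathcal T(\alpha, L)$; the hypothesis $L > r + 16\alpha$ allows us to apply \autoref{res: pre-vitali} with $x=o$, $y_1 = g_m o$, $y_2 = g_n o$, yielding $\mathcal O_o(g_n o, r) \subset \mathcal O_o(g_m o, r+42\alpha)$. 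This is exactly what we need.

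There is no genuine obstacle in this argument: the real geometric work has already been absorbed into \autoref{res: pre-vitali}, and the remaining content is the standard Vitali bookkeeping. The only delicate point worth emphasizing is that one must enumerate $S$ in order of non-decreasing distance from $o$, so that the element kept in $S^*$ always plays the role of the \emph{closer} point in \autoref{res: pre-vitali}; this is precisely why properness of the action (which makes such an enumeration available) is used.
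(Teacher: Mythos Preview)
Your proof is correct and follows essentially the same approach as the paper: enumerate $S$ in non-decreasing distance to $o$ using properness, greedily select elements whose $r$-shadows miss all previously selected ones, and then invoke \autoref{res: pre-vitali} to absorb any rejected element's shadow into the enlarged shadow of an earlier selected one. The only cosmetic difference is that the paper builds an auxiliary index sequence $(i_n)$ to record the selected elements, whereas you define $S^*$ directly by a membership predicate; the resulting set and argument are the same.
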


\begin{proof}
	Since the action of $G$ on $X$ is proper, we can index the elements $g_0, g_1, g_2 \dots$ of $S$ such that $\dist o{g_io} \leq \dist o{g_{i+1}o}$, for every $i \in \N$.
	We build by induction a sequence $(i_n)$ as follows.
	Set $i_0 = 0$.
	Let $n \in \N$ for which $i_n$ has been defined.
	We search for the minimal index $j > i_n$ such that
	\begin{equation*}
		\left( \bigcup_{k = 0}^n \mathcal O_o(g_{i_k}o,r)\right) \cap  \mathcal O_o(g_jo,r)  = \emptyset.
	\end{equation*}
	If such an index exists, we let $i_{n+1} = j$.
	Otherwise we let $i_{n+1} = i_n$, in other words, the sequence $(i_n)$ eventually stabilizes.
	Finally we set 
	\begin{equation*}
		S^* = \set{g_{i_n}}{n \in \N}.
	\end{equation*}
	Note that \ref{enu: vitali - disjoint} directly follows from the construction.
	Let $g \in S$.
	If $g$ does not belong to $S^*$, it means that there is $h \in S^*$, with $\dist o{ho} \leq \dist o{go}$ such that $\mathcal O_o(go,r) \cap \mathcal O_o(ho,r)$ is non-empty.
	Hence by \autoref{res: pre-vitali}, the shadow $\mathcal O_o(go,r)$ is contained in $\mathcal O_o(ho,r+42\alpha)$.
	This completes the proof of \ref{enu: vitali - cover}.
\end{proof}

\begin{lemm}
\label{res: cocycle locally determined}
	Let $\alpha \in \R_+^*$ and $r, L \in \R_+$ with $L > r + 13\alpha$.
	Let $x,y \in X$ such that $(x,y)$ has an $(\alpha, L)$-contracting tail.
	Let $K$ be a closed ball of radius $R$ centered at $x$.
	If $\dist xy > R + r +  13\alpha$, then for every $c,c' \in \mathcal O_x(y,r)$, we have $\norm[K]{c-c'} \leq 20\alpha$.
\end{lemm}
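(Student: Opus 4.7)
My plan is to pick a common reference point on the contracting tail $\tau$, compare every cocycle in the shadow to the associated distance cocycle, and conclude by the triangle inequality. Let $z \in \tau$ be the point with $\dist yz = r + 13\alpha$. Since $L > r + 13\alpha$, the point $z$ lies strictly between $p$ and $y$ on $\tau$, with $\dist pz \geq L - (r + 13\alpha) > 0$; and since $\dist xy > R + r + 13\alpha$, one has $\dist xz \geq \dist xy - \dist yz > R$, so $z \notin K$.

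The argument of Claim~1 in the proof of \autoref{res: pre-vitali} applies verbatim: any gradient arc $\gamma$ from $x$ to $c \in \mathcal O_x(y, r)$ enters $\mathcal N_\alpha(\tau)$ near $p$ and exits near $y$ by \autoref{res: wysiwyg shadow}, so by \autoref{res: qc contracting set} its intermediate part lies in $\mathcal N_{5\alpha/2}(\tau)$, and \autoref{res: neighbored geodesics} then places $z$ within $5\alpha$ of $\gamma$. Writing $\dist{z}{\gamma(u_c)} \leq 5\alpha$ and using that $c$ is $1$-Lipschitz with $c(x, \gamma(u_c)) = u_c = \dist x{\gamma(u_c)}$, one obtains $\dist xz - 10\alpha \leq c(x, z) \leq \dist xz$ for every $c \in \mathcal O_x(y, r)$.

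The plan is then to show $\norm[K]{c - b_z} \leq 10\alpha$, from which $\norm[K]{c - c'} \leq 20\alpha$ follows by the triangle inequality applied to $c$ and $c'$. For $w, w' \in K$, the cocycle identities give
\begin{equation*}
c(w, w') - b_z(w, w') = \bigl[c(w, z) - \dist wz\bigr] - \bigl[c(w', z) - \dist{w'}z\bigr],
\end{equation*}
where each bracket is non-positive by $1$-Lipschitzness. It therefore suffices to establish a uniform lower bound $c(w, z) \geq \dist wz - O(\alpha)$ for every $w \in K$.

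This last estimate is where the main difficulty lies. I would derive it by analysing a geodesic from $w$ to $z$ together with the contracting tail $\tau$. The separation $\dist pz \geq L - (r + 13\alpha) > 0$, combined with $\dist xy > R + r + 13\alpha$ (the latter controls via \autoref{rem: Lipschitz proj} how far a projection of $w$ on $\tau$ can drift from $p$), ensures that a projection of $w$ on $\tau$ sits on the $p$-side of $z$ with enough slack to invoke \autoref{res: proj contracting set}. This forces the $w$-to-$z$ geodesic to travel inside a $5\alpha/2$-neighborhood of $\tau$ all the way up to $z$. Matching this against the fact that the gradient arc from $x$ to $c$ also tracks $\tau$ near $z$, the $w$-to-$z$ geodesic becomes an $O(\alpha)$-quasi-gradient arc for $c$, yielding the desired inequality for both $c$ and $c'$ and hence the announced bound.
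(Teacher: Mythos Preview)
Your reduction of the problem to the estimate $c(w,z) \geq \dist wz - O(\alpha)$ for every $w \in K$ is correct, and the strategy of comparing both cocycles to a common distance cocycle is essentially the paper's. The gap lies in the last paragraph.

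First, the claim that a projection $p_w$ of $w$ on $\tau$ lies on the $p$-side of $z$ ``with enough slack'' is not justified. The bound from \autoref{rem: Lipschitz proj} gives only $\dist p{p_w} \leq R + 4\alpha$, and nothing in the hypotheses relates $L$ to $R$; even the sharper case analysis one can extract from $\dist xy > R + r + 13\alpha$ yields only $\dist{p_w}{y} > r + 11\alpha$, which still allows $p_w$ to sit up to $2\alpha$ past $z$ toward $y$. So $\dist{p_w}{z}$ may be arbitrarily small and \autoref{res: proj contracting set} need not apply. Second, and more seriously, the inference that the geodesic from $w$ to $z$ is an $O(\alpha)$-quasi-gradient arc for $c$ does not follow from the mere fact that both this geodesic and the gradient ray $\gamma$ track $\tau$: tracking $\tau$ tells you nothing about $c(w,p_w)$, which is the missing term once you split $c(w,z) = c(w,p_w) + c(p_w,z)$.

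The paper obtains the needed estimate by a direct application of \autoref{res: proj cocycle contracting set}, taking as reference the projection $q$ of $c$ on $\tau$ rather than a fixed point. Since \autoref{res: wysiwyg shadow} gives $\dist yq \leq r + 7\alpha$, a short case split on whether $\dist p{p'} \leq \alpha$ (and, in the other case, the bound $\dist x{p'} \leq \dist x{x'} + 2\alpha$ from \autoref{res: proj contracting set}) shows $\dist{p'}{q} > 4\alpha$ for the projection $p'$ of any $x' \in K$; the corollary then yields $c(x',q) \geq \dist{x'}{q} - O(\alpha)$ immediately. This is exactly the control you were aiming for, obtained without analysing the geodesic from $w$ to $z$. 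Your fixed-$z$ variant can be salvaged, but only by passing through $q$ in this way first and then transferring back to $z$.
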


\begin{proof}
	Denote by $\tau$ a contracting tail of $(x,y)$ and let $p$ be a projection of $x$ on $\tau$ such that $\dist py \geq L$.
	Fix $x' \in K$ and denote by $p'$ a projection of $x'$ onto $\tau$.
	Let $q$ be a projection of $c$ on $\tau$, so that $\dist yq \leq r + 7\alpha$  (\autoref{res: wysiwyg shadow}).
	We claim that $\dist {p'}q > 4\alpha$.
	If $\dist p{p'} \leq \alpha$, then this is just a consequence of the triangle inequality.
	Thus we can assume that $\dist p{p'} > \alpha$.
	According to \autoref{res: proj contracting set} we have $\dist x{p'} \leq \dist x{x'} + 2 \alpha$.
	Hence
	\begin{align*}
		\dist {p'}q 
		\geq \dist xy - \dist x{p'} - \dist qy 
		& \geq \dist xy - \dist x{x'} - \dist qy -2 \alpha \\
		& \geq \dist xy - (R + r + 9\alpha) 
		> 4 \alpha.
	\end{align*}
	It follows from \autoref{res: proj cocycle contracting set} that 
	\begin{equation*}
		\dist {x'}q - 10\alpha \leq c(x',q) \leq \dist {x'}q.
	\end{equation*}
	This estimate holds for every $x' \in K$.
	Consider now $x_1,x_2 \in K$.
	Since $c$ is a cocycle, $c(x_1,x_2) = c(x_1,q) + c(q,x_2)$, thus
	\begin{equation*}
		\abs{c(x_1,x_2) - \left[\dist {x_1}q - \dist {x_2}q\right]}\leq 10\alpha.
	\end{equation*}
	The same argument holds for $c'$, thus $\norm[K]{c-c'} \leq 20\alpha$.
\end{proof}

\subsection{The contracting limit set}

\paragraph{Contracting limit set.}
We now introduce a variation of the the radial limit set that keeps track of the elements fellow-traveling with some contracting geodesics.
Let $\alpha,r,L \in \R_+$ and $x \in X$.
The set $\Lambda_{\rm ctg}(G, x, \alpha, r, L)$ consists of all cocycles $c \in \partial X$ with the following property:  for every $T \geq 0$, there exists $g \in G$ such that
\begin{itemize}
	\item $\dist x{go} \geq T$,
	\item $(x,go)$ has an $(\alpha, L)$-contracting tail,
	\item $c \in \mathcal O_x(go,r)$.
\end{itemize}
We also let
\begin{equation*}
	\Lambda_{\rm ctg} (G, x, \alpha,r) = \bigcap_{ L \in \R_+} \Lambda_{\rm ctg} (G, x, \alpha, r, L). 
\end{equation*}
\begin{rema*}
	Observe that the set $\Lambda_{\rm ctg}(G, x, \alpha, r, L)$ is a non-decreasing (\resp non-increasing) function of $\alpha$ and $r$, (\resp $L$).
\end{rema*}

\begin{defi}
\label{def: contracting limit set}
	The \emph{contracting limit set} of $G$ is 
	\begin{equation*}
		\Lambda_{\rm ctg}(G) = 
		\bigcup_{\alpha, r \in \R_+} G \Lambda_{\rm ctg}(G, o,\alpha, r).
	\end{equation*}
\end{defi}

It follows from the definition that the contracting limit set is $G$-invariant and contained in the radial limit set.

\begin{prop}
\label{res: contracting limit set invariant}
	Let $\alpha \in \R^*_+$.
	For every $r \in \R_+$ and $L > r + 13\alpha$ we have
	\begin{equation*}
		G \Lambda_{\rm ctg}(G, o,\alpha, r, L) \subset \Lambda_{\rm ctg}(G, o, \alpha,r+12\alpha, L- \alpha).
	\end{equation*}
	In particular, $G \Lambda_{\rm ctg}(G, o,\alpha, r) \subset \Lambda_{\rm ctg}(G, o, \alpha,r+12\alpha)$.
	Moreover, the contracting limit set can also be described as
	\begin{equation*}
		\Lambda_{\rm ctg}(G) = 
		\bigcup_{\alpha, r \in \R_+}
		\Lambda_{\rm ctg}(G, o, \alpha, r).
	\end{equation*}
\end{prop}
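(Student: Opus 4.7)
The plan is to prove the main inclusion $G\Lambda_{\rm ctg}(G, o, \alpha, r, L) \subset \Lambda_{\rm ctg}(G, o, \alpha, r+12\alpha, L-\alpha)$; the remaining two assertions will follow by elementary nested-intersection arguments. Fix $c \in \Lambda_{\rm ctg}(G, o, \alpha, r, L)$, $h \in G$ and $T \geq 0$. I would apply the defining property of $c$ at a threshold $T' \geq T + \dist o{ho}$ (with a margin to be specified) to produce $g \in G$ with $\dist o{go} \geq T'$, an $(\alpha, L)$-contracting tail $\tau$ for $(o, go)$ with projection $p$ of $o$ satisfying $\dist p{go} \geq L$, and $c \in \mathcal O_o(go, r)$. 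The candidate witness for $hc$ at level $T$ is $g' := hg$, and $\dist o{g'o} \geq T$ follows immediately from the triangle inequality.

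For the tail assertion, I would take the isometric translate $h\tau$ as the candidate $\alpha$-contracting geodesic ending at $g'o = hgo$. Denoting by $q$ a projection of $o$ on $h\tau$ and by $hp$ the projection of $ho$ (which satisfies $\dist{hp}{hgo} \geq L$ by isometry), I would apply \autoref{res: proj contracting set} to a geodesic joining $o$ to $ho$ against the contracting set $h\tau$, and perform a case analysis on the relative position of $q$ and $hp$ along $h\tau$ to conclude $\dist q{hgo} \geq L - \alpha$. The freedom in the choice of $T'$ (and the corresponding choice of $g$) is used to absorb the potential projection shift of $\dist o{ho} + 4\alpha$ that is available a priori from \autoref{rem: Lipschitz proj}.

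For the shadow bound $hc \in \mathcal O_o(g'o, r + 12\alpha)$, I would compute $\gro o{hc}{hgo} = \tfrac{1}{2}(\dist o{hgo} + hc(hgo, o))$ by decomposing $hc(hgo, o) = hc(hgo, q') + hc(q', o)$ through a projection $q' = h\pi_\tau(c)$ of $hc$ on $h\tau$. By \autoref{res: wysiwyg shadow} applied to the tail data of $c$, we have $\dist{q'}{hgo} \leq r + 7\alpha$. Then \autoref{res: proj cocycle contracting set} applied to $(o, hc, h\tau)$ yields $hc(o, q') \geq \dist oq + \dist q{q'} - 14\alpha$, while the triangle inequality together with \autoref{res: proj contracting set} applied to a geodesic from $o$ to $hgo$ gives $\dist o{hgo} \leq \dist oq + \dist q{q'} + r + 7\alpha$. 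Assembling these estimates produces $\gro o{hc}{hgo} \leq r + 12\alpha$, as required.

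The second inclusion is deduced by intersecting Part 1 over $L > r + 13\alpha$: for $c \in \bigcap_L \Lambda_{\rm ctg}(G, o, \alpha, r, L)$, Part 1 places $hc$ in every $\Lambda_{\rm ctg}(G, o, \alpha, r+12\alpha, L-\alpha)$, hence in $\Lambda_{\rm ctg}(G, o, \alpha, r+12\alpha)$. The alternative description of $\Lambda_{\rm ctg}(G)$ follows by combining this with the definition: the $G$-action in the defining union is absorbed by the increase $r \mapsto r + 12\alpha$, so $\bigcup_{\alpha, r} G\Lambda_{\rm ctg}(G, o, \alpha, r) = \bigcup_{\alpha, r} \Lambda_{\rm ctg}(G, o, \alpha, r)$. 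The main obstacle is the tail step in Part 1: refining the projection shift on $h\tau$ to $\alpha$ (rather than the naive bound $\dist o{ho} + 4\alpha$) requires carefully exploiting both the contraction of $h\tau$ and the geometry of the geodesic from $o$ to $ho$, using the flexibility in the choice of the witnessing element $g$.
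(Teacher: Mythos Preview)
Your approach is essentially the paper's own argument, just phrased with $hc$ and the translated tail $h\tau$ in place of the paper's $b = h^{-1}c$ and $\tau_n$. The tail step matches: the paper also splits on whether the two projections on the tail are more than $\alpha$ apart, and in the nontrivial case uses the entry-point estimate from \autoref{res: proj contracting set} together with the largeness of $\dist o{go}$ (your threshold $T'$, which must be at least $\dist o{ho} + L + \alpha$) to bound $\dist {ho}q$ and hence $\dist q{hgo} \geq L-\alpha$.

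There is one arithmetic slip in the shadow step. Using only the stated conclusion of \autoref{res: proj cocycle contracting set}, namely $hc(o,q') \geq \dist oq + \dist q{q'} - 14\alpha$, your assembly yields
\[
\gro o{hc}{hgo} \leq \tfrac12\bigl[(\dist oq + \dist q{q'} + r + 7\alpha) + (r+7\alpha) - (\dist oq + \dist q{q'} - 14\alpha)\bigr] = r + 14\alpha,
\]
not $r + 12\alpha$. The paper reaches $r+12\alpha$ by instead establishing the sharper estimate $\gro{o}{hc}{q'} \leq 5\alpha$ (equivalently $hc(o,q') \geq \dist o{q'} - 10\alpha$), which comes from the exit-point bound rather than the final inequality of \autoref{res: proj cocycle contracting set}; then $\gro o{hc}{hgo} \leq \gro o{hc}{q'} + \dist{q'}{hgo} \leq 5\alpha + (r+7\alpha)$ via (\ref{eqn: gromov product - lip}). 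Either adjust your computation along these lines, or accept $r+14\alpha$ and note that every downstream use only needs some uniform additive constant in $\alpha$.
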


\begin{proof}
	Let $h \in G$ and $c$ be a cocycle in 
	\begin{equation*}
		h\Lambda_{\rm ctg}(G, o,\alpha, r, L) = \Lambda_{\rm ctg}(G, ho,\alpha, r, L).
	\end{equation*}
	For simplicity we let $b = h^{-1}c$.
	For every $n \in \N$, we can find an element $g_n \in \mathcal T(\alpha, L)$ such that $\dist o{g_no} \geq n$ and $b \in \mathcal O_o(g_no, r)$.
	Denote by $\tau_n$ the contracting tail for the pair $(o,g_no)$ and $p_n$ a projection of $o$ on $\tau_n$ satisfying $\dist {p_n}{g_no} \geq L$.
	Up to passing to a subsequence, we can assume that $\dist o{g_no} \geq \dist o{h^{-1}o} + L + \alpha$, for every $n \in \N$.
	
	Let $n \in \N$.
	We claim that $hg_n$ belongs to $\mathcal T(\alpha, L - \alpha)$.
	Let $p'_n$ be a projection of $h^{-1}o$ on $\tau_n$.
	It suffices to prove that $\dist {p'_n}{g_no} \geq L-\alpha$.
	Indeed, after translating the figure by $h$, it tells us that $h\tau_n$ is a contracting tail for the pair $(o, hg_no)$.
	We distinguish two cases.
	Assume first that $\dist {p_n}{p'_n} > \alpha$.
	It follows from \autoref{res: proj contracting set} that $\dist o{p'_n} \leq \dist o{h^{-1}o} + 2\alpha$.
	The triangle inequality yields
	\begin{equation*}
		\dist{p'_n}{g_no} \geq \dist o{g_no} - \dist o{p'_n} 
		\geq \dist o{g_no} - \dist o{h^{-1}o} - 2\alpha 
		\geq L - \alpha
	\end{equation*}
	Assume now that $\dist{p_n}{p'_n} \leq \alpha$.
	The triangle inequality yields
	\begin{equation*}
		\dist{p'_n}{g_no} \geq \dist {p_n}{g_no} - \dist {p_n}{p'_n} 
		\geq L - \alpha,
	\end{equation*}
	which completes the proof of our claim.
	
	We now prove that $c \in \mathcal O_o(hg_no, r +18\alpha)$.
	Let $q_n$ be a projection of $b$ onto $\tau_n$.
	According to \autoref{res: wysiwyg shadow}, we have $\dist {q_n}{g_no} \leq r + 7\alpha$.
	Combining the above discussion with the triangle inequality we have
	\begin{equation*}
		\dist {p'_n}{q_n} 
		\geq \dist {p'_n}{g_no} - \dist {q_n}{g_no} 
		\geq L - (r + 8\alpha) > 4\alpha.
	\end{equation*}
	On the one hand, by \autoref{res: proj cocycle contracting set}, we have
	\begin{equation*}
		b(h^{-1}o, q_n) \geq \dist {h^{-1}o}{q_n} - 10\alpha,
	\end{equation*}
	i.e. $\gro{h^{-1}o}b{q_n} \leq 5\alpha$.
	Consequently 
	\begin{equation*}
		\gro {h^{-1}o}b{g_no} \leq \gro {h^{-1}o}b{q_n} + \dist {q_n}{g_no}  \leq r + 12\alpha.
	\end{equation*}
	Recall that $b = h^{-1}c$.
	The above inequality implies that $c$ belongs to the shadow $\mathcal O_o(hg_no, r + 12\alpha)$, which completes the proof of our claim.
	Note that $\dist o{hg_no}$ diverges to infinity, hence $c \in  \Lambda_{\rm ctg}(G, o, \alpha, r+12\alpha,L-\alpha)$.
\end{proof}

The points in the contracting limit set have a specific behavior with respect to the equivalence relation $\sim$  used to define the reduced horoboundary (see \autoref{sec: reduced horoboundary}).
Proceeding as for the radial limit set (see \autoref{res: radial limit set saturated}) one checks that the contracting limit set is saturated.
The next statement precises this fact.

\begin{prop}
\label{res: equiv rel restricted to contracting}
	Let $\alpha \in \R_+^*$ and $r, L \in \R_+$ with $L > r + 29\alpha$.
	Let $c,c' \in \partial X$ such that $c \sim c'$.
	Assume that $c$ belongs to $\Lambda_{\rm ctg}(G,o, \alpha, r, L)$.
	Then $\norm[\infty] {c-c'} \leq 20\alpha$ and $c'$ belongs to $\Lambda_{\rm ctg}(G,o, \alpha, r + 16\alpha, L)$.
	In particular, the saturation of $\Lambda_{\rm ctg}(G,o, \alpha, r)$ is contained in $\Lambda_{\rm ctg}(G,o, \alpha, r + 16\alpha)$.
\end{prop}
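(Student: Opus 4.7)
The plan is to reduce the two main conclusions to a single shadow-membership claim, and to handle the ``in particular'' clause at the end. First, if $c \in \Lambda_{\rm ctg}(G,o,\alpha,r) = \bigcap_L \Lambda_{\rm ctg}(G,o,\alpha,r,L)$, then $c \in \Lambda_{\rm ctg}(G,o,\alpha,r,L)$ for every $L > r + 29\alpha$, and the main statement yields $c' \in \Lambda_{\rm ctg}(G,o,\alpha,r+16\alpha,L)$ for every such $L$; together with the containment $\Lambda_{\rm ctg}(G,o,\alpha,r+16\alpha,L) \subset \Lambda_{\rm ctg}(G,o,\alpha,r+16\alpha,L')$ for $L \geq L'$, intersecting over $L$ gives the desired inclusion into $\Lambda_{\rm ctg}(G,o,\alpha,r+16\alpha)$. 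So I focus on the main claim.

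For each $T \geq 0$, pick a witness $g \in \mathcal T(\alpha, L)$ with $\dist o{go} \geq T$ and $c \in \mathcal O_o(go, r)$. Let $\tau$ denote the $(\alpha, L)$-contracting tail of $(o, go)$, let $p$ be a projection of $o$ on $\tau$ with $\dist p{go} \geq L$, and let $q$, $q'$ be projections of $c$, $c'$ onto $\tau$ respectively. Both $q$ and $q'$ exist because $\tau$ is bounded, and by \autoref{res: wysiwyg shadow} the projection $q$ satisfies $\dist q{go} \leq r + 7\alpha$. The heart of the proof is to establish that $c' \in \mathcal O_o(go, r+16\alpha)$ for every such witness. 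Writing $\tilde\delta(y) = c'(o,y) - c(o,y)$, the hypothesis $c \sim c'$ gives that $\tilde\delta$ is bounded (and $2$-Lipschitz, since $c$ and $c'$ are both $1$-Lipschitz); on $\tau$ one has $c'(o, y) = c(o, y) + \tilde\delta(y) - \tilde\delta(o)$, so $q'$ is a point where $c(o, \cdot) + \tilde\delta(\cdot)$ achieves its maximum on $\tau$. I would use the slope-$(+1)$ behavior of $c(o, \cdot)$ along $[p, q]$ — which has length at least $L - (r + 7\alpha) > 22\alpha$ — together with the long tail hypothesis $L > r + 29\alpha$ and the geometric rigidity that contraction provides, to pin down $\dist{q'}{go} \leq r + 9\alpha$. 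Feeding this into \autoref{res: proj cocycle contracting set} applied to $c'$ on $\tau$ yields $c'(o, q') \geq \dist o p + \dist p{q'} - 14\alpha$, and combining with $\dist o p + \dist p{go} \geq \dist o{go}$ and $|c'(q', go)| \leq \dist{q'}{go}$ gives $c'(o, go) \geq \dist o{go} - 2(r + 16\alpha)$, i.e.\ $c' \in \mathcal O_o(go, r+16\alpha)$.

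Once this shadow-membership is established, both conclusions of the proposition follow directly. Since $T$ was arbitrary and each witness $g$ also witnesses $c' \in \mathcal O_o(go, r+16\alpha)$, we obtain $c' \in \Lambda_{\rm ctg}(G, o, \alpha, r+16\alpha, L)$. Moreover, with $L > (r+16\alpha) + 13\alpha$, the hypotheses of \autoref{res: cocycle locally determined} hold for the shadow $\mathcal O_o(go, r+16\alpha)$. Applying this lemma to the pair $(c, c')$ both lying in $\mathcal O_o(go, r+16\alpha)$: for any $R \in \R_+$, taking a witness $g$ with $\dist o{go} > R + (r+16\alpha) + 13\alpha$ (possible for $T$ sufficiently large) yields $\norm[B(o, R)]{c - c'} \leq 20\alpha$. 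Since $R$ is arbitrary, $\norm[\infty]{c - c'} \leq 20\alpha$.

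The main obstacle is the geometric rigidity argument establishing $\dist{q'}{go} \leq r + 9\alpha$ without a circular dependence on $\norm[\infty]{c - c'}$. The delicate analysis uses that, on the portion $[p, q]$ of $\tau$, $c(o, \cdot)$ increases with slope $+1$, so if the maximum of $c(o, \cdot) + \tilde\delta(\cdot)$ were attained at a point far from $q$ in the direction of $p$, then $\tilde\delta$ would have to compensate a linear drop of $c$ over a distance exceeding $r + 2\alpha$, forcing $\tilde\delta$ itself to have an oscillation incompatible with either the length of $\tau$ or the $2$-Lipschitz constraint. This forces $q'$ to sit near $q$ or past $q$ toward $go$, in both cases within $r + 9\alpha$ of $go$.
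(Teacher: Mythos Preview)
Your overall architecture matches the paper's: reduce everything to showing that each witness $g \in \mathcal T(\alpha,L)$ for $c$ is also a witness for $c'$ with a slightly larger radius, then invoke \autoref{res: cocycle locally determined} on a common shadow to get the uniform bound $\norm[\infty]{c-c'}\leq 20\alpha$. That final part of your argument is correct.

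The gap is precisely where you flag it: the ``geometric rigidity'' step pinning down $q'$. Your proposed mechanism --- the $2$-Lipschitz bound on $\tilde\delta$ together with the finite length of $\tau$ --- does not give a quantitative bound on $\dist{q'}q$ independent of $\norm[\infty]{c-c'}$. Concretely, if $q'$ lies at distance $d$ from $q$ towards $p$, then comparing the maxima of $c(o,\cdot)$ and $c'(o,\cdot)$ on $\tau$ yields only $\tilde\delta(q')-\tilde\delta(q)\gtrsim d$, and the $2$-Lipschitz constraint merely says $|\tilde\delta(q')-\tilde\delta(q)|\leq 2d$, which is no contradiction. The only bound you can extract this way is $d\lesssim \norm[\infty]{c-c'}$, and that is circular: you need the location of $q'$ to place $c'$ in a common shadow with $c$, but the common shadow is exactly what gives you the sup-norm bound.

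The paper breaks this circularity by leaving $\tau$. Take a gradient ray $\gamma$ for $c$ starting at $q$; by \autoref{res: proj cocycle pre lipschitz}, the projection of $\gamma(t)$ on $\tau$ remains $q$ for all $t$. If $\dist q{q'}>4\alpha$, then \autoref{res: proj cocycle contracting set} applied to $c'$ (with $x=\gamma(t)$, projection $q$, and cocycle-projection $q'$) forces $c'(\gamma(t),q')\geq -10\alpha$ for every $t$, while $c(\gamma(t),q')=-t+c(q,q')\to-\infty$. This contradicts $\norm[\infty]{c-c'}<\infty$. So $\dist q{q'}\leq 4\alpha$, and the rest follows as you outline. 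The point is that the contracting property, applied to $c'$ along a ray that escapes $\tau$, controls $c'$ in a way that makes the qualitative hypothesis $c\sim c'$ do real work; staying on $\tau$ you only see the projections, and those alone carry no uniform information.
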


\begin{proof}
	Since $c$ belongs to $\Lambda_{\rm ctg}(G,o,  \alpha, r, L)$, there exists a sequence of elements $g_n \in \mathcal T(\alpha, L)$ such that $\dist o{g_no}$ diverges to infinity and $c \in \mathcal O_o(g_no,r)$ for every $n \in \N$.
	For each $n \in \N$,  the pair $(o,g_no)$ has an $(\alpha, L)$-contracting tail, say $\tau_n$.
	Let $q_n$ (\resp $q'_n$) be a projection of $c$ (\resp $c'$) onto $\tau_n$.
	We break the proof in several steps.
	
	\begin{clai}
		For every $n \in \N$, we have $\dist {q_n}{q'_n} \leq 4\alpha$.
	\end{clai}
	
	Fix $n \in \N$.
	Let $\gamma \colon \R_+ \to X$ be a gradient ray for $c$ starting at $q_n$.
	Let $t \in \R_+$.
	According to \autoref{res: proj cocycle pre lipschitz}, $q_n$ is a projection of $\gamma(t)$ on $\tau_n$.
	Assume that contrary to our claim $\dist {q'_n}{q_n} > 4\alpha$.
	It follows from \autoref{res: proj cocycle contracting set} applied with $c'$ that 
	\begin{equation*}
		c'(\gamma(t), q'_n) \geq \dist {q'_n}{\gamma(t)} - 10\alpha \geq - 10\alpha.
	\end{equation*}
	On the other hand, since $\gamma$ is a gradient ray for $c$, we know that
	\begin{equation*}
		c(\gamma(t), q'_n)  = c(\gamma(t), q_n) + c(q_n,q'_n)  = - t + c(q_n,q'_n).
	\end{equation*}
	These two estimates hold for every $t \geq T$, thus contradicting the fact that $\norm[\infty]{c - c'} < \infty$.
	It completes the proof of our first claim.
	
	\begin{clai}
	\label{clai: equiv rel restricted to contracting - common shadows}
		For every $n \in \N$, the cocycle $c'$ belongs to $\mathcal O_o(g_no, r+ 16\alpha)$.
		In particular, $c' \in \Lambda_{\rm ctg}(G,o, \alpha, r+16\alpha, L)$.
	\end{clai}

	Let $n \in \N$.
	Since $(o, g_no)$ has a contracting tail, there is a projection $p_n$ of $o$ on $\tau_n$ such that $\dist{p_n}{g_no} \geq L$.
	According to \autoref{res: wysiwyg shadow}, $\dist {q_n}{g_no} \leq r + 7\alpha$.
	Our previous claim, combined with the triangle inequality, gives
	\begin{equation*}
		\dist {p_n}{q'_n} 
		\geq \dist {p_n}{g_no} - \dist{q'_n}{q_n} - \dist{q_n}{g_no} 
		\geq L - (r + 11\alpha)   > 4\alpha.
	\end{equation*}
	Hence by \autoref{res: proj cocycle contracting set}, we have $\gro o{c'}{q'_n} \leq 5\alpha$, which combined with  (\ref{eqn: gromov product - lip}) yields
	\begin{equation*}
		\gro o{c'}{g_no} \leq \gro o{c'}{q'_n} + \dist {q'_n}{q_n} + \dist {q_n}{g_no}  \leq r + 16\alpha.
	\end{equation*}
	This completes the proof of our second claim.
	
	\begin{clai}
		$\norm[\infty]{c -c'} \leq 20\alpha$.
	\end{clai}
	
	Consider a closed ball $K$ of radius $R$ centered at $o$.
	If $n \in \N$ is sufficiently large, then $\dist o{g_no} > R + r + 29\alpha$.
	According to our previous claim $c$ and $c'$ both belong to $\mathcal O_o(g_no, r+ 16\alpha)$.
	It follows from \autoref{res: cocycle locally determined} that $\norm[K]{c-c'} \leq 20\alpha$.
	This estimate does not depend on $K$, whence the result.
\end{proof}

In a CAT(-1) settings, shadows are known to provide a basis of open neighborhoods for the points in the boundary at infinity.
This is no more the case in our context.
However we can still approximate \emph{saturated} subsets of the contracting limit set using shadows.
This is the purpose of the next lemma.

\begin{coro}
\label{res: approximation saturated contracting by shadows}
	Let $\alpha \in \R_+^*$ and $r, L \in \R_+$ with $L > r + 13\alpha$.
	Let $B$ be a saturated subset of $\Lambda_{\rm ctg}(G,o, \alpha, r, L)$ and $V$ an open subset of $\bar X$ containing $B$.
	Let $b \in B$.
	There exists $T \in \R_+$ such that for every $g \in \mathcal T(\alpha, L)$ with $\dist o{go} \geq T$, if $b$ belongs to $\mathcal O_o(go,r)$ then $\mathcal O_o(go,r) \subset V$.
\end{coro}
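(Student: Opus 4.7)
The plan is to argue by contradiction. If the claim fails, I can extract for each $n \in \N$ an element $g_n \in \mathcal T(\alpha, L)$ with $\dist o{g_no} \geq n$, the common point $b \in \mathcal O_o(g_no, r)$, and some $c_n \in \mathcal O_o(g_no, r) \setminus V$. Since $\bar X$ is compact and $\bar X \setminus V$ is closed, up to passing to a subsequence I may assume $c_n \to c$ for some $c \in \bar X \setminus V$, and I will derive a contradiction by showing $c \in B$.

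The main input will be \autoref{res: cocycle locally determined}. Fixing an arbitrary $R \in \R_+$ and letting $K$ be the closed ball of radius $R$ centered at $o$, for all $n$ large enough we have $\dist o{g_no} > R + r + 13\alpha$. Since $L > r + 13\alpha$ and $g_n \in \mathcal T(\alpha, L)$, the lemma then applies to $b$ and $c_n$, both lying in $\mathcal O_o(g_no, r)$, and yields $\norm[K]{b - c_n} \leq 20\alpha$. The topology on $\bar X$ is uniform convergence on compact subsets, so passing to the limit gives $\norm[K]{b - c} \leq 20\alpha$; since $R$ was arbitrary, $\norm[\infty]{b - c} \leq 20\alpha < \infty$.

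The step I expect to be the most delicate is verifying that the limit $c$ lies in $\partial X$, so that $b \sim c$ in the sense of \autoref{sec: reduced horoboundary} (recall that the equivalence relation requires both cocycles to be in $\partial X$). I would argue as follows: if instead $c = \iota(z)$ for some $z \in X$, then $c(o, y) = \dist o z - \dist y z \leq \dist o z$ is bounded from above on $X$. But $b \in B \subset \Lambda_{\rm ctg}(G, o, \alpha, r, L) \subset \partial X$, so \autoref{res: existence gradient ray} provides a gradient ray $\gamma \colon \R_+ \to X$ for $b$ based at $o$, along which $b(o, \gamma(t)) = t$ is unbounded. This contradicts $\norm[\infty]{b - c} \leq 20\alpha$, so $c \in \partial X$.

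Finally, $b, c \in \partial X$ with $\norm[\infty]{b - c} < \infty$ gives $b \sim c$. Since $B$ is saturated and $b \in B$, I conclude $c \in B \subset V$, contradicting $c \in \bar X \setminus V$. Hence a $T$ with the required property must exist.
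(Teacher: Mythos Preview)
Your proof is correct and follows the same contradiction argument as the paper, using \autoref{res: cocycle locally determined} to control $\norm[K]{b-c}$ and then invoking saturation of $B$. You are in fact slightly more careful than the paper: you explicitly verify that the limit $c$ lies in $\partial X$ (a point the paper glosses over), which is formally required for the equivalence $b \sim c$ to hold.
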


\begin{proof}
	Assume on the contrary that our statement fails.
	We can find a sequence of elements $g_n \in \mathcal T(\alpha, L)$ such that $\dist o{g_no}$ diverges to infinity, $b$ belongs to $\mathcal O_o(g_no, r)$ and $\mathcal O_o(g_no, r) \setminus V$ is non-empty.
	For every $n \in \N$, we write $c_n$ for a cocycle in $\mathcal O_o(g_no, r) \setminus V$.
	Up to passing to a subsequence, we can assume that $c_n$ converges to $c \in \bar X$.
	As $V$ is open, $c \notin V$.
	We claim that $\norm[\infty]{c - b} < \infty$.
	Let $K$ be a closed ball of radius $R$ centered at $o$.
	As $c_n$ converges to $c$, there is $N \in \N$, such that for every $n \geq N$, we have $\norm[K]{c_n - c} \leq 1$.
	By construction $b$ and $c_n$ both belong to $\mathcal O_o(g_no,r)$.
	According to \autoref{res: cocycle locally determined}, if $n$ is sufficiently large $\norm[K]{b - c_n} \leq 20\alpha$ so that $\norm[K]{b - c} \leq 20\alpha +1$.
	This inequality holds for every compact subset $K \subset X$,  which completes the proof of our claim.
	Since $B$ is saturated, $c \in B$.
	It contradicts the fact that $B \subset V$.
\end{proof}

\subsection{Measure of the contracting limit set}
\label{sec: measure contracting}

From now on, we assume that  $G$ is not virtually cyclic and contains a contracting element.
According to \autoref{res: shadow lemma}, there are $(\epsilon, r_0) \in \R_+^* \times \R_+$ such that any $G$-invariant, $\omega_G$-conformal density $\nu = (\nu_x)$ satisfies the Shadow Principle with parameters $(\epsilon, r_0)$.
The goal of this section is to prove that if the action of $G$ on $X$ is divergent, then any such density gives full measure to the contracting limit set.
The proof is an application of the Kochen-Stone theorem, which generalizes the second Borel-Cantelli Lemma.

\begin{prop}[Kochen-Stone \cite{Kochen:1964vy}]
\label{res: kochen-stone}
	Let $(\Omega,\mu)$ be a probability space.
	Let $(B_n)$ be a sequence of subsets of $\Omega$ such that 
	\begin{equation*}
		\sum_{n \in \N} \mu(B_n) = \infty.
	\end{equation*}
	Assume that there exists $C \in \R_+^*$ such that for every $N \in \N$,
	\begin{equation*}
		\sum_{n_1 = 0}^N \sum_{n_2 = 0}^N \mu\left(B_{n_1} \cap B_{n_2} \right) \leq C \left( \sum_{n = 0}^N \mu\left(B_n\right) \right)^2.
	\end{equation*}
 	Then 
	\begin{equation*}
		\mu\left( \bigcap_{N \in \N} \bigcup_{n \geq N} B_n\right) \geq \frac 1C.
	\end{equation*}
\end{prop}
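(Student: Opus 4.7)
The plan is to use the second moment (Paley--Zygmund) method. Recall that for any non-negative measurable function $S$ on a probability space, applying the Cauchy--Schwarz inequality to $S = S \cdot \mathbb 1_{\{S > 0\}}$ yields
$$\mu(\{S > 0\}) \geq \frac{\left(\int S\, d\mu\right)^2}{\int S^2 \, d\mu}.$$
I will apply this to the truncated counting functions $S_{M,N} = \sum_{n=M}^N \mathbb 1_{B_n}$ in order to convert the hypothesis into a lower bound for the measure of $\bigcup_{n=M}^N B_n$.

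First I would fix $M \in \N$ and, for each $N \geq M$, observe that $\{S_{M,N} > 0\} = \bigcup_{n=M}^N B_n$. Unfolding the Paley--Zygmund bound gives
$$\mu\left(\bigcup_{n=M}^N B_n\right) \geq \frac{\left(\sum_{n=M}^N \mu(B_n)\right)^2}{\sum_{n_1,n_2=M}^N \mu(B_{n_1}\cap B_{n_2})}.$$
The denominator is dominated by the full double sum from $0$ to $N$, which by assumption is at most $C \bigl(\sum_{n=0}^N \mu(B_n)\bigr)^2$.

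Next I would let $N \to \infty$. Since $\sum_{n=0}^{M-1}\mu(B_n)$ is a fixed finite quantity while $\sum_{n=0}^N \mu(B_n)$ diverges, the ratio $\sum_{n=M}^N \mu(B_n)/\sum_{n=0}^N \mu(B_n)$ tends to $1$. Continuity of $\mu$ from below turns the left-hand side into $\mu\bigl(\bigcup_{n \geq M} B_n\bigr)$, so I conclude that $\mu\bigl(\bigcup_{n \geq M} B_n\bigr) \geq 1/C$ for every $M \in \N$.

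Finally, since the sets $A_M = \bigcup_{n \geq M} B_n$ form a decreasing sequence inside the probability space $(\Omega,\mu)$, continuity from above yields
$$\mu\left(\bigcap_{M \in \N} \bigcup_{n \geq M} B_n\right) = \lim_{M \to \infty} \mu(A_M) \geq \frac 1C,$$
which is the desired conclusion. No real obstacle arises; the only mild care needed is to start the counting function at $M$ rather than $0$, so that the Paley--Zygmund estimate, combined with a monotone limit in $M$, captures the $\limsup$ set rather than merely a single tail.
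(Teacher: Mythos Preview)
Your argument is correct: the Paley--Zygmund/Cauchy--Schwarz bound applied to the truncated counting function $S_{M,N}$, followed by the limits $N\to\infty$ and then $M\to\infty$, is exactly the standard proof of the Kochen--Stone lemma. The paper does not give its own proof of this proposition; it merely states the result and cites the original reference, so there is no alternative approach to compare against.
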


Recall that for every $\alpha, r, \ell,  L \in \R_+$, the set $A_\ell(\alpha, r,L)$ is defined by
\begin{equation*}
	A_\ell(\alpha, r,L) = \bigcup_{g \in S(\ell, r) \cap  \mathcal T(\alpha, L)} \mathcal O_o(go, r).
\end{equation*}
These are the sets with which we will apply the Kochen-Stone theorem.
The aim of the next lemmas is to make sure that the hypotheses of \autoref{res: kochen-stone} are satisfied.

\begin{lemm}
\label{res: borel-cantelli - lower hyp}
	Let $a \in \R_+^*$.
	There are $\alpha, r_1, C_1\in \R_+^*$, such that for every $r \geq r_1$ and $L > r + 13\alpha$, the following holds.
	Let $\nu = (\nu_x)$ be a $G$-invariant, $\omega_G$-conformal density.
	For every integer $N \in \N$, 
	\begin{equation*}
		\sum_{\substack{ g \in G \\ \dist o{go} \leq Na - L}}e^{-\omega_G \dist o{go}}
		\leq C_1e^{2\omega_GL}
		\sum_{n = 0}^N \nu_o(B_n),
	\end{equation*}
	where $B_n = A_{na}(\alpha,r,L)$.
\end{lemm}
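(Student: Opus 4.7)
The plan is straightforward: apply \autoref{res: pre upper bounds} shell by shell to a covering of the ball $\{g \in G : \dist o{go} \leq Na - L\}$ by metric annuli of half-width $a$, and sum the resulting estimates. First, by \autoref{res: shadow lemma} applied with $N = G$, every $G$-invariant, $\omega_G$-conformal density $\nu$ satisfies the Shadow Principle with the \emph{same} parameters $(\epsilon, r_0) \in \R_+^* \times \R_+$, which depend only on $(G,X)$. Feeding these, together with $\omega = \omega_G$ and the given $a$, into \autoref{res: pre upper bounds} yields constants $\alpha, r_1, C \in \R_+^*$ (depending only on $a$) such that for every $r \geq r_1$, every $L > r + 13\alpha$, every $\ell \in \R_+$, and every such density $\nu$,
\begin{equation*}
	\sum_{g \in S(\ell, a)} e^{-\omega_G \dist o{go}} \leq C e^{2\omega_G L}\, \nu_o\!\left(A_{\ell+L}(\alpha, r, L)\right),
\end{equation*}
where we used that $\norm{\nu_{go}} = 1$ by the $G$-invariance of $\nu$.

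Next, I would set $\ell_n = na - L$ and specialize the above estimate to each integer $n$ with $\lceil L/a \rceil \leq n \leq N$, so that $\ell_n \in \R_+$ and $\ell_n + L = na$. The union of the shells $S(\ell_n, a)$ for such $n$ covers every $g \in G$ with $0 \leq \dist o{go} \leq Na - L$, because $\lceil L/a \rceil a - L - a \leq 0$; moreover each $g$ belongs to at most two of these shells. Summing the previous inequality over $n$ therefore gives
\begin{equation*}
	\sum_{\substack{g \in G \\ \dist o{go} \leq Na - L}} e^{-\omega_G \dist o{go}}
	\leq 2 \sum_{n = \lceil L/a \rceil}^{N} \sum_{g \in S(\ell_n, a)} e^{-\omega_G \dist o{go}}
	\leq 2C\, e^{2\omega_G L} \sum_{n = 0}^N \nu_o(B_n),
\end{equation*}
which is the required bound with $C_1 = 2C$.

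There is no genuine obstacle in this argument; it is essentially bookkeeping on top of \autoref{res: pre upper bounds}. The only point requiring attention is to check that the constants $\alpha, r_1, C_1$ can indeed be chosen independently of the density $\nu$. This is precisely what the uniformity built into \autoref{res: shadow lemma} provides: a single pair $(\epsilon, r_0)$ works for every $G$-invariant, $\omega_G$-conformal density, which in turn feeds a single choice of $(\alpha, r_1, C)$ through \autoref{res: pre upper bounds}.
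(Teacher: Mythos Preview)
Your proposal is correct and follows essentially the same route as the paper: invoke the uniform Shadow Principle parameters from \autoref{res: shadow lemma}, feed them into \autoref{res: pre upper bounds}, use $G$-invariance to replace $\norm{\nu_{go}}$ by $1$, and then sum over a family of annuli. The only cosmetic difference is that the paper centers its shells at $\ell = na$ and re-indexes at the end (using $A_{na+L}\subset B_{m-1}\cup B_m$ with $m=n+\lceil L/a\rceil$), whereas you shift the centers to $\ell_n = na - L$ up front so that $A_{\ell_n+L}=B_n$ directly; both lead to the same constant $C_1 = 2C$.
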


\begin{proof}
	Recall that $(\epsilon, r_0)$ are the Shadow Principle parameters which have been fixed once and for all at the beginning of \autoref{sec: measure contracting}.
	We denote by $\alpha, r_1, C \in \R_+$ the parameters given by \autoref{res: pre upper bounds} applied with $\omega_G$, $a$, and $(\epsilon, r_0)$.
	Fix $r \geq r_1$ and $L > r + 13\alpha$.
	Let $\nu = (\nu_x)$ be a $G$-invariant, $\omega_G$-conformal density.
	Recall that $(G, \nu)$ satisfies the Shadow Principle with parameters $(\epsilon, r_0)$.
	It follows from \autoref{res: pre upper bounds} that for every $\ell \in \R_+$,
	\begin{equation*}
		\sum_{g \in S(\ell, a)} e^{-\omega _G\dist o{go}}
		\leq C e^{2\omega L}\nu_o \left( A_{\ell+L}(\alpha, r,L) \right).
	\end{equation*}
	Summing this identity we get for every $N \in \N$, 
	\begin{equation*}
		\sum_{n = 0}^N \sum_{g \in S(na, a)} e^{-\omega _G\dist o{go}}
		\leq C e^{2\omega L} \sum_{n = 0}^N  \nu_o \left( A_{na+L}(\alpha, r,L) \right).
	\end{equation*}
	Note that $A_{na+L}(\alpha, r,L)$ is covered by $A_{(m-1)a}(\alpha, r,L)$ and $A_{ma}(\alpha, r,L)$ where $m = n + \lceil L/a \rceil$.
	Hence 
	\begin{equation*}
		\sum_{\substack{ g \in G \\ \dist o{go} \leq (N+1)a}} e^{-\omega _G\dist o{go}}
		\leq 2C e^{2\omega L} \sum_{n = 0}^{N +\lceil L/a \rceil} \nu_o \left( B_n \right),
	\end{equation*}
	whence the result.
\end{proof}

\begin{lemm}
\label{res: borel-cantelli - upper hyp}
	Let $a, \alpha, r \in \R_+^*$.
	There are $b, C_2 \in \R_+$ such that for every $L > r + 13\alpha$, the following holds.
	Let $\nu = (\nu_x)$ be a $G$-invariant, $\omega_G$-conformal density.
	For every $N \in \N$, we have
	\begin{equation*}
		\sum_{n_1 = 0}^N\sum_{n_2 = 0}^N \nu_o(B _{n_1}\cap B_{n_2}) 
		\leq C_2 \left( \sum_{\substack{ g \in G \\ \dist o{go} \leq Na+ b}}e^{-\omega_G \dist o{go}}\right)^2,
	\end{equation*}
	where $B_n = A_{na}(\alpha,r,L)$.
\end{lemm}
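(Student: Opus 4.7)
The plan is to bound each summand $\nu_o(B_{n_1}\cap B_{n_2})$ by an exponential weight that factorises into a product of a function of $g_1$ alone and a function of $h=g_1^{-1}g_2$ alone, so that the double sum becomes, up to a uniform constant, the square of the single truncated Poincaré-type sum $T_N=\sum_{\dist o{go}\le Na+b}e^{-\omega_G\dist o{go}}$. The only measure-theoretic ingredient is the upper half of the shadow estimate (\autoref{rem: shadow lemma}), which requires no invariance; the $G$-invariance of $\nu$ will enter merely through $\norm{\nu_{go}}=1$.

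First I would expand
\begin{equation*}
\nu_o(B_{n_1}\cap B_{n_2})\le\sum_{\substack{g_i\in S(n_ia,a)\cap\mathcal T(\alpha,L)\\i=1,2}}\nu_o\bigl(\mathcal O_o(g_1o,r)\cap\mathcal O_o(g_2o,r)\bigr)
\end{equation*}
and retain only the pairs whose shadows meet. For such a pair, \autoref{res: intersection contracting shadows} supplies the alignment $\dist{g_1o}{g_2o}\le|\dist o{g_1o}-\dist o{g_2o}|+4r+44\alpha$. Restricting, at the cost of a factor $2$, to ordered pairs with $\dist o{g_1o}\le\dist o{g_2o}$ rewrites this as
\begin{equation*}
\dist o{g_2o}\ge\dist o{g_1o}+\dist{g_1o}{g_2o}-(4r+44\alpha),
\end{equation*}
and combining the upper shadow bound with $\norm{\nu_{g_2o}}=1$ gives
\begin{equation*}
\nu_o\bigl(\mathcal O_o(g_1o,r)\cap\mathcal O_o(g_2o,r)\bigr)\le e^{2\omega_Gr}e^{-\omega_G\dist o{g_2o}}\le e^{\omega_G(6r+44\alpha)}e^{-\omega_G\dist o{g_1o}}e^{-\omega_G\dist o{ho}},
\end{equation*}
with $h=g_1^{-1}g_2$. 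This is the factorised estimate we need.

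To conclude, sum over $(n_1,n_2)\in\{0,\dots,N\}^2$. Each $g\in G$ belongs to at most two shells $S(na,a)$, so reindexing $(n_1,n_2,g_1,g_2)\mapsto(g_1,g_2)$ introduces an overcounting factor at most $4$. The constraints $\dist o{g_io}\le Na+a$ together with the alignment yield $\dist o{ho}\le Na+a+4r+44\alpha$; dropping the remaining constraints (contracting tails, shells, non-empty intersection) the bound factorises as
\begin{equation*}
\sum_{n_1,n_2=0}^{N}\nu_o(B_{n_1}\cap B_{n_2})\le C_2\Biggl(\sum_{\dist o{g_1o}\le Na+a}e^{-\omega_G\dist o{g_1o}}\Biggr)\Biggl(\sum_{\dist o{ho}\le Na+a+4r+44\alpha}e^{-\omega_G\dist o{ho}}\Biggr),
\end{equation*}
and both single sums are bounded by $T_{Na+b}$ with $b=a+4r+44\alpha$ and $C_2$ depending only on $a$, $\alpha$, $r$, $\omega_G$.

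The main obstacle is the factorisation step in the previous paragraph. If one replaced the alignment by the bare triangle inequality $\dist o{g_2o}\ge|\dist o{g_1o}-\dist o{ho}|$, the exponential weights would remain coupled; after the change of variables $k=n_2-n_1$ the double sum would collapse into a convolution $\sum_kT_kT_{N-k}$ accompanied by an extra factor of $N$, which would be fatal for the planned Kochen-Stone application whenever $T_N$ grows slowly. \autoref{res: intersection contracting shadows}, where the contracting-tail hypothesis is used in an essential way, upgrades the triangle inequality to the quasi-additivity $\dist o{g_2o}\approx\dist o{g_1o}+\dist o{ho}$ that produces the clean multiplicative bound.
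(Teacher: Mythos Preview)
Your approach is correct and matches the paper's proof: expand the intersection, use \autoref{res: intersection contracting shadows} to obtain the quasi-additivity $\dist o{g_2o}\geq \dist o{g_1o}+\dist o{ho}-(4r+44\alpha)$, apply the upper shadow estimate with $\norm{\nu_{g_2o}}=1$, and then sum. One small slip to fix: by the definition of $A_\ell(\alpha,r,L)$ the shells are $S(n_ia,r)$, not $S(n_ia,a)$, so an element may lie in up to $\lceil 2r/a\rceil$ shells (and $h$ in up to $\lceil(12r+88\alpha)/a\rceil$ of the wider shells); adjust the overcounting constant and take $b=a+6r+44\alpha$ accordingly.
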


\begin{proof}
	Let $L > r + 13\alpha$.	
	Let $\nu = (\nu_x)$ be a $G$-invariant, $\omega_G$-conformal density.
	Let $N \in \N$.
	Observe first that 
	\begin{equation}
	\label{eqn: borel-cantelli - upper hyp}
		\begin{split}
			\sum_{n_1 = 0}^N\sum_{n_2 = 0}^N \nu_o(B _{n_1}\cap B_{n_2}) 
			& \leq 2 \sum_{n_1 = 0}^N\sum_{n_2 = n_1}^N \nu_o(B _{n_1}\cap B_{n_2}) \\
			& \leq 2  \sum_{n_1 = 0}^N\sum_{n_2=0}^{N-n_1} \nu_o(B _{n_1}\cap B_{n_1+n_2}).
		\end{split}
	\end{equation}
	Consider now $n_1, n_2 \in \N$ with $0 \leq n_1 \leq n_1 + n_2 \leq N$.
	By definition, we have
	\begin{equation*}
		B_{n_1} \cap B_{n_1+n_2}
		= \bigcup_{(g_1,g_2) \in U}
		\mathcal O_o(g_1o,r) \cap \mathcal O_o(g_1g_2o,r),
	\end{equation*}
	where $U$ is the set of pairs $(g_1,g_2) \in G$ with the following properties:
	\begin{labelledenu}[U]
		\item \label{enu: borel-cantelli - upper hyp - tail}
		$g_1, g_1g_2 \in \mathcal T(\alpha, L)$,
		\item \label{enu: borel-cantelli - upper hyp - dist}
		$g_1 \in S(n_1a,r)$ and $g_1g_2 \in S(n_1a + n_2a,r)$.
	\end{labelledenu}
	Let $(g_1,g_2) \in U$ for which $\mathcal O_o(g_1o,r) \cap \mathcal O_o(g_1g_2o,r)$ is non-empty.
	According to \autoref{res: intersection contracting shadows}, we have
	\begin{equation*}
		\dist o{g_2o} \leq \dist{g_1o}{g_1g_2o} 
		\leq \abs{\dist o{g_1g_2o} - \dist o{g_1o}} + 4r + 44\alpha.
	\end{equation*}
	In particular, 
	\begin{equation*}
		\dist o{g_1g_2o} \geq \dist o{g_1o} + \dist o{g_2o} - 4r - 44\alpha.
	\end{equation*}
	Moreover, combined with  \ref{enu: borel-cantelli - upper hyp - dist}, it shows that $g_2 \in S(n_2a, 6r + 44\alpha)$.
	Using \autoref{rem: shadow lemma}, we estimate the measure of each shadow:
	\begin{align*}
		\nu_o\left( O_o(g_1o,r) \cap \mathcal O_o(g_1g_2o,r)\right)
		& \leq \nu_o\left( \mathcal O_o(g_1g_2o,r)\right) \\
		& \leq e^{2\omega_Gr}e^{-\omega_G\dist o{g_1g_2o}} \\
		& \leq e^{\omega_G(6r+44\alpha)} e^{-\omega_G\left[\dist o{g_1o} + \dist o{g_2o}\right]}.
	\end{align*}
	Consequently 
	\begin{equation*}
		\nu_o\left(B_{n_1} \cap B_{n_1+n_2}\right)
		\leq e^{\omega_G(6r+44\alpha)} \sum_{\substack {g_1 \in S(n_1a,r), \\  g_2 \in S(n_2a, 6r + 44\alpha)}} e^{-\omega_G\left[\dist o{g_1o} + \dist o{g_2o}\right]}.
	\end{equation*}
	Note that for every $d \in \R_+$, an element $g \in G$ belongs to at most $\lceil 2d/a\rceil$ spheres of the form $S(na,d)$ when $n$ runs over $\N$.
	Summing the previous inequality over $n_1$ and $n_2$, and using (\ref{eqn: borel-cantelli - upper hyp}) we get 
	\begin{align*}
		\sum_{n_1 = 0}^N\sum_{n_2 = 0}^N \nu_o(B _{n_1}\cap B_{n_2}) 
		& \leq C_2\sum_{\substack {g_1,g_2 \in G \\ \dist o{g_1o},   \dist o{g_2o} \leq Na +b}} e^{-\omega_G\left[\dist o{g_1o} + \dist o{g_2o}\right]} \\
	& \leq C_2 \left( \sum_{\substack{ g \in G \\ \dist o{go} \leq Na + b}}e^{-\omega_G \dist o{go}}\right)^2,
	\end{align*}
	where 
	\begin{equation*}
		b = a + 6r + 44 \alpha,
		\quad \text{and} \quad
		C_2 = 2e^{\omega_G(6r+44\alpha)} \left\lceil  \frac {12r + 88\alpha}a\right\rceil ^2
	\end{equation*}
	only depends on $a$, $\alpha$ and $r$.
\end{proof}

\begin{prop}
\label{res: borel-cantelli}
	Assume that the action of $G$ on $X$ is divergent.
	There exists $\alpha, r \in \R_+$ with the following property.
	Let $\nu = (\nu_x)$ be a $G$-invariant, $\omega_G$-conformal density.
	For every $L > r + 13\alpha$, we have
	\begin{equation*}
		\nu_0 \left( \Lambda_{\rm ctg}(G, o, \alpha, r, L) \right) > 0.
	\end{equation*}
\end{prop}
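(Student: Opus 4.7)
The plan is to apply the Kochen-Stone theorem (\autoref{res: kochen-stone}) to the family $B_n = A_{na}(\alpha, r, L)$ and to observe that its $\limsup$ is contained in $\Lambda_{\rm ctg}(G, o, \alpha, r, L)$. Fix an auxiliary $a \in \R_+^*$; let $\alpha, r_1, C_1$ be the constants produced by \autoref{res: borel-cantelli - lower hyp}, choose any $r \geq r_1$, and feed $(a,\alpha,r)$ into \autoref{res: borel-cantelli - upper hyp} to obtain $b, C_2$. Finally, pick any $L > r + 13\alpha$ and set $k = \lceil (L+b)/a \rceil$, which is chosen precisely so that $Na + b \leq (N+k)a - L$ for every $N \in \N$.

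The first Kochen-Stone hypothesis is immediate from divergence. Since $\mathcal P_G(\omega_G) = \infty$, the truncated Poincaré sums $P_M := \sum_{g : \dist o{go} \leq M} e^{-\omega_G \dist o{go}}$ tend to infinity as $M \to \infty$. Plugging this into \autoref{res: borel-cantelli - lower hyp} forces $S_N := \sum_{n=0}^N \nu_o(B_n) \to \infty$ as $N \to \infty$.

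The quadratic hypothesis is obtained by chaining the two preceding lemmas in opposite directions. \autoref{res: borel-cantelli - upper hyp} yields $\sum_{n_1, n_2 = 0}^N \nu_o(B_{n_1} \cap B_{n_2}) \leq C_2 P_{Na+b}^2$, and \autoref{res: borel-cantelli - lower hyp} applied at $N+k$ then bounds $P_{Na+b} \leq P_{(N+k)a - L}$ by $C_1 e^{2\omega_G L} S_{N+k}$. Since $\nu_o$ is a probability measure, $\nu_o(B_n) \leq 1$, so $S_{N+k} \leq S_N + k$; once $N$ is large enough to satisfy $S_N \geq k$ (possible because $S_N \to \infty$), this gives $S_{N+k} \leq 2 S_N$. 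Consequently, for all sufficiently large $N$,
\begin{equation*}
  \sum_{n_1, n_2 = 0}^N \nu_o(B_{n_1} \cap B_{n_2}) \leq 4 C_1^2 C_2 e^{4\omega_G L}\, S_N^2,
\end{equation*}
and a routine bookkeeping step handles the remaining small values of $N$ (observing that both sides vanish when $S_N = 0$ and the ratio is finite on the remaining finite range) to upgrade this to a uniform constant valid for every $N$.

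\autoref{res: kochen-stone} then delivers $\nu_o\left(\bigcap_N \bigcup_{n \geq N} B_n\right) > 0$. Any cocycle $c$ in this $\limsup$ lies in $\mathcal O_o(go, r)$ for elements $g \in S(na, r) \cap \mathcal T(\alpha, L)$ with $n$ arbitrarily large, so $\dist o{go}$ is arbitrarily large too; this is exactly the defining condition for $c$ to belong to $\Lambda_{\rm ctg}(G, o, \alpha, r, L)$. The main obstacle is the quadratic step: the two preceding lemmas naturally go in opposite directions, with slightly mismatched truncation thresholds, and it is precisely divergence of the action that lets us absorb the bounded shift $k$ between these thresholds by exploiting $S_N \to \infty$.
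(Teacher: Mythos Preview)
Your proof is correct and follows the same Kochen--Stone strategy as the paper, with the same choice of events $B_n = A_{na}(\alpha,r,L)$ and the same chaining of \autoref{res: borel-cantelli - lower hyp} and \autoref{res: borel-cantelli - upper hyp}. The one difference is in how you absorb the offset between the truncation thresholds $Na-L$ and $Na+b$: the paper invokes \autoref{res: pre upper bounds bis} to bound each Poincar\'e sphere sum uniformly, whereas you bypass this by shifting the index from $N$ to $N+k$ in \autoref{res: borel-cantelli - lower hyp} and then using the trivial bound $\nu_o(B_n)\leq 1$ to control $S_{N+k}-S_N\leq k$; your route is slightly more elementary and self-contained.
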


\begin{proof}
	Fix $a \in \R_+^*$.
	Let $\alpha, r_1, C_1$ be the parameters given by \autoref{res: borel-cantelli - lower hyp}.
	Fix $r \geq r_1$.
	Let $b, C_2$ be the parameters given by \autoref{res: borel-cantelli - upper hyp} applied with $a$, $\alpha$ and $r$.
	Choose $L > r+13\alpha$ and set $a' = \max\{a, b, L\}$.
	We write $C$ for the constant given by \autoref{res: pre upper bounds bis} applied with $\omega_G$, $a'$ and $(\epsilon, r_0)$.
	
	Let $\nu = (\nu_x)$ be a $G$-invariant, $\omega_G$-conformal density.
	For simplicity; for every $n \in \N$,  we let $B_n  = A_{na}(r,\alpha, L)$.
	Since the action of $G$ is divergent, \autoref{res: borel-cantelli - lower hyp} tells us that 
	\begin{equation*}
		\sum_{n \in \N} \nu_o(B_n) = \infty.
	\end{equation*}
	Recall that $(G,\nu)$ satisfies the Shadow Principle with parameters $(\epsilon, r_0)$.
	By \autoref{res: pre upper bounds bis}
	\begin{equation*}
		\sum_{g \in S(\ell, a')} e^{-\omega_G\dist o{go}} \leq C, \quad \forall \ell \in \R_+.
	\end{equation*}
	Since the Poincaré series of $G$ diverges at $s = \omega_G$, we deduce from  Lemmas~\ref{res: borel-cantelli - lower hyp} and  \ref{res: borel-cantelli - upper hyp} that there exists $C' \in \R_+^*$, such that for every sufficiently large $N \in \N$,
	\begin{equation*}
		\sum_{n_1 = 0}^N \sum_{n_2 = 0}^N \nu_o\left(B_{n_1} \cap B_{n_2} \right) 
		\leq C' \left( \sum_{n = 0}^N \nu_o\left(B_n\right) \right)^2.
	\end{equation*}
	Applying \autoref{res: kochen-stone}, we observe that 
	\begin{equation*}
		\nu_0 \left( \Lambda_{\rm ctg}(G, o, \alpha, r, L) \right) 
		= \nu_o\left( \bigcap_{N \in \N} \bigcup_{n \geq N} B_n\right) \geq \frac 1{C'} .\qedhere
	\end{equation*}
\end{proof}

\begin{coro}
\label{res: ctg limit set with full measure}
	Assume that the action of $G$ on $X$ is divergent.
	There exist $\alpha, r \in \R_+^*$ with the following property.
	If $\nu = (\nu_x)$ is a $G$-invariant, $\omega_G$-conformal density, then $\nu_o$ gives full measure to 
	\begin{equation*}
		\Lambda_{\rm ctg} (G, o, \alpha, r) = \bigcap_{ L \in \R_+} \Lambda_{\rm ctg} (G, o, \alpha, r, L). 
	\end{equation*}
	In particular, $\nu_o$ gives full measure to the contracting limit set $\Lambda_{\rm ctg} (G)$ and thus to the radial limit set $\Lambda_{\rm rad}(G)$.
\end{coro}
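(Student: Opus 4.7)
Write $\Lambda_L := \Lambda_{\rm ctg}(G, o, \alpha, r, L)$ for the parameters $\alpha, r \in \R_+^*$ produced by \autoref{res: borel-cantelli}, so that $\nu_o(\Lambda_L) > 0$ for every $L > r + 13\alpha$. Since $\Lambda_{\rm ctg}(G, o, \alpha, r) = \bigcap_L \Lambda_L$ with the family decreasing in $L$, continuity from above along any countable cofinal sequence $L_n \to \infty$ reduces the corollary to the assertion that $\nu_o(\Lambda_L) = 1$ for every such $L$. The second conclusion ($\nu_o(\Lambda_{\rm rad}(G)) = 1$) follows from the inclusion $\Lambda_{\rm ctg}(G, o, \alpha, r) \subset \Lambda_{\rm ctg}(G) \subset \Lambda_{\rm rad}(G)$.

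The promotion from positive to full measure is the heart of the argument, and must be carried out without invoking the ergodicity of \autoref{intro: quasi-conf + ergo}, which is proved only after the present corollary. I would proceed by contradiction, assuming $\nu_o(E_L) > 0$ for $E_L = \bar X \setminus \Lambda_L$. By definition, every $c \in E_L$ admits a threshold $T_c$ beyond which no element of $\mathcal T(\alpha, L)$ places $c$ in its $r$-shadow; a standard partitioning argument produces a positive-measure slice $E^* \subset E_L$ with a uniform threshold $T^*$. Thus $E^*$ is disjoint from $U_T := \bigcup_{g \in \mathcal T(\alpha, L),\, \dist o{go} \geq T} \mathcal O_o(go, r)$ for every $T \geq T^*$, whence $\nu_o(U_T) \leq 1 - \nu_o(E^*) < 1$.

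Combining \autoref{res: vitali} with the Shadow Lemma \autoref{res: shadow lemma}, I would translate this obstruction into a quantitative tail bound on the Poincaré series restricted to $\mathcal T(\alpha, L)$. Vitali extracts from $\mathcal T(\alpha, L) \cap \{\dist o{go} \geq T\}$ a pairwise-disjoint subfamily $S^*(T)$ whose $r$-shadows satisfy $\sum_{g \in S^*(T)} \nu_o(\mathcal O_o(go, r)) \leq \nu_o(U_T) \leq 1 - \nu_o(E^*)$, and the lower Shadow Lemma then yields $\sum_{g \in S^*(T)} e^{-\omega_G \dist o{go}} \leq (1 - \nu_o(E^*))/\epsilon$. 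Working slab by slab (of thickness $a$) and using \autoref{res: intersection contracting shadows} together with properness of the action to bound the multiplicity of Vitali absorption within each slab, this produces a uniform-in-$T$ bound on $\sum_{g \in \mathcal T(\alpha, L),\, \dist o{go} \geq T} e^{-\omega_G \dist o{go}}$. Finally, \autoref{res: contracting extension} matches every $g \in G$ with an element $gu \in \mathcal T(\alpha, L)$ at distance $\dist o{go} + L + O(\alpha)$ with fibers of size at most $2$, transferring the bound to $\sum_{g \in G,\, \dist o{go} \geq T} e^{-\omega_G \dist o{go}}$ and contradicting the divergence of the Poincaré series of $G$. Hence $\nu_o(E_L) = 0$, as required.

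The main obstacle is the quantitative Vitali-covering step, specifically the slab-wise bookkeeping required to pass from the disjoint subfamily $S^*(T)$ to the full restricted Poincaré series. The geometric constants arising from the Vitali enlargement, the Shadow Lemma, and the multiplicity bound from \autoref{res: intersection contracting shadows} must be tracked carefully so that the ``saving'' factor $\nu_o(E^*) > 0$ survives in the final inequality and genuinely contradicts divergence, rather than being swallowed by the geometric losses $e^{O(r + \alpha + L)}$. An alternative zero-one argument via ergodicity of $G \curvearrowright (\bar X, \nu_o)$ would be conceptually cleaner, but is unavailable here since that ergodicity statement (\autoref{intro: quasi-conf + ergo}) is established only after the present corollary.
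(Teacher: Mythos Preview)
Your Vitali-covering route has a genuine gap, and the paper's argument is both simpler and structurally different.

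The gap is in the passage from the Vitali subfamily $S^*(T)$ to the full tail $\sum_{g \in \mathcal T(\alpha, L),\, \dist o{go} \geq T} e^{-\omega_G \dist o{go}}$. You propose bounding the multiplicity of Vitali absorption slab by slab, but the Vitali parent $g' \in S^*(T)$ absorbing a given $g$ may lie in a strictly lower slab, and then \autoref{res: intersection contracting shadows} only yields $\dist{go}{g'o} \leq (\dist o{go} - \dist o{g'o}) + 4r + 44\alpha$, which is \emph{not} a uniform bound; a single $g'$ can absorb many elements from each higher slab. If instead you run Vitali within each slab separately, multiplicity is bounded but the output is $\sum_{g \in \mathcal T \cap S(\ell,a)} e^{-\omega_G\dist o{go}} \leq C\, \nu_o(A_\ell(\alpha,r,L))$, and summing over $\ell$ yields $\sum_\ell \nu_o(A_\ell)$, which \emph{diverges} by \autoref{res: borel-cantelli - lower hyp}. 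The hypothesis $\nu_o(U_T) \leq 1 - \nu_o(E^*)$ controls the measure of the \emph{union} of the $A_\ell$, not the sum of their measures; these differ precisely because the $A_\ell$ overlap heavily. In effect you are trying to run the converse of Kochen--Stone (``$\nu_o(\limsup B_n) < 1 \Rightarrow \sum \nu_o(B_n) < \infty$''), which is false in general.

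The paper avoids all of this with a one-line trick you overlooked: \autoref{res: borel-cantelli} applies to \emph{every} $G$-invariant, $\omega_G$-conformal density, not just the given $\nu$. Set $B = G\,\Lambda_{\rm ctg}(G, o, \alpha, r, L)$, which is $G$-invariant by construction. If $A = \bar X \setminus B$ had positive measure, then $\nu^*_x = \nu_o(A)^{-1}\,\mathbb 1_A\, \nu_x$ would again be a $G$-invariant, $\omega_G$-conformal density (the $G$-invariance of $A$ is exactly what makes this work, and is why one passes through $G\Lambda_L$ rather than $\Lambda_L$). Applying \autoref{res: borel-cantelli} to $\nu^*$ gives $\nu^*_o(\Lambda_{\rm ctg}(G,o,\alpha,r,L)) > 0$, hence $\nu^*_o(B) > 0$, contradicting that $\nu^*_o$ is supported on $A$. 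Thus $\nu_o(B) = 1$, and \autoref{res: contracting limit set invariant} then gives $B \subset \Lambda_{\rm ctg}(G, o, \alpha, r+12\alpha, L-\alpha)$; intersecting over $L$ finishes. This \emph{is} the zero-one argument you were seeking, and it requires no ergodicity---only the universality of \autoref{res: borel-cantelli} over the class of invariant conformal densities.
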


\begin{rema}
	The corollary is a reminiscence of the fact that contracting elements are ``generic'' in $G$, see Yang \cite{Yang:2020ub}.
\end{rema}

\begin{proof}
	Let $\alpha, r$ be the parameters given by \autoref{res: borel-cantelli}.
	Let $\nu = (\nu_x)$ be a $G$-invariant, $\omega_G$-conformal density.
	Let $L > r + 13\alpha$.
	For simplicity we let 
	\begin{equation*}
		B = G \Lambda_{\rm ctg} (G, o, \alpha, r, L).
	\end{equation*}
	We claim that $\nu_o$ gives full measure to $B$.
	Assume on the contrary that the set $A = \bar X \setminus B$ has positive measure.
	We define a new density $\nu^*  = (\nu^*_x)$ by 
	\begin{equation*}
		\nu^*_x = \frac 1{\nu_o(A)}  \mathbb 1_A \nu_x.
	\end{equation*}
	Note that $\nu^*$ is $\omega_G$-conformal.
	By construction $B$, and thus $A$, is $G$-invariant.
	Hence $\nu^*$ is also $G$-invariant.
	It follows from \autoref{res: borel-cantelli}, that the $\nu^*_o$ gives positive measure to $B$, a contradiction.
	According to \autoref{res: contracting limit set invariant}, the set $B$ is contained in $\Lambda_{\rm ctg} (G, o, \alpha, r + 12\alpha, L- \alpha)$.
	Hence the latter has full measure as well.
	This facts holds for every $L > r + 13\alpha$.
	Thus
	\begin{equation*}
		\nu_0 \left(\bigcap_{L \in \R_+} \Lambda_{\rm ctg}(G, o,\alpha, r+12\alpha, L) \right) = 1,
	\end{equation*}
	i.e. 
	\begin{equation*}
		\nu_0 \left( \Lambda_{\rm ctg}(G, o, \alpha, r + 12\alpha) \right) = 1. \qedhere
	\end{equation*}
\end{proof}

\subsection{Passing to the reduced horoboundary}

We now study the restriction to the reduced horoboundary of invariant conformal densities.
We still assume that $G$ is not virtually cyclic and contains a contracting element.
The Shadow Principle parameters $(\epsilon, r_0) \in \R_+^* \times \R_+$ are as in the previous section.

\begin{prop}
\label{res: pre-unicity ps for div group}
	Assume that the action of $G$ on $X$ is divergent.
	There is $C \in \R_+^*$ with the following property.
	Let $\nu = (\nu_x)$ and $\nu' = (\nu'_x)$ be two $G$-invariant, $\omega_G$-conformal densities.
	Denote by $\mu = (\mu_x)$ and $\mu' = (\mu'_x)$ their respective restrictions to the reduced horocompactification $(\bar X, \mathfrak R)$.
	Then $\mu_o \leq C\mu'_o$.
\end{prop}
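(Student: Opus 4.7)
The plan is as follows. Fix once and for all the Shadow Principle parameters $(\epsilon, r_0) \in \R_+^* \times \R_+$ provided by \autoref{res: shadow lemma} and the parameters $\alpha, r_1 \in \R_+^*$ provided by \autoref{res: ctg limit set with full measure}. Pick $r \geq \max\{r_0, r_1\}$ and $L > r + 13\alpha$; we will show that $C = \epsilon^{-1} e^{2\omega_G(r + 42\alpha)}$ works, independently of $\nu$ and $\nu'$. Let $B \in \mathfrak{R}$. Since $\Lambda_{\rm ctg}(G, o, \alpha, r) \subset \Lambda_{\rm ctg}(G, o, \alpha, r, L) \subset \partial X$, applying \autoref{res: ctg limit set with full measure} to $\nu$ gives $\mu_o(B) = \nu_o\bigl(B \cap \Lambda_{\rm ctg}(G, o, \alpha, r, L)\bigr)$. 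By outer regularity of $\nu'_o$ on the compact metrizable space $\bar X$, for any $\eta > 0$ we may choose an open set $V \subset \bar X$ with $B \subset V$ and $\nu'_o(V) \leq \mu'_o(B) + \eta$.

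The next step is to cover $B \cap \Lambda_{\rm ctg}(G, o, \alpha, r, L)$ by shadows of elements of $\mathcal{T}(\alpha, L)$ that are trapped inside $V$. Fix $b$ in this intersection. The defining property of the contracting limit set supplies a sequence of elements $g_n \in \mathcal{T}(\alpha, L)$ with $b \in \mathcal{O}_o(g_n o, r)$ and $\dist o{g_n o} \to \infty$. Reprising the argument of \autoref{res: approximation saturated contracting by shadows} — but using only that $b$ (and not all of $B$) lies in $\Lambda_{\rm ctg}(G, o, \alpha, r, L)$ — we find at least one $g_b \in \mathcal{T}(\alpha, L)$ with $b \in \mathcal{O}_o(g_b o, r) \subset V$. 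Indeed, if no shadow along the sequence were contained in $V$, one could pick $c_n \in \mathcal{O}_o(g_n o, r) \setminus V$; a subsequential limit $c \in \bar X \setminus V$ would, by \autoref{res: cocycle locally determined}, satisfy $\norm[\infty]{b - c} \leq 20\alpha$, hence $b \sim c$, and saturation of $B$ would place $c \in B \subset V$, a contradiction. Let $S \subset \mathcal{T}(\alpha, L)$ consist of all these elements $g_b$, and apply \autoref{res: vitali} to extract $S^* \subset S$ with pairwise disjoint shadows $(\mathcal{O}_o(g o, r))_{g \in S^*}$ whose $(r + 42\alpha)$-enlargements still cover $B \cap \Lambda_{\rm ctg}(G, o, \alpha, r, L)$.

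The conclusion then follows from a double application of the Shadow Lemma. Since $\nu$ and $\nu'$ are $G$-invariant, $\norm{\nu_{g o}} = \norm{\nu'_{g o}} = 1$ for every $g \in G$. The upper bound of~(\ref{eqn: shadow lemma}), which requires no invariance (see \autoref{rem: shadow lemma}), yields
\begin{equation*}
  \nu_o\bigl(\mathcal{O}_o(g o, r + 42\alpha)\bigr) \leq e^{2\omega_G(r + 42\alpha)} e^{-\omega_G \dist o{g o}},
\end{equation*}
while the Shadow Principle applied to $\nu'$ yields
\begin{equation*}
  \nu'_o\bigl(\mathcal{O}_o(g o, r)\bigr) \geq \epsilon \, e^{-\omega_G \dist o{g o}}.
\end{equation*}
Summing over $g \in S^*$ and using disjointness together with $\mathcal{O}_o(g o, r) \subset V$,
\begin{equation*}
  \mu_o(B) \leq \sum_{g \in S^*} \nu_o\bigl(\mathcal{O}_o(g o, r + 42\alpha)\bigr) \leq \frac{e^{2\omega_G(r + 42\alpha)}}{\epsilon} \sum_{g \in S^*} \nu'_o\bigl(\mathcal{O}_o(g o, r)\bigr) \leq C \, \nu'_o(V).
\end{equation*}
Letting $\eta \to 0$ delivers $\mu_o(B) \leq C \mu'_o(B)$.

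The main obstacle is the incompatibility between the two topological settings: $B$ is saturated for $\sim$, a relation under which the reduced horoboundary need not even be Hausdorff, yet the Vitali covering argument requires outer regularity in the ambient topology of $\bar X$, where arbitrarily small open neighbourhoods of $B$ are essentially never saturated. The contracting limit set is precisely the locus where this tension is resolved, via the quantitative control of $\sim$ provided by \autoref{res: cocycle locally determined} and \autoref{res: equiv rel restricted to contracting}; this is what allows a shadow cover of $B$ to be trapped inside any prescribed open enlargement $V$, and what ultimately forces the constant $C$ to be uniform in $\nu$ and $\nu'$.
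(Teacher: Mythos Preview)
Your proof is correct and follows essentially the same approach as the paper: full measure on the contracting limit set, cover by shadows trapped in an open $V \supset B$, Vitali refinement, and the Shadow Lemma in both directions. The only cosmetic difference is that the paper passes through the saturation $(B \cap \Lambda)^+ \subset \Lambda_{\rm ctg}(G,o,\alpha, r+16\alpha)$ to invoke \autoref{res: approximation saturated contracting by shadows} verbatim, whereas you reprise its proof directly; note however that \autoref{res: vitali} requires $L > r + 16\alpha$, not merely $L > r + 13\alpha$.
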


\begin{proof}
	Let $\alpha, r \in \R^*_+$ be the parameters given by \autoref{res: ctg limit set with full measure}.
	Without loss of generality we can assume that $r \geq r_0$.
	Set $r' = r + 16\alpha$.
	For simplicity we write $\Lambda$ and $\Lambda'$ for the set  $\Lambda_{\rm ctg} (G, o, \alpha, r)$ and $\Lambda_{\rm ctg} (G, o, \alpha, r')$ respectively.
	Let $\nu = (\nu_x)$ and $\nu' = (\nu'_x)$ be as in the statement.
	Let $B \subset \partial X$ be a saturated Borel subset.
	We want to compare $\mu_o(B) = \nu_o(B)$ and $\mu'_o(B) = \nu'_o(B)$.
	According to \autoref{res: ctg limit set with full measure}, both $\nu_o$ and $\nu'_o$ give full measures to $\Lambda$.
	In addition, the saturated set $(B \cap \Lambda)^+$ is contained in $\Lambda'$, see \autoref{res: equiv rel restricted to contracting}
	(note that we do not claim that $(B \cap \Lambda)^+$ is measurable; we will use it only as an auxiliary tool to describe sets and will never compute its measure).
	Let $V$ be an open subset of $\bar X$ containing $B$.
	Choose $L  > r' + 16\alpha$.
	Using \autoref{res: approximation saturated contracting by shadows} with $(B \cap \Lambda)^+$, we build a subset $S \subset \mathcal T(\alpha, L)$ such that 
	\begin{equation*}
		B \cap \Lambda \subset (B \cap \Lambda)^+ \subset \bigcup_{g \in S}\mathcal O_o(go,r') \subset V.
	\end{equation*}
	According to \autoref{res: vitali}, there is a subset $S^*$ of $S$ such that 
	\begin{itemize}
		\item the collection $\left( \mathcal O_o(go,r')\right)_{g \in S^*}$ is pairwise disjoint, and
		\item $B \cap \Lambda$ is covered by $\left( \mathcal O_o(go,r'+42\alpha)\right)_{g \in S^*}$
	\end{itemize}
	Recall that $(G,\nu')$ satisfies the Shadow Principle.
	Since $\Lambda$ has full measure, we get
	\begin{align*}
		\nu_o(B)
		\leq \nu_o(B \cap \Lambda) 
		\leq \sum_{g \in S^*} \nu_o(\mathcal O_o(go,r'+42\alpha))
		& \leq e^{2\omega_G(r'+42\alpha)} \sum_{g \in S^*} e^{-\omega_G \dist o{go}} \\
		& \leq C \sum_{g \in S^*} \nu'_o(\mathcal O_o(go,r)),
	\end{align*}
	where $C = e^{2\omega_G(r'+42\alpha)}/\epsilon$ does not depend on $\nu$ and $\nu'$.
	Hence $\nu_o(B) \leq C \nu'_o(V)$.
	This inequality holds for every open subset $V$ containing $B$, thus $\nu_o(B) \leq C\nu'_o(B)$.
\end{proof}

\begin{prop}
\label{res: quasi-conf + ergo}
	Let $\nu = (\nu_x)$ be a $G$-invariant, $\omega_G$-conformal density and $\mu = (\mu_x)$ its restriction to the reduced horocompactification $(\bar X, \mathfrak R)$.
	Assume that the action of $G$ on $X$ is divergent.
	Then
	\begin{enumerate}
		\item \label{enu: quasi-conf + ergo - ergo}
		 $\mu_o$ is supported on the contracting limit set
		\item \label{enu: quasi-conf + ergo - ergo}
		$\mu_o$ is ergodic.
		\item \label{enu: quasi-conf + ergo - non-atomic}
		$\mu_o$ is non-atomic, in particular, no equivalence class for $\sim$ has positive measure.
		\item \label{enu: quasi-conf + ergo - conf}
		$\mu$ is a $G$-invariant, $\omega_G$-quasi-conformal density.
		\item \label{enu: quasi-conf + ergo - uniqueness}
		$\mu$ is almost-unique, that is there is $C \in \R_+^*$ such that  if $\mu' = (\mu'_x)$ is the restriction to the reduced horocompactification of another $G$-invariant, $\omega_G$-conformal density, then for every $x \in X$, we have $\mu'_x \leq C \mu_x$.
	\end{enumerate}
\end{prop}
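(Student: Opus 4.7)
The plan is to combine two inputs already in hand: Corollary~\ref{res: ctg limit set with full measure}, giving $\alpha, r \in \R_+^*$ so that $\nu_o$ is supported on $\Lambda_{\rm ctg}(G, o, \alpha, r)$, and Proposition~\ref{res: equiv rel restricted to contracting}, which provides the key local control $\|c - c'\|_\infty \leq 20\alpha$ for any $c' \sim c$ with $c \in \Lambda_{\rm ctg}$. The first assertion (support on the contracting limit set) is a direct quotation of \autoref{res: ctg limit set with full measure}, and the last assertion (almost uniqueness) is \autoref{res: pre-unicity ps for div group}. Ergodicity then follows formally: if $B \in \mathfrak R$ were $G$-invariant with $0 < \mu_o(B) < 1$, the restrictions $\nu^B_x = \mathbb 1_B \nu_x / \nu_o(B)$ and $\nu^{B^c}_x = \mathbb 1_{B^c}\nu_x / \nu_o(B^c)$ would both be $G$-invariant, $\omega_G$-conformal densities; their $\mathfrak R$-restrictions are supported on $B$ and $B^c$ respectively, so applying almost uniqueness to them and evaluating on $B$ yields $1 \leq C \cdot 0$, a contradiction.

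For quasi-conformality, the tightness bound implies that the function $c \mapsto e^{-\omega_G c(x, y)}$ is constant up to a multiplicative factor $e^{20\alpha\omega_G}$ on each equivalence class meeting $\Lambda_{\rm ctg}$. Writing $d\mu_x/d\mu_y$ as the $\mathfrak R$-conditional expectation of $e^{-\omega_G c(x, y)}$ with respect to $\nu_y$, the previous observation forces this Radon-Nikodym derivative to agree with $e^{-\omega_G c(x, y)}$ up to a factor $e^{40\alpha\omega_G}$ almost everywhere, independently of $x, y \in X$. The $G$-invariance of $\mu$ is inherited from that of $\nu$, since $\mathfrak R$ is a $G$-invariant sub-$\sigma$-algebra.

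The subtlest point is non-atomicity. Both the vanishing of equivalence class measures and the absence of $\mathfrak R$-atoms rest on one shadow estimate: if $c \in \Lambda_{\rm ctg}(G, o, \alpha, r, L)$ and $g \in \mathcal T(\alpha, L)$ satisfies $c \in \mathcal O_o(go, r)$, then by tightness every $c' \in [c]$ lies in $\mathcal O_o(go, r + 10\alpha)$, so
\begin{equation*}
	\nu_o\big(\mathcal O_o(go, r)^+ \cap \Lambda_{\rm ctg}\big) \leq \nu_o\big(\mathcal O_o(go, r + 10\alpha)\big) \leq e^{2\omega_G(r + 10\alpha)} e^{-\omega_G d(o, go)}
\end{equation*}
by the Shadow Lemma. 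Letting $d(o, go) \to \infty$ shows $\nu_o^*([c]) = 0$ for every $c \in \Lambda_{\rm ctg}$, hence for every $c$ since $\Lambda_{\rm ctg}$ has full measure. For the atomic assertion, suppose $A \in \mathfrak R$ is a $\mu_o$-atom with $\mu_o(A) = m > 0$. Since $A \subset \Lambda_{\rm ctg}$ modulo null, and $\Lambda_{\rm ctg}$ is covered for each $T$ by the shadows $\mathcal O_o(go, r)$ with $g \in \mathcal T(\alpha, L)$ and $d(o, go) \geq T$, countable additivity produces some such $g$ with $\nu_o(A \cap \mathcal O_o(go, r)) > 0$. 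The atomic property then forces $A \subset \mathcal O_o(go, r)^+$ modulo null, and the displayed shadow bound yields $m \leq e^{2\omega_G(r + 10\alpha)} e^{-\omega_G T}$; letting $T \to \infty$ gives $m = 0$, the contradiction. The hard part will be correctly handling the passage to the saturation $\mathcal O_o(go, r)^+$: it is the uniform tightness of equivalence classes inside $\Lambda_{\rm ctg}$, special to the divergent setting, that keeps the measure of the saturated shadow controlled by that of a slightly larger ordinary shadow.
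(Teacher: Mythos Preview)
Your proof is correct and follows essentially the same approach as the paper's: both deduce (1) from \autoref{res: ctg limit set with full measure}, obtain ergodicity by restricting $\nu$ to a putative $G$-invariant set $B \in \mathfrak R$ and invoking \autoref{res: pre-unicity ps for div group}, derive quasi-conformality from the uniform bound $\|c-c'\|_\infty \leq 20\alpha$ of \autoref{res: equiv rel restricted to contracting} (the paper does this via the auxiliary $\mathfrak R$-measurable functions $\beta^\pm_c(x,y) = \inf/\sup_{c'\sim c} c'(x,y)$ rather than conditional expectations, but the content is identical), and handle non-atomicity by trapping an atom inside saturated shadows $\mathcal O_o(go,r)^+$ whose intersection with $\Lambda_{\rm ctg}$ sits in an ordinary shadow of slightly larger radius, then letting $\dist o{go}\to\infty$.

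One small correction: \autoref{res: pre-unicity ps for div group} by itself only gives $\mu'_o \leq C\mu_o$ at the base point, not the full assertion $\mu'_x \leq C\mu_x$ for every $x$ as stated in (5). The paper is explicit that the extension to all $x$ uses the quasi-conformality just established; since you do prove quasi-conformality, this is only an ordering issue in your exposition, not a genuine gap. Also, your use of outer measure $\nu_o^*([c])$ is unnecessary: each equivalence class $[c]$ is an $F_\sigma$ set and belongs to $\mathfrak R$, so its measure is well defined and the vanishing of $\nu_o([c])$ is already contained in the non-atomicity statement.
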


\begin{proof}
	Let $\alpha, r \in \R^*_+$ be the parameters given by \autoref{res: ctg limit set with full measure} and $C \in \R_+$  the one given by \autoref{res: pre-unicity ps for div group}.
	Let $\nu = (\nu_x)$ be a $G$-invariant, $\omega_G$-conformal density and $\mu = (\mu_x)$ its restriction to the reduced horocompactifiction $(\bar X, \mathfrak R)$.
	It follows from \autoref{res: ctg limit set with full measure} that $\mu_o$ gives full measure to the contracting limit set.
	Let us prove the ergodicity of $\mu_o$.
	Let $B$ be a $G$-invariant saturated Borel subset such that $\mu_o(B) > 0$.
	Consider the density $\nu^* = (\nu^*_x)$ defined by
	\begin{equation*}
		\nu^*_x = \frac 1{\nu_o(B)} \mathbb 1_B \nu_x.
	\end{equation*}
	Since $B$ is $G$-invariant, $\nu^*$ is a $G$-invariant, $\omega_G$-conformal density.
	Denote by $\mu^* = (\mu^*_x)$ its restriction to the reduced horocompactifiction.
	It follows from \autoref{res: pre-unicity ps for div group} that $\mu_o \leq C \mu^*_o$.
	Consequently $\mu_o(\bar X \setminus B) = 0$, that is $\mu_o(B) = 1$.
	
	We now focus on non-atomicity.
	Assume on the contrary that $B \in \mathfrak R$ is an atom of $\mu_o$.
	For every $g \in G$, we write $\mathcal O^+_o(go,r)$ for the saturation of the shadow $\mathcal O_o(go,r)$.
	Note that $\mathcal O^+_o(go,r)$ is mesurable (\autoref{res: saturation of closed subset}).
	Consequently the measure of $B \cap \mathcal O^+_o(go,r)$ is either zero or equals $\mu_o(B)$.
	By \autoref{res: ctg limit set with full measure} $\mu_o$ only charges the set $\Lambda = \Lambda_{\rm ctg}(G, o, \alpha,r)$, hence $\Lambda_{\rm rad}(G,o,r)$.
	Consequently, for every $n \in \N$, there is $g_n \in G$, with $\dist o{g_no} \geq n$ such that
	\begin{equation*}
		\mu_o \left(\mathcal O^+_o(g_no,r) \right) 
		\geq \mu_o \left( B \cap \mathcal O^+_o(g_no,r) \right) 
		\geq \mu_o(B) 
	\end{equation*}
	By \autoref{res: equiv rel restricted to contracting}, $\mathcal O^+_o(g_no,r) \cap \Lambda$ is contained in $\mathcal O_o(g_no,r+20\alpha)$.
	Since $\nu_o$ gives full measure to $\Lambda$, we get
	\begin{equation*}
		0 
		< \mu_o(B) 
		\leq \nu_o \left(\mathcal O^+_o(g_no,r) \right) 
		\leq \nu_o \left(\mathcal O^+_o(g_no,r) \cap \Lambda\right)
		\leq \nu_o \left(\mathcal O_o(g_no,r+20\alpha)\right).
	\end{equation*}
	Recall that $\dist o{g_no}$ diverges to infinity.
	Hence the above inequality contradicts the Shadow Lemma (\autoref{res: shadow lemma}).
	
	Let us prove \ref{enu: quasi-conf + ergo - conf}.
	It follows from the construction that $\mu$ is $G$-invariant, $\norm{\mu_o} = 1$, and $\mu_x \ll \mu_y$, for every $x,y \in X$.
	Hence we are only left to prove that $\mu$ is quasi-conformal.
	Let $x,y \in X$.
	We define two auxiliary maps as follows.
	\begin{equation*}
		\begin{array}{ccccccc}
			\bar X & \to & \R & \quad \text{and} \quad\ & \bar X & \to & \R \\
			c & \mapsto &\displaystyle \inf_{c' \sim c} c'(x,y) && c & \mapsto &\displaystyle \sup_{c' \sim c} c'(x,y) 
		\end{array}
	\end{equation*}
	We denote them by $c \mapsto \beta^-_c(x,y)$ and $c \mapsto \beta^+_c(x,y)$ respectively.
	As $\bar X$ is separable, one checks that these maps are $\mathfrak R$-measurable.
	Let $B$ be a saturated Borel subset.
 	Using the conformality of $\nu$ we have
	\begin{equation*} 
		\nu_x(B) 
		\leq \int \mathbb 1_B(c) e^{-\omega_Gc(x,y)} d\nu_y(c) 
		\leq \int \mathbb 1_B(c) e^{-\omega_G\beta^-_c(x,y)} d\nu_y(c). 
	\end{equation*}
	Since $B$ is saturated and $c \mapsto \beta^-_c(x,y)$ is $\mathfrak R$-measurable, we get
	\begin{equation*} 
		\mu_x(B) 
		\leq \int \mathbb 1_B(c) e^{-\omega_G\beta^-_c(x,y)} d\mu_y(c).
	\end{equation*}
	This inequality holds for every $B \in \mathfrak R$.
	Hence 
	\begin{equation*}
		 \frac {d\mu_x}{d\mu_y} (c) \leq e^{-\omega_G \beta^-_c(x,y)}, \quad \mu\text{-a.e.}
	\end{equation*}
	In the same way, we obtain a lower bound for the Radon-Nikodym derivative with $\beta^+_c(x,y)$ in place of $\beta^-_c(x,y)$.
	By \autoref{res: equiv rel restricted to contracting}, for $\mu$-almost every $c \in \bar X$, we have
	\begin{equation*}
		c(x,y) -20\alpha \leq \beta^-_c(x,y) \leq \beta^+_c(x,y) \leq c(x,y) + 20\alpha.
	\end{equation*}
	Hence $\mu$ is quasi-conformal.
	Point~\ref{enu: quasi-conf + ergo - uniqueness} now follows from \autoref{res: pre-unicity ps for div group} and the quasi-conformality.
\end{proof}

\subsection{More applications}
\label{sec: more applications}

\begin{prop}
\label{res: strict half inequality}
	Assume that the action of $G$ on $X$ is divergent.
	For every infinite normal subgroup of $G$ we have 
	\begin{equation*}
		\omega_N > \frac 12 \omega_G.
	\end{equation*}
\end{prop}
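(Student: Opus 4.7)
I argue by contradiction: suppose $\omega_N = \tfrac12 \omega_G$. Since the canonical projection $\zeta\colon X \to X/N$ is $1$-Lipschitz, $\omega_{G/N} \leq \omega_G$; combined with \autoref{res: improved lower bound growth normal sbgp} this forces $\omega_{G/N} = \omega_G$. Note $\omega_G>0$: a non virtually cyclic group with a contracting element has exponential orbit growth. In particular $\omega_N < \omega_G$, so by the easy direction \autoref{res: amenability roblin}, the quotient $G/N$ must be non-amenable.

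I then dichotomize according to whether the action of $N$ on $X$ is divergent at $\omega_N$. If $N$ is divergent at $\omega_N$, then since $N\triangleleft G$ is normal, hence commensurated, \autoref{res: div normal sbgp} (proved in the body of the paper; see \autoref{intro: div normal sbgp part 1}) yields $\omega_N = \omega_G$, contradicting $\omega_N = \omega_G/2$.

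Assume therefore that $N$ is convergent at $\omega_N$. Patterson's construction (\autoref{res: existence twisted ps}, applied to the action of $N$ on $X$ with $H=N$, $\chi = 0$) yields an $N$-invariant, $\omega_N$-conformal density $\nu = (\nu_x)$ on $\bar X$, supported on $\partial X$. Since $N\triangleleft G$ is infinite, by \autoref{res: shadow lemma} the pair $(G,\nu)$ satisfies the Shadow Principle. I would revisit the $\ell^2$-argument of \autoref{res: improved lower bound growth normal sbgp}: convergence of $N$ at $\omega_N$ lets the estimate $\|\phi_s\|^2 \leq C\sum_{h\in N}(1+d(o,ho))\,e^{-sd(o,ho)}$ extend to $s=\omega_N$, so $\|\phi_{\omega_N}\| < \infty$. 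Combined with $\|\psi_t\|^2=\mathcal{P}_{G/N}(2t)$ and the Cauchy--Schwartz inequality $\mathcal{P}_G(s+t) \leq \|\phi_s\|\|\psi_t\|$, letting $s=\omega_N$ and $t\to \omega_G/2^+$ and using divergence of $G$ forces $\|\psi_t\| \to \infty$, i.e., $G/N$ is divergent at $\omega_{G/N}=\omega_G$. To close the argument, I exploit the Shadow Principle for $(G,\nu)$ via \autoref{res: pre upper bounds bis}: for any $a>0$,
\begin{equation*}
  \sum_{g \in S(\ell,a)} \|\nu_{go}\|\,e^{-\omega_N d(o,go)} \leq C, \quad \forall \ell \in \R_+.
\end{equation*}
The elementary lower bound $\|\nu_{go}\| \geq e^{-\omega_N d(o,go)}$ (cocycles in $\bar X$ are $1$-Lipschitz) then gives $\sum_{g\in S(\ell,a)} e^{-\omega_G d(o,go)} \leq C$, i.e., the partial sums of $\mathcal{P}_G(\omega_G)$ up to radius $L$ grow at most linearly in $L$. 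I would finally sharpen this by using that elements $g \in \mathcal T(\alpha,L)$ with long contracting tails abound under the divergence of $G$: \autoref{res: contracting extension} together with the Kochen--Stone / Borel--Cantelli input of \autoref{res: borel-cantelli} and the full-measure statement \autoref{res: ctg limit set with full measure} produce, along a positive-density family of $g$, a strict improvement $\|\nu_{go}\| \gg e^{-\omega_N d(o,go)}$ coming from the non-negligible $\nu_o$-mass placed on the shadow $\mathcal{O}_o(g^{-1}o,r)$ (via the Shadow Lemma applied in the ``dual'' direction). This upgrades the ring estimate to a strictly sublinear bound, incompatible with divergence of $G$, yielding the contradiction.

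\textbf{Main obstacle.} The convergent-$N$ case is the delicate one: it requires turning the \emph{geometric} existence of many contracting tails (a $G$-level property enforced by divergence) into a \emph{quantitative measure-theoretic} improvement of the trivial bound $\|\nu_{go}\| \geq e^{-\omega_N d(o,go)}$ that holds uniformly along a positive-density family of $g$. The technical difficulty is to interface the $N$-invariant density $\nu$ with the $G$-level contracting geometry—bypassing the fact that neither $N$ nor $G/N$ need act well on any natural quotient space—so as to convert the linear upper bound on partial sums of $\mathcal{P}_G(\omega_G)$ into a strictly sublinear one.
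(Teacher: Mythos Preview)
Your divergent-$N$ branch is fine: invoking \autoref{res: div normal sbgp} (whose proof does not rely on \autoref{res: strict half inequality}) immediately gives $\omega_N=\omega_G$, a contradiction. The convergent-$N$ branch, however, is not a proof. You correctly derive the sphere bound $\sum_{g\in S(\ell,a)} e^{-\omega_G\dist o{go}}\leq C$, but this is perfectly compatible with divergence of $G$. Your proposed upgrade --- finding a positive-density family along which $\norm{\nu_{go}}\gg e^{-\omega_N\dist o{go}}$ --- would at best replace $C$ by a smaller constant on that family; it does nothing on the complement and cannot force the sphere sums to tend to zero. The side facts you establish (non-amenability of $G/N$, divergence of $G/N$) are never fed back into the argument, which is a symptom that the strategy has stalled.

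The paper avoids this dichotomy entirely. It takes a $G$-invariant $\omega_G$-conformal density $\nu$ and an $N$-invariant $\omega_N$-conformal density $\nu'$ chosen $N$-ergodic. The hypothesis $\omega_G=2\omega_N$ enters via the trivial bound $\norm{\nu'_{go}}\geq e^{-\omega_N\dist o{go}}$, which rewrites as $e^{-\omega_G\dist o{go}}\leq \norm{\nu'_{go}}\,e^{-\omega_N\dist o{go}}$. A Vitali covering by contracting-tail shadows (\autoref{res: approximation saturated contracting by shadows}, \autoref{res: vitali}) together with the Shadow Principle for $(G,\nu')$ then converts this pointwise inequality into $\mu_o\leq C\mu'_o$ on the reduced $\sigma$-algebra $\mathfrak R$. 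Ergodicity of $\mu'_o$ for $N$ upgrades absolute continuity to equivalence; since $\mu_o$ is $G$-invariant, $\mu'_o$ becomes $G$-quasi-invariant, hence almost-fixed by $G$ via \autoref{res: ergodic quasi-inv induces almost-fixed}, and finally \autoref{res: dimension almost-fixed} forces $\omega_N\geq\omega_G$. The point you are missing is that the comparison should run between \emph{two} densities of different dimensions, with $\omega_G=2\omega_N$ used to bridge them, rather than trying to squeeze a contradiction out of a single $N$-density.
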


\begin{proof}
	Let $Q = G/N$ and $\omega_Q$ be the growth rate of $Q$ on $X/N$.
	According to \autoref{res: improved lower bound growth normal sbgp} we have
	\begin{equation*}
		\omega_N + \frac 12 \omega_Q \geq \omega_G.
	\end{equation*}
	Since the map $X \to X/N$ is $1$-Lipschitz, $\omega_Q \leq \omega_G$.
	Hence
	\begin{equation*}
		\omega_N \geq \frac 12 \omega_G.
	\end{equation*}
	Suppose now that, contrary to our claim, $\omega_G = 2 \omega_N$.
	We choose 
	\begin{itemize}
		\item a $G$-invariant, $\omega_G$-conformal density $\nu = (\nu_x)$ and
		\item an $N$-invariant, $\omega_N$-conformal density $\nu'  = (\nu'_x)$ such that the action of $N$ on $(\bar X, \mathfrak B, \nu'_o)$ is ergodic.
	\end{itemize}
	We write $\mu$ and $\mu'$ for their respective restrictions to the reduced horocompactification $(\bar X, \mathfrak R)$.
	In particular, the action of $N$ on $(\bar X, \mathfrak R, \mu'_o)$ is ergodic.
	We claim that $\mu_0$ is absolutely continuous with respect to $\mu'_0$.
	According to \autoref{res: shadow lemma}, $(G, \nu')$ satisfies the Shadow Lemma for some parameters $(\epsilon, r_0) \in \R_+^* \times \R_+$.
	By \autoref{res: ctg limit set with full measure}, there exists $\alpha,r \in \R^*_+$ such that $\nu_o$ gives full measure to $\Lambda = \Lambda_{\rm ctg} (G, o, \alpha,r)$.	
	Without loss of generality, we can assume that $r \geq r_0$.
	For simplicity we set $r' = r + 16\alpha$ and write $\Lambda'$ for $\Lambda_{\rm ctg} (G, o, \alpha,r)$.	

	Let $B$ be a saturated subset contained in $\Lambda_{\rm ctg} (G, o,\alpha,r)$.
	Let $V$ be an open set containing $B$.
	Observe that $(B \cap \Lambda)^+$ is contained in $\Lambda'$ (\autoref{res: equiv rel restricted to contracting})
	Fix $L > r' +16\alpha$.
	Using \autoref{res: approximation saturated contracting by shadows} with $(B \cap \Lambda)^+$, we build a subset $S \subset \mathcal T(\alpha, L)$ such that 
	\begin{equation*}
		B \cap \Lambda \subset (B \cap \Lambda)^+ \subset \bigcup_{g \in S}\mathcal O_o(go,r') \subset V.
	\end{equation*}
	According to \autoref{res: vitali}, there is a subset $S^*$ of $S$ such that 
	\begin{itemize}
		\item the collection $\left( \mathcal O_o(go,r')\right)_{g \in S^*}$ is pairwise disjoint, and
		\item $B\cap \Lambda$ is covered by $\left( \mathcal O_o(go,r'+42\alpha)\right)_{g \in S^*}$
	\end{itemize}
	Since $\nu$ gives full measure to $\Lambda$, we have $\nu_o(B) = \nu_o(B \cap \Lambda)$.
	Using \autoref{rem: shadow lemma} with the density $\nu$ we get
	\begin{equation*}
		\nu_o(B) 
		\leq \sum_{g \in S^*} \nu_o(\mathcal O_o(go,r'+42\alpha))
		\leq e^{2\omega_G(r'+42\alpha)} \sum_{g \in S^*} e^{-\omega_G \dist o{go}}.
	\end{equation*}
	Recall that for every $g \in G$, we have $\norm{\nu'_{go}} \geq e^{-\omega_N \dist o{go}}$.
	Since $\omega_G = 2 \omega_N$, we obtain
	\begin{equation*}
		\nu_o(B) 
		\leq e^{2\omega_G(r'+42\alpha)} \sum_{g \in S^*} \norm{\nu'_{go}}  e^{-\omega_N \dist o{go}}.
	\end{equation*}
	Using now the Shadow Principle with the density $\nu'$ we obtain
	\begin{equation*}
		\nu_o(B) 
		\leq C \sum_{g \in S^*} \nu'_o(\mathcal O_o(go,r'))
		\leq C \nu'_o(V),
		\quad \text{where} \quad
		C =  \frac 1\epsilon e^{2\omega_G(r'+42\alpha)}
	\end{equation*}
	does not depend on $B$.
	This inequality holds for every open subset $V$ containing $B$, hence $\nu_o(B) \leq C\nu'_o(B)$, i.e. $\mu_o(B) \leq C \mu'_o(B)$.
	This completes the proof of our claim.
	
	Denote by $f$ the Radon-Nikodym derivative $f = d\mu_o / d \mu'_o$.
	Both $\mu$ and $\mu'$ are $N$-invariant.
	Hence the set $A = \set{c \in \bar X}{f(c) > 0}$ is $N$-invariant.
	Note that $\mu'_o(A) > 0$.
	Indeed otherwise $\mu_o$ would be the zero measure.
 	Since the action of $N$ on $(\bar X, \mathfrak R, \mu'_o)$ is ergodic, we get $\mu'_o(A) = 1$.
	Hence $\mu_o$ and $\mu'_o$ are in the same class of measures.
	Since $\mu_o$ is $G$-invariant, $\mu'_o$ is $G$-quasi-invariant.
	We assumed that $\mu'_o$ is ergodic for the action of $N$.
	It follows from \autoref{res: ergodic quasi-inv induces almost-fixed} that $\mu'$ is almost fixed by $G$.
	Thus $\omega_N \geq \omega_G$ by \autoref{res: dimension almost-fixed}.
	This contradicts our assumption and completes the proof.
\end{proof}

\begin{prop}
\label{res: shadow lemma almost-fixed reduced}
	Let $H \subset G$ be a subgroup which is not virtually cyclic and contains a contracting element.
	Let $\nu = (\nu_x)$ be an $H$-invariant, $\omega_H$-conformal density and $\mu = (\mu_x)$ its restriction to the reduced horocompactification $(\bar X, \mathfrak R)$.
	Assume that the action of $H$ is divergent.
	If $\mu$ is almost fixed by $G$, then $(G, \nu)$ satisfies the Shadow Principle.
\end{prop}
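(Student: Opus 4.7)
My plan is to reduce the Shadow Principle to the uniform lower bound $\nu^g_o(\mathcal O_{g^{-1}o}(o,r)) \geq \epsilon$ for all $g \in G$ and $r$ large, and then to establish this bound by contradiction, adapting the argument of Proposition \ref{res: pre-shadow lemma}. The reduction is identical to the first step of Corollary \ref{res: shadow lemma almost-fixed} and only uses conformality of $\nu$. The difficulty is that, unlike in that corollary, $\nu$ itself is not assumed almost-fixed by $G$: only its restriction $\mu$ to $\mathfrak R$ is.

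Suppose the uniform bound fails along sequences $g_n \in G$, $r_n \to \infty$, and set $z_n = g_n^{-1}o$. After extracting a subsequence, $\dist o{z_n} \to \infty$ and $z_n \to b \in \partial X$. Since $G$ is not virtually cyclic and contains a contracting element (both inherited from $H$), \autoref{res: existence transverse contracting} provides a contracting element $h_0 \in G$ whose $\group{h_0}$-orbit $A_0$ is $\alpha$-contracting and satisfies $b \notin \partial^+ A_0$; let $p \in A_0$ be a projection of $b$. Define the closed set $F = \set{c \in \bar X}{c(p,y) \leq 4\alpha, \forall y \in A_0}$. The argument of Proposition \ref{res: pre-shadow lemma} then gives $\bar X \setminus F \subset \mathcal O_{z_n}(o,r_n)$ for $n$ large, so that $\nu^{g_n}_o(F) \to 1$. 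Since $F$ is closed, $F^+ \in \mathfrak R$ by \autoref{res: saturation of closed subset}, and $\mu^{g_n}_o(F^+) = \nu^{g_n}_o(F^+) \to 1$; combined with almost-fixedness, $\mu_o(F^+) \geq 1/C > 0$.

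To replace the $G$-quasi-invariance step of Proposition \ref{res: pre-shadow lemma}, I will exploit the contracting limit set. Apply \autoref{res: ctg limit set with full measure} to the divergent subgroup $H$ to get $r_\ast$ such that $\mu_o$ gives full measure to $\Lambda_{\rm ctg}(H, o, \alpha, r_\ast)$ (after possibly enlarging $\alpha$ to accommodate both contraction parameters), hence also to $\Lambda := \Lambda_{\rm ctg}(H, o, \alpha, r_\ast, L)$ for any fixed $L > r_\ast + 29\alpha$. By \autoref{res: equiv rel restricted to contracting}, equivalence classes meeting $\Lambda$ have uniform diameter at most $20\alpha$. The key geometric claim I need is: every $c \in F^+ \cap \Lambda$ admits a projection $q$ on $A_0$ with $\dist p q \leq 52\alpha$. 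Indeed, pick $c_0 \in F$ with $c_0 \sim c$: the argument of Proposition \ref{res: pre-shadow lemma} gives $c_0 \notin \partial^+ A_0$ with projection $q_0$ satisfying $\dist p{q_0} \leq 18\alpha$, and \autoref{res: proj cocycle contracting set} applied at $q_0 \in A_0$ to $c$ (using $\norm[\infty]{c-c_0} \leq 20\alpha$) forces $\dist q{q_0} \leq 34\alpha$.

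Pick $g \in \group{h_0}$ with $\dist p{gp} > 104\alpha$. The same lemma applied to $(gF)^+$ makes $F^+ \cap (gF)^+ \cap \Lambda$ empty, so $\mu_o(F^+ \cap (gF)^+) = 0$; almost-fixedness of $\mu$ propagates this to $\mu^{g_n}_o(F^+ \cap (gF)^+) = 0$ for every $n$. The probability inequality
\begin{equation*}
	\mu^{g_n}_o((gF)^+) \leq 1 - \mu^{g_n}_o(F^+) + \mu^{g_n}_o(F^+ \cap (gF)^+)
\end{equation*}
therefore forces $\mu^{g_n}_o((gF)^+) \to 0$. On the other hand, $\mu_o$ and $\mu_{g^{-1}o}$ lie in the same measure class (as members of the same density), so almost-invariance of $\mu$ by $G$ gives $\mu_o((gF)^+) > 0$ from $\mu_o(F^+) > 0$; almost-fixedness then makes $\mu^{g_n}_o((gF)^+)$ uniformly bounded below by a strictly positive constant (depending on the fixed $g$). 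This is the contradiction. The main obstacle will be the geometric claim: the disjointness $F \cap gF = \emptyset$ used in Proposition \ref{res: pre-shadow lemma} must be upgraded to $\mu_o$-almost disjointness of the saturations $F^+$ and $(gF)^+$, which is possible precisely because $\sim$-classes are $\ell^\infty$-bounded on the contracting limit set.
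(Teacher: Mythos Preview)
Your argument is essentially correct (modulo some loose constants, e.g.\ the threshold $\dist p{gp}>104\alpha$ may need to be slightly larger, and the step ``$\mu_o((gF)^+)>0$'' deserves one more line: use $\mu^g_o(F^+)\geq C^{-1}\mu_o(F^+)>0$, unwind $\mu^g_o$ to get $\mu_{go}(gF^+)>0$, and pass to $\mu_o$ via absolute continuity of the density). But you are working much harder than necessary: you re-run the entire compactness/contradiction argument of \autoref{res: pre-shadow lemma}, carrying saturations through each step.

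The paper's route is shorter and more modular. Starting from the same reduction
\[
\nu_o\!\left(\mathcal O_o(go,r)\right)\geq \norm{\nu_{go}}\,e^{-\omega_H\dist o{go}}\,\nu^g_o\!\left(\mathcal O_{g^{-1}o}(o,r)\right),
\]
it uses the single observation that on the contracting limit set $\Lambda_{\rm ctg}(H,o,\alpha,r)$ (which has full $\nu$-measure by \autoref{res: ctg limit set with full measure}) the saturated shadow satisfies $\mathcal O^+_{g^{-1}o}(o,r)\cap\Lambda_{\rm ctg}\subset\mathcal O_{g^{-1}o}(o,r+20\alpha)$, by \autoref{res: equiv rel restricted to contracting}. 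This yields the chain
\[
\nu^g_o\!\left(\mathcal O_{g^{-1}o}(o,r+20\alpha)\right)\ \geq\ \mu^g_o\!\left(\mathcal O^+_{g^{-1}o}(o,r)\right)\ \geq\ \epsilon\,\mu_o\!\left(\mathcal O^+_{g^{-1}o}(o,r)\right)\ \geq\ \epsilon\,\nu_o\!\left(\mathcal O_{g^{-1}o}(o,r)\right),
\]
where the middle inequality is exactly almost-fixedness of $\mu$ on $\mathfrak R$, and the outer two are trivial inclusions (together with the fact that $\nu^g$ gives full measure to a saturated full-measure set, which follows from almost-fixedness applied to its complement). One is then left with a uniform lower bound for $\nu_o(\mathcal O_{g^{-1}o}(o,r))$, and \emph{this} is precisely \autoref{res: pre-shadow lemma} applied to the group $H$ with $\mathcal D_0=\{\nu\}$ --- no contradiction argument needs to be redone. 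In short: you proved disjointness of $F^+$ and $(gF)^+$ on the contracting limit set, whereas the paper uses the very same $20\alpha$-bound to control saturated \emph{shadows} and then invokes \autoref{res: pre-shadow lemma} as a black box.
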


\begin{proof}
	According to \autoref{res: ctg limit set with full measure}, there are $\alpha, r_0 \in \R^*_+$ such that $\nu$ gives full measure to $\Lambda_{\rm ctg}(H,o,\alpha,r_0)$.
	Proceeding as in the proof of \autoref{res: shadow lemma}, we show that for every $g \in G$ and $r \in \R_+$, 
	\begin{equation}
	\label{eqn: shadow lemma almost-fixed reduced}
		\nu_o\left(\mathcal O_o(go,r)\right)
		\geq \norm{\nu_{go}} e^{-\omega_H \dist o{go}}\nu^g_o\left(\mathcal O_{g^{-1}o}(o,r)\right).
	\end{equation}
	Choose now $r \geq r_0$ and $g \in G$.
	We denote by $\mathcal O^+_{g^{-1}o}(o,r)$ the saturation of the shadow $\mathcal O_{g^{-1}o}(o,r)$, which is mesurable by \autoref{res: saturation of closed subset}.
	According to \autoref{res: equiv rel restricted to contracting}, 
	\begin{equation*}
		\mathcal O^+_{g^{-1}o}(o,r) \cap \Lambda_{\rm ctg}(H,o,\alpha, r) \subset \mathcal O_{g^{-1}o}(o,r+20\alpha).
	\end{equation*}
	Recall that $\nu$ gives full measure to $\Lambda_{\rm ctg}(H,o,\alpha, r_0)$, thus to $\Lambda_{\rm ctg}(H,o,\alpha, r)$ as well.
	Since $\mu$ is almost fixed by $G$ we have
	\begin{align*}
		\nu^g_o\left(\mathcal O_{g^{-1}o}(o,r+20\alpha)\right)
		\geq \mu^g_o\left(\mathcal O^+_{g^{-1}o}(o,r) \right)
		& \geq \epsilon \mu_o\left(\mathcal O^+_{g^{-1}o}(o,r) \right) \\
		& \geq \epsilon \nu_o\left(\mathcal O_{g^{-1}o}(o,r) \right),
	\end{align*}
	where $\epsilon \in \R_+^*$ does not depend on $g$ and $r$.
	Combined with (\ref{eqn: shadow lemma almost-fixed reduced}) it shows that for every $r \geq r_0$, for every $g \in G$, we have 
	\begin{equation*}
		 \nu_o\left(\mathcal O_o(go,r +20\alpha)\right)
		\geq \epsilon \norm{\nu_{go}} e^{-\omega_H \dist o{go}}\nu_o\left(\mathcal O_{g^{-1}o}(o,r)\right).
	\end{equation*}
	According to our assumption, $H$ is not virtually cyclic and contains a contracting element.
	The conclusion now follows from \autoref{res: pre-shadow lemma} applied with the group $H$ and the set $\mathcal D_0 = \{ \nu\}$..
\end{proof}

\begin{theo}
\label{res: div normal sbgp}
	Let $H$ be a commensurated subgroup of $G$.
	If the action of $H$ on $X$ is divergent, then the following holds.
	\begin{enumerate}
		\item \label{res: div normal sbgp - qinv}
		Any $H$-invariant, $\omega_H$-conformal density is $G$-almost invariant when restricted to the reduced horocompactification $(\bar  X, \mathfrak R)$.
		\item \label{res: div normal sbgp - exp}
		$\omega_H = \omega_G$.
		\item \label{res: div normal sbgp - div}
		The action of $G$ on $X$ is divergent.
	\end{enumerate}
\end{theo}

\begin{proof}
	Let $\nu = (\nu_x)$ be an $H$-invariant, $\omega_H$-conformal density.
	We denote by $\mu = (\mu_x)$ its restriction to the reduced horocompactification $(\bar X, \mathfrak R)$.
	Let $g \in G$.
	By definition of commensurability the intersection $H_0 = H^g \cap H$ has finite index in $H$.
	In particular, $H_0$ is divergent and $\omega_{H_0} = \omega_H$.
	Recall that $\nu^g$ is the image of $\nu$ under the right action of $g \in G$.
	It is an $H^g$-invariant, $\omega_H$-conformal density, thus an $H_0$-invariant, $\omega_{H_0}$-conformal density.
	Similarly $\nu$ is an $H_0$-invariant, $\omega_{H_0}$-conformal density.
	Since $H_0$ is divergent, \autoref{res: quasi-conf + ergo} tells us that
	\begin{itemize}
		\item there is $C \in \R_+$ such that $\mu^g \leq C \mu$,
		\item the action of $H^g \cap H$ on $(\bar X, \mathfrak R, \mu_o)$ is ergodic.
	\end{itemize}
	Note that $C$ depends a priori on $H_o$ and thus on $g$.
	Nevertheless, it still proves that $\mu$ is $G$-quasi-invariant.
	Consequently $\mu$ is $C_0$-almost fixed by $G$, for some $C_0 \in \R_+^*$ (\autoref{res: ergodic quasi-inv induces almost-fixed}).
	We deduce from \autoref{res: shadow lemma almost-fixed reduced} that $(G, \nu)$ satisfies the Shadow Principle.
	Point \ref{res: div normal sbgp - exp} now follows from \autoref{res: dimension almost-fixed}.
	Recall that $\mathcal P_H(s) \leq \mathcal P_G(s)$, for every $s \in \R_+$.
	Since the action of $H$ on $X$ is divergent, $\mathcal P_G(s)$ diverges at $s = \omega_H = \omega_G$.
	Hence the action of $G$ on $X$ is divergent as well, which proves \ref{res: div normal sbgp - div}.
	
	We already know that $\mu$ is almost-fixed by $G$, so that the map $\chi \colon G \to \R$ sending $g$ to $\ln \norm{\mu_{go}}$ is a quasi-morphism (\autoref{res: almost-fixed gives quasi-morphism}).
	We are left to prove that $\mu$ is actually $G$-almost invariant, i.e. $\chi$ is bounded.
	Recall that $(G, \nu)$ satisfies the Shadow Principle.
	It follows from \autoref{res: series roblin} that the critical exponent of the series
	\begin{equation*}
		\sum_{g \in G} e^{\chi(g)}e^{-s\dist o{go}} = \sum_{g \in G} \norm{\nu_{go}} e^{-s\dist o{go}}
	\end{equation*}
	is exactly $\omega_H$.
	Hence $\omega_{-\chi} = \omega_\chi = \omega_H$.
	Note also that, since $\nu$ is $H$-invariant, $\chi(hg) = \chi(g)$ for every $h \in H$ and $g \in G$.
	Using \autoref{res: existence twisted ps}, with the quasi-morphism $- \chi$, we produce an $H$-invariant, $\omega_H$-conformal density $\nu^* = (\nu^*_x)$ satisfying the following additional property:
	there is $C_1 \in \R_+^*$ such that for every $g \in G$, for every $x \in X$, we have
	\begin{equation*}
		\frac 1{C_1} \nu_x \leq e^{\chi(g)} {g^{-1}}_\ast \nu_{gx} \leq C_1 \nu_x.
	\end{equation*}
	Denote by $\mu^*$ its restriction to the reduced horocompactification $(\bar X, \mathfrak R)$.
	According to \autoref{res: quasi-conf + ergo}\ref{enu: quasi-conf + ergo - uniqueness}, there is $C_2 \in \R_+$ such that $\mu \leq C_2 \mu^*$.
	Recall that $\mu$ is $C_0$-almost fixed by $G$.
	Consequently for every $g \in G$, we have
	\begin{equation*}
		e^{\chi(g)}\mu_o 
		\leq C_0 \left( {g^{-1}}_\ast \mu_{go}\right)
		 \leq C_0C_2 \left( {g^{-1}}_\ast\mu^*_{go} \right)
		 \leq C_0C_1C_2  \left(e^{-\chi(g)}\mu^*_o\right).
	\end{equation*}
	Since $\mu_o$ and $\mu^*_o$ are probability measures, $\chi$ is bounded, whence the result.
\end{proof}

\begin{rema}
\label{rem: commensurated subroups}
	As we noticed in the introduction, every finite index and every normal subgroup of $G$ is commensurated.
	More generally, consider a subgroup $H$ of $G$ and $N$ a normal subgroup of $G$.
	If $H$ and $N$ are commensurable (i.e. $H\cap N$ has finite index in both $H$ and $N$) then $H$ is commensurated.
	However there are plenty of other examples.
	Here is a construction suggested by Uri Bader \cite{Bader:2018}.
	Consider the free group $\free 2$ and morphism $\phi \colon \free 2 \to M$ where $M$ is a topological group.
	Let $K$ be an open compact subgroup of $M$.
	Then $H = \phi^{-1}(K)$ is commensurated.
	Now if $\phi$ has dense image, then $H$ is commensurable with a normal subgroup of $\free 2$ if and only if $K$ is commensurable with a normal subgroup of $M$.
	Consider now for instance a prime $p$ and a morphism $\phi \colon \free 2 \to  {\rm PSL}_2(\Q_p)$ with dense image.
	Then $\phi^{-1}({\rm PSL}_2(\Z_p))$ provides an example of a commensurated subgroup of $\free 2$ that is not commensurable with a normal subgroup of $\free 2$.
\end{rema}

\appendix

\section{Strongly positively recurrent actions}
\label{sec: spr}

\subsection{Definition}
Let $G$ be a group acting properly, by isometries on a proper, geodesic, metric space $X$.
Given a compact subset $K \subset X$, we define a subset $G_K \subset G$ as follows:
an element $g \in G$ belongs to $G_K$, if there exist $x,y \in K$ and a geodesic $\gamma$ joining $x$ to $gy$ such that the intersection $\gamma \cap G K$ is contained in $K \cup gK$.
Although $G_K$ is not a subgroup of $G$, its exponential growth rate $\omega(G_K, X)$ is defined in the same way as for the one of $G$.

\begin{defi}
\label{res: entropy at infinity}
	The \emph{entropy at infinity} of the action of $G$ on $X$ is
	\begin{equation*}
		\omega_\infty(G, X) = \inf_K \omega(G_K, X),
	\end{equation*}
	where $K$ runs over all compact subsets of $X$.
 	The action of $G$ on $X$ is \emph{strongly positively recurrent} (or \emph{statistically convex co-compact}) if $\omega_\infty(G, X) < \omega(G,X)$.
\end{defi}

We refer the reader to \cite{Schapira:2021ti,Coulon:2018va} for examples of strongly positively recurrent actions in the context of hyperbolic geometry.
Arzhantseva, Cashen and Tao \cite[Section~10]{Arzhantseva:2015cl} also observed that the work of Eskin, Mirzakani, and Rafi \cite[Theorem~1.7]{Eskin:2019uw} implies that the action of the mapping class group on the Teichmüller space endowed with the Teichmüller metric is strongly positively recurrent.

\subsection{Divergence}

\begin{prop}
\label{res: spr gives div}
	If the action of $G$ on $X$ is strongly positively recurrent, then it is divergent.
\end{prop}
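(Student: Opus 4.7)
The strategy follows Schapira--Tapie: decompose each element of $G$ into first-return pieces lying in $G_K$, and translate this into a renewal-type comparison between $\mathcal P_G(s)$ and a geometric series in $A(s)=\mathcal P_{G_K}(s)$. By the SPR hypothesis, pick a compact $K\subset X$ containing $o$ with $\omega(G_K,X)<\omega(G,X)$; the critical exponent of $A$ is then strictly less than $\omega_G$, so $A(s)$ is finite and analytic on a right neighborhood of $\omega_G$.

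Given $g\in G$, choose a geodesic $\gamma$ from $o$ to $go$ and enumerate the translates of $K$ meeting $\gamma$ in the order of first entry: $K=h_0K, h_1K, \ldots, h_nK=gK$. Setting $g_i=h_{i-1}^{-1}h_i$, the sub-geodesic of $\gamma$ between the entries into $h_{i-1}K$ and $h_iK$, translated by $h_{i-1}^{-1}$, witnesses $g_i\in G_K$; hence $g=g_1\cdots g_n$. Properness of the action bounds uniformly the number of such decompositions of any fixed $g$, and reading off $d(o,go)$ at the entry points of $\gamma$ into the $h_iK$ yields the telescoping
\[d(o,go)\;\le\;\sum_{i=1}^n d(o,g_io)\;\le\;d(o,go)+2n\diam K.\]

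Collecting the weights $e^{-sd(o,go)}$ by the length $n$ of the decomposition and comparing with the unconstrained sum $\sum_{(g_1,\ldots,g_n)\in G_K^n}\prod_i e^{-sd(o,g_io)}=A(s)^n$ produces a two-sided comparison
\[c_1\sum_{n\ge0}A(s)^n\;\le\;\mathcal P_G(s)\;\le\;c_2\sum_{n\ge0}\bigl(e^{2s\diam K}A(s)\bigr)^n\]
on a right neighborhood of $\omega(G_K,X)$. Since $\mathcal P_G$ has critical exponent exactly $\omega_G$, both bracketing sums must share that critical exponent; combined with the continuity of $A$ at $\omega_G$, this pins down $A(\omega_G)=1$, whence $\sum_n A(\omega_G)^n=\infty$ and so $\mathcal P_G(\omega_G)=\infty$, proving divergence.

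The main technical obstacle is that, taken at face value, the factor $e^{2s\diam K}$ in the upper bound makes its critical exponent strictly smaller than the lower bound's critical exponent, so the comparison alone does not force $A(\omega_G)=1$. The resolution, in the spirit of Schapira--Tapie, is either to refine the decomposition so that lengths telescope exactly -- replacing $A(s)$ by an ``entry-to-entry'' weighted series such as $\sum_{g\in G_K}\inf_{x,y\in K}e^{-sd(x,gy)}$, which has the same critical exponent as $A$ -- or to exploit $\omega_\infty(G,X)=\inf_K\omega(G_K,X)<\omega(G,X)$ to enlarge $K$ until $\omega(G_K,X)$ lies below $\omega_G$ by more than the error introduced by $\diam K$. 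Once the bounds are synchronized, the chain $A(\omega_G)=1\Rightarrow\mathcal P_G(\omega_G)=\infty$ completes the proof.
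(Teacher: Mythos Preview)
Your approach is genuinely different from the paper's, and it has real gaps that you yourself partially acknowledge but do not close.

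\textbf{The lower bound is not established.} The inequality $c_1\sum_{n\ge 0}A(s)^n\le \mathcal P_G(s)$ would require that the multiplication map $G_K^n\to G$, $(g_1,\dots,g_n)\mapsto g_1\cdots g_n$, has uniformly bounded fibers over all $n$. Properness of the action does \emph{not} give this: if $h\in G_K$ then typically $h^{-1}\in G_K$ as well, and inserting cancelling pairs $hh^{-1}$ produces unboundedly many words representing a fixed $g$. What properness bounds is the number of \emph{first-return} decompositions of a fixed $g$ along geodesics, but those form only a sparse subset of all words in $G_K$, so you cannot recover $\sum_n A(s)^n$ from them. In renewal-theoretic language, you would need an \emph{exact} identity of the form $\mathcal P_G(s)=1/(1-F(s))$ for a suitable first-return series $F$, not a two-sided inequality; this demands a canonical, bijective decomposition, which you have not set up.

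\textbf{The error term is not resolved.} Even granting both comparisons, you correctly observe that the upper and lower geometric series have different bases, so their critical behaviour brackets $A(\omega_G)$ in an interval rather than forcing $A(\omega_G)=1$. Your proposed fix of enlarging $K$ goes the wrong way: it increases $\diam K$ and hence widens the gap. The other fix (entry-to-entry weights) could in principle be made to work, but it requires the same exact bijection mentioned above, and you have not carried it out.

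\textbf{What the paper does instead.} The paper takes a measure-theoretic route that avoids any combinatorial decomposition. One runs Patterson's construction to obtain a $G$-invariant $\omega_G$-conformal density $\nu$ on $\bar X$, and then shows directly that $\nu_o$ gives full measure to the radial limit set $\Lambda_{\rm rad}(G)$. The SPR hypothesis enters only to bound, uniformly in the Patterson parameter $s$, the $\nu^s_o$-mass of certain open neighbourhoods of $\partial X\setminus\Lambda_{\rm rad}(G)$ by tails of the convergent series $\mathcal P_{G_K}(\omega_G-\eta)$. Once $\nu_o(\Lambda_{\rm rad}(G))>0$, the divergence of $\mathcal P_G$ at $\omega_G$ follows from the upper Shadow Lemma estimate alone (\autoref{res: series roblin} and \autoref{rem: series roblin}), with no need for contracting elements or multiplicity bounds.
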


The statement was proved by Yang \cite{Yang:2019wa}.
We give here an alternative approach in the spirit of Schapira-Tapie \cite{Schapira:2021ti}.
The idea is to build a $G$-invariant, $\omega_G$-conformal density which gives positive measure to the radial limit set.
Indeed, according to \autoref{res: series roblin}, this will imply that the action of $G$ on $X$ is divergent.
As we explained in \autoref{rem: series roblin} this part of \autoref{res: series roblin} does not require that $G$ contains a contracting element.

First, we give a description of the complement of the radial limit set.
To that end we introduce some notations.
Given a compact subset $K \subset X$ and $\epsilon \in \R_+^*$, we denote by $A_{K, \epsilon}$ the set of all cocycles $c \in \partial X$ with the following property: 
there is a point $x \in K$ such that for every $\epsilon$-quasi-gradient ray $\gamma \colon \R_+ \to X$ for $c$ starting at $x$, for every $u \in G$, if the intersection $\gamma \cap uK$ is non-empty, then $d(K, uK) \leq 1$.

\begin{lemm}
\label{res: inclusion radial limit set}
	The radial set of $G$ satisfies the following inclusion
	\begin{equation*}
		\partial X \setminus \Lambda_{\rm rad} (G)
		\subset \bigcap_{K \subset X} G \left( \bigcup_{\epsilon > 0} A_{K, \epsilon}\right),
	\end{equation*}
	where $K$ runs over all compact subsets of $X$.
\end{lemm}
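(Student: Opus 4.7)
Suppose some nonempty compact $K$ witnesses $c \notin G\bigl(\bigcup_{\epsilon > 0} A_{K, \epsilon}\bigr)$; I will show $c \in \Lambda_{\rm rad}(G)$. Using the equivariance that $u\gamma$ is an $\epsilon$-quasi-gradient ray for $c$ from $ux$ if and only if $\gamma$ is one for $u^{-1}c$ from $x$, the assumption translates to the following workable form: \emph{for every $u \in G$, every $y \in uK$ and every $\epsilon > 0$, there exist $w \in G$ with $d(uK, wK) > 1$ and an $\epsilon$-quasi-gradient ray for $c$ from $y$ that enters $wK$.} The plan is to iterate this statement in order to build a quasi-gradient ray for $c$ that passes through a sequence of translates $w_iK$ with $\dist o{w_io} \to \infty$.

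\textbf{Iterative construction.} Fix $y_0 \in K$, set $w_{-1} = 1$, and choose $\epsilon_i = 2^{-i-1}$ so that $\sum_i \epsilon_i = 1$. Inductively, given $y_i \in w_{i-1}K$, apply the translated statement with $u = w_{i-1}$, $y = y_i$ and $\epsilon = \epsilon_i$ to obtain $w_i \in G$ with $d(w_{i-1}K, w_iK) > 1$, an $\epsilon_i$-quasi-gradient ray $\alpha_i$ for $c$ from $y_i$, and a first time $t_i > 0$ with $y_{i+1} := \alpha_i(t_i) \in w_iK$. Because $\alpha_i$ is $1$-Lipschitz and crosses from $w_{i-1}K$ to $w_iK$, we get $t_i \geq d(w_{i-1}K, w_iK) > 1$, so the cumulative lengths $\tau_n := \sum_{j \leq n} t_j$ diverge. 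Gluing the pieces $\alpha_i|_{[0, t_i]}$ end to end produces an arc-length path $\pi \colon \R_+ \to X$ starting at $y_0$.

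\textbf{Shadow estimate.} By Remark~\ref{rem: gradient arc}, each truncation $\pi|_{[0, \tau_n]}$ is $\bigl(\sum_{j \leq n}\epsilon_j\bigr)$-quasi-gradient, so every finite subarc of $\pi$ is $1$-quasi-gradient; hence $\pi$ is itself a $1$-quasi-gradient ray for $c$ from $y_0$. Let $D = \sup_{x \in K}\dist ox$. Since $\pi(\tau_i) = y_{i+1} \in w_iK$ lies within $D$ of $w_io$, expanding the Gromov product while using $c(y_0, \pi(\tau_i)) \geq \tau_i - 1$ and $\dist{y_0}{\pi(\tau_i)} \leq \tau_i$ yields $\gro{y_0}{c}{w_io} \leq D + 1/2$; the base-point shift (\ref{eqn: gromov product - lip}) then gives $\gro{o}{c}{w_io} \leq 2D + 1/2$. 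The lower bound $\dist{y_0}{y_{i+1}} \geq c(y_0, y_{i+1}) \geq \tau_i - 1$ forces $\dist o{w_io} \geq \tau_i - 1 - 2D \to \infty$. Thus $c \in \mathcal O_o(w_io, 2D + 1/2)$ for a sequence of $w_i$ escaping to infinity, placing $c$ in $\Lambda_{\rm rad}(G, o, 2D + 1/2) \subset \Lambda_{\rm rad}(G)$ and contradicting the hypothesis.

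\textbf{Main difficulty.} The one delicate step is that the infinite concatenation $\pi$ inherits the $1$-quasi-gradient property from its finite truncations; this works because any pair of parameters $s \leq s'$ already lies in some $[0, \tau_n]$, on which the cumulative error is bounded by $\sum_i \epsilon_i = 1$. Everything else is triangle-inequality bookkeeping combined with the $1$-Lipschitz property of cocycles.
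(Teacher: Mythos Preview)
Your proof is correct and follows essentially the same approach as the paper: contraposition, iterated application of the negation of $A_{K,\epsilon}$ (translated via equivariance) with geometrically decreasing errors, and concatenation into a single quasi-gradient ray that meets infinitely many far-away orbit translates. The paper's version is terser at the end (it simply says ``one proves using the triangle inequality that $c$ belongs to the radial limit set''), whereas you spell out the shadow radius $2D+1/2$ explicitly; otherwise the arguments coincide.
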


\begin{proof}
	The proof is by contraposition.
	Consider a cocycle $c \in \partial X$ that is not in the set
	\begin{equation*}
		\bigcap_{K \subset X} G \left( \bigcup_{\epsilon > 0} A_{K, \epsilon}\right).
	\end{equation*}
	There is a compact subset $K \subset X$ such that for every $g \in G$ and $\epsilon > 0$, the cocycle $c$ does not belong to $gA_{K, \epsilon}$.
	Fix $\epsilon \in (0, 1)$ and $x_0 \in K$.
	In addition we let $g_0 = 1$.
	We are going to build by induction, a sequence of points $x_1, x_2\dots$ in $X$, a sequence of elements $g_1, g_2,\dots$ in $G$, and a sequence of rays $\gamma_1, \gamma_2,\dots$, such that for every $i \in \N\setminus\{0\}$ the following holds.
	\begin{enumerate}
		\item $x_i$ belongs to $g_iK$.
		\item $c(x_0,x_i) \geq i /2$.
		\item For every $i \in \N\setminus \{0\}$, the path $\gamma_i$ is a $2^{-i}\epsilon$-quasi-gradient ray of $c$ starting at $x_{i-1}$ and passing through $x_i$.
	\end{enumerate}
	Let $i \in \N$.
	Assume that $x_i \in X$, $g_i \in G$ have been defined.
	By assumption $c$ does not belong to the set
	\begin{equation*}
		g_iA_{K, 2^{-(i+1)}\epsilon}.
	\end{equation*}
	Hence there exists a $2^{-(i+1)}\epsilon$-quasi-gradient ray $\gamma_{i+1} \colon \R_+ \to X$ for $c$ starting at $x_i$ and an element $u_i \in G$ such that $\gamma_{i+1} \cap g_iu_iK$ is non-empty and $d(g_iK, g_iu_iK) > 1$.
	We let $g_{i+1} = g_iu_i$ and denote by $x_{i+1}$ a point in $\gamma_{i+1} \cap g_iu_iK$.
	Since $x_i \in g_iK$ and $x_{i+1} \in g_iu_iK$, we have $\dist {x_i}{x_{i+1}} > 1$.
	However $\gamma_{i+1}$ is a quasi-gradient line.
	Hence
	\begin{equation*}
		c(x_i, x_{i+1}) \geq \dist {x_i}{x_{i+1}}  - 2^{-(i+1)}\epsilon \geq 1/2.
	\end{equation*}
	Using the induction hypothesis, we get
	\begin{equation*}
		c(x_0, x_{i+1}) \geq c(x_0, x_i) + c(x_i, x_{i+1}) \geq (i+1)/2.
	\end{equation*}
	Consequently $x_{i+1}$, $g_{i+1}$, and $\gamma_{i+1}$ satisfy the announced properties.

	Note that the sequence $(x_i)$ is unbounded. 
	Indeed otherwise $c(x_0, x_i)$ should be bounded as well.
	Thus we can build an infinite path $\gamma$ by concatenating the restriction of each $\gamma_i$ between $x_{i-1}$ and $x_i$.
	It follows from the construction that $\gamma$ is an $\epsilon$-quasi-gradient line for $c$, see \autoref{rem: gradient arc}.
	Moreover $\gamma$ intersects $g_iK$ for every $i \in \N$.
	One proves using the triangle inequality that $c$ belongs to the radial limit set.
\end{proof}

Let $K \subset X$ be a compact subset and $\epsilon \in \R_+^*$.
For every compact subset $F \subset X$, we define $U_{K, \epsilon}(F)$ to be the set of cocycles $b \in \bar X$ for which there is a cocycle $c \in A_{K, \epsilon}$ satisfying $\norm[F]{b - c} < \epsilon$.
Observe that $U_{K,\epsilon}(F)$ is an open subset of $\bar X$ containing $A_{K, \epsilon}$.

\begin{lemm}
\label{res: open set around complement rad}
	Let $K \subset X$ be a compact set and $\epsilon \in \R_+^*$.
	Fix a base point $o \in K$.
	There exist $r \in \R_+$ and a finite subset $S \subset G$, such that for every $T \geq \epsilon$, if $F$ stands for the closed ball of radius $T + 2r$ centered at $o$, then 
	\begin{equation*}
		U_{K, \epsilon}(F) \cap Go \subset S \left(\bigcup_{\substack{k \in G_K \\ \dist o{ko} \geq T}} \mathcal O_o(ko,r)\right).
	\end{equation*}
\end{lemm}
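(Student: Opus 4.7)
The plan is to produce, for each $b = \iota(ho) \in U_{K,\epsilon}(F) \cap Go$, a decomposition $h = sk$ with $s$ drawn from the finite set $S := S_0 := \set{u \in G}{\dist K{uK} \leq 1}$ (finite by properness of the action) and $k \in G_K$ extracted from the last $GK$-visit of a geodesic from $K$ to $ho$ inside $F$. Let $D_0 := \sup_{u \in S_0} \dist o{uo}$ and choose $r$ once and for all so that $2r \geq D_0 + 5 \diam K$; this forces $r \geq 2\diam K$ and, for every $T \geq \epsilon$, $T + 2r - \epsilon/2 > D_0 + \diam K$. Fix $c \in A_{K,\epsilon}$ with $\norm[F]{b-c} < \epsilon$ and let $x_0 \in K$ be the point from the definition of $A_{K,\epsilon}$ attached to $c$.

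First I would establish the lower bound $\dist o{ho} > T + 2r - \epsilon/2$. For this, let $\sigma \colon \R_+ \to X$ be a gradient ray for $c$ starting at $o$ (\autoref{res: existence gradient ray}) and set $y := \sigma(T+2r) \in F$. Then $c(y,o) = -(T+2r)$ by the gradient-arc identity, while $b(y,o) = \dist y{ho} - \dist o{ho} \geq (T+2r) - 2\dist o{ho}$ whenever $\dist o{ho} \leq T+2r$; the inequality $|b(y,o) - c(y,o)| < \epsilon$ forces the claimed lower bound. In particular $h \notin S_0$. Next, fix a geodesic $\gamma \colon \intval 0L \to X$ from $x_0$ to $ho$ with $L = \dist{x_0}{ho}$; since $b(\gamma(s),\gamma(t)) = t-s$ on the nose, the $F$-closeness $\norm[F]{b-c}<\epsilon$ shows that every subarc of $\gamma$ whose image lies in $F$ is an $\epsilon$-quasi-gradient arc for $c$. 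Setting $t^* := \sup\set{t \in \intval 0L}{\gamma(\intval 0t) \subset F}$, Step 1 (together with the choice of $r$) gives $t^* < L$ and $\dist o{\gamma(t^*)} = T+2r$. Concatenating $\gamma|_{\intval 0{t^*}}$ with any gradient ray for $c$ based at $\gamma(t^*)$ yields, by \autoref{rem: gradient arc}, an $\epsilon$-quasi-gradient ray $\tilde\gamma$ for $c$ from $x_0$; the defining property of $A_{K,\epsilon}$ then forces every $u \in G$ with $uK \cap \tilde\gamma \neq \emptyset$ to lie in $S_0$. In particular $\gamma|_{\intval 0{t^*}}$ meets only $S_0$-translates of $K$.

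Now define $\tau_n$ as the largest $t \in \intval 0{t^*}$ with $\gamma(t) \in GK$, pick $g_n \in S_0$ with $\gamma(\tau_n) \in g_n K$, let $\tau_{n+1}$ be the smallest $t > \tau_n$ with $\gamma(t) \in GK \setminus g_n K$, and pick $g_{n+1}$ with $\gamma(\tau_{n+1}) \in g_{n+1} K$; the existence of $\tau_{n+1}$ uses that $ho \notin g_n K$, which is exactly the content of $\dist o{ho} > D_0 + \diam K$. Set $s := g_n$ and $k := g_n^{-1} g_{n+1}$. Then $s \in S = S_0$; the translated segment $g_n^{-1}\gamma|_{\intval{\tau_n}{\tau_{n+1}}}$ joins a point of $K$ to a point of $kK$ while $\gamma|_{\intval{\tau_n}{\tau_{n+1}}} \cap GK \subset g_n K \cup g_{n+1} K$ by the choice of $\tau_{n+1}$, so its $GK$-intersection sits inside $K \cup kK$ and $k \in G_K$. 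The routine estimates $\tau_n \leq D_0 + 2\diam K$ (from $\gamma(\tau_n) \in g_n K$ with $g_n \in S_0$) and $\tau_{n+1} \geq t^* \geq T + 2r - \diam K$ yield $\dist o{ko} = \dist{g_n o}{g_{n+1}o} \geq (\tau_{n+1} - \tau_n) - 2\diam K \geq T$ by our choice of $r$; while the fact that $\gamma$ is a geodesic running all the way from $\gamma(\tau_n) \in g_n K$ to $ho$ through $\gamma(\tau_{n+1}) \in g_{n+1} K$ gives the Gromov-product bound $\gro{g_n o}{b}{g_{n+1}o} \leq 2\diam K \leq r$, i.e. $b \in \mathcal O_{g_n o}(g_{n+1} o, r) = s \cdot \mathcal O_o(ko, r)$.

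The principal difficulty is the verification $k \in G_K$: it would be natural to choose $\tau_{n+1}$ as the first time $\gamma$ re-enters $GK$, but outside $F$ the geodesic $\gamma$ may re-enter $g_n K$ repeatedly before hitting a genuinely new translate, and each such revisit would seem to violate the condition $\gamma' \cap GK \subset K \cup kK$ in the definition of $G_K$. The resolution is that this definition only asks the intersection to sit inside $K \cup kK$, not to be supported at the endpoints; taking $\tau_{n+1}$ to be the first entry into $GK \setminus g_n K$ (rather than into $GK$) absorbs every revisit of $g_n K$ into the allowed region $g_n^{-1}(g_n K) = K$, making the construction go through.
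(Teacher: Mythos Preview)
Your proof is correct and follows essentially the same strategy as the paper's: along a geodesic from $K$ to $ho$, locate the last visit to a ``near'' translate of $K$ and the first subsequent visit to a new one, then use that segment to produce $s \in S$ and $k \in G_K$; the paper just organises this slightly differently, defining the two times directly via $SK$ versus $(G\setminus S)K$ and proving the distance lower bound $\dist oz \geq R$ afterwards by contradiction, instead of introducing the exit time $t^*$ up front. One small slip: the assertion that $t^* < L$ follows from Step~1 and the choice of $r$ is not justified as written (Step~1 only yields $\dist o{ho} > T+2r-\epsilon/2$, which need not exceed $T+2r$); however your very next step supplies the correct reason, since if $t^* = L$ then $\tilde\gamma$ passes through $ho \in hK$ with $h \notin S_0$, contradicting $c \in A_{K,\epsilon}$.
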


\begin{proof}
	Since the action of $G$ on $X$ is proper, the set 
	\begin{equation*}
		S = \set{u \in G}{d(K, uK) \leq 1}
	\end{equation*}
	if finite.
	We fix $r > 2 \diam K + 1$.
	Let $T \geq \epsilon$ and $F$ be the closed ball of radius $R = T+2r$ centered at $o$.
	Let $g \in G$ such that $go$ belongs to $U_{K, \epsilon}(F)$.
	We write $b = \iota(go)$ for the corresponding cocycle.
	By definition, there is $c \in A_{K, \epsilon}$ such that $\norm[F]{b-c} < \epsilon$.
	Observe first that $\dist o{go} > R - \epsilon$.
	Indeed the map $x \mapsto b(x,go)$ admits a global minimum at $go$, while there exists a $c$-gradient line starting at $go$.
	We cannot have at the same time $\dist o{go} \leq R - \epsilon$ and $\norm[F]{b-c} < \epsilon$.
	In particular, $g \notin S$.
	
	Since $c \in A_{K, \epsilon}$, there exists $x \in K$, such that for every $\epsilon$-quasi-gradient ray $\gamma \colon \R_+ \to X$ for $c$, starting at $x$, if $\gamma$ intersects $uK$ for some $u \in G$, then $u \in S$.
	Consider now a geodesic $\alpha\colon \intval 0\ell \to X$ from $x$ to $go$.
	We denote by $s \in \intval 0\ell$, the largest time such that the point $y = \alpha(s)$ belongs to $SK$.
	We now denote by $t \in \intval s\ell$, the smallest time such that the point $z = \alpha(t)$ lies in $hK$ for some $h \in G\setminus S$ (such a time $t$ exists since $\alpha(\ell)$ belongs to $gK$).
	It follows from the construction that $h$ can be written $h = uk$ with $u \in S$ and $k \in G_K$.
	Moreover $y \in uK$.
	Observe that $\gro y{go}z = 0$, while $\dist y{uo} \leq r/2$ and $\dist z{uko}\leq r/2$.
	The triangle inequality yields $\gro {uo}{go}{uko} \leq r$, i.e. $go$ belongs to $u\mathcal O_o(ko, r)$.
	
	We are left to prove that $\dist o{ko} \geq T$.
	By construction $\dist o{uo} \leq r$.
	Thanks to the triangle inequality, it suffices to show that $\dist oz \geq R$.
	Assume on the contrary that  $\dist oz < R$.
	In particular, both $x$ and $z$ belong to $F$.
	Since $b$ and $c$ differ by at most $\epsilon$ on $F$, we get that $c(x,z) \geq \dist xz - \epsilon$.
	Hence any geodesic from $x$ to $z$ is an $\epsilon$-quasi-gradient arc for $c$.
	If we concatenate this path with a gradient ray for $c$ starting at $z$, we obtain an $\epsilon$-quasi-gradient ray for $c$ starting at $x$ and intersecting $hK$ with $d(K, hK) > 1$.
	This contradicts the fact that $c$ belongs to $A_{K, \epsilon}$, and completes the proof.
\end{proof}

\begin{prop}
\label{res: spr gives full measure}
	If the action of $G$ on $X$ is strongly positively recurrent, then there is a $G$-invariant, $\omega_G$-conformal density which gives full measure to the radial limit set $\Lambda_{\rm rad}(G)$.
\end{prop}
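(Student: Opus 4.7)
The plan is to build $\nu$ through Patterson's construction with a carefully chosen weight, then use \autoref{res: inclusion radial limit set} to identify the complement of $\Lambda_{\rm rad}(G)$ inside $\partial X$ and bound its $\nu_o$-mass using the SPR gap.

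Using the SPR hypothesis, fix a compact $K \subset X$ satisfying $\omega(G_K, X) < \omega_G$ and choose $\varepsilon \in (0, \omega_G - \omega(G_K, X))$. Apply Patterson's construction (\autoref{res: existence twisted ps} with trivial quasi-morphism) using a slowly growing function $\theta$ whose slow-growth parameter is at most $\varepsilon$. This produces approximating probability measures $\nu^s_x$ supported on $Go$ for $s > \omega_G$; any weak-$*$ limit along some $s_n \to \omega_G^+$ provides a $G$-invariant, $\omega_G$-conformal density $\nu = (\nu_x)$ supported on $\partial X$. By \autoref{res: inclusion radial limit set} and the $G$-invariance of $\nu$, it will suffice to show $\nu_o(A_{K,\epsilon}) = 0$ for every $\epsilon > 0$.

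The core technical step will be a shadow-lemma-type bound for the non-conformal measures $\nu^s$: for some $C > 0$ and every $s \in (\omega_G, \omega_G + 1]$ and every $k \in G$ with $\dist o{ko}$ sufficiently large,
\begin{equation*}
	\nu^s_o(\mathcal O_o(ko, r)) \leq C e^{-(\omega_G - \varepsilon) \dist o{ko}}.
\end{equation*}
To prove this one would compare $\nu^s_o$ and $\nu^s_{ko}$ atom by atom via $\nu^s_o(\{go\}) = [\theta(\dist o{go})/\theta(\dist{ko}{go})] e^{-s b_{go}(o, ko)} \nu^s_{ko}(\{go\})$. For $go \in \mathcal O_o(ko, r)$ the shadow condition yields $b_{go}(o, ko) \geq \dist o{ko} - 2r$ and $\dist o{go} - \dist{ko}{go} \in [\dist o{ko} - 2r, \dist o{ko}]$; slow growth of $\theta$ then gives $\theta(\dist o{go})/\theta(\dist{ko}{go}) \leq C_0 e^{\varepsilon \dist o{ko}}$, and summing against the probability measure $\nu^s_{ko}$ produces the desired bound.

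To conclude, let $r, S$ be as in \autoref{res: open set around complement rad} for the chosen $\epsilon$, and let $F_T$ denote the closed ball of radius $T + 2r$ around $o$. Since $\nu^s_o$ is supported on $Go$ and $u \mathcal O_o(ko, r) \subset \mathcal O_o(uko, r + \dist o{uo})$, that lemma combined with the estimate above gives
\begin{equation*}
	\nu^s_o(U_{K,\epsilon}(F_T)) \leq \sum_{u \in S} \sum_{\substack{k \in G_K \\ \dist o{ko} \geq T}} \nu^s_o(u \mathcal O_o(ko, r)) \leq C' \sum_{\substack{k \in G_K \\ \dist o{ko} \geq T}} e^{-(\omega_G - \varepsilon) \dist o{ko}},
\end{equation*}
uniformly in $s \in (\omega_G, \omega_G + 1]$. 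Weak-$*$ lower semi-continuity on the open set $U_{K,\epsilon}(F_T)$ transfers this bound to $\nu_o$; since $A_{K,\epsilon} \subset U_{K,\epsilon}(F_T)$ for every $T$ and $\omega_G - \varepsilon > \omega(G_K, X)$, letting $T \to \infty$ yields $\nu_o(A_{K,\epsilon}) = 0$. The hard part is the shadow-type upper bound for the non-conformal $\nu^s$: one cannot directly invoke \autoref{res: shadow lemma}, so slow growth of $\theta$ must emulate conformality, and $\varepsilon$ must be chosen small enough that the analytic loss $e^{\varepsilon \dist o{ko}}$ is exactly absorbed by the SPR margin $e^{-(\omega_G - \omega(G_K, X))\dist o{ko}}$ while still leaving a summable tail in $G_K$.
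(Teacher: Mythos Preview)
Your proposal is correct and follows essentially the same route as the paper: build $\nu$ via Patterson's construction, use \autoref{res: inclusion radial limit set} and \autoref{res: open set around complement rad} to cover the complement of the radial limit set by translates of shadows over $G_K$, and exploit the slow growth of $\theta$ together with the SPR gap to show the resulting sum is a convergent tail. The only cosmetic difference is in how the shadow bound for $\nu^s_o$ is organized: the paper expands the definition of $\nu^s_o$ and writes each $g$ in the shadow as $g = ku$, comparing the resulting sum directly to $\mathcal Q(s)$, whereas you package the same computation as a comparison between $\nu^s_o$ and the probability measure $\nu^s_{ko}$---these are equivalent since $\norm{\nu^s_{ko}} = 1$ is exactly the identity $\sum_u \theta(\dist{ko}{g o})e^{-s\dist{ko}{g o}} = \mathcal Q(s)$.
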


\begin{proof}
 	By definition, there is a compact subset $K \subset X$ such that $\omega_{G_K} < \omega_G$.
	We fix once and for all a base point $o \in K$.
	The argument relies on Patterson's construction recalled in the proof of \autoref{res: existence twisted ps} with $H = G$ and $\chi$ the trivial morphism.
	 In particular, $\mathcal Q(s)$ stands for the weighted Poincaré series defined in (\ref{eqn: weighted patterson series}).
	 For every $s > \omega_G$, we consider the density $\nu^s = (\nu_x^s)$ defined as in (\ref{res: patterson approx}).
	 As we explained there is a sequence $(s_n)$ converging to $\omega_G$ from above such that $\nu^{s_n}$ converges to a $G$-invariant, $\omega_G$-conformal density $\nu$ supported on $\partial X$.
	 	 
	 Let $\eta > 0$ such that $\omega_{G_K} < \omega_G - \eta$.
	 The weight $\theta$ used to construct $\nu$ is slowing increasing.
	 More precisely, according to \ref{enu: patterson - slowing growing weight} there exists $t_0$ such that for every $t \geq t_0$ and $u \in \R_+$ we have $\theta(t + u) \leq e^{\eta u}\theta(t)$.
	 Let $\epsilon > 0$.
	 Let $r \in \R_+$ and $S \subset G$ be the data provided by \autoref{res: open set around complement rad} applied with $K$ and $\epsilon$.
	For every $T \in \R_+$, we write $F_T$ for the closed ball of radius $R =T+2r$ centered at $o$.
	Let $s > \omega_G$ and $T \geq \max\{t_0, \epsilon\}$.
	In view of \autoref{res: open set around complement rad},	we have
	\begin{equation*}
		\nu^s_o\left(U_{K,\epsilon}(F_T)\right)
		\leq \card S \sum_{\substack{k \in G_K \\ \dist o{ko} \geq T}} \nu^s_o\left(\mathcal O_o(ko, r) \right).
	\end{equation*}
	Let us estimate the measures of the shadows in the sum.
	Let $k \in G_K$, such that $\dist o{ko} \geq T$.
	Any element $g \in G$ such that $go \in \mathcal O_o(ko,r)$ can be written $g = ku$ with $u \in G$ and 
	\begin{equation*}
		 \dist o{ko} + \dist o{uo} - 2r \leq \dist o{go} \leq \dist o{ko} + \dist o{uo}.
	\end{equation*}
	Unfolding the definition of $\nu^s$, we get
	\begin{equation}
	\label{eqn: spr gives full measure}
		\nu^s_o\left(\mathcal O_o(ko, r) \right)
		\leq  \frac{e^{2sr}e^{-s \dist o{ko}}}{\mathcal Q(s)}\sum_{u \in G} \theta(\dist o{go}) e^{-s \dist o{uo}}.
	\end{equation}
	Observe that if $\dist o{uo} \geq t_0$, then it follows from our choice of $t_0$ that 
	\begin{equation*}
		\theta\left(\dist o{go}\right)
		\leq \theta\left( \dist o{ko} + \dist o{uo}\right)
		\leq e^{\eta\dist o{ko}} \theta\left(\dist o{uo}\right).
	\end{equation*}
	Otherwise, since $\dist o{ko}\geq T \geq t_0$, we have
	\begin{equation*}
		\theta\left(\dist o{go}\right)
		\leq \theta\left(t_0 + \dist o{uo}+  \dist o{ko} -t_0 \right)
		\leq e^{\eta\dist o{ko}} \theta\left(t_0\right).
	\end{equation*}
	We break the sum in (\ref{eqn: spr gives full measure}) according to the length of $u$ and get 
	\begin{equation*}
		\nu^s_o\left(\mathcal O_o(ko, r) \right)
		\leq \frac{e^{2sr}e^{-(s-\eta) \dist o{ko}}}{\mathcal Q(s)} \left[\theta(t_0)\Sigma_1(s) + \Sigma_2(s)\right],
	\end{equation*}
	where
	\begin{align*}
		\Sigma_1(s) & = \sum_{\substack{u \in G \\ \dist o{uo} \leq t_0}} e^{-s \dist o{uo}}, \quad \text{and}\quad \\
		\Sigma_2(s) & =  \sum_{\substack{u \in G \\ \dist o{uo} > t_0}} \theta(\dist o{uo}) e^{-s \dist o{uo}}.
	\end{align*}
	Note that $\Sigma_1(s)$ is a finite sum that does not depend on $k$, while $\Sigma_2(s)$ is the remainder of the series $\mathcal Q(s)$.
	Hence
	\begin{equation*}
		\nu^s_o\left(\mathcal O_o(ko, r) \right)
		\leq e^{2sr}
		\left[ \frac {\theta(t_0)}{\mathcal Q(s)} \Sigma_1(s) + 1\right] e^{-(s-\eta) \dist o{ko}},
	\end{equation*}
	Summing over all long elements $k \in G_K$, we get
	\begin{equation*}
		\nu^s_o\left(U_{K,\epsilon}(F_T)\right)
		\leq \card S e^{2sr}
		\left[ \frac {\theta(t_0)}{\mathcal Q(s)} \Sigma_1(s) + 1\right] 
		\sum_{\substack{k \in G_K \\ \dist o{ko} \geq T}} e^{-(s-\eta) \dist o{ko}}.
	\end{equation*}
	Note that $\Sigma_1(s)$ is bounded, while $\mathcal Q(s)$ diverges to infinity.
	Since $U_{K,\epsilon}(F_T)$ is an open subset of $\bar X$, we can pass to the limit and get
	\begin{equation*}
		\nu_o\left(U_{K,\epsilon}(F_T)\right)
		\leq \card S e^{2\omega_Gr}
		\sum_{\substack{k \in G_K \\ \dist o{ko} \geq T}} e^{-(\omega_G-\eta) \dist o{ko}}.
	\end{equation*}
	The sum corresponds to the remainder of the Poincaré series of $G_K$ at $s = \omega_G - \eta$.
	However $\omega_G - \eta > \omega_{G_K}$.
	Hence this series converges, and its reminder tends to zero when $T$ approaches infinity.
	Consequently, for every $\epsilon > 0$,
	\begin{equation*}
		\nu_o\left(\bigcap_{T \geq 0}U_{K,\epsilon}(F_T)\right) = 0.
	\end{equation*}
	By construction the set $A_{K, \epsilon}$ is contained in $U_{K, \epsilon}(F_T)$ for every $T \in \R_+$.
	It follows from \autoref{res: inclusion radial limit set} that 
	\begin{equation*}
		\partial X \setminus \Lambda_{\rm rad} (G)
		\subset G \left( \bigcup_{\epsilon > 0} \bigcap_{T \geq 0}U_{K, \epsilon}(F_T)\right).
	\end{equation*}
	Since $G$ is countable we conclude that $\nu_o(\partial X \setminus \Lambda_{\rm rad}(G)) = 0$.
	Recall that $\nu_o$ is supported on $\partial X$, thus $\nu_o(\Lambda_{\rm rad}(G)) = 1$.
\end{proof}


\bigskip
\noindent
\emph{R\'emi Coulon} \\
Université de Bourgogne, CNRS \\
IMB - UMR 5584 \\
F-21000 Dijon, France\\
\texttt{remi.coulon@cnrs.fr} \\
\texttt{http://rcoulon.perso.math.cnrs.fr}

\end{document}